\newtheorem{thm}{Theorem}[section]
\newtheorem*{thm*}{Theorem}
\newtheorem{cor}[thm]{Corollary}
\newtheorem*{cor*}{Corollary}
\newtheorem{prop}[thm]{Proposition}
\newtheorem*{prop*}{Proposition}
\newtheorem*{properties*}{Properties}
\newtheorem{lem}[thm]{Lemma}
\newtheorem*{lem*}{Lemma}
\newtheorem*{claim*}{Claim}
\newtheorem*{fact*}{Fact}
\newtheorem{fact}[thm]{Fact}
\newtheorem*{fait*}{Fait}
\newtheorem*{qst*}{Question}
\newtheorem*{pb*}{Problem}
\newtheorem*{conj*}{Conjecture}
\newtheorem*{exo*}{Exercise}
 \newtheorem{thmspecial}{Theorem}
 \newtheorem{corspecial}[thmspecial]{Corollary}
 \theoremstyle{definition}
 \newtheorem{dfn}[thm]{Definition}
 \newtheorem*{dfn*}{Definition}
  \newtheorem{dfnspecial}[thmspecial]{Definition}
\theoremstyle{remark}
\newtheorem*{algo*}{Algorithm}
\newtheorem*{rem*}{Remark}
\newtheorem{rem}[thm]{Remark}
\newtheorem*{example*}{Example}
\newcounter{numEnonceTmpInterne}
\newenvironment{enonce*}[1]{\theoremstyle{plain}\stepcounter{numEnonceTmpInterne}%
\def\a{enoncetmp\alph{numEnonceTmpInterne}}%
\newtheorem*{\a}{#1}\begin{\a}}{\end{\a}}
\edef\@tempa#1#2{\def#1{\mathaccent\string"\noexpand\accentclass@#2 }}
\@tempa\rond{017}
\newcommand{\es}{\emptyset}
\renewcommand{\phi}{\varphi}
\newcommand{\m} {^{-1}}
\newcommand {\ra} {\rightarrow}
\newcommand {\onto} {\twoheadrightarrow}
\newcommand {\xra} {\xrightarrow}
\newcommand{\imp} {\Rightarrow}
\newcommand{\actson}{\curvearrowright}
\newcommand{\semidirect}{\ltimes}
\newcommand{\normal} {\vartriangleleft}
\newcommand{\ie} {i.~e.\ }
\newcommand {\cala} {{\mathcal {A}}}
\newcommand {\calc} {{\mathcal {C}}}
\newcommand {\calg} {{\mathcal {G}}}
\newcommand {\calm} {{\mathcal {M}}}
\newcommand {\calo} {{\mathcal {O}}}
\newcommand {\calp} {{\mathcal {P}}}
\newcommand {\calq} {{\mathcal {Q}}}
\newcommand {\cals} {{\mathcal {S}}}
\newcommand {\calt} {{\mathcal {T}}}
\newcommand {\calu} {{\mathcal {U}}}
\newcommand {\calv} {{\mathcal {V}}}
\newcommand {\bbF} {{\mathbb {F}}}
\newcommand {\bbQ} {{\mathbb {Q}}}
\newcommand {\bbR} {{\mathbb {R}}}
\newcommand {\bbZ} {{\mathbb {Z}}}
\newcommand{\abs}[1]{\lvert#1\rvert} 
\newcommand{\grp}[1]{\langle #1 \rangle}
\newcommand{\Out} {{\mathrm{Out}}}
\newcommand{\QH} {{\mathrm{QH}}}
\newcommand{\Hom} {{\mathrm{Hom}}}
\newcommand{\Aut} {{\mathrm{Aut}}}
\newcommand{\Inn} {{\mathrm{Inn}}}
\newcommand{\id} {\mathrm{id}}
\newcommand{\ad} {{\mathrm{ad}}}
\newcommand{\Res}{\mathrm{Res}}
\newcommand{\Fact}{\mathrm{Fact}}
\newcommand{\gobble}[1]{} 
    \renewcommand{\>}{\rangle}
\newcommand{\lk}{\mathrm{lk}}
\newcommand{\st}{\mathrm{st}}   
\newcommand{\mids}{\, | \, }
\newcommand{\Mc}{\operatorname{Mc}}
\newcommand{\GL}{\operatorname{GL}}
\newcommand{\PGL}{\operatorname{PGL}}
\newcommand{\SL}{\operatorname{SL}}
\newcommand{\IA}{\operatorname{IA}}
\newcommand{\EH}{\operatorname{EH}}
\newcommand{\MCG}{\operatorname{MCG}}
\newcommand{\depth}{\operatorname{depth}}
\renewcommand{\vec}{\mathbf{v}}
\newcommand{\card}[1]{\#{#1}}
\newcommand{\A}{\mathbb{A}}
\begin{document}

\title{Vastness properties of automorphism groups of RAAGs}
\author{Vincent Guirardel, Andrew Sale}
\date{\today}

\maketitle

\begin{abstract}
Outer automorphism groups of RAAGs, denoted $\Out(\A_\Gamma)$, interpolate between $\Out(\bbF_n)$ and $\GL_n(\bbZ)$.
We consider several \emph{vastness} properties for  which $\Out(\bbF_n)$ behaves very differently from $\GL_n(\bbZ)$:
virtually mapping onto all finite groups,
SQ-universality, virtually having an infinite dimensional space of homogeneous quasimorphisms, and not being boundedly generated.

We give a neccessary and sufficient condition in terms of the defining graph $\Gamma$ for each of these properties to hold.
Notably, the condition for all four properties is the same, meaning $\Out(\A_\Gamma)$ will either satisfy all four, or none.
In proving this result, we describe conditions on $\Gamma$ that imply $\Out(\A_\Gamma)$ is large.

Techniques used in this work are then applied to the case of McCool groups, defined as subgroups of $\Out(\bbF_n)$ that preserve a given family of conjugacy classes.
In particular we show that any McCool group that is not virtually abelian virtually maps onto all finite groups, is SQ-universal, is not boundedly generated, and has a finite index subgroup whose space of homogeneous quasimorphisms is infinite dimensional. \end{abstract}

Right-angled Artin groups (or RAAGs, and also known as partially commutative groups, or graph groups) are
an important class of groups, containing free groups and free abelian groups as their extreme examples.
These groups are defined by a finite presentation whose set of relations is a subset of the set of commutation relations among generators.
Such a group $\A_\Gamma$ is conveniently defined by a finite graph $\Gamma$ whose vertex set is the set of generators of the presentation,
and where there is an edge between two generators when their commutator is a relation.

The fundamental group of a special cube complex embeds into a right-angled Artin group, giving them a rich and important collection of subgroups,
thanks to theory of Haglund and Wise \cite{Haglund-Wise-special}.

In this paper, it is not  the right-angled Artin groups themselves that we study, but rather their outer automorphism groups.
Free groups and free abelian groups being the extreme examples of right-angled Artin groups,
these automorphism groups \emph{interpolate} in some sense between the groups $\Out(\bbF_n)$ and  $\GL_n(\bbZ)$.

In many senses, $\GL_n(\bbZ)$ and $\Out(\bbF_n)$ exhibit very different types of behaviour (for $n\geq 3$).
For instance, $\GL_n(\bbZ)$ is not far from being simple for $n\geq 3$ since, except for $\PGL_n(\bbZ)$,
all its quotients are finite by Margulis' normal subgroup theorem \cite{Margulis_discrete}.
At the other extreme, $\Out(\bbF_n)$ is \emph{SQ-universal} for $n\geq 2$: any countable group embeds in some quotient of $\Out(\bbF_n)$.
Being virtually free, $\GL_2(\bbZ)$ is also SQ-universal.

There are many other senses in which $\Out(\bbF_n)$ is \emph{vaster} than $\GL_n(\bbZ)$.
Say that a finitely generated group $G$ \emph{involves all finite groups} if for every finite group $F$
there exists a finite index subgroup of $G$ that maps onto $F$.
For example, non-abelian free groups involve all finite groups, but virtually polycyclic groups, or more generally groups satisfying a law, do not.
It is well known (thanks to the congruence subgroup property) that for $n\geq 3$,
$\GL_n(\bbZ)$ does not involve all finite groups (see for instance \cite[Ch.{} 4]{LoRe_surface}).
On the other hand, $\GL_2(\bbZ)$ does involve all finite groups, since it is virtually free, as do
 mapping class groups and
$\Out(\bbF_n)$ for $n\geq 2$ \cite{MaRe_finite,GruLub_linear}.

Another such vastness property is related to bounded cohomology. Say that a finitely generated group $G$ has \emph{many quasimorphisms}
if the set of homogeneous quasimorphisms on $G$ is infinite dimensional.
If $G$ is finitely generated, then this implies that its bounded cohomology space $H^2_b(G;\bbR)$ is infinite dimensional.
It is known that every homogeneous quasimorphism on $\GL_n(\bbZ)$ is trivial,  for $n\geq 3$  \cite{Newman_unimodular,BuMo_cohom_lattice}.
Meanwhile $\SL_2(\bbZ)$ and $\Out(\bbF_n)$ have many quasimorphisms, for each $n\geq 2$ \cite{BF_hyperbolic}.

We can also consider bounded generation: one says that $G$ is \emph{boundedly generated} if there exist $g_1,\ldots, g_k \in G$ such that any $g\in G$ can be written as $g=g_1^{\alpha_1}\ldots g_k^{\alpha_k}$ for some $\alpha_1,\ldots,\alpha_k \in \bbZ$.
It is known since Carter--Keller \cite{CarterKeller} that $\GL_n(\bbZ)$ is boundedly generated.
At the other extreme, it follows from \cite{BF_hyperbolic} that $\Out(\bbF_n)$ is not boundedly generated.

Since the groups $\Out(\A_\Gamma)$ of outer automorphisms of RAAGs interpolate between $\GL_n(\bbZ)$ and $\Out(\bbF_n$),
for each of these properties it is natural to ask what is the ``boundary'' (in the space of all finite graphs $\Gamma$)
between the vast behaviour, and the skimpy one.

Our main result shows that the four properties above give rise to the same boundary, and we describe it precisely in terms of the graph $\Gamma$.

A stronger property of groups that implies each of the above vastness properties is that of being \emph{large}.
A group $G$ is said to be large if it has a finite index subgroup that maps onto a non-abelian free group.
The question of whether $\Out(\bbF_n)$ is large is still open for $n>3$, with it known to be large for $n=2,3$ \cite{GruLub_linear}.
Part of our main result describes properties of $\Gamma$ that imply that $\Out(\A_\Gamma)$ is large.

Before stating our theorems,
let us first recall  a result by M.{} Day saying
which graphs $\Gamma$ are such that $\Out(\A_\Gamma)$ contains a free group \cite{Day_solvable}.
This involves the following two features of a graph that were introduced by Servatius \cite{Servatius} and by Gutierrez--Piggott--Ruane \cite{GPR_automorphisms}.

The first feature is a preordering $\leq$ on the vertex set of $\Gamma$ defined by
$u\leq v$ if $\lk(u)\subset \lk(v)\cup\{v\}$
(where $\lk(u)$ is the set of vertices of $\Gamma\setminus\{u\}$ joined by an edge to $u$;
 see Section \ref{sec:preordering} for more on the preordering).
Servatius introduced this notion (though not in this language) as it is the right condition that allows the existence of \emph{transvection} automorphisms
$R_u^v, L_u^v$ sending $u$ to $uv$ or $vu$ respectively, and fixing all the other generators.
Associated to this preordering is an equivalence relation: $u\sim v$ if $u\leq v$ and $v\leq u$.
Note that the equivalence classes determined by the preordering  are either \emph{abelian},
if the equivalence class forms a clique in $\Gamma$, or \emph{free}, if there is no edge between any pair of vertices in the equivalence class
(it is both free and abelian if it contains a single vertex).

If there exist two distinct vertices $u$ and $v$ such that $u\sim v$, the transvections $R_u^v$, $R_v^u$, $L_u^v$, $L_v^u$ exist in $\Out(\A_\Gamma)$,
and one easily checks that the subgroup of $\Out(\A_\Gamma)$ they generate contains a free group:  according to whether $u$ commutes with $v$ or not,
it naturally maps onto to $\SL_2(\bbZ)$ or to $\Out^+(\bbF_2)$ (these two groups being in fact isomorphic),
where $\Out^+(\bbF_n)$ is the index 2 subgroup of $\Out(\bbF_n)$
containing the elements whose action on the abelianisation has determinant $1$.

The second feature
is called a \emph{separating intersection of links} (SIL).
It was introduced by Gutierrez--Piggott--Ruane \cite{GPR_automorphisms}, who showed that the subgroup generated by partial conjugations is abelian if and only if $\Gamma$ has no SIL (in fact their results concern more general graph products).
In this paper, a SIL is a triple $(x,y\mids z)$ of pairwise non-commuting vertices $x,y,z \in \Gamma$, such that
the connected component $Z$ of $\Gamma\setminus \lk(x)\cap \lk(y)$ containing $z$ contains neither $x$ nor $y$ (see Section \ref{subsec_SIL} for more details).
This feature allows one to define two \emph{partial conjugations} $C_Z^x$, $C_Z^y$ that act as the identity on the generators in $\Gamma\setminus Z$,
and conjugates all the generators in $Z$ by $x$ (resp. by $y$).
By a normal form argument, one easily checks that these two partial conjugations generate a free group in $\Out(\A_\Gamma)$.

In \cite{Day_solvable}, Day shows that these two sources suffice to explain the existence of free subgroups:
$\Out(\A_\Gamma)$ contains a free group if and only if it contains an equivalence class of size at least 2 or a SIL;
 moreover, Day proves that otherwise, $\Out(\A_\Gamma)$ is virtually nilpotent.\\

We are now ready to state our main result.

\begin{thmspecial}\label{thm_various}
  Given a finite graph $\Gamma$, the following are equivalent:
\begin{itemize}
\item $\displaystyle (*)\begin{cases}
\text{$\Gamma$ has a SIL,}\\
\text{or a non-abelian equivalence class,}\\
\text{or an abelian equivalence class of size 2;}
\end{cases}
$
\item $\Out(\A_\Gamma)$ is SQ-universal;
\item $\Out(\A_\Gamma)$ involves all finite groups;
\item $\Out(\A_\Gamma)$ virtually has many quasimorphisms;
\item $\Out(\A_\Gamma)$ is not boundedly generated.
\end{itemize}
\end{thmspecial}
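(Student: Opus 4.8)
The plan is to prove the theorem by a ring of implications. The key structural input is Day's theorem, already recalled: if $\Gamma$ satisfies none of the conditions in $(*)$ --- no SIL, no non-abelian equivalence class, and every abelian equivalence class is a single vertex --- then $\Out(\A_\Gamma)$ is virtually nilpotent, hence satisfies a law. A group satisfying a law cannot involve all finite groups, is not SQ-universal (an SQ-universal group cannot satisfy a law, since e.g.\ it has non-solvable quotients), has no unbounded homogeneous quasimorphisms even virtually (virtually nilpotent groups are boundedly generated and amenable, so every homogeneous quasimorphism is a homomorphism, and the space of homomorphisms to $\bbR$ is finite-dimensional), and is boundedly generated. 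So each of the four vastness properties implies $(*)$; this disposes of four of the implications at once, and reduces the theorem to showing that $(*)$ implies each of the four properties. In fact it suffices to prove that $(*)$ implies $\Out(\A_\Gamma)$ is \emph{large} in the sense introduced above, since largeness implies all four: a finite-index subgroup surjecting onto a non-abelian free group $F$ inherits SQ-universality and the property of involving all finite groups from $F$ (quotients of $F$, resp.\ finite-index subgroups of $F$ mapping onto any finite group), carries infinitely many linearly independent quasimorphisms pulled back from $F$, and is not boundedly generated since $F$ is not. Wait --- largeness is stated to be open for $\Out(\bbF_n)$ with $n>3$, and a non-abelian equivalence class of large size gives exactly a quotient $\Out^+(\bbF_n)$; so $(*)$ does \emph{not} in general imply largeness. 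The correct reduction is therefore: it suffices to produce, for each case of $(*)$, a finite-index subgroup of $\Out(\A_\Gamma)$ that surjects onto a group already known to be SQ-universal, to involve all finite groups, to have many quasimorphisms, and to not be boundedly generated --- and the groups $\Out^+(\bbF_n)$ and $\SL_2(\bbZ)$ and non-abelian free groups all qualify (the first by the citations \cite{BF_hyperbolic}, \cite{MaRe_finite,GruLub_linear}, the congruence subgroup failure, and so on; $\SL_2(\bbZ)$ and $F$ because they are virtually free).

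The heart of the argument is thus to handle the three cases of $(*)$. For a non-abelian equivalence class of size $\geq 2$ or an abelian equivalence class of size exactly $2$, the transvections discussed in the excerpt generate a subgroup mapping onto $\Out^+(\bbF_2)\cong\SL_2(\bbZ)$ (abelian size $2$) or onto $\Out^+(\bbF_k)$ for $k$ the class size (non-abelian); but one must promote this from ``a subgroup maps onto'' to ``a finite-index subgroup of $\Out(\A_\Gamma)$ maps onto.'' The standard tool here is a retraction or a restriction-type homomorphism: Day and others have constructed homomorphisms $\Out(\A_\Gamma)\to\Out(\A_{\Gamma'})$ or $\to\GL$ of relevant blocks by looking at the action on a suitable quotient or subquotient RAAG determined by a $\leq$-closed subset of vertices. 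Concretely, for an equivalence class $[v]$, the subgroup of $\Out(\A_\Gamma)$ preserving the relevant data has finite index and admits a homomorphism onto $\Out^\pm(\A_{\Gamma[[v]]})$ where $\Gamma[[v]]$ is the induced subgraph on $[v]$ (a complete graph in the abelian case, an anticomplete one in the free case), hence onto $\GL_2(\bbZ)$ or $\Out(\bbF_k)$; intersecting with a further finite-index subgroup lands us in $\SL_2(\bbZ)$ or $\Out^+(\bbF_k)$. The SIL case is where partial conjugations $C_Z^x,C_Z^y$ live; these generate a free subgroup of rank $2$, and again one needs a homomorphism from a finite-index (in fact, from the whole) $\Out(\A_\Gamma)$ detecting this free group, for which the natural candidate is a map recording the ``$Z$-component conjugation part'' --- essentially the projection to the subgroup of partial conjugations modulo the SIL-free part, analyzed via the $\Gamma$-Whitehead or the Laurence--Servatius generating set.

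I expect the main obstacle to be precisely this promotion step: showing that the free or $\SL_2(\bbZ)$-like behavior witnessed by an explicit pair of generators is not ``killed'' inside $\Out(\A_\Gamma)$ but survives in a genuine quotient of a finite-index subgroup. For the equivalence-class cases this should be reasonably clean using known splitting/projection homomorphisms for $\Out(\A_\Gamma)$ along $\leq$-invariant vertex subsets (the relevant subgroup of $\Out(\A_\Gamma)$ has a block-triangular-type structure, with a quotient acting on the RAAG of the equivalence class). For the SIL case it is more delicate, because partial conjugations interact with transvections and with each other in ways governed by the combinatorics of $\Gamma$; the clean statement to aim for is that there is a homomorphism from $\Out(\A_\Gamma)$ (or a finite-index subgroup) onto a group containing the image of $\grp{C_Z^x, C_Z^y}$ as a free group of rank $2$ with that image retracting onto a non-abelian free quotient --- e.g.\ by mapping to a right-angled Artin or Coxeter group on which these partial conjugations act as distinct reflections/translations. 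Once such a homomorphism onto a non-abelian free group (or $\SL_2(\bbZ)$, or $\Out^+(\bbF_k)$) is in hand in each case, the four vastness properties transfer immediately by the inheritance remarks above, closing the equivalences.
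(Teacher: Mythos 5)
Your reduction of the four ``vastness implies $(*)$'' implications to Day's theorem rests on a misreading of $(*)$. The negation of $(*)$ is: no SIL, no non-abelian equivalence class, and no abelian equivalence class of size \emph{exactly} $2$ --- it does \emph{not} force every equivalence class to be a single vertex, since abelian classes of size $\geq 3$ are permitted. For example, if $\Gamma$ is a complete graph on $n\geq 3$ vertices then $(*)$ fails, yet $\Out(\A_\Gamma)=\GL_n(\bbZ)$ is not virtually nilpotent and satisfies no law, so the ``satisfies a law'' argument collapses. What is actually needed here (and what the paper does in Section \ref{sec:SES when no SIL}) is a structure theorem: absence of a SIL forces $\IA(\A_\Gamma)$ to be abelian, and $\Out^0(\A_\Gamma)$ then sits in a short exact sequence with finitely generated nilpotent kernel and quotient $\prod_i\SL_{n_i}(\bbZ)$ with all $n_i\neq 2$; one then invokes the specific facts that $\SL_n(\bbZ)$ for $n\geq 3$ is boundedly generated (Carter--Keller), not SQ-universal (Margulis), does not involve all finite groups (congruence subgroup property), and has no quasimorphisms even virtually, and closes the argument with the extension axiom \ref{vast:SES} and the finite-index axiom \ref{vast:finite index}. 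None of this is replaced by virtual nilpotence.

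On the forward direction, your treatment of the equivalence-class cases via $\leq$-closed factor/restriction homomorphisms is essentially the paper's Proposition \ref{prop_tricho1} and is fine in outline, but the SIL case is left as a hope rather than a proof, and it is exactly the case where the real work lies. When $\Gamma$ has a SIL but all equivalence classes are abelian of size $1$ or $\geq 3$, no transvection-based virtual surjection onto $\Out^+(\bbF_n)$ or $\SL_2(\bbZ)$ is available, and the mere existence of the free subgroup $\grp{C_Z^x,C_Z^y}$ proves nothing ($\SL_3(\bbZ)$ also contains free subgroups). Your proposed ``map recording the $Z$-component conjugation part'' onto a RAAG or Coxeter group is not constructed, and there is no reason such a map exists in general: partial conjugations with other multipliers and transvections into the SIL classes interfere. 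The paper's route is to locate a \emph{special} SIL (Proposition \ref{prop:all abelian, sil->special}), obtain from it a homomorphism $\Out^0(\A_\Gamma)\to\Out^0(\bbZ^a*\bbZ^b*\bbZ^c)$ with controlled image, and then prove that image is large either via a Grunewald--Lubotzky-type representation on $H_1$ of a double cover (where the three partial conjugations map to a nonabelian free subgroup of $\PGL_2(\bbZ)$) or, when that representation degenerates, via amalgam/HNN splittings of the relevant automorphism group. Without an argument of this kind your proof of the SIL case, and hence of ``$(*)$ implies the four properties,'' is incomplete.
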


Note that there is no obvious
general relation between the four group theoretic properties  in Theorem \ref{thm_various},  except that
boundedly generated groups don't have many quasimorphisms.
For example, the free product of two infinite simple groups is SQ-universal, but does not involve every finite group.
Hence, the fact that these four properties share the same boundary looks like a coincidence. It can be explained thanks to the following
trichotomy concerning $\Out(\A_\Gamma)$.

\begin{thmspecial}\label{thm_tricho}
  Let $\Gamma$ be a finite graph. Then  there is a finite index subgroup $\Out^0(\A_\Gamma)$ of $\Out(\A_\Gamma)$ such that:
  \begin{itemize}
  \item[$\mathrm{(1a).}$] if $\Gamma$ has a non-abelian  free equivalence class  of size $n\geq 2$,
   or an abelian equivalence class of size 2,
then $\Out^0(\A_\Gamma)$ maps onto $\Out^+(\bbF_n)$ or $\SL_2(\bbZ)$ accordingly;
\item[$\mathrm{(1b).}$] if all equivalence classes of $\Gamma$ are abelian, and $\Gamma$ has a SIL, then $\Out(\A_\Gamma)$ is large;
\item[$\mathrm{(2).}$] if $\Gamma$ has no SIL,  and all its equivalence classes are abelian of sizes $n_1,\dots,n_k\neq 2$,
then $\Out^0(\A_\Gamma)$ fits in a short exact sequence
$$1\ra P\ra \Out^0(\A_\Gamma) \ra \prod_{i=1}^k \SL_{n_i}(\bbZ)\ra 1$$
where $P$ is finitely generated and nilpotent.
  \end{itemize}
\end{thmspecial}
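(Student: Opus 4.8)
The plan is to use the structure theory of $\Out(\A_\Gamma)$ via its action on the "projective" quotients, following Charney--Vogtmann and Day, together with the decomposition of $\Out(\A_\Gamma)$ into building blocks indexed by the equivalence classes of the preordering $\leq$. Recall that by work of Servatius, Gutierrez--Piggott--Ruane, and Charney--Vogtmann, $\Out(\A_\Gamma)$ is generated by graph symmetries, inversions, transvections $R_u^v, L_u^v$ (available exactly when $u \leq v$), and partial conjugations $C_Z^x$. After passing to a finite index subgroup $\Out^0(\A_\Gamma)$ that acts trivially on the (finite) set of equivalence classes and with "positive" sign data, one gets a homomorphism from $\Out^0(\A_\Gamma)$ onto a product of linear groups: each abelian equivalence class $[v]$ of size $n_i$ contributes a factor $\SL_{n_i}(\bbZ)$ (arising from transvections within the clique $[v]$), and each free equivalence class of size $n\geq 2$ contributes a factor $\Out^+(\bbF_n)$ (here one restricts automorphisms to the "relative" picture of a free factor). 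Let me call this map $\rho \colon \Out^0(\A_\Gamma) \to \prod_i \EH_i$ where $\EH_i$ is $\SL_{n_i}(\bbZ)$ or $\Out^+(\bbF_{n_i})$, and let $P = \ker \rho$; Day's analysis shows $P$ is generated by transvections between distinct classes and by partial conjugations, and is finitely generated.

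For part (1a): if some equivalence class is free of size $n\geq 2$, or abelian of size $2$, then the corresponding factor $\EH_i$ is $\Out^+(\bbF_n)$ or $\SL_2(\bbZ)$; composing $\rho$ with the projection onto that factor gives the desired surjection of $\Out^0(\A_\Gamma)$ onto $\Out^+(\bbF_n)$ or $\SL_2(\bbZ)$. The only thing to verify here is surjectivity onto the single factor, which follows because the transvections $R_u^v, L_u^v$ with $u \sim v$ in that class already generate the relevant $\Out^+(\bbF_n)$ or $\SL_2(\bbZ)$ (as noted in the excerpt just before Day's theorem), and these lie in $\Out^0(\A_\Gamma)$.

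For part (2): if $\Gamma$ has no SIL and every equivalence class is abelian of size $n_i \neq 2$, then the above gives $1 \to P \to \Out^0(\A_\Gamma) \to \prod_i \SL_{n_i}(\bbZ) \to 1$. It remains to prove $P$ is nilpotent. Here the hypotheses are used twice: no SIL means, by the Gutierrez--Piggott--Ruane result quoted in the excerpt, that the partial conjugations generate an abelian subgroup; and the "no size-2 class" condition eliminates the only abelian linear factors whose unipotent subgroups could create long commutator chains. One then stratifies the between-class transvections by the height in the preordering: a transvection $R_u^v$ decreases (strictly, after quotienting by the within-class part) in the $\leq$-height, commutator brackets of these strictly increase the "drop", and since $\leq$ has only finitely many levels the whole group they generate, together with the abelian partial-conjugation subgroup, is nilpotent. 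This filtration argument, essentially the one Day uses to prove virtual nilpotence in the no-SIL/no-big-class case, is what I expect to be the main obstacle: one must check carefully that conjugation by elements of $P$ preserves the filtration (partial conjugations can move transvections, but only within controlled bounds given no SIL) and that the relations in $\Out(\A_\Gamma)$ (the Servatius/Day presentation) are compatible with the grading.

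For part (1b): assume all equivalence classes are abelian and $\Gamma$ has a SIL $(x,y \mids z)$. The two partial conjugations $C_Z^x, C_Z^y$ generate a free group $F_2$ in $\Out(\A_\Gamma)$, but we need \emph{largeness}, i.e.\ a finite-index subgroup surjecting onto a nonabelian free group. The strategy is to find a finite-index subgroup $H \leq \Out(\A_\Gamma)$ and a \emph{retraction} $H \to F$ onto a nonabelian free subgroup $F$ generated by (conjugates of) partial conjugations. Concretely, one builds a homomorphism $\Out(\A_\Gamma) \to \SL_{n}(\bbZ)$-type quotients killing the linear noise, observes that the partial conjugations lie in the kernel, and then uses the normal-form/ping-pong control on partial conjugations together with a separation result — the components cut out by $\lk(x)\cap\lk(y)$ give a $\bbZ$-linear system of "which vertex gets conjugated by what" whose solution space maps a finite-index subgroup onto a free group. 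Equivalently, following the Grunewald--Lubotzky philosophy: exhibit an action of a finite-index subgroup on a free group or a tree realizing the SIL, giving a surjection onto $\bbF_2$; largeness then follows since largeness passes to finite extensions. I expect verifying that the partial-conjugation subgroup survives (has infinite, indeed free, image) in a suitable finite-index quotient — rather than being collapsed by the other generators — to require the bulk of the work, and this is where the SIL hypothesis does the heavy lifting via the ping-pong argument sketched in the excerpt.
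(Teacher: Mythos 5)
Your sketches for (1a) and (2) are broadly in line with the paper's route, but both are glossed at the points where the actual work happens, and (1b) has a genuine gap. For (1a), the nontrivial content is not surjectivity but well-definedness: $\Out^0(\A_\Gamma)$ does not in general preserve the conjugacy class of $\A_{[v]}$, so one cannot simply ``restrict to the relative picture''. The paper first applies a factor map killing all vertices not $\leq$ some element of $[v]$ and then checks, case by case on transvections and partial conjugations (the delicate case being a partial conjugation whose star separates two vertices of a free class $[v]$, which forces its multiplier to be $\leq v$), that the image preserves the conjugacy class of $\A_{[v]}$, after which a restriction map exists. Your stronger assertion that $\Out^0(\A_\Gamma)$ surjects onto the \emph{product} of all the class groups $\SL_{n_i}(\bbZ)$ and $\Out^+(\bbF_{n_i})$ is not justified (and for the free factors it is not obviously true); fortunately it is also not needed, since (1a) only requires one factor at a time. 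For (2), your filtration-by-depth plan is essentially the paper's argument (abelianness of $\IA(\A_\Gamma)$ via triviality of Day's commutator transvections when there is no SIL, plus Day's relations $[R^x_y,C^y_Y]=C^x_Y$ and $[R^x_y,(R^y_z)^{\pm1}]=(R^x_z)^{\pm1}$), but note that the hypothesis $n_i\neq 2$ plays no role in the nilpotence: the paper proves the short exact sequence under the sole assumption of no SIL, and the size restriction only matters later when deducing Theorem~1.

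The real gap is (1b). Producing a free subgroup generated by the SIL automorphisms is easy (ping-pong), but largeness needs a finite-index subgroup with a \emph{surjection} onto $\bbF_2$, and your proposal offers no mechanism for this: a free subgroup of infinite index is not a virtual retract in general, and the quotients you describe (``$\SL_n(\bbZ)$-type quotients killing the linear noise'' in which ``the partial conjugations lie in the kernel'') cannot work, precisely because any quotient in whose kernel the partial conjugations lie cannot witness the largeness they carry. The paper's proof has two substantial ingredients you are missing. First, one cannot even restrict to the SIL triple directly: one needs a \emph{special} SIL, i.e.\ one for which the composed factor-and-restriction map $\Out^0(\A_\Gamma)\to\Out^0(\A_{\Gamma_S})$ with $\A_{\Gamma_S}\cong\bbZ^a*\bbZ^b*\bbZ^c$ is well defined; its existence is a combinatorial minimality argument using the hypothesis that all classes are abelian. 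Second, largeness of the image is proved by a dichotomy: either the image contains three partial conjugations $C_X^z,C_Y^x,C_Z^y$, in which case one builds a Looijenga/Grunewald--Lubotzky-style representation on the $(-1)$-eigenspace of $H_1$ of a double cover of the Salvetti complex, lands in $\PGL_2(\bbZ)$, and checks by explicit matrices that the partial conjugations have image containing a nonabelian free group (so they are emphatically \emph{not} in the kernel); or else the relevant automorphism group splits as an amalgam or HNN extension over groups like $\SL_a(\bbZ)\ltimes\bbZ^a$, and reducing the abelian parts mod $3$ gives a virtually free, non-virtually-cyclic quotient. Your appeal to ``the Grunewald--Lubotzky philosophy'' points at the first case but gives no construction, and nothing in your outline addresses the second case, where that representation has virtually abelian image. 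As written, (1b) is a statement of intent rather than a proof.
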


The finite index subgroup $\Out^0(\A_\Gamma)$ is the subgroup generated by transvections and partial conjugations (see Definition \ref{dfn_out0}).
Slightly enlarging $\Out^0(\A_\Gamma)$ (by including inversions)
will instead yield a map onto $\Out(\bbF_n)$ or $\GL_2(\bbZ)$ in assertion (1a) above. See Remark \ref{rem:add inversions}.

We note that assertions (1a) or
(1b) hold if and only if $(*)$ holds, while assertion (2) holds if and only if $(*)$ does not hold.

In fact, assertions (1a) and (2) are particular cases of more general statements.
Proposition \ref{prop_tricho1} says that for any equivalence class $[u]$, there is a natural epimorphism
$\Out^0(\A_\Gamma)\onto \Out^0(\A_{[u]})$.
Proposition \ref{prop_tricho3} shows that the statement in assertion (2)
also holds under the sole assumption that $\Gamma$ has no SIL, in which case some $n_i$ may equal $2$. In fact, this assumption implies
there is no free equivalence class of size at least $3$ and one can prove that the free equivalence classes of size 2 have to correspond to direct products with free groups (see Lemma \ref{lem_product}).

Since $\Out(\bbF_2)\simeq GL_2(\bbZ)$ and $\Out(\bbF_3)$ are large \cite{GruLub_linear}, we deduce:

\begin{corspecial}
	For every finite graph $\Gamma$, either $\Out(\A_\Gamma)$ has a finite index subgroup that fits in a short exact sequence as in Theorem \ref{thm_tricho} (2), or $\Out(\A_\Gamma)$ is large, or $\Out(\A_\Gamma)$ has a finite index subgroup that surjects onto $\Out(\bbF_n)$ for some $n\geq 4$.
\end{corspecial}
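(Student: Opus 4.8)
The plan is to derive this corollary as a direct case analysis from Theorem~\ref{thm_tricho}, feeding in the known largeness of $\Out(\bbF_2)\simeq\GL_2(\bbZ)$ and $\Out(\bbF_3)$ from \cite{GruLub_linear}. First I would record two elementary permanence properties of largeness, used repeatedly: (i) if $G$ surjects onto a large group $H$, then $G$ is large, since pulling back along the surjection a finite-index subgroup of $H$ that maps onto a non-abelian free group yields a finite-index subgroup of $G$ that maps onto that free group; and (ii) if $G$ has a large finite-index subgroup, then $G$ is large. Since $\Out^+(\bbF_n)$ has index $2$ in $\Out(\bbF_n)$, property (ii) and \cite{GruLub_linear} show that $\Out^+(\bbF_2)\simeq\SL_2(\bbZ)$, $\Out^+(\bbF_3)$ and $\SL_2(\bbZ)$ (the latter being virtually free of rank $\geq 2$) are all large; then by (i), any group mapping onto one of them is large, and by (ii) again $\Out(\A_\Gamma)$ is large whenever its finite-index subgroup $\Out^0(\A_\Gamma)$ maps onto one of these.

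Next I would check that the three cases of Theorem~\ref{thm_tricho} are exhaustive over all finite graphs. Using the fact (recalled in the introduction) that each equivalence class of the preordering is either a clique, hence abelian, or an independent set, hence free, every equivalence class falls into exactly one of the types: abelian of size $\neq 2$, abelian of size $2$, or free of size $n\geq 2$. If $\Gamma$ has a class of one of the last two types, case (1a) applies. Otherwise every class is abelian of size $\neq 2$, and then case (1b) applies if $\Gamma$ has a SIL and case (2) applies if it does not. So exactly one of (1a), (1b), (2) holds.

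Now the trichotomy yields the three alternatives of the corollary. In case (2), Theorem~\ref{thm_tricho} directly provides the short exact sequence for the finite-index subgroup $\Out^0(\A_\Gamma)$, giving the first alternative. In case (1b), Theorem~\ref{thm_tricho} asserts that $\Out(\A_\Gamma)$ is large, the second alternative. In case (1a), $\Out^0(\A_\Gamma)$ maps onto $\Out^+(\bbF_n)$ for some free equivalence class of size $n\geq 2$, or onto $\SL_2(\bbZ)$ when instead there is an abelian equivalence class of size $2$; when $\Gamma$ has several eligible classes I would choose one of largest size. If the chosen class is free of size $n\geq 4$, I would invoke Remark~\ref{rem:add inversions}: slightly enlarging $\Out^0(\A_\Gamma)$ by adjoining inversions gives a still finite-index subgroup of $\Out(\A_\Gamma)$ surjecting onto $\Out(\bbF_n)$, the third alternative. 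In all remaining subcases of (1a) the target is $\Out^+(\bbF_2)\simeq\SL_2(\bbZ)$, $\Out^+(\bbF_3)$, or $\SL_2(\bbZ)$, each large by the first paragraph, so $\Out(\A_\Gamma)$ is large, again the second alternative.

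The proof is essentially bookkeeping; the only points deserving care are the verification that cases (1a), (1b), (2) cover every finite graph (which rests on the clique-or-independent-set dichotomy for equivalence classes) and, within case (1a) when $\Gamma$ has several nontrivial equivalence classes, the choice of which class to exploit so as to land in the $n\geq 4$ alternative whenever a free class of size at least $4$ is present rather than settling for largeness. I do not anticipate any genuine obstacle here, since all the analytic content is already packaged in Theorem~\ref{thm_tricho}, Remark~\ref{rem:add inversions}, and \cite{GruLub_linear}.
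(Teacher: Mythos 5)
Your proposal is correct and follows essentially the same route as the paper: the corollary is deduced directly from the trichotomy of Theorem \ref{thm_tricho}, using largeness of $\Out(\bbF_2)\simeq \GL_2(\bbZ)$, $\Out(\bbF_3)$ and $\SL_2(\bbZ)$ (from \cite{GruLub_linear} and virtual freeness) in the small cases of (1a), and Remark \ref{rem:add inversions} to upgrade the surjection onto $\Out^+(\bbF_n)$ to one onto $\Out(\bbF_n)$ when $n\geq 4$. Your extra bookkeeping (exhaustiveness of the three cases, choice of class) is fine but not needed beyond what the paper already records.
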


Formally, Theorem 2 only gives a virtual map onto $\Out^+(\bbF_n)$, but as noted above, Remark \ref{rem:add inversions} yields a virtual map
to $\Out(\bbF_n)$.

To deduce Theorem \ref{thm_various} from Theorem \ref{thm_tricho},
we observe that the four vastness properties in Theorem \ref{thm_various}
hold for assertions (1a), (1b) of Theorem \ref{thm_tricho} because they do so for
  $\Out(\bbF_n)$, $\SL_2(\bbZ)$, and any large group.
On the other hand, if assertion (2) holds, then $\Out^0(\A_\Gamma)$ maps with  finitely generated nilpotent kernel
onto a product of $\SL_{n_i}(\bbZ)$ with $n_i\neq 2$;
since $\SL_{n_i}(\bbZ)$ does not satisfy these vastness properties,
one easily deduces that neither does $\Out^0(\A_\Gamma)$ (see Section \ref{sec:prelim vast} for more details).

One can in fact place these properties in an abstract framework.

\begin{dfnspecial}\label{dfn:vastness}
We say a \emph{vastness property} of a group is a property $\calp$ that satisfies the following conditions:
\begin{enumerate}
\renewcommand{\theenumi}{($\calv$\arabic{enumi})}
\item If $G\onto G'$ and $G'$ has $\calp$, then so does $G$.\label{vast:surjection}
\item Let $H$ be a finite index subgroup in a group $G$. Then $G$ has $\calp$ if and only if $H$ does.\label{vast:finite index}
\item Not having $\calp$ is stable under extensions:
if $1\ra N\ra G\ra Q\ra 1$ is an exact sequence where $N$ and $Q$ do not have $\calp$, then $G$ does not have $\calp$.\label{vast:SES}
\end{enumerate}
\end{dfnspecial}

SQ-universality, involving all finite groups, virtually having many quasimorphisms,  and  not being boundedly generated
are all vastness properties in this sense (see Section \ref{sec:prelim}), and they are not satisfied by $\bbZ$.
Note that if a vastness property is not satisfied by $\bbZ$, then it is not satisfied by any virtually polycyclic group,
including finitely generated nilpotent groups (being infinite, or virtually mapping onto $\bbZ$ being examples of vastness properties satisfied by $\bbZ$).
Thus, Theorem \ref{thm_various} can be viewed as a particular case of the following:

\begin{corspecial}\label{cor:vast out raag}
  Let $\calp$ be any vastness property that is satisfied by  $\Out(\bbF_n)$ for all $n\geq 2$, but not by $\bbZ$ or by
$\SL_n(\bbZ)$ for $n\geq 3$.

Then $\Out(\A_\Gamma)$ satisfies $\calp$ if and only if the graph $\Gamma$ satisfies $(*)$.
\end{corspecial}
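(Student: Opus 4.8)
The plan is to read off the statement directly from the trichotomy of Theorem~\ref{thm_tricho}, using nothing beyond the axioms \ref{vast:surjection}--\ref{vast:SES} and the three hypotheses imposed on $\calp$. First I would record the elementary facts needed. Since $\bbZ$ does not satisfy $\calp$ and the trivial group $1$ is a quotient of $\bbZ$, axiom \ref{vast:surjection} shows that $1$ does not satisfy $\calp$; hence by \ref{vast:finite index} no finite group does, and then, running through a subnormal series with cyclic quotients and applying \ref{vast:SES} repeatedly, no finitely generated nilpotent group satisfies $\calp$ (this is the remark made just before the statement). In the other direction, $\Out(\bbF_2)\cong\GL_2(\bbZ)$ satisfies $\calp$ and is virtually a free group of rank at least $2$, so by \ref{vast:finite index} some $\bbF_k$ with $k\geq 2$ satisfies $\calp$; composing with an epimorphism $\bbF_k\onto\bbF_2$ and invoking \ref{vast:surjection} gives that $\bbF_2$ satisfies $\calp$. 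Consequently every large group satisfies $\calp$: it has a finite-index subgroup surjecting onto $\bbF_2$, so one applies \ref{vast:surjection} and then \ref{vast:finite index}. Finally, for every $n\geq 2$ the index-$2$ subgroup $\Out^+(\bbF_n)\leq\Out(\bbF_n)$ satisfies $\calp$ by \ref{vast:finite index}; in particular so does $\SL_2(\bbZ)\cong\Out^+(\bbF_2)$.

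Suppose now that $\Gamma$ satisfies $(*)$. As observed in the text, this means that assertion (1a) or (1b) of Theorem~\ref{thm_tricho} holds. If (1b) holds, then $\Out(\A_\Gamma)$ is large, hence satisfies $\calp$ by the previous paragraph. If (1a) holds, then the finite-index subgroup $\Out^0(\A_\Gamma)$ surjects onto $\Out^+(\bbF_n)$ for some $n\geq 2$, or onto $\SL_2(\bbZ)$; both of these satisfy $\calp$, so $\Out^0(\A_\Gamma)$ satisfies $\calp$ by \ref{vast:surjection}, and therefore $\Out(\A_\Gamma)$ satisfies $\calp$ by \ref{vast:finite index}.

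Conversely, suppose $\Gamma$ does not satisfy $(*)$. Then assertion (2) of Theorem~\ref{thm_tricho} applies, giving a short exact sequence
$$1\ra P\ra \Out^0(\A_\Gamma)\ra \prod_{i=1}^k \SL_{n_i}(\bbZ)\ra 1$$
with $P$ finitely generated nilpotent and each $n_i\neq 2$, hence each $n_i\in\{1\}\cup\{m:m\geq 3\}$. Thus every factor $\SL_{n_i}(\bbZ)$ is either trivial or of the form $\SL_m(\bbZ)$ with $m\geq 3$, and in either case does not satisfy $\calp$. By \ref{vast:SES} and induction on $k$ (splitting off one factor at a time), the product $\prod_{i=1}^k\SL_{n_i}(\bbZ)$ does not satisfy $\calp$; and $P$ does not satisfy $\calp$ either, being finitely generated nilpotent. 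Applying \ref{vast:SES} to the displayed sequence shows that $\Out^0(\A_\Gamma)$ does not satisfy $\calp$, and then \ref{vast:finite index} shows that $\Out(\A_\Gamma)$ does not satisfy $\calp$. This establishes both implications.

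There is no essential obstacle here once Theorem~\ref{thm_tricho} is in hand: the argument is purely a matter of bookkeeping with the three axioms. The only points requiring a little care are the preliminary facts of the first paragraph---in particular that $\bbF_2$, and hence every large group, satisfies $\calp$, which is where the virtual freeness of $\GL_2(\bbZ)\cong\Out(\bbF_2)$ enters---together with the identification $\SL_2(\bbZ)\cong\Out^+(\bbF_2)$, which is what lets the ``$\SL_2(\bbZ)$'' case of (1a) be treated on the same footing as the ``$\Out^+(\bbF_n)$'' case.
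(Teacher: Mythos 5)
Your proof is correct and follows the same route as the paper: deduce both implications from the trichotomy of Theorem~\ref{thm_tricho} using only the axioms \ref{vast:surjection}--\ref{vast:finite index}--\ref{vast:SES}, together with the observations that $(*)$ is equivalent to (1a) or (1b) and that its negation gives (2). The preliminary facts you spell out (that $\bbF_2$, hence every large group, satisfies $\calp$ via the virtual freeness of $\Out(\bbF_2)\cong\GL_2(\bbZ)$, and that finitely generated nilpotent groups do not, since $\bbZ$ does not) are exactly the details the paper leaves as remarks, so this is essentially the paper's argument with the bookkeeping made explicit.
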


Finally, we give conditions on the graph $\Gamma$ that imply the outer automorphism group to be large.

\begin{thmspecial}\label{thmspecial:raags_large}
	If $\Gamma$ has an equivalence class of size two (abelian or not),
	or a non-abelian equivalence class of size three,
	or if all  equivalences classes are abelian and there is a SIL,
	then $\Out(\A_\Gamma)$ is large.
\end{thmspecial}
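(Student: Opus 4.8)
The plan is to read this off directly from the trichotomy in Theorem~\ref{thm_tricho}, using only three elementary stability facts about largeness together with the known largeness of $\SL_2(\bbZ)$ and of $\Out(\bbF_3)$. The stability facts are: largeness is inherited by finite-index subgroups; a group containing a large subgroup of finite index is large; and a group that surjects onto a large group is large. The last holds because the preimage of a finite-index subgroup mapping onto a non-abelian free group is again a finite-index subgroup mapping onto that free group; the first two are standard. The external inputs are that $\SL_2(\bbZ)$ is large, being virtually a non-abelian free group, and that $\Out(\bbF_3)$ is large by Grunewald--Lubotzky~\cite{GruLub_linear}, whence its index-two subgroup $\Out^+(\bbF_3)$ is large as well.

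With these in hand I would simply run through the three clauses of the hypothesis. First, suppose $\Gamma$ has an equivalence class of size two. Since equivalence classes are cliques or edgeless, this class is either abelian (an edge) or free (a non-edge). In the abelian case, Theorem~\ref{thm_tricho}(1a) produces an epimorphism from the finite-index subgroup $\Out^0(\A_\Gamma)$ onto $\SL_2(\bbZ)$; in the free case it produces one onto $\Out^+(\bbF_2)\cong\SL_2(\bbZ)$. Either way $\Out^0(\A_\Gamma)$ surjects onto the large group $\SL_2(\bbZ)$, so $\Out^0(\A_\Gamma)$ is large, and hence so is $\Out(\A_\Gamma)$.

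Next, suppose $\Gamma$ has a non-abelian equivalence class of size three; as noted this is a free equivalence class with $n=3$, so Theorem~\ref{thm_tricho}(1a) gives an epimorphism $\Out^0(\A_\Gamma)\onto\Out^+(\bbF_3)$. Since $\Out^+(\bbF_3)$ is large, so is $\Out^0(\A_\Gamma)$, and therefore $\Out(\A_\Gamma)$. Finally, if all equivalence classes of $\Gamma$ are abelian and $\Gamma$ has a SIL, then largeness of $\Out(\A_\Gamma)$ is precisely the assertion of Theorem~\ref{thm_tricho}(1b), and there is nothing to prove.

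I do not expect any real obstacle: all the substance has been absorbed into Theorem~\ref{thm_tricho}, in particular into building the transvection-to-$\Out^+(\bbF_n)$ quotient maps underlying (1a) and into the argument establishing largeness in the SIL case (1b). The only mild subtlety is that (1a) only yields a map onto the index-two subgroup $\Out^+(\bbF_n)$ rather than onto $\Out(\bbF_n)$ itself — but for $n=2$ this is already $\SL_2(\bbZ)$, and for $n=3$ largeness of $\Out^+(\bbF_3)$ follows from largeness of $\Out(\bbF_3)$ by passing to a finite-index subgroup (alternatively one could invoke Remark~\ref{rem:add inversions} to obtain a map onto $\Out(\bbF_n)$ directly).
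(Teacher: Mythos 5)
Your proposal is correct and is essentially the paper's own deduction: the statement is obtained by combining Theorem~\ref{thm_tricho}(1a)/(1b) (i.e.\ Propositions~\ref{prop_tricho1} and~\ref{prop_tricho2}) with the largeness of $\SL_2(\bbZ)$ and of $\Out(\bbF_3)$ (Corollary~\ref{cor_involved_OutFn}, Grunewald--Lubotzky), using the standard stability of largeness under surjections and passage to/from finite-index subgroups.
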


In particular, under the hypotheses of Theorem \ref{thmspecial:raags_large}, $\Out(\A_\Gamma)$ does not have Kazhdan's  property (T);
see also the results of Aramayona and Martinez-Perez about the same question \cite{AM16_firstcohom}.

\medskip

In addition, we apply our techniques to study vastness properties of McCool groups.
A \emph{McCool group} $\Mc(\bbF_n;\calc)$ of the free group $\bbF_n$ is a subgroup of $\Out(\bbF_n)$
defined as the stabilizer  of a finite set $\calc$ of conjugacy classes in $\bbF_n$
(see \cite{GL_McCool} for seemingly more general but equivalent definitions, particularly Corollary 1.6 therein).
For this class of groups, we prove the following:

\begin{thmspecial}
Let $M=\Mc(\bbF_n;\calc)$ be a McCool group of $\bbF_n$.

If $M$ is not virtually abelian, then
	\begin{itemize}
		\item it is SQ-universal,
		\item it involve all finite groups,
		\item it virtually has many quasimorphisms,
		\item it isn't boundedly generated.
	\end{itemize}
\end{thmspecial}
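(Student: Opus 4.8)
The plan is to establish a structural trichotomy for McCool groups $M = \Mc(\bbF_n;\calc)$ parallel to Theorem \ref{thm_tricho}, and then deduce the four vastness properties from it exactly as Theorem \ref{thm_various} is deduced from Theorem \ref{thm_tricho}. McCool groups are known (via the theory developed in \cite{GL_McCool}) to act on a deformation space / outer space with good properties, and in particular they have a canonical decomposition governed by a relative analogue of the transvection preorder and the SIL condition. So the first step is to show that a McCool group admits, after passing to a finite-index subgroup $M^0$, a generating set consisting of (relative) transvections and partial conjugations, mirroring the definition of $\Out^0(\A_\Gamma)$; the point is that the same two geometric sources of free subgroups — an equivalence class of size $\geq 2$ in the relative preorder, and a SIL — are the only ones, and when both are absent $M$ is virtually abelian (this should follow from \cite{GL_McCool}, or be reproved by the same normal-form arguments used here for RAAGs).

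Next I would run the dichotomy. Suppose $M$ is \emph{not} virtually abelian. Then the relative structure graph either has a nontrivial equivalence class or a SIL. In the first case, as in assertion (1a), the relevant pair of transvections generates (virtually) a copy of $\Out^+(\bbF_2)\cong\SL_2(\bbZ)$ or, more generally, a free equivalence class of size $m\geq 2$ yields a virtual epimorphism onto $\Out^+(\bbF_m)$ — here one would cite or adapt Proposition \ref{prop_tricho1}, whose proof is graph-theoretic and local and should transfer to the McCool setting where "links" are replaced by the appropriate relative data. In the SIL case, as in assertion (1b) / Theorem \ref{thmspecial:raags_large}, the two partial conjugations $C_Z^x, C_Z^y$ and their interaction with the rest of the group produce a finite-index subgroup mapping onto a non-abelian free group, i.e.\ $M$ is large. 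Either way, $M$ (virtually) surjects onto one of: $\SL_2(\bbZ)$, some $\Out^+(\bbF_m)$ with $m\geq 2$, or a non-abelian free group — and all of these satisfy the four vastness properties (free groups and $\SL_2(\bbZ)$ trivially; $\Out(\bbF_m)$ by \cite{BF_hyperbolic,MaRe_finite,GruLub_linear}, and largeness implies all four). Since each of SQ-universality, involving all finite groups, virtually having many quasimorphisms, and not being boundedly generated is a vastness property in the sense of Definition \ref{dfn:vastness} — in particular stable under passing to finite-index subgroups and inherited from quotients — the conclusion for $M$ follows.

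The conjugacy-class stabilizer $\calc$ only constrains which transvections and partial conjugations survive, so the geometric inputs are genuinely of the same two types as in the RAAG case; thus the main obstacle is the first step — making precise the "relative preorder / relative SIL" dictionary and proving that their simultaneous absence forces $M$ to be virtually abelian. This is where I expect to lean most heavily on \cite{GL_McCool}: one needs that a McCool group with no such features acts on a tree (or deformation space) with abelian-by-abelian structure, or more directly that its natural JSJ-type decomposition is trivial enough to make it virtually polycyclic. Once that classification is in hand, the rest is an application of the abstract vastness formalism together with the already-established behaviour of $\Out^+(\bbF_m)$, $\SL_2(\bbZ)$, and large groups, exactly as in the deduction of Theorem \ref{thm_various} from Theorem \ref{thm_tricho}.
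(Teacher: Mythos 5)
The central gap is your claimed dichotomy ``nontrivial equivalence class or SIL, else virtually abelian,'' which is not what happens for McCool groups: the correct structure theorem (Theorem \ref{thm_MC}) contains a genuinely different outcome, namely that $M$, or a finite-index subgroup of it, maps onto one or more non-virtually abelian mapping class groups of punctured (possibly non-orientable) hyperbolic surfaces. This case really occurs: when $\bbF$ is freely indecomposable relative to $\calc$ (for instance $\bbF=\pi_1(S)$ for a punctured surface $S$ with $\calc$ the boundary classes), Theorem \ref{thm:mccool ses} gives a finite-index subgroup of $M$ that is an extension of a product of groups $\MCG(S_j)$ by a finitely generated abelian twist group, and there is no known virtual surjection of such an $M$ onto a non-abelian free group, onto $\SL_2(\bbZ)$, or onto $\Out^+(\bbF_m)$ (largeness of mapping class groups is in general open). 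So your concluding step ``either way $M$ virtually surjects onto $\SL_2(\bbZ)$, some $\Out^+(\bbF_m)$, or a free group'' fails, and the four properties must instead be established for these mapping class groups by a separate argument; the paper does this in Proposition \ref{prop:v abelian mcg}, using the Birman exact sequence together with Lemma \ref{lem:technical_lemma} to show all finite groups are involved, and WPD actions on hyperbolic spaces \cite{BeFu_cohomology,BeFu_quasi,DGO_HE} for SQ-universality, many quasimorphisms, and failure of bounded generation.

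Relatedly, your first step --- a finite-index subgroup of $M$ generated by ``relative transvections and partial conjugations'' governed by a relative preorder/SIL dictionary --- is not available and does not follow from \cite{GL_McCool}; there is no Laurence--Servatius-type generating set in this setting. The structure the paper actually uses is the Grushko decomposition of $\bbF$ relative to $\calc$, $\bbF=H_1*\dots*H_k*\bbF_r$: if $r\geq 2$ one gets a surjection onto $\Out(\bbF_r)$; if there are at least three factors, largeness comes from three partial conjugations via the double-cover homology representation (Lemma \ref{lem:three partial conjugations}), which is the one place where the RAAG technology transfers; if there are exactly two factors, one analyses the cyclic JSJ decomposition of the big factor relative to $\calc$, using invariant splittings, Hall's theorem, and Lemmas \ref{lem_usable_amalg} and \ref{lem_usable_HNN} to produce either largeness or a mapping class group quotient; and the freely indecomposable case is Theorem \ref{thm:mccool ses}. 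Finally, recognising the virtually abelian case requires the fact that virtually polycyclic subgroups of $\Out(\bbF)$ are virtually abelian \cite{BFH_solvable}, a step absent from your outline. Your closing appeal to the vastness formalism is fine, but the structural classification it rests on is the real content, and the RAAG-style version you propose is incorrect precisely in the surface case.
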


This result follows from the following alternative:
\begin{thmspecial}\label{thmspecial:mccool}
	Let $\calc$ be a finite set of  conjugacy classes in $\bbF_n$. Then either
	\begin{itemize}
		\item $\Mc(\bbF_n;\calc)$ is large,
		\item $\Mc(\bbF_n;\calc)$ maps onto $\Out(\bbF_r)$,  where $\bbF_r$ is a free group of rank $r$ with  $2\leq r\leq  n$,
		\item $\Mc(\bbF_n;\calc)$ maps onto a  non-virtually abelian mapping class group of a (not necessarily orientable) hyperbolic surface with at least one puncture,
		\item $\Mc(\bbF_n;\calc)$ is virtually abelian.
	\end{itemize}
\end{thmspecial}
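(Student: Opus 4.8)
The plan is to analyse $\Mc(\bbF_n;\calc)$ through the canonical decomposition of $\bbF_n$ relative to $\calc$, reading off the four alternatives from its pieces exactly as the RAAG trichotomy of Theorem~\ref{thm_tricho} is read off from the canonical graph of $\A_\Gamma$. After the usual reductions — replacing each class in $\calc$ by its root, which does not change $\Mc(\bbF_n;\calc)$ since roots in a free group are unique up to conjugacy, and passing to the finite-index subgroup fixing each class of $\calc$ individually (largeness and virtual abelianness being insensitive to this) — I would decompose in two stages: first the relative Grushko decomposition $\bbF_n = \bbF_r * H_1 * \dots * H_s$, where every element of $\calc$ is conjugate into some $H_i$, each $H_i$ is freely indecomposable relative to the classes it carries, and $\bbF_r$ is a free complementary factor; then, inside each $H_i$, the cyclic JSJ decomposition relative to its classes, or rather its canonical tree of cylinders, on which $\Mc$ acts. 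Since subgroups of free groups are free, the vertex groups occurring are cyclic, rigid, or QH (surface-with-boundary) groups, with no non-cyclic abelian vertices, and the theory of Guirardel--Levitt and Levitt then provides, after a further passage to finite index, an exact sequence $1 \to T \to \Mc^0(\bbF_n;\calc) \to \prod_v Q_v \to 1$ in which $T$ is the finitely generated abelian twist group (the edge groups being cyclic, hence slender) and $Q_v$ is the relative outer automorphism group of the vertex~$v$.

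Next I would identify the contribution of each type of piece. A QH vertex contributes a mapping class group: such a surface has nonempty boundary, and modulo $T$ its contribution to $\prod_v Q_v$ is the mapping class group of the associated (possibly non-orientable) punctured surface; if one of these surfaces is hyperbolic with non-virtually-abelian mapping class group, $\Mc^0(\bbF_n;\calc)$ surjects onto it, which is the third alternative. Every other vertex, whether cyclic or rigid, contributes a finite group, because relative rigidity — being elliptic in every cyclic splitting relative to the incident edge groups and the carried classes — forces a finite relative outer automorphism group. The free factor $\bbF_r$ contributes a surjection $\Mc(\bbF_n;\calc) \onto \Out(\bbF_r)$: the family $\{H_1, \dots, H_s\}$ is canonical, so $\Mc(\bbF_n;\calc)$ preserves $N = \ngrp{H_1, \dots, H_s}$, the quotient $\bbF_n/N$ is canonically isomorphic to $\bbF_r$, and the induced homomorphism is onto because any automorphism of $\bbF_r$ lifts to the automorphism of $\bbF_n$ acting as it on $\bbF_r$ and fixing each $H_i$ pointwise; when $r \ge 2$ this is the second alternative. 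Finally, the branching of the Grushko decomposition, together with the splittings relative to $\calc$, contributes partial (Fouxe--Rabinovitch) conjugations and transvections into the free factor; this is where the techniques behind Theorems~\ref{thm_tricho} and~\ref{thmspecial:raags_large} are deployed — a free splitting of $(\bbF_n,\calc)$ into at least three factors exhibits a SIL-type configuration, and the same ping-pong and largeness arguments used for $\Out^0(\A_\Gamma)$ then show that $\Mc(\bbF_n;\calc)$ is large, which is the first alternative.

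It then remains to check that when none of these applies we are in the fourth alternative. If $r \le 1$, no free splitting of $(\bbF_n,\calc)$ has three or more factors, and every QH surface that occurs has virtually abelian mapping class group, then each $Q_v$ is virtually abelian — cyclic and rigid vertices give finite groups, and by assumption every QH vertex gives a virtually abelian one — so $\Mc^0(\bbF_n;\calc)$ is an extension of a virtually abelian group by the finitely generated abelian group $T$. Using that in the free-group setting the twist group is free abelian and is centralised by a finite-index subgroup, and that its outer action is finite, one concludes that $\Mc^0(\bbF_n;\calc)$, hence $\Mc(\bbF_n;\calc)$, is virtually abelian (and not merely virtually nilpotent, in contrast with the RAAG case of Theorem~\ref{thm_tricho}(2)).

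I expect the Fouxe--Rabinovitch and transvection part to be the main obstacle: the difficulty is not to exhibit large subgroups of $\Mc(\bbF_n;\calc)$, but to make $\Mc(\bbF_n;\calc)$ itself large — to produce a finite-index subgroup surjecting onto $\bbF_2$ — whenever the relative Grushko decomposition has enough factors, which means transporting the SIL-largeness mechanism of the RAAG part to these relative outer automorphism groups and controlling how the free factor $\bbF_r$, which carries transvections and hence non-abelian normal subgroups, interacts with the rest of the structure (a product-decomposition argument should be needed to handle the rank-one free part). Two further points require care: organising the Guirardel--Levitt structure theory so that $T$ is genuinely finitely generated abelian with finite outer action and vanishing commutators, so that the last alternative really reads ``virtually abelian'' rather than only ``virtually polycyclic''; and bookkeeping the permutation action of $\Mc(\bbF_n;\calc)$ on the QH vertices, so that in the surface alternative one obtains a surjection onto a connected mapping class group rather than merely onto the mapping class group of a disjoint union of copies.
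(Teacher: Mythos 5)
Your overall skeleton agrees with the paper's in three of the four regimes: the surjection $\Mc(\bbF_n;\calc)\onto\Out(\bbF_r)$ when $r\geq 2$, largeness when the relative Grushko decomposition has at least three factors (the paper abelianises the factors and feeds the three partial conjugations into the Grunewald--Lubotzky-type representation of Lemma \ref{lem:three partial conjugations}, which is the ``SIL mechanism'' you invoke), and the freely indecomposable case via the Guirardel--Levitt exact sequence. The genuine gap is the case of exactly two factors, $\bbF_n=H_1*H_2$ or $H_1*\bbZ$, which is where most of the paper's work lies. Your structural claim --- an exact sequence $1\to T\to \Mc^0(\bbF_n;\calc)\to\prod_v Q_v\to 1$ with $T$ finitely generated abelian --- is only valid when $\bbF_n$ is freely indecomposable relative to $\calc$ (there the edge groups of the relative JSJ are infinite cyclic, so twists commute). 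Once the relative Grushko decomposition is nontrivial, the kernel also contains the partial conjugations attached to the free splitting, which twist by arbitrary elements of the factors and hence form nonabelian free groups as soon as some factor is noncyclic; no abelian-by-$\prod_v Q_v$ structure survives. Concretely, take $H_1$ of rank $\geq 2$ rigid relative to $\calc_{|H_1}$ and $H_2$ cyclic: your final step would conclude virtual abelianness (no splitting into three or more factors, no QH pieces, all vertex contributions finite), yet $\Mc(\bbF_n;\calc)\cong \Aut(H_1;\calc_{|H_1})\times\Aut(H_2;\calc_{|H_2})$ contains $\Inn(H_1)\cong H_1$ with finite index in its first factor, so it is virtually $\bbF_2\times\bbZ$: large, not virtually abelian.

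What is missing is precisely the paper's ``sporadic'' case \ref{it_sporadic}: the invariance of the one-edge free splitting (uniqueness of the reduced tree in its deformation space) identifies $\Mc(\bbF_n;\calc)$ with $\Aut(H_1;\calc_{|H_1})\times\Aut(H_2;\calc_{|H_2})$ or $\Aut(H_1;\calc_{|H_1})\ltimes H_1$ via Lemmas \ref{lem_usable_amalg} and \ref{lem_usable_HNN}, and one must then analyse the relative JSJ of $H_1$: a rigid vertex or a QH vertex with finite mapping class group makes $\Aut(H_1;\calc_{|H_1})$ virtually free, hence large; a QH vertex with non-virtually abelian mapping class group gives the surface alternative; and a nontrivial invariant cyclic splitting requires an argument not present in your proposal --- pass to finite-index subgroups of $H_1$ (Hall's theorem to make the edge element primitive, then a $\bbZ/3$ cover killing it) so that the quotient graph of the induced splitting has first Betti number at least two, whence a finite-index subgroup of $\Aut(H_1;\calc_{|H_1})$ surjects onto $\bbF_2$; the once-punctured Klein bottle is handled separately via its unique essential two-sided curve. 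The SIL/partial-conjugation mechanism cannot replace this, since it needs three factors; so as written your proof establishes the theorem only outside the two-factor case, and in that case its dichotomy ``large via SIL or virtually abelian'' fails.
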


A more precise version of Theorem \ref{thmspecial:mccool} is given later in the text, as Theorem \ref{thm_MC}.
In particular, we explain how the nature of the Grushko decomposition of $\bbF_n$ relative to $\calc$ influences the possible outcomes.

A consequence of Theorem \ref{thmspecial:mccool} is the following:

\begin{corspecial}\label{corspecial:mccool}
	Let $\calp$ be any vastness property that holds for $\Out(\bbF_n)$, for $n\geq 2$, and for all non-virtually abelian mapping class groups of punctured hyperbolic surfaces.
	
	Then any McCool group of $\bbF$ is either virtually abelian or satisfies $\calp$.
\end{corspecial}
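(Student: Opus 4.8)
The plan is to read off Corollary~\ref{corspecial:mccool} from the four-way alternative of Theorem~\ref{thmspecial:mccool}, using only axioms \ref{vast:surjection} and \ref{vast:finite index} of a vastness property. Fix such a $\calp$ and let $M=\Mc(\bbF_n;\calc)$ be a McCool group. If $M$ is virtually abelian there is nothing to prove, so assume it is not; by Theorem~\ref{thmspecial:mccool}, $M$ then falls into one of the first three cases of that theorem, and I would check that $\calp$ holds in each.

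The one point needing a preliminary observation is the ``large'' case, since being large is not literally among the hypotheses on $\calp$. So first I would record the following: \emph{if $\Out(\bbF_2)$ has $\calp$, then every large group has $\calp$}. Indeed, $\Out(\bbF_2)\cong\GL_2(\bbZ)$ is virtually free, hence contains a finite-index subgroup isomorphic to $\bbF_m$ for some $m\ge 2$; by \ref{vast:finite index} this $\bbF_m$ has $\calp$. Since $\bbF_m$ in turn embeds as a finite-index (index $m-1$) subgroup of $\bbF_2$, axiom \ref{vast:finite index} gives that $\bbF_2$ has $\calp$, and then --- $\bbF_k$ being a finite-index subgroup of $\bbF_2$ for every $k\ge 2$ --- that $\bbF_k$ has $\calp$ for all $k\ge 2$. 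Now if $G$ is large, choose a finite-index subgroup $H\le G$ and an epimorphism $H\onto\bbF_k$ with $k\ge 2$; axiom \ref{vast:surjection} gives that $H$ has $\calp$, and axiom \ref{vast:finite index} then gives that $G$ has $\calp$.

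With this in hand the three cases are immediate. If $M$ is large, it has $\calp$ by the observation above (the $n=2$ instance of the hypothesis supplies that $\Out(\bbF_2)$ has $\calp$). If $M$ surjects onto $\Out(\bbF_r)$ with $2\le r\le n$, it has $\calp$ by \ref{vast:surjection}, since $\Out(\bbF_r)$ has $\calp$ by hypothesis. If $M$ surjects onto a non-virtually-abelian mapping class group of a (possibly non-orientable) punctured hyperbolic surface, it has $\calp$ by \ref{vast:surjection}, since such mapping class groups have $\calp$ by hypothesis. This exhausts the alternative of Theorem~\ref{thmspecial:mccool}, so $M$ is virtually abelian or has $\calp$, as claimed. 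I do not expect any genuine obstacle here: the extension axiom \ref{vast:SES} plays no role (there is no rigidity direction to prove), and all the work is already contained in Theorem~\ref{thmspecial:mccool}; the corollary is pure bookkeeping, the only subtlety being the short detour through $\bbF_m\into\Out(\bbF_2)$ and $\bbF_k\into\bbF_2$ needed to handle largeness.
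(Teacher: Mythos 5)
Your derivation is correct, and your explicit treatment of the ``large'' case --- $\Out(\bbF_2)\cong\GL_2(\bbZ)$ is virtually free, all finitely generated non-abelian free groups are commensurable, so \ref{vast:finite index} and \ref{vast:surjection} give $\calp$ for every large group --- is a point the paper leaves implicit but genuinely needs. However, your route is not the paper's. The paper proves the corollary not from the four-way alternative of Theorem \ref{thmspecial:mccool} but directly from its precise version, Theorem \ref{thm_MC}: if $M$ fails $\calp$, then cases \ref{it_Fr} and \ref{it_FF} are excluded, case \ref{it_sporadic} forces $M$ to be virtually abelian, and in case \ref{it_1bout} each $\MCG(S_j)$ must be virtually abelian (otherwise the finite-index subgroup $M_0$ surjects onto a non-virtually abelian mapping class group, and \ref{vast:surjection} plus \ref{vast:finite index} give $\calp$ for $M$), so $M_0$ is (finitely generated abelian)-by-(virtually abelian), hence virtually polycyclic, hence virtually abelian by the Bestvina--Feighn--Handel theorem \cite{BFH_solvable} on solvable subgroups of $\Out(\bbF)$. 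That last step is exactly what is hidden inside your citation of Theorem \ref{thmspecial:mccool}: case \ref{it_1bout} of Theorem \ref{thm_MC} is not literally one of the four bullets of Theorem \ref{thmspecial:mccool}, and the only place in the paper where that gap is bridged is the proof of this very corollary. So your argument is sound if Theorem \ref{thmspecial:mccool} is taken at face value (which is how the introduction frames the corollary), but within the paper's logical organisation it is mildly circular; what your bookkeeping buys is a clean separation of the vastness formalities from the structure theory, while the paper's proof buys a self-contained deduction from Theorem \ref{thm_MC} at the cost of invoking \cite{BFH_solvable}.
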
	

In particular we show in Proposition \ref{prop:v abelian mcg} that all such mapping class groups are either virtually abelian or involve all finite groups (see \cite{MaRe_finite} for the orientable case, including for non-punctured surfaces). We also explain therein how \cite{BeFu_cohomology,BeFu_quasi,DGO_HE} imply the other three vastness properties also hold. Hence Corollary \ref{corspecial:mccool} leads to the following.

\begin{corspecial}
	Any  McCool group of a free group $\bbF$ which is not virtually abelian involves all finite groups, is SQ-universal, virtually has many quasimorphisms, and is not boundedly generated.
\end{corspecial}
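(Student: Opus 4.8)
The plan is to obtain this statement as a formal consequence of Corollary \ref{corspecial:mccool}, by checking that each of the four listed properties is a vastness property in the sense of Definition \ref{dfn:vastness} which is enjoyed by $\Out(\bbF_n)$ for every $n\geq 2$ and by every non-virtually-abelian mapping class group of a (not necessarily orientable) hyperbolic surface with at least one puncture. Granting this, Corollary \ref{corspecial:mccool} applied to each of the four properties in turn says that any McCool group of a free group is either virtually abelian or has that property, which is exactly the assertion.

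First I would recall from Section \ref{sec:prelim} that SQ-universality, involving all finite groups, virtually having many quasimorphisms, and not being boundedly generated each satisfy the axioms \ref{vast:surjection}--\ref{vast:SES}, and that none of them is satisfied by $\bbZ$, hence none is satisfied by any virtually polycyclic (in particular virtually abelian) group. I would also note that a large group has all four: it has a finite-index subgroup surjecting onto $\bbF_2$, and $\bbF_2$ involves all finite groups, is SQ-universal, has many quasimorphisms, and is not boundedly generated, so \ref{vast:surjection} and \ref{vast:finite index} give the claim.

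Next I would verify the two hypotheses of Corollary \ref{corspecial:mccool} for each of these properties $\calp$. That $\Out(\bbF_n)$ has all four for $n\geq 2$ is recalled in the introduction: it involves all finite groups \cite{MaRe_finite,GruLub_linear}, is SQ-universal, virtually has many quasimorphisms \cite{BF_hyperbolic}, and is not boundedly generated by \cite{BF_hyperbolic}. For a mapping class group $M$ of a (possibly non-orientable) punctured hyperbolic surface which is not virtually abelian, I would invoke Proposition \ref{prop:v abelian mcg}: such an $M$ involves all finite groups, and, being non-elementary acylindrically hyperbolic \cite{DGO_HE}, it is SQ-universal, virtually has many quasimorphisms \cite{BeFu_cohomology,BeFu_quasi}, and is not boundedly generated. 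This certifies each of the four properties as an admissible $\calp$ for Corollary \ref{corspecial:mccool}, and the proof concludes as in the first paragraph. (Unravelling, one is using Theorem \ref{thmspecial:mccool}: a non-virtually-abelian McCool group is large, or surjects onto $\Out(\bbF_r)$ with $2\leq r\leq n$, or surjects onto a non-virtually-abelian punctured-hyperbolic-surface mapping class group, and in each case the four properties are inherited via \ref{vast:surjection} and \ref{vast:finite index}.)

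I expect essentially all the real work to lie in Proposition \ref{prop:v abelian mcg} and the attendant remarks, \ie in establishing the four vastness properties for mapping class groups of possibly non-orientable punctured hyperbolic surfaces. Involving all finite groups is known for orientable surfaces by \cite{MaRe_finite} but needs a separate argument in the non-orientable case; and SQ-universality, having many quasimorphisms, and failure of bounded generation hinge on pinning down exactly which such surfaces have non-elementary acylindrically hyperbolic mapping class group, so that the low-complexity sporadic cases (where $M$ can be finite, virtually cyclic, or virtually free but not virtually abelian) are handled and ``not virtually abelian'' coincides with the non-elementary range. Once Proposition \ref{prop:v abelian mcg} is in hand, the present corollary is immediate.
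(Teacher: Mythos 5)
Your proposal is correct and follows essentially the same route as the paper: the corollary is deduced by applying Corollary \ref{corspecial:mccool} to each of the four properties, after noting they are vastness properties held by $\Out(\bbF_n)$ for $n\geq 2$ and, via Proposition \ref{prop:v abelian mcg} (Birman exact sequence for involving all finite groups, WPD actions/acylindrical hyperbolicity for the rest), by every non-virtually-abelian mapping class group of a punctured hyperbolic surface. You also correctly identify that the substantive work sits in Proposition \ref{prop:v abelian mcg}, exactly as in the paper.
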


 We now outline the structure of the paper.
The necessary preliminary materials are given in Section \ref{sec:prelim}, before giving way to the proof of Theorem \ref{thm_tricho} in Sections \ref{sec:free equivs}--\ref{sec:SES when no SIL}.
The proofs of the alternatives of the trichotomy are essentially independent and occupy a section each.

In Section \ref{sec:free equivs}, we make the observation  that for any equivalence class $[v]\subset \Gamma$ of size $n$,
$\Out^0(\A_\Gamma)$ always surjects onto the group $\Out^0(\A_{[v]})$, which is either isomorphic to  $\Out^+(\bbF_n)$ or to $\SL_n(\bbZ)$,
according to whether $[v]$ is abelian or not.
Although not difficult, this observation seems to be new as we did not find it in the literature (see \cite{CV09} when $[v]$ is maximal).

In Section  \ref{sec:SIL}, we take care of assertion (1b) of the trichotomy, showing  that if all equivalence classes are abelian, then the existence of a SIL implies that $\Out(\A_\Gamma)$ is large. This is proved by looking for a  SIL $(x_1,x_2\mids x_3)$ that is in some sense minimal, which yields a homomorphism
of $\Out^0(\A_\Gamma)$ onto a subgroup $\calo$ of $\Out(\bbZ^{n_1}*\bbZ^{n_2}*\bbZ^{n_3})$  that has a well-understood set of generators
(where $n_i$ is the size of the equivalence class $[x_i]$).
To prove that the subgroup $\calo$ is large,
we use a linear representation of $\Out(\bbZ^{n_1}*\bbZ^{n_2}*\bbZ^{n_3})$
coming from the  action on the first rational homology of an index 2 subgroup of $\bbZ^{n_1}*\bbZ^{n_2}*\bbZ^{n_3}$,
similar to the representations constructed by Looijenga and Grunewald--Larsen--Lubotzky--Malestein for mapping class groups, and by Grunewald--Lubotzky for $\Out(\bbF_n)$ \cite{Looi97,GLLM_arithmetic_mcg,GruLub_linear}.
In some cases, the image of $\calo$ in this representation is too small to imply largeness,
but we are instead able to show that $\calo$ splits as an amalgam or an HNN extension that allows us to deduce largeness.

In Section \ref{sec:SES when no SIL}, we cover the last case of the trichotomy.
We use that when there is no SIL the group $\IA(\A_\Gamma)$ is abelian, where $\IA(\A_\Gamma)\subset \Out(\A_\Gamma)$ is
defined as the kernel of the map $\Out(\A_\Gamma)\ra GL_{\card\Gamma}(\bbZ)$ induced by the abelianisation
(we give a short proof, see also \cite{CRSV_no_SIL,GPR_automorphisms}).
This uses a generating set of $\IA(\A_\Gamma)$ given by Day \cite{Day_symplectic}, see also \cite{Wade_Phd}.
Since the image of $\Out^0(\A_\Gamma)$ in $\GL_{\card\Gamma}(\bbZ)$ is block triangular with diagonal blocks of size $n_i$, where
the integers $n_i$ are the sizes of the equivalence classes in $\Gamma$, we get a short exact sequence as in $\mathrm{(2)}$ with $P$ polycyclic. An argument by Day allows to show that $P$ is in fact nilpotent.

Finally, Section \ref{sec:mccool} deals with McCool groups.
One aspect we need to deal with here is to show that the vastness properties hold for mapping class groups that are not virtually abelian, and the surface is punctured and hyperbolic, and may not be orientable.
To show all finite groups are involved, we use the Birman exact sequence \cite{Birman_MCG,Korkmaz_MCG} (see also \cite{MaRe_finite} for orientable surfaces, punctured or not).
The remaining three properties follow from acylindrical hyperbolicity.

\paragraph{Acknowledgements.} We would like to thank Ruth Charney, Denis Osin, Mark Sapir, and Karen Vogtmann for various conversations,  and the anonymous referees for their careful reading and helpful comments that allowed to simplify some of the arguments. The two authors acknowledge support from ANR grant ANR-11-BS01-013.
The first author acknowledges support from the Institut Universitaire de France, and would like to thank the Centre Henri Lebesgue ANR-11-LABX-0020 LEBESGUE. 

\section{Preliminaries}\label{sec:prelim}

The material we cover in the section is as follows:
after describing the pertinent features of RAAGs, Section \ref{sec:prelim vast} discusses the proposed vastness properties,
Section \ref{sec:prelim McCool} gives some basic properties of McCool groups, needed in Section \ref{sec:mccool},
and Section \ref{sec:prelim usable results} contains some useful properties of groups of outer automorphisms of both free products and HNN extensions over the trivial group.

\subsection{Right-angled Artin groups}

Given a simplicial graph $\Gamma$, we define the associated RAAG, denoted $\A_\Gamma$.
The generating set of $\A_\Gamma$ is the set of vertices of $\Gamma$.
A set of defining relators consists of commutators $[u,v]$ for all pairs $u,v$ of adjacent vertices in $\Gamma$.
That is, two generators commute if and only if they correspond to adjacent vertices.

Given a subset $V$ of vertices of $\Gamma$, we define $\A_V$ as the RAAG associated to the induced subgraph.
Since $\A_V$ naturally embeds into $\A_\Gamma$, we identify $\A_V$ with the subgroup of $\A_\Gamma$ generated by the vertices of $V$.

\subsubsection{The partial preordering}\label{sec:preordering}

We make use of a partial  preordering,  introduced  as \emph{domination} by Servatius \cite{Servatius}, and also described in \cite{CV09}.
Let $u,v$ be vertices in $\Gamma$. We say:
$$u \leq v \textrm{ whenever }\lk(u) \subseteq \st(v)$$
 where $\lk(u)$ is the \emph{link} of $u$, namely the set of all vertices connected to $u$ by a single edge, and $\st(v)=\lk(v)\cup\{v\}$ is the \emph{star} of $v$.
We note that if $u$ and $v$ are distinct, adjacent vertices
then $u\leq v$ is equivalent to $\st(u)\subseteq \st(v)$. If they are distinct, non-adjacent vertices
then $u \leq v$ is equivalent to $\lk(u)\subseteq \lk(v)$.

The partial  preordering defines equivalence classes: $u\sim v$ if $u\leq v$ and $v\leq u$.
We denote by $[u]$ the equivalence class of $u$.
If an equivalence class $[u]$ contains two adjacent vertices, then
all vertices in $[u]$ have the same star, so $[u]$ is a clique (any two vertices are adjacent).
Otherwise, all vertices in $[u]$ share the same link and generate a free group.
We therefore say that $[u]$ is \emph{abelian} if all its vertices are adjacent, (\ie if the  subgroup  generated by $[u]$ is abelian),
and we say that $[u]$ is \emph{free} if it contains no adjacent pair of vertices (\ie if the  subgroup generated by $[u]$ is free).
An equivalence class of size 1 is both abelian and free.
Since vertices of a free  equivalence class share the same link,
 we may think of this link as being the link of the equivalence class, and similarly for the star of an abelian equivalence class.
For more details on this partial preordering, we refer the reader to \cite{CV09}.

\subsubsection{Automorphisms}\label{sec:prelim-generators}

A finite set of generators for the (outer) automorphism group of a right-angled Artin group was given  by Servatius and Laurence \cite{Servatius,Laur95}.
	The generators may be viewed as automorphisms, however we will take them to represent their class in $\Out(\A_\Gamma)$, unless explicitly stated otherwise.
The generating set consists of the following elements:

\begin{itemize}
	\item \emph{involution} of a generator: sends one generator $v$ to its inverse $v ^{-1}$ and fixes all others;
	\item \emph{graph symmetry}: permute the generating set according to a symmetry of $\Gamma$;
	\item \emph{transvections}: if $v \leq w$ then define the left-transvection $L_v^w$  (respectively right-transvection $R_v^w$) by
	$$L_v^w(v)=wv\ \text{ \quad  (respectively $R_v^w(v)=vw$)}$$
	while fixing all other generators. The \emph{multiplier} of $L_v^w$ and $R_v^w$ is the vertex $w$.
	\item \emph{partial conjugations}: for $v \in V$, let $Z$ be a union of connected components of $\Gamma \setminus \st(v)$. Define the partial conjugation $C_Z^v$ by sending $z$ to $vzv^{-1}$ if $z \in Z$ and fixing $z$ otherwise. The \emph{multiplier} of the partial conjugation is the vertex $v$.
\end{itemize}

\begin{rem}	
	Often, one restricts partial conjugations to the case where $Z$ is connected.
	 However our definition agrees with the definition given by Laurence, where he used the name \emph{locally inner} for partial conjugations \cite{Laur95}.
	Clearly, our partial conjugations include these automorphisms, and also those that are products of partial conjugations in the restricted sense.
	It will be more convenient for us to use this more flexible notion, as then the inverse of a partial conjugation is again a partial conjugation (in $\Out(\A_\Gamma)$), and the restriction and factor maps defined below will
	send partial conjugations to partial conjugations.
\end{rem}
	
The inner automorphisms of $\Aut(\A_\Gamma)$ are denoted by $\ad_g : h \mapsto ghg\m$.

\begin{dfn}\label{dfn_out0}
	We will consider the subgroup $\Out^0(\A_\Gamma)$ of $\Out(\A_\Gamma)$ that is generated by transvections and partial conjugations.
\end{dfn}

The generating set of $\Out^0(\A_\Gamma)$ does not include the graph symmetries and involutions,
and it is not hard to see that $\Out^0(\A_\Gamma)$ is a normal subgroup of finite index in $\Out(\A_\Gamma)$.
For instance, for $\A_\Gamma=\bbZ^n$, $\Out^0(\A_\Gamma)=\SL_n(\bbZ)$, and
for $\A_\Gamma=\bbF_n$, $\Out^0(\A_\Gamma)=\Out^+(\bbF_n)$, the index 2 subgroup of $\Out(\bbF_n)$
consisting of outer automorphisms of $\bbF_n$ inducing an automorphism of $\bbZ^n$ with determinant $1$.

We note that in the literature, $\Out^0(\A_\Gamma)$ often also includes inversions.

\subsubsection{The standard representation of $\Out(\A_\Gamma)$}\label{sec:standard representation}

In the following, we describe a representation of  $\Out(\A_\Gamma)$, which we shall hereupon call the \emph{standard representation}.
It is obtained by having $\Aut(\A_\Gamma)$ act on the abelianisation of $\A_\Gamma$, and we denote it by  $\sigma:\Out(\A_\Gamma)\ra \GL_{\card{\Gamma}}(\bbZ)$.
Fix an enumeration $v_1,\dots,v_n$ of the vertices of $\Gamma$ that agrees with the partial preordering defined above:
vertices of an equivalence class have adjacent indices, and
if $v_i\leq v_j$ with $v_i\not\sim v_j$ then $i\leq j$.
Left and right transvections are mapped to elementary matrices in $\SL_{\card{\Gamma}}(\bbZ)$, and, since partial conjugations are in the kernel of $\sigma$, we see that the image of $\Out^0(\A_\Gamma)$ has a block lower-triangular form.

Indeed, define the algebraic group
$$\mathcal{G} = \big\{ (x_{u,v})_{u,v\in\Gamma} \in \SL_{\card{\Gamma}}(\bbR) \mid x_{u,v} = 0 \textrm{ if } v \nleq u\big\} .$$
With this notation, the image of $\Out^0(\A_\Gamma)$ under $\sigma$ coincides with $\mathcal{G}_\bbZ=\calg\cap \SL_{\card{\Gamma}}(\bbZ)$.
There is a diagonal block $\SL_d(\bbZ)$ for each equivalence class of size $d$, and an off-diagonal block of integer matrices $M_{d,d'}(\bbZ)$
for each pair of equivalence classes
$[v]\leq [v']$ of sizes $d,d'$.

\subsubsection{The Torelli group}\label{sec:torelli}

We denote by $\IA(\A_\Gamma)$ the kernel of the standard representation. This is sometimes called the Torelli group of $\A_\Gamma$.
Day \cite[\S 3]{Day_symplectic} showed that $\IA(\A_\Gamma)$ is finitely generated, describing explicitly a finite generating set $\mathcal{M}_\Gamma$. We also refer the reader to Wade \cite{Wade_Phd} who independently proved the same result.

The set $\mathcal{M}_\Gamma$ consists of all partial conjugations along with the commutator transvections $K_u^{[v,w]}$, whenever $u,v,w$ are all distinct,  $[v,w]\neq 1$, and $u \leq v,w$, defined as
$$K_u^{[v,w]}(y) = \left\{ \begin{array}{ll} u[v,w] & \textrm{if $ y=u$};\\ y & \textrm{otherwise.} \end{array}\right.$$

\subsection{Vastness properties}\label{sec:prelim vast}

Recall that we say a property of groups is a vastness property if it satisfies \ref{vast:surjection}, \ref{vast:finite index}, and \ref{vast:SES}, from the introduction.
In light of Corollary \ref{cor:vast out raag}, we are particularly interested in those vastness properties that are satisfied by $\Out(\bbF_n)$, for $n\geq 2$, but not by $\SL_n(\bbZ)$ for $n\geq 3$.

 \subsubsection{Involving all finite groups}

 \begin{dfn}
 	We say that a group $G$  \emph{involves all finite groups} if for every finite group $F$
 	there exists a finite index subgroup $G_0<G$ and an epimorphism $G_0\onto F$.
 \end{dfn}

 For instance, the free group $\bbF_2$  involves all finite groups.
 Clearly if a quotient of $G$, or a finite index subgroup of $G$ involves all finite groups, then so does $G$.
 In particular,   \ref{vast:surjection} holds, and one direction of \ref{vast:finite index} also holds.
 Note that if $G$ is large then it involves all finite groups.

 On the other hand, if $G$ is abelian, nilpotent, or virtually solvable (and more generally any group satisfying a law)
 then $G$ does not involve all finite groups,
 since some finite groups will not satisfy the law(s).

 \begin{rem}\label{rem:afgi An}
 	Note that all finite groups are involved in a group $G$ if and only if there exist infinitely many integers $n$ such that
 	 	there is a finite-index subgroup $G_n<G$ with an epimorphism of $G_n$ onto the alternating group $A_{n}$.
 	 	Indeed, any finite group $F$ embeds into a symmetric group $S_k$, for some $k$, which can then be embedded into $A_{k+2}$.
 Given an epimorphism $p:G \onto A_{k+2}$, $p\m(F)$ is a finite-index subgroup of $G$ that maps onto $F$.
 \end{rem}

 The following lemma gives the behaviour of this property under group extensions,  and is proved below.

 \begin{lem}\label{lem:technical_lemma}
 	Let $1\ra N\ra G\ra Q\ra 1$ be a short exact sequence.
 	\begin{enumerate}
 		\item If $G$ involves all finite groups, then so does either $N$ or $Q$.
 		\item If $N$ is finitely generated and involves all finite groups, then so does $G$.
 	\end{enumerate}
 \end{lem}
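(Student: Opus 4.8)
The plan is to use the characterization of "involving all finite groups" via alternating groups recorded in Remark \ref{rem:afgi An}, together with the elementary fact that a finite simple group surjected onto by a finite group must appear as a subquotient in a controlled way.

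\medskip

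\textbf{Part (1).} Suppose $G$ involves all finite groups. By Remark \ref{rem:afgi An}, there are infinitely many $n$ and finite-index subgroups $G_n < G$ with epimorphisms $p_n : G_n \onto A_n$. Fix such an $n$ with $A_n$ non-abelian and simple (i.e. $n \geq 5$). Set $N_n = N \cap G_n$, a finite-index subgroup of $N$, and note $N_n \normal G_n$. Consider the restriction of $p_n$ to $N_n$. Its image $p_n(N_n)$ is a normal subgroup of the simple group $A_n$, hence either trivial or all of $A_n$. If $p_n(N_n) = A_n$ for infinitely many such $n$, then $N$ involves all finite groups (again by Remark \ref{rem:afgi An}, since each such $N_n$ is finite-index in $N$ and surjects onto $A_n$) and we are done. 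Otherwise, for all but finitely many of these $n$, $p_n(N_n)$ is trivial, so $p_n$ factors through $G_n / N_n$, which is a finite-index subgroup of $Q$ (namely the image of $G_n$ in $Q$); this gives epimorphisms from finite-index subgroups of $Q$ onto $A_n$ for infinitely many $n$, so $Q$ involves all finite groups. In either case the conclusion holds.

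\medskip

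\textbf{Part (2).} Now suppose $N$ is finitely generated and involves all finite groups; we must show $G$ does. Let $F$ be a finite group. By hypothesis there is a finite-index subgroup $N_0 < N$ and an epimorphism $q : N_0 \onto F$. Since $N$ is finitely generated, it has only finitely many subgroups of index $[N:N_0]$; let $N_1$ be their intersection, a \emph{characteristic} finite-index subgroup of $N$ contained in $N_0$, and hence normal in $G$. Replacing $F$ by $q(N_1)$ — which is still a finite-index subgroup of $F$, so $F/q(N_1)$ is... — hmm, this replacement does not obviously preserve "surjects onto $F$". Instead, the cleaner route: use that $N_1$ is characteristic in $N$ and $N$ is normal in $G$, so $N_1 \normal G$, and $G/N_1$ is an extension of $Q$ by $N/N_1$ with $N/N_1$ finite. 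This does not immediately finish things either, since $Q$ need not involve all finite groups.

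So the real argument for Part (2) must go differently: one wants to find a finite-index subgroup of $G$ surjecting onto $F$, using only that $N$ (finitely generated) does. The key idea is that for any finite-index $N_0 < N$, finite generation of $N$ lets us pass to a characteristic (in $N$, hence normal in $G$) finite-index subgroup $M \leq N_0$; then $G$ acts by conjugation on the finite group $N/M$, and we can look at the corresponding finite-index subgroup $G_0 = \mathrm{Stab}$ of the $G$-orbit structure, or more directly: the action of $G$ on $N/M$ gives a homomorphism $G \to \Aut(N/M)$ with kernel $C$ of finite index, and $C \cap N = Z$ acts trivially on $N/M$ by conjugation... The cleanest formulation: since $M$ is finite-index in the finitely generated group $N$, the epimorphism $q$ can be chosen so that $\ker q \supseteq M$ is normal in $N$; then let $G_0$ be the finite-index subgroup of $G$ stabilizing the conjugacy class of $\ker q$ among finite-index normal subgroups of $N$ (finitely many of each index), so $G_0$ normalizes $\ker q$; now $G_0 / \ker q$ is a finite-index subgroup of $G/\ker q$ containing $N_0/\ker q \cong F$ as a normal subgroup, and a further finite-index subgroup surjects onto $F$ exactly when $F$ occurs as a quotient of this finite group — which requires care. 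Concretely: $G_0/\ker q$ has $F$ as a normal subgroup; take a subgroup complement... there may not be one.

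\medskip

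\textbf{Main obstacle.} The genuine difficulty, and the step I expect to require the actual work, is Part (2): converting a surjection of a finite-index subgroup of $N$ onto $F$ into a surjection of a finite-index subgroup of $G$ onto $F$. The issue is that the extension need not split and $F$ need not be a quotient of the finite group $G_0/M$. I expect the resolution to use the hypothesis that $F$ is \emph{arbitrary} in a self-strengthening way: since it suffices (Remark \ref{rem:afgi An}) to handle $F = A_n$ for infinitely many $n$, and since $A_n$ is simple, the finite group $G_0/M$ does have $A_n$ appearing as a composition factor (as $N_0/M$ surjects onto $A_n$), hence has some subgroup $H/M$ with a quotient isomorphic to $A_n$; then $H$ is the desired finite-index subgroup of $G$. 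Making this precise — that in a finite group with $A_n$ as a subquotient, one can find an actual subgroup surjecting onto $A_n$, and that this can be arranged for infinitely many $n$ — is the crux, and I would lean on the simplicity of $A_n$: any minimal subgroup mapping onto the relevant composition factor works.
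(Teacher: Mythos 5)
Your Part (1) is correct and is essentially the paper's own argument: the paper runs it contrapositively (assuming $N$ does not virtually surject onto $A_n$ for large $n$), while you phrase it as a dichotomy over the simple groups $A_n$, $n\geq 5$, but the computation --- $p_n(N\cap G_n)$ is normal in $A_n$, hence trivial or everything, and in the trivial case $p_n$ factors through a finite-index subgroup of $Q$ --- is the same.

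Part (2), as you acknowledge, is not a proof, and the rescue sketched in your last paragraph does not work. You speak of ``the finite group $G_0/M$'' (or $G_0/\ker q$), but this group is finite only when $Q$ is finite: $M$ has finite index in $N$, not in $G$. Moreover, even granting a subgroup of $G_0/M$ having $A_n$ as a quotient, the natural one furnished by the subquotient you exhibit is essentially $N_0/M$, whose preimage is $N_0$ --- of \emph{infinite} index in $G$ whenever $Q$ is infinite --- so simplicity of $A_n$ by itself never addresses the real problem, which is to transfer the surjection from the (generally infinite-index) subgroup $N_0$ to a finite-index subgroup of $G$. The missing idea (the paper's proof) uses finite generation of $N$ twice and a centerless quotient. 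Given $\phi:N_0\onto A_n$, the normalizer $G_1$ of $N_0$ has finite index in $G$ because $N$ has only finitely many subgroups of index $[N:N_0]$; and the conjugated maps $\phi_g=\phi\circ\ad_g$, $g\in G_1$, form a finite family because the finitely generated group $N_0$ has only finitely many homomorphisms to $A_n$. Setting $N_1=\bigcap_{g\in G_1}\ker\phi_g$, one gets $N_1\normal G_1$, and $N_0/N_1$ embeds in $(A_n)^k$ surjecting onto each factor, so it has trivial centre. The conjugation action now gives $G_1\ra \Aut(N_0/N_1)$, whose image contains $\Inn(N_0/N_1)\simeq N_0/N_1$ (this isomorphism is exactly what the trivial centre buys); the preimage $G_0$ of $\Inn(N_0/N_1)$ has finite index in $G$ (since $\Aut(N_0/N_1)$ is finite) and surjects onto $N_0/N_1$, hence onto $A_n$ via the factor corresponding to $\phi$. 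None of your attempted routes (characteristic subgroups, splitting the extension, composition factors) contains a substitute for this step.
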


The first part is sufficient to imply property \ref{vast:SES}, thus leading to the following.

 \begin{cor}
 	Having all finite groups involved is a vastness property.
 \end{cor}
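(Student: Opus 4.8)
The plan is to verify that ``involving all finite groups'' satisfies the three axioms \ref{vast:surjection}, \ref{vast:finite index}, and \ref{vast:SES} of Definition \ref{dfn:vastness}, since most of this has already been observed in the surrounding discussion and in Lemma \ref{lem:technical_lemma}. Axiom \ref{vast:surjection} is immediate: if $G\onto G'$ and $G'$ has a finite index subgroup $G'_0$ mapping onto a finite group $F$, pull $G'_0$ back to a finite index subgroup of $G$, which then also maps onto $F$.

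For axiom \ref{vast:finite index}, let $H\leq G$ have finite index. If $H$ involves all finite groups then so does $G$, since a finite index subgroup of $H$ witnessing involvement of $F$ is also a finite index subgroup of $G$; this is the ``easy'' direction already noted after the definition. For the converse, suppose $G$ involves all finite groups and let $F$ be finite. Pick a finite index subgroup $G_0\leq G$ and an epimorphism $p\colon G_0\onto F$. Then $H\cap G_0$ has finite index in $G$, hence in $H$, and $p(H\cap G_0)$ is a subgroup of $F$ of index dividing $[G_0:H\cap G_0]$. Using Remark \ref{rem:afgi An}, it suffices to handle the case where $F=A_n$ for arbitrarily large $n$: replacing $F$ by $A_n$ with $n$ large, the image $p(H\cap G_0)$ is a subgroup of $A_n$ of bounded index (bounded in terms of $[G:H]$ only, since $[G_0:H\cap G_0]\leq [G:H]$), but $A_n$ has no proper subgroup of index less than $n$ for $n\geq 5$; so for $n$ large enough $p(H\cap G_0)=A_n$, and $H\cap G_0$ is the desired finite index subgroup of $H$ mapping onto $A_n$. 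This shows $H$ involves all finite groups.

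Axiom \ref{vast:SES} is exactly the contrapositive of part (1) of Lemma \ref{lem:technical_lemma}: if $1\ra N\ra G\ra Q\ra 1$ with $G$ involving all finite groups, then $N$ or $Q$ does; hence if neither $N$ nor $Q$ involves all finite groups, neither does $G$. Combining the three verifications proves the corollary.

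The only non-routine point is the converse direction of \ref{vast:finite index}, and the main subtlety there is that intersecting a witnessing subgroup $G_0$ with $H$ may destroy the surjection onto $F$ — one gets only a subgroup of $F$. The device that fixes this is Remark \ref{rem:afgi An}: reducing to the family $F=A_n$ turns the problem into a question about subgroups of bounded index in alternating groups, which vanish once $n$ exceeds the index bound. This is where I expect the real content of the proof to lie; everything else is a formal consequence of statements already recorded in the excerpt.
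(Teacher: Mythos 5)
Your proof is correct, but for the one non-obvious step — that a finite index subgroup $H$ of a group $G$ involving all finite groups again involves all finite groups — you take a genuinely different route from the paper. The paper first replaces $H$ by a further finite index subgroup that is normal in $G$ (the normal core) and then applies part (1) of Lemma \ref{lem:technical_lemma} to the extension $1\ra H\ra G\ra G/H\ra 1$: the finite quotient $G/H$ cannot involve all finite groups, so $H$ must. You instead argue directly: via Remark \ref{rem:afgi An} you reduce to the targets $A_n$, intersect a witnessing subgroup $G_0\onto A_n$ with $H$, observe that $[A_n:p(H\cap G_0)]\leq [G_0:H\cap G_0]\leq [G:H]$, and invoke the standard fact that $A_n$ has no proper subgroup of index less than $n$ for $n\geq 5$, so that for $n>[G:H]$ the image is all of $A_n$. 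Both arguments are sound; yours is self-contained for this step (it does not need the normal core reduction or the extension lemma, only the minimal-index fact for $A_n$, which follows from simplicity), while the paper's is shorter given that Lemma \ref{lem:technical_lemma} is already in hand and reuses it uniformly for both \ref{vast:finite index} and \ref{vast:SES}. Your treatments of \ref{vast:surjection} and \ref{vast:SES} coincide with the paper's.
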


 \begin{proof}
 	Lemma \ref{lem:technical_lemma} implies assertion \ref{vast:SES} of vastness properties.
 	The only non-obvious property remaining to check is that if $G$ involves all finite groups,
 	then so does any finite index subgroup $H$.
 	Passing to a further finite index subgroup, we can assume that $H$ is normal in $G$.
 	Since $G/H$ is finite it cannot have all finite groups involved, so Lemma \ref{lem:technical_lemma} implies $H$ does.
\end{proof}

 \begin{proof}[Proof of Lemma \ref{lem:technical_lemma}.]
 	Assume that $G$ involves all finite groups, but $N$ does not.
 	This means, by Remark \ref{rem:afgi An}, there exists some $n_0$ such that for all $n\geq n_0$, there is no virtual homomorphism from $N$ onto $A_n$.
 	Let $n\geq n_0$ and $G_0$ be a finite index subgroup with an epimorphism  $p:G_0\onto A_{n}$.
 	Restrict the short exact sequence to
 	$$1\ra N_0\ra G_0\ra Q_0\ra 1.$$
 	The group $p(N_0)$  is a normal subgroup of $A_n$ and can't be $A_n$, so it must be trivial.
 	Hence $p$ factors through $Q_0$ and consequently, $Q$ involves all finite groups.
 	This proves the first part.
 	
 	For the second part, we want to show that $G$ virtually surjects onto $A_n$ for all $n$ large enough,
 	assuming this is true of $N$ and $N$ is finitely generated.
 	Take $N_0<N$ of finite index with an epimorphism $\phi:N_0\onto A_n$.
 	Let $G_1$ be the normalizer of $N_0$, so that $N_0\normal G_1$.
 	Then $G_1$ has finite index in $G$ since, being finitely generated, $N$  has only finitely many subgroups of a given index  and each coset of $G_1$ in $G$ determines a different subgroup of $N$ that is a conjugate of $N_0$.
 	
 	Consider the family of epimorphisms
 	$$\phi_g=\phi\circ \ad_g:N_0 \onto A_n , \ \ g\in G_1$$
 	 where $\ad_g$ is an inner automorphism of $G_1$.
 	Since $N_0$ is finitely generated, it has only finitely many homomorphisms to $A_n$, so
 	there are only finitely many (say $k$) distinct maps in this family.
 	Let $N_1=\cap_{g\in G_1} \ker \phi_g$, which is normal in $G_1$. Upon taking the quotient, we get
 	$N_0/N_1 \normal G_1/N_1$, and $N_0/N_1$ is a finite group that embeds into $(A_n)^k$ as a subgroup that surjects onto each factor.
 	In particular, $N_0/N_1$ has trivial centre.
 	Now let $G_1/N_1$ act by conjugation on $N_0/N_1$. This gives us the second map in the following:
 	$$G_1 \onto G_1/N_1 \to \Aut(N_0/N_1).$$
 	The image in $\Aut(N_0/N_1)$ contains $\Inn(N_0/N_1)\simeq N_0/N_1$.
 	We take $G_0$ to be the preimage of $\Inn(N_0/N_1)$ in $G_1$.
 	Then $G_0$ surjects onto $N_0/N_1$, and hence onto $A_n$, as required.
 \end{proof}

 Lemma \ref{lem:technical_lemma} implies
 the following  proposition, saying that
 the automorphism group of any RAAG $\A_\Gamma$ involves all finite groups, except maybe for $\A_\Gamma\simeq\bbZ^n$.
 Note that the argument says nothing about the group of outer automorphisms.

 \begin{prop}\label{prop:raag fgi}
 	If $\A_\Gamma$ is a non-abelian RAAG, then $\Aut(\A_\Gamma)$  involves all finite groups.
 \end{prop}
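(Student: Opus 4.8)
The plan is to exploit the short exact sequence
$$1\ra \Inn(\A_\Gamma)\ra \Aut(\A_\Gamma)\ra \Out(\A_\Gamma)\ra 1$$
and apply part (2) of Lemma \ref{lem:technical_lemma}. Since $\Inn(\A_\Gamma)\simeq \A_\Gamma/Z(\A_\Gamma)$ is a quotient of the finitely generated group $\A_\Gamma$, it is itself finitely generated, so by that lemma it suffices to prove that $\Inn(\A_\Gamma)$ involves all finite groups. This is precisely the step where the hypothesis "$\A_\Gamma$ non-abelian" is used, and it is consistent with the remark after the statement that the argument says nothing about $\Out(\A_\Gamma)$.

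First I would produce an epimorphism $\Inn(\A_\Gamma)\onto \bbF_2$; since $\bbF_2$ involves all finite groups and a group surjecting onto a group with this property also has it (\ref{vast:surjection}), this finishes the proof. As $\A_\Gamma$ is non-abelian, $\Gamma$ is not a complete graph, so we may fix two non-adjacent vertices $a,b$. Then $\A_{\{a,b\}}=\langle a,b\rangle$ is free of rank $2$, and killing every other vertex defines a retraction $r\colon \A_\Gamma\onto \langle a,b\rangle$.

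The key point is that $r$ annihilates the centre $Z(\A_\Gamma)$, so that it factors through $\Inn(\A_\Gamma)=\A_\Gamma/Z(\A_\Gamma)$. To see this, let $z\in Z(\A_\Gamma)$; then $r(z)$ commutes with both $r(a)=a$ and $r(b)=b$ inside the free group $\langle a,b\rangle$, hence lies in $\langle a\rangle\cap\langle b\rangle=\{1\}$. (Alternatively one can invoke Servatius's description of $Z(\A_\Gamma)$ as generated by the vertices adjacent to all others, none of which is $a$ or $b$.) Thus $r$ descends to the desired surjection $\Inn(\A_\Gamma)\onto \bbF_2$, and the proposition follows.

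I do not expect a genuine obstacle: once the surjection $\Inn(\A_\Gamma)\onto\bbF_2$ is in hand the proof is a one-line application of Lemma \ref{lem:technical_lemma}(2), and the only thing requiring a moment's care is checking that $r$ kills the centre — which, as above, reduces to the triviality of $\langle a\rangle\cap\langle b\rangle$ in a free group.
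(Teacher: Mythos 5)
Your proof is correct and follows essentially the same route as the paper: both use the exact sequence $1\ra \A_\Gamma/Z(\A_\Gamma)\ra \Aut(\A_\Gamma)\ra \Out(\A_\Gamma)\ra 1$, a surjection $\A_\Gamma/Z(\A_\Gamma)\onto\bbF_2$ coming from the non-abelian hypothesis, and part (2) of Lemma \ref{lem:technical_lemma}. The only difference is that you verify explicitly that the retraction onto two non-adjacent generators kills the centre, whereas the paper leaves this (equally valid) observation implicit.
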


 \begin{proof}
 	Considering the automorphism group of a RAAG $\A_\Gamma$, we have the
 	short exact sequence
 	$$1\ra \A_\Gamma/Z(\A_\Gamma) \ra \Aut(\A_\Gamma) \ra \Out(\A_\Gamma)\ra 1$$
 	where $Z(\A_\Gamma)$ is the centre of $\A_\Gamma$, and
 	$\A_\Gamma/Z(\A_\Gamma) \cong
 	\Inn(\A_\Gamma)$.
 	Since $\A_{\Gamma}$ is
 	non-abelian, it maps onto $\bbF_2$, and the kernel of
 	this map necessarily contains $Z(\A_\Gamma)$.
 	Hence $\A_\Gamma/Z(\A_\Gamma)$ involves all finite groups, and so does $\Aut(\A_\Gamma)$ by the second part of Lemma \ref{lem:technical_lemma}.
 \end{proof}

On the other hand, it is well known that because of the congruence subgroup property, for $n\geq 3$,
not all  finite  subgroups are involved in $\SL_n(\bbZ)$  (see for instance \cite{LoRe_surface}, using \cite[Prop 16.4.10 p.348]{LubSeg_subgroup} or \cite[Thm 5.7A]{DM_permutation_groups}).

\begin{thm}[\cite{LoRe_surface,DM_permutation_groups}]\label{thm_SLn}
 	For $n\geq 3$, not all finite groups are involved in $\SL_n(\bbZ)$.
\end{thm}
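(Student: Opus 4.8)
The statement to prove is Theorem~\ref{thm_SLn}: for $n\geq 3$, not all finite groups are involved in $\SL_n(\bbZ)$.

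The plan is to deduce this from the congruence subgroup property (CSP) for $\SL_n(\bbZ)$ when $n\geq 3$, which asserts that every finite-index subgroup of $\SL_n(\bbZ)$ contains a principal congruence subgroup $\Gamma(m) = \ker(\SL_n(\bbZ)\to \SL_n(\bbZ/m\bbZ))$ for some $m\geq 1$. Granting this, any finite quotient of a finite-index subgroup $G_0 < \SL_n(\bbZ)$ is a quotient of $G_0/\Gamma(m)$, which embeds in $\SL_n(\bbZ/m\bbZ)$. By the Chinese Remainder Theorem, $\SL_n(\bbZ/m\bbZ)$ is a product of groups $\SL_n(\bbZ/p^{k}\bbZ)$ over prime powers $p^k \| m$; and there is a surjection $\SL_n(\bbZ/p^k\bbZ)\onto \PSL_n(\bbF_p)$ whose kernel is solvable (it is built from abelian layers $\SL_n(\bbF_p)$-modules plus the center). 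So every finite quotient of a finite-index subgroup of $\SL_n(\bbZ)$ is an extension of a subgroup of $\prod_i \PSL_{n}(\bbF_{p_i})$ by a solvable group, where the simple factors $\PSL_n(\bbF_{p_i})$ all have bounded rank (equal to $n$ as algebraic groups).

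The key finishing step is then a group-theoretic input: a finite group all of whose nonabelian composition factors have bounded rank cannot contain every finite simple group as a subquotient. Concretely, the alternating groups $A_k$ for large $k$ cannot be involved. One clean way, which is exactly what the cited references \cite{LubSeg_subgroup,DM_permutation_groups} provide, is via permutation group theory: if $A_k$ were a quotient of a finite-index subgroup $G_0 < \SL_n(\bbZ)$, then by the above $A_k$ would be a subquotient of $\prod_i \PSL_n(\bbF_{p_i})$ (the solvable part dies since $A_k$ is simple nonabelian), hence of a single factor $\PSL_n(\bbF_p)$; but $|\PSL_n(\bbF_p)|$ grows only polynomially in $p$ for fixed $n$, while a lower bound on $|\PSL_n(\bbF_p)|$ forced by containing $A_k$ together with bounds on the degrees of faithful permutation representations of $\PSL_n(\bbF_p)$ (all of polynomially bounded degree for fixed $n$, by \cite[Thm 5.7A]{DM_permutation_groups}) gives $k = O_n(1)$. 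So for $k$ large, $A_k$ is not involved, and by Remark~\ref{rem:afgi An} not all finite groups are involved in $\SL_n(\bbZ)$.

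The main obstacle — and the only nontrivial content — is the bounded-rank-simple-composition-factors obstruction; everything else (CSP, CRT, solvability of the relevant kernels) is standard. I would cite CSP (Bass--Milnor--Serre) and then invoke \cite[Prop 16.4.10 p.348]{LubSeg_subgroup} or \cite[Thm 5.7A]{DM_permutation_groups} for the statement that the finite groups appearing as quotients of finite-index subgroups of $\SL_n(\bbZ)$ form a restricted class not containing all $A_k$; this is precisely how \cite{LoRe_surface} handles it, so I would follow that reference rather than reprove the permutation-group bound. Thus the write-up is essentially a pointer-assembly: combine CSP with the structural description of $\SL_n(\bbZ/m\bbZ)$ and the cited finite-group-theoretic fact, then apply Remark~\ref{rem:afgi An}.
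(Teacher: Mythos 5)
Your proposal is correct and takes essentially the same route as the paper, which does not prove this statement but simply quotes it: the congruence subgroup property reduces everything to sections of the groups $\SL_n(\bbZ/m\bbZ)$, and the finite-group-theoretic obstruction (no alternating sections $A_k$ of unbounded degree in $\PSL_n(\bbF_p)$ for fixed $n$) is delegated to \cite{LubSeg_subgroup} or \cite{DM_permutation_groups}, exactly as you do, with Remark \ref{rem:afgi An} finishing the argument. One cosmetic point: apply the congruence subgroup property to the kernel $N$ of $G_0\onto F$ (which also has finite index in $\SL_n(\bbZ)$) rather than to $G_0$ itself, so that $\Gamma(m)\subseteq N$ and $F$ is genuinely a quotient of $G_0/\Gamma(m)$.
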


Proposition \ref{prop:raag fgi} says nothing about the outer automorphism group of a RAAG.
For the group $\Out(\bbF_n)$, Grunewald and Lubotzky have exhibited a nice sequence of representations.

 \begin{thm}[{\cite[Th 9.2]{GruLub_linear}}]\label{thm:Grunewald-Lubotzky}
 	Let $\bbF_n$ be a free group of rank  $n\geq 2$.
 	For every $h\geq 1$ there is a finite index subgroup $G_h$ of  $\Out(\bbF_n)$ and a representation
 	$$\rho_h : G_h \ra \SL_{(n-1)h}(\bbQ)$$
 	whose image is commensurable with $\SL_{(n-1)h}(\bbZ)$.
 \end{thm}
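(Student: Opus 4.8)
The plan is to reconstruct the Grunewald--Lubotzky representations from the action of $\Aut(\bbF_n)$ on the first homology of a suitable finite-index subgroup of $\bbF_n$.

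First I would fix a finite group $Q$ with trivial centre carrying an absolutely irreducible \emph{nontrivial} representation $V$ over $\bbQ$ of dimension $h$: one may take $Q=S_{h+1}$ with $V$ the standard $h$-dimensional representation for $h\ge 2$, and $Q=S_3$ with $V$ the sign representation for $h=1$. Since $\bbF_n$ is free of rank $n\ge 2$ and $Q$ is $2$-generated, pick an epimorphism $\pi\colon\bbF_n\onto Q$ and set $R=\ker\pi$, which is free of rank $h'=(n-1)|Q|+1$ by Nielsen--Schreier; the conjugation action of $Q$ makes $R^{ab}=H_1(R;\bbZ)$ a $\bbZ[Q]$-module. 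Let $\Gamma(R)\le\Aut(\bbF_n)$ be the finite-index subgroup of automorphisms inducing the identity on $\bbF_n/R\cong Q$. Each $\phi\in\Gamma(R)$ preserves $R$ and acts on $R^{ab}$; since $\phi(g)\in gR$ for all $g$, this action commutes with the $Q$-action, giving $\Gamma(R)\to\Aut_{\bbZ[Q]}(R^{ab})$. Because $Z(Q)=1$, the inner automorphisms of $\bbF_n$ lying in $\Gamma(R)$ are exactly $\ad(R)$, which act trivially on $R^{ab}$; hence this homomorphism factors through the image $G_h$ of $\Gamma(R)$ in $\Out(\bbF_n)$, which has finite index.

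Next I would pass to $\bbQ$-coefficients and extract the right piece. From the Fox-derivative exact sequence $0\to R^{ab}\to\bbZ[Q]^n\to I_Q\to 0$ and semisimplicity of $\bbQ[Q]$ one gets $R^{ab}\otimes\bbQ\cong\bbQ\oplus\bbQ[Q]^{n-1}$ as $\bbQ[Q]$-modules. The $V$-isotypic summand of $\bbQ[Q]^{n-1}$ is canonical, hence $G_h$-invariant; since $V$ is absolutely irreducible over $\bbQ$ it occurs in $\bbQ[Q]$ with multiplicity $h$, so this summand is $\cong V^{\oplus(n-1)h}$ and the commutant of $Q$ on it is $M_{(n-1)h}(\bbQ)$. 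Restricting the $\bbQ[Q]$-linear action of $G_h$ gives $\rho_h\colon G_h\to\GL_{(n-1)h}(\bbQ)$, and --- replacing $G_h$ by the finite-index subgroup on which the determinant is $1$ --- a map into $\SL_{(n-1)h}(\bbQ)$. Integrality of the lattice $R^{ab}$ shows the image preserves a lattice, hence lies in a conjugate of $\SL_{(n-1)h}(\bbZ)$.

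The hard part will be to show this image is a \emph{finite-index} (equivalently: arithmetic) subgroup of $\SL_{(n-1)h}(\bbZ)$, rather than merely a Zariski-dense or thin one. I see two routes. The first is direct: compute $\rho_h$ on the Nielsen transvection generators of $\Out(\bbF_n)$ using an explicit Schreier transversal for $R$ in $\bbF_n$, show that enough of their images are elementary matrices (transvections) in $\SL_{(n-1)h}(\bbZ)$, and invoke that such elementary matrices generate a finite-index subgroup. The second is to establish Zariski-density of the image in $\SL_{(n-1)h}$ first and then apply strong approximation (Nori--Weisfeiler, Matthews--Vaserstein--Weisfeiler) to deduce arithmeticity. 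I would attempt the first route, since the images of the generating transvections are combinatorially computable; checking that these images genuinely generate a finite-index subgroup --- not an a priori proper one --- is the real obstacle, and is essentially the content of Grunewald--Lubotzky's theorem.
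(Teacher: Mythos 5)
The paper does not prove this statement at all: it is quoted from Grunewald--Lubotzky \cite{GruLub_linear} and used as a black box (the closest the paper comes to this circle of ideas is Lemma \ref{lem:three partial conjugations}, which adapts the same ``homology of a finite cover'' technique in a much smaller situation). So the question is whether your sketch amounts to a proof, and it does not. Your construction --- choose a centreless finite quotient $Q$ of $\bbF_n$ with an absolutely irreducible rational representation $V$ of dimension $h$, let the principal congruence-type subgroup $\Gamma(R)\le\Aut(\bbF_n)$ act on $H_1(R;\bbQ)\cong\bbQ\oplus\bbQ[Q]^{n-1}$ (Gasch\"utz), and restrict to the $V$-isotypic part to obtain a lattice-preserving map of a finite-index subgroup of $\Out(\bbF_n)$ into $\SL_{(n-1)h}(\bbQ)$ --- is indeed the Grunewald--Lubotzky set-up, and that part is sound. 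But the entire content of the theorem is that the image is \emph{commensurable with} $\SL_{(n-1)h}(\bbZ)$, i.e.\ arithmetic and of finite index, and you leave exactly that unproved, saying yourself that it is ``essentially the content of Grunewald--Lubotzky's theorem''. Producing a representation into $\SL_{(n-1)h}(\bbQ)$ whose image preserves a lattice is the easy half; without the finite-index statement none of the applications in this paper (largeness of $\Out(\bbF_3)$, Corollary \ref{cor_involved_OutFn}) follow. So the proposal has a genuine gap at precisely the decisive step.

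Moreover, of the two routes you offer for closing it, the second is not valid as stated: Zariski density together with strong approximation (Nori--Weisfeiler, Matthews--Vaserstein--Weisfeiler) does \emph{not} imply finite index in $\SL_{(n-1)h}(\bbZ)$; thin subgroups are Zariski dense and enjoy strong approximation, which only controls the closure of the image in the congruence (adelic) topology. This distinction is visible in the paper itself: strong approximation suffices for Corollary \ref{cor_involved_OutFn}, because there one only needs finite quotients $\SL_{(n-1)h}(\bbZ/p\bbZ)$, but it cannot yield the commensurability asserted in Theorem \ref{thm:Grunewald-Lubotzky}. Your first route --- computing the images of Nielsen transvections on the $V$-isotypic lattice and showing they supply enough elementary matrices (say all of the elementary subgroup of some principal congruence subgroup, which has finite index when $(n-1)h\ge 3$, with the small cases needing separate treatment) --- is the correct strategy and is essentially what Grunewald and Lubotzky carry out, but you have not done the computation, so the key claim remains unsupported.
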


In contrast with $\SL_n(\bbZ)$, this implies:

\begin{cor}[Grunewald--Lubotzky]\label{cor_involved_OutFn} \
 	\begin{itemize}
 		\item $\Out(\bbF_2)$ and $\Out(\bbF_3)$ are large.
 		\item  For any $n\geq 2$, all  finite groups are involved in $\Out(\bbF_n)$.
 	\end{itemize}
\end{cor}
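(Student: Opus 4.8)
The plan is to deduce both statements from Theorem~\ref{thm:Grunewald-Lubotzky} almost formally, the only substantive input beyond it being a weak form of strong approximation for $\SL_m(\bbZ)$. For the first bullet, I would apply Theorem~\ref{thm:Grunewald-Lubotzky} with $(n,h)=(2,2)$ and with $(n,h)=(3,1)$: in both cases $(n-1)h=2$, so one obtains a finite-index subgroup $G\leq\Out(\bbF_n)$ and a surjection of $G$ onto a subgroup $Q\leq\SL_2(\bbQ)$ commensurable with $\SL_2(\bbZ)$. As $\SL_2(\bbZ)$ contains a non-abelian free group of finite index, so does $Q$; hence $Q$ is large, whence $G$ is large (a finite-index free quotient of $Q$ pulls back to one of $G$), whence $\Out(\bbF_n)$ is large (largeness passes between a group and any finite-index subgroup). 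The only facts used are that largeness is inherited backwards along surjections and is preserved in both directions under passage to finite-index subgroups.

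For the second bullet I would fix an arbitrary finite group $F$, put $d=\card{F}+1$, and --- since $n-1\geq1$ --- choose $h$ with $m:=(n-1)h\geq d$. Theorem~\ref{thm:Grunewald-Lubotzky} then supplies a finite-index subgroup $G_h\leq\Out(\bbF_n)$ and a homomorphism $\rho\colon G_h\to\SL_m(\bbQ)$ whose image $Q=\rho(G_h)$ is commensurable with $\SL_m(\bbZ)$, so $Q_0:=Q\cap\SL_m(\bbZ)$ has finite index in both $Q$ and $\SL_m(\bbZ)$. The argument then rests on two elementary observations. (i) $F$ embeds into $\SL_m(\bbF_p)$ for every prime $p\geq3$: view $F\leq S_{\card{F}}$, embed $S_k\hookrightarrow\SL_{k+1}(\bbZ)$ by $\sigma\mapsto\mathrm{diag}(P_\sigma,\mathrm{sgn}\,\sigma)$, reduce modulo $p$ (injective on $F$, as such matrices have entries in $\{0,\pm1\}$), and include $\SL_{\card{F}+1}(\bbF_p)$ as a diagonal block of $\SL_m(\bbF_p)$. (ii) For all but finitely many primes $p$, reduction modulo $p$ restricts to a surjection $r_p\colon Q_0\onto\SL_m(\bbF_p)$: for each $i\neq j$ the set $\{t\in\bbZ:E_{ij}(t)\in Q_0\}$ is a finite-index, hence nontrivial, subgroup $c_{ij}\bbZ$ of $\bbZ$, so $E_{ij}(c)\in Q_0$ for $c=\mathrm{lcm}_{i\neq j}c_{ij}$; for $p\nmid c$ the matrices $r_p(E_{ij}(c))$ are elementary with invertible off-diagonal entry and hence generate $\SL_m(\bbF_p)$.

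Given (i) and (ii), I would pick a prime $p\geq3$ with $p\nmid c$; then $(r_p|_{Q_0})^{-1}(F)$ is a finite-index subgroup of $Q_0$, hence of $Q$, which surjects onto $F$ because $r_p|_{Q_0}$ is already onto the overgroup $\SL_m(\bbF_p)\supseteq F$. Pulling this subgroup back along $\rho$ produces a finite-index subgroup of $G_h$, and so of $\Out(\bbF_n)$, that surjects onto $F$; since $F$ was arbitrary, $\Out(\bbF_n)$ involves all finite groups. (Alternatively one may invoke Remark~\ref{rem:afgi An} and only treat $F=A_k$, using $A_k\hookrightarrow\SL_{k-1}(\bbF_p)$.) The step I expect to need the most care is observation (ii) --- that a finite-index subgroup of $\SL_m(\bbZ)$ maps onto $\SL_m(\bbF_p)$ for almost every $p$ --- the remainder being routine manipulation of preimages; I would also point out that this is compatible with Theorem~\ref{thm_SLn}, since for a \emph{fixed} $m$ the construction only reaches finite groups $F$ with $\card{F}<m$, while for $\Out(\bbF_n)$ the integer $m=(n-1)h$ can be taken arbitrarily large.
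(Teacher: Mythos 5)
Your argument is correct, and its skeleton is the same as the paper's: apply Theorem \ref{thm:Grunewald-Lubotzky} to get a virtual map of $\Out(\bbF_n)$ onto a group commensurable with $\SL_m(\bbZ)$ for $m=(n-1)h$ arbitrarily large, then pass to congruence quotients $\SL_m(\bbZ/p\bbZ)$ to catch any prescribed finite group. The differences are in how the key intermediate steps are handled. For the second bullet, the paper simply cites strong approximation (\cite{MVW_congruence}) to get that a finite-index subgroup of $\SL_m(\bbZ)$ surjects onto some $\SL_m(\bbZ/p\bbZ)$, and then goes through alternating groups via Remark \ref{rem:afgi An}; you instead prove the needed surjectivity by hand (the elementary matrices $E_{ij}(c)$ with $c$ the lcm of the indices $c_{ij}$ generate $\SL_m(\bbZ/p\bbZ)$ once $p\nmid c$), which even gives it for all but finitely many $p$, and you embed an arbitrary finite $F$ directly by permutation matrices rather than reducing to $A_k$ -- this makes the proof self-contained at the cost of a little length, and your closing remark correctly explains why this does not conflict with Theorem \ref{thm_SLn}. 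For the first bullet, the paper treats $\Out(\bbF_2)\simeq \GL_2(\bbZ)$ as immediate (it is virtually free) and only invokes the theorem for $\Out(\bbF_3)$ with $h=1$; your use of $(n,h)=(2,2)$ also works, just less directly. All the bookkeeping you do with commensurability, preimages of finite-index subgroups, and the two-way stability of largeness under finite index is sound.
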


 \begin{proof}
 	The first assertion is clear for $\Out(\bbF_2)\simeq GL_2(\bbZ)$.
 	Theorem \ref{thm:Grunewald-Lubotzky} applied to $\Out(\bbF_3)$ for $h=1$ shows that $\Out(\bbF_3)$ is large.
 	
 	The second assertion also follows from Theorem \ref{thm:Grunewald-Lubotzky}. Indeed, by strong approximation,
 	any finite index subgroup $H<\SL_{(n-1)h}(\bbZ)$ maps onto $\SL_{(n-1)h}(\bbZ/p\bbZ)$ for some prime number $p$  \cite{MVW_congruence}.
 	Since this  finite group contains the  alternating group $A_{(n-1)h}$, $H$ has finite index subgroup
 	 that surjects onto $A_{(n-1)h}$. Applying this to $\Gamma=\rho_h(G_h)$ proves the corollary,  via Remark \ref{rem:afgi An}.\end{proof}

 We will use strategies similar to the construction used in the proof of Theorem \ref{thm:Grunewald-Lubotzky}
 to prove the largeness of the group of outer automorphism group of some RAAGs, see Lemma \ref{lem:three partial conjugations}.

 \subsubsection{SQ-universality}

 \begin{dfn}
 	A group $G$ is \emph{SQ-universal} if any countable group embeds in some quotient of $G$.
 \end{dfn}

 Clearly, any group that has an SQ-universal quotient is itself SQ-universal,  thus satisfying \ref{vast:surjection}.
 A result of P.{} Neumann  shows that if $H$ is a finite index subgroup of $G$, then $G$ is SQ-universal if and only if $H$ is as well
 \cite{Neumann_SQ}, giving \ref{vast:finite index}.
 The following lemma confirms that \ref{vast:SES} holds, and thus SQ-universality is a vastness property.

 \begin{lem}\label{lem:SQ SES}
 	Consider a short exact sequence of groups $1\ra N\ra G \ra Q \ra 1$.
 	
 	If $G$ is SQ-universal, then so is either $N$ or $Q$.
 \end{lem}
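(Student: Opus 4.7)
The plan is to prove the contrapositive: if neither $N$ nor $Q$ is SQ-universal, then $G$ is not SQ-universal either.

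Assume accordingly that there exist countable groups $H_N$ and $H_Q$ that fail to embed in any quotient of $N$, resp.\ of $Q$. The first step will be to fuse these two separate obstructions into a single witness. By the Higman--Neumann--Neumann embedding theorem, every countable group embeds into a countable \emph{simple} group; applied to $H_N * H_Q$ it yields a countable simple group $H^*$ containing both $H_N$ and $H_Q$. Since any embedding of $H^*$ into a quotient of $N$ (resp.\ of $Q$) would restrict to an embedding of $H_N$ (resp.\ $H_Q$), the group $H^*$ itself fails to embed into any quotient of $N$ or of $Q$.

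Next, I would suppose for contradiction that $G$ is SQ-universal, pick $K\normal G$ with $H^* \into G/K$, and analyse the normal subgroup $N':=NK/K$ of $G/K$. By the second isomorphism theorem $N'\cong N/(N\cap K)$ is a quotient of $N$, while $(G/K)/N' \cong G/(NK)$ is a quotient of $Q$. The intersection $H^* \cap N'$ is normal in $H^*$, so simplicity forces it to be trivial or all of $H^*$. If $H^* \cap N' = H^*$, then $H^*$ embeds into $N'$, a quotient of $N$, contradicting the choice of $H^*$. Otherwise $H^*\cap N' = \{1\}$, so the composition $H^*\into G/K \onto (G/K)/N' \cong G/(NK)$ is injective, exhibiting an embedding of $H^*$ into a quotient of $Q$, again a contradiction.

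The only substantive step is the passage from two separate non-embedding witnesses to a single simple one: the Higman--Neumann--Neumann theorem provides simplicity, and simplicity is precisely the ingredient that forces the clean dichotomy under an arbitrary normal subgroup of $G/K$. Everything else is routine bookkeeping with the isomorphism theorems.
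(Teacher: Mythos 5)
Your proof is correct and follows essentially the same route as the paper: embed the free product of the two non-embedding witnesses into a countable simple group, and use simplicity of that group to force the dichotomy on its intersection with the image of $N$ in the quotient of $G$. The only quibble is attribution — the embedding of a countable group into a countable simple group is credited in the paper to Hall rather than to the Higman--Neumann--Neumann theorem (which concerns embeddings into $2$-generator groups) — but this does not affect the argument.
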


 \begin{proof}
 	Suppose neither $N$ nor $Q$ is SQ-universal.
 	Let $A$ (resp. $B$) be a  countable group that does not embed in a quotient of $Q$ (resp. $N$).
 	By Hall \cite{Hall_embedding},
 	$A*B$ can be embedded in a countable simple group $C$.
 	If $G$ is SQ-universal there is a quotient $p:G\onto H$ such that $C$ embeds in $H$. Then $p(N)$ is a normal subgroup of $H$,
 	so $p(N)\cap C$ is normal in $C$. If $p(N)\supset C$, then $p(N)$ is a quotient of $N$ containing $B$, a contradiction.
 	Thus $p(N)\cap C=1$, and $C$ embeds in $H/p(N)$. Thus  $H/p(N)$ is a quotient of $Q$ containing $A$, a contradiction.
 \end{proof}

 \begin{cor}
 	SQ-universality is a vastness property.\qed
 \end{cor}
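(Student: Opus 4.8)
The plan is simply to verify the three defining conditions of a vastness property, \ref{vast:surjection}, \ref{vast:finite index} and \ref{vast:SES}, for the property of being SQ-universal; all three have in effect already been recorded in the discussion preceding the statement, so the proof is an assembly of pieces rather than new work.

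First, \ref{vast:surjection} is immediate from the definition: if $G\onto G'$ and every countable group embeds in some quotient of $G'$, then, since every quotient of $G'$ is also a quotient of $G$, every countable group embeds in some quotient of $G$, so $G$ is SQ-universal. Second, \ref{vast:finite index} is precisely P.\ Neumann's theorem \cite{Neumann_SQ}, which says that for a finite index subgroup $H\leq G$, the group $G$ is SQ-universal if and only if $H$ is; I would just cite it.

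Finally, \ref{vast:SES} requires that, for an exact sequence $1\ra N\ra G\ra Q\ra 1$ with neither $N$ nor $Q$ SQ-universal, the group $G$ is not SQ-universal; this is exactly the contrapositive of Lemma \ref{lem:SQ SES}, which asserts that SQ-universality of $G$ forces SQ-universality of $N$ or of $Q$. So the remaining step is only to observe that Lemma \ref{lem:SQ SES} is phrased in the matching direction, after which all three conditions hold and the corollary follows. There is no genuine obstacle at this stage: whatever difficulty exists has been absorbed into Lemma \ref{lem:SQ SES}, whose crux is Hall's embedding \cite{Hall_embedding} of a free product $A*B$ into a countable simple group, and into Neumann's theorem — both already available to us.
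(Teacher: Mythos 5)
Your proposal is correct and matches the paper's own argument: the text preceding the corollary verifies \ref{vast:surjection} directly from the definition, cites Neumann for \ref{vast:finite index}, and invokes Lemma \ref{lem:SQ SES} for \ref{vast:SES}, exactly as you assemble it. Nothing further is needed.
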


 The fact that any countable group embeds in a two generator group says that $\bbF_2$ is SQ-universal, and hence so is any large group.
 In fact, every hyperbolic group, and more generally, any group with a proper hyperbolically embedded subgroup is SQ-universal \cite{Olshanskii_SQ,DGO_HE}.
 Since for $n\geq 2$, $\Out(\bbF_n)$ contains a proper hyperbolically embedded subgroup \cite{BF_hyperbolic,DGO_HE}, we get:

 \begin{thm}[\cite{DGO_HE}] \label{thm_SQ_OutFn}
 	For all $n\geq 2$, $\Out(\bbF_n)$ is SQ-universal.
 \end{thm}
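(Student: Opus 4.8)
The plan is to obtain SQ-universality as a formal consequence of acylindrical hyperbolicity. Concretely, I would invoke the general principle — recorded just above for hyperbolic groups via \cite{Olshanskii_SQ} and in full generality in \cite{DGO_HE} — that any group possessing a proper hyperbolically embedded subgroup is SQ-universal. The underlying mechanism is Olshanskii-type small cancellation over hyperbolically embedded data: given a countable group $A$, first embed it in a $2$-generated group (so it suffices to handle that case), then kill enough ``long'' elements supported near the hyperbolically embedded subgroup to produce a quotient of $\Out(\bbF_n)$ into which the prescribed countable group still injects.

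With that principle in hand, the only thing left to verify is that $\Out(\bbF_n)$ admits a proper hyperbolically embedded subgroup for every $n\geq 2$. For $n=2$ this is elementary: $\Out(\bbF_2)\cong\GL_2(\bbZ)$ is a non-elementary virtually free group, hence hyperbolic, and every non-elementary hyperbolic group contains proper hyperbolically embedded (virtually cyclic) subgroups; alternatively one can avoid this entirely and note that $\bbF_2$ sits as a finite-index subgroup of $\Out(\bbF_2)$, so SQ-universality of $\bbF_2$ transfers via Neumann's theorem \cite{Neumann_SQ}. For $n\geq 3$ I would appeal to Bestvina--Feighn \cite{BF_hyperbolic}, who construct an isometric action of $\Out(\bbF_n)$ on a Gromov-hyperbolic complex admitting loxodromic elements with the requisite weak proper discontinuity; the criterion of \cite{DGO_HE} then upgrades the maximal elementary subgroup of a fully irreducible atoroidal element to a proper hyperbolically embedded subgroup.

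The substantive input — indeed the only non-formal ingredient — is precisely this last point: that $\Out(\bbF_n)$ is acylindrically hyperbolic for $n\geq 3$, which packages the hard analysis of the $\Out(\bbF_n)$-action on hyperbolic spaces from \cite{BF_hyperbolic}. Everything else is bookkeeping, so granting that input (as \cite{BF_hyperbolic,DGO_HE} do) the theorem follows immediately.
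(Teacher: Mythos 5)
Your proposal is correct and follows essentially the same route as the paper: the theorem is deduced by citing the general result of \cite{DGO_HE} that a group with a proper hyperbolically embedded subgroup is SQ-universal, together with the fact that $\Out(\bbF_n)$ admits such a subgroup via the actions on hyperbolic spaces with WPD elements from \cite{BF_hyperbolic}. Your separate elementary treatment of $n=2$ (via $\GL_2(\bbZ)$ being virtually free and Neumann's theorem) is a harmless variant of the same argument.
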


 On the other hand, for $n\geq 3$, every normal subgroup of $\SL_n(\bbZ)$ is either central or has finite index, by the Margulis normal subgroup theorem, implying:
 \begin{thm}[Margulis]\label{thm_SQ_GLn}
 	For  $n\geq 3$, $\SL_n(\bbZ)$ is not SQ-universal.
 \end{thm}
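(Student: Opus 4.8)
The plan is to disprove SQ-universality by exhibiting a single countable group that embeds into \emph{no} quotient of $\SL_n(\bbZ)$; the essential input is, as the statement indicates, the Margulis normal subgroup theorem \cite{Margulis_discrete}.

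First I would describe all quotients of $\SL_n(\bbZ)$ for $n\geq 3$. By the normal subgroup theorem, every normal subgroup $N\normal\SL_n(\bbZ)$ is either of finite index or contained in the centre, and $Z(\SL_n(\bbZ))\subseteq\{\pm I\}$ has order at most $2$. Hence every quotient of $\SL_n(\bbZ)$ is either a finite group, or isomorphic to $\SL_n(\bbZ)$, or isomorphic to $\mathrm{PSL}_n(\bbZ)=\SL_n(\bbZ)/\{\pm I\}$. The feature I want is that each of these is residually finite: this is clear for finite groups, holds for $\SL_n(\bbZ)$ by Malcev's theorem since it is finitely generated and linear, and holds for $\mathrm{PSL}_n(\bbZ)$ for the same reason, as it embeds into $\PGL_n(\bbR)$ and so is a finitely generated linear group (one may instead use its congruence quotients).

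Next I would invoke the elementary observation that a residually finite group $G$ contains no infinite simple subgroup: if $S\leq G$ is simple and $1\neq s\in S$, residual finiteness furnishes a finite-index normal subgroup $K\normal G$ with $s\notin K$, so $S\cap K$ is a proper normal subgroup of $S$, hence trivial, whence $S$ embeds into the finite group $G/K$ and is therefore finite. Finally I would fix a countably infinite simple group $S$ — for instance the group of finitely supported even permutations of $\bbN$, that is $\bigcup_m A_m$, which is countable, infinite, and simple. Combining the last two paragraphs, $S$ embeds into no quotient of $\SL_n(\bbZ)$, so $\SL_n(\bbZ)$ is not SQ-universal.

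I do not expect a genuine obstacle: once Margulis's theorem is granted, the remaining ingredients are soft, and the only real content beyond it is the remark about simple subgroups of residually finite groups. The single point deserving care is residual finiteness of $\mathrm{PSL}_n(\bbZ)$; should one wish to avoid citing linearity of $\PGL_n$, one can argue equivalently that an infinite simple subgroup of a quotient $Q=\SL_n(\bbZ)/N$ — where $N$ is necessarily finite, since otherwise $Q$ is finite — would pull back to a subgroup $\widetilde S\leq\SL_n(\bbZ)$ carrying a finite normal subgroup with infinite simple quotient, which is impossible because the quotient of a residually finite group by a finite normal subgroup is again residually finite.
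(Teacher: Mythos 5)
Your argument is correct and follows the same route as the paper, which simply invokes the Margulis normal subgroup theorem (every normal subgroup of $\SL_n(\bbZ)$, $n\geq 3$, is central or of finite index) and leaves the remaining soft steps implicit. Your classification of the quotients as finite, $\SL_n(\bbZ)$, or $\mathrm{PSL}_n(\bbZ)$, together with residual finiteness and the non-embeddability of a countably infinite simple group such as $\bigcup_m A_m$, is exactly the standard way to fill in that implication.
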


 \subsubsection{Quasimorphisms}\label{prelim:vastness-quasim}

 \begin{dfn}
 	A \emph{quasimorphism} is a map $\phi:G\ra \bbR$ such that there exists $C\geq 0$ for which $\left|\phi(gh)-\phi(g)-\phi(h)\right|\leq C$ for all $g,h\in G$.
 	
 	The quasimorphism is \emph{homogeneous} if for all $g\in G$ and all $k\in \bbZ$, $\phi(g^k)=k\phi(g)$.
 \end{dfn}

 The set of all homogeneous quasimorphisms is an $\bbR$--vector space which we denote by $\QH(G)$.

 \begin{dfn}
 	Say that $G$ \emph{virtually  has many quasimorphisms} if $G$ has a finite index subgroup $H$
 	such that $\QH(H)$ is infinite dimensional.
 \end{dfn}

 Assume that $H$ is finitely generated.
 Then the space $\Hom(H;\bbR)$ of all true homomorphisms $H\ra \bbR$ is a finite dimensional subspace of $\QH(H)$,
 and the quotient $\calq(H)=\QH(H)/\Hom(H;\bbR)$
 is isomorphic to the kernel $\EH^2_b(H;\bbR)$ of the comparison map $H^2_b(H;\bbR)\ra H^2(H;\bbR)$
 from bounded cohomology to usual cohomology.
 Thus, a finitely generated group $G$  virtually has many quasimorphisms if and only if
 $\EH^2_b(H;\bbR)$ is infinite dimensional for some finite index subgroup $H$.

 \begin{prop}
 	Virtually having many quasimorphisms is a vastness property.
 \end{prop}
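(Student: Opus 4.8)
The plan is to verify the three axioms \ref{vast:surjection}, \ref{vast:finite index}, and \ref{vast:SES} for the property ``virtually having many quasimorphisms,'' mirroring the structure of the proofs already given for involving all finite groups and for SQ-universality. Throughout I would use the fact that $\QH$ is a contravariant functor with respect to injective maps in a suitable sense, and more concretely the two standard facts: (i) if $H \le G$ has finite index, then $\QH(H)$ is infinite dimensional if and only if $\QH(G)$ is (this is the Bavard-type averaging/restriction argument: restriction $\QH(G) \to \QH(H)$ has finite-dimensional kernel, and a transfer/averaging map goes the other way); and (ii) if $\phi: G \onto G'$ is a surjection, then precomposition $\QH(G') \to \QH(G)$ is injective.

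First I would record \ref{vast:surjection}: if $G \onto G'$ and $G'$ virtually has many quasimorphisms, pick $H' \le G'$ of finite index with $\QH(H')$ infinite dimensional, let $H$ be its preimage in $G$ (finite index in $G$, surjecting onto $H'$), and conclude $\QH(H)$ is infinite dimensional by (ii). Next, \ref{vast:finite index}: if $H \le G$ has finite index then a finite index subgroup of $H$ is a finite index subgroup of $G$ and vice versa, so ``having a finite index subgroup with infinite-dimensional $\QH$'' is literally the same condition for $G$ and for $H$ — here fact (i) is not even needed since the property is already stated ``virtually.''

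The substantive point is \ref{vast:SES}: I must show that if $1 \to N \to G \to Q \to 1$ is exact and neither $N$ nor $Q$ virtually has many quasimorphisms, then neither does $G$. So fix any finite index subgroup $G_0 \le G$; I want $\QH(G_0)$ finite dimensional. Passing to a further finite index (normal) subgroup and using \ref{vast:finite index}, I may assume $G_0$ is normal in $G$, and restricting the sequence gives $1 \to N_0 \to G_0 \to Q_0 \to 1$ with $N_0 = N \cap G_0$ of finite index in $N$ and $Q_0$ of finite index in $Q$; by hypothesis $\QH(N_0)$ and $\QH(Q_0)$ are finite dimensional. The hard part is then the purely homological input: I would invoke the exact sequence relating (bounded) cohomology of $G_0$, $N_0$ and $Q_0$ — concretely, the result (Bouarich, or via the Lyndon–Hochschild–Serre sequence in bounded cohomology, or Calegari's ``scl'' book Remark/Section on quasimorphisms and extensions) that there is an exact sequence
\[
0 \to H^1_b(Q_0;\bbR) \to H^1_b(G_0;\bbR) \to H^1_b(N_0;\bbR)^{Q_0} \to H^2_b(Q_0;\bbR) \to H^2_b(G_0;\bbR),
\]
and similarly for ordinary cohomology; combined with finite generation this controls $\calq(G_0) \cong \EH^2_b(G_0;\bbR)$ in terms of $\EH^2_b(Q_0;\bbR)$, $\EH^2_b(N_0;\bbR)^{Q_0}$, and finite-dimensional ordinary cohomology groups, all of which are finite dimensional. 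Hence $\QH(G_0)$ is finite dimensional. (An alternative, more elementary route avoiding the five-term sequence: a homogeneous quasimorphism on $G_0$ restricts to a $G_0$-invariant homogeneous quasimorphism on $N_0$; the space of these is finite dimensional as a subspace of $\QH(N_0)$, and a quasimorphism vanishing on $N_0$ descends — up to bounded error, hence exactly after homogenization — to a quasimorphism on $Q_0$, whose space is finite dimensional; one then bounds $\dim \QH(G_0)$ by the sum plus a correction coming from $H^1$. I would likely present whichever of these is cleanest, citing the bounded-cohomology literature for the exactness.)

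The main obstacle is purely that last homological bookkeeping — making precise that ``neither $N_0$ nor $Q_0$ contributes infinitely many independent quasimorphisms'' forces the same on $G_0$, which requires either the bounded-cohomology five-term exact sequence or a careful hands-on argument with invariant quasimorphisms on the normal subgroup and descent to the quotient, together with the observation that $\Hom(-;\bbR)$ is always finite dimensional on finitely generated groups so it can be absorbed. Everything else (\ref{vast:surjection} and \ref{vast:finite index}) is formal. I would close by remarking, as the paper does for the other properties, that $\SL_n(\bbZ)$ for $n \ge 3$ has only trivial homogeneous quasimorphisms (citing \cite{Newman_unimodular,BuMo_cohom_lattice}) and passes to finite index subgroups, so it does not virtually have many quasimorphisms, whereas $\Out(\bbF_n)$ does for $n \ge 2$ by \cite{BF_hyperbolic}, placing this property among those covered by Corollary \ref{cor:vast out raag}.
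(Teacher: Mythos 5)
Your ``elementary alternative'' for \ref{vast:SES} is, in substance, exactly the paper's proof: restrict homogeneous quasimorphisms on a finite-index subgroup $H\le G$ to $N'=N\cap H$; since $\QH(N')$ is finite dimensional, the kernel of restriction carries all but finitely many dimensions of $\QH(H)$; and each $\phi$ in that kernel is pushed down to $Q'$ (the image of $H$ in $Q$) by setting $\hat\psi(gN')=\inf_{k\in N'}\phi(gk)$ and homogenizing. Two small remarks on your sketch of this route: the descended map is injective outright (if $\psi=0$ then $\hat\psi$ is bounded, so $\phi$ is bounded, hence $0$ by homogeneity), so no ``correction coming from $H^1$'' is needed and one gets $\dim\QH(H)\le\dim\QH(N')+\dim\QH(Q')$; and no invariance discussion is needed (homogeneous quasimorphisms are automatically conjugation-invariant, and in any case only finite dimensionality of $\QH(N')$ is used). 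Also, normality of $G_0$ in $G$ is unnecessary. For \ref{vast:surjection} you argue as the paper does. For \ref{vast:finite index}, your ``vice versa'' is not literal: a finite-index subgroup $K\le G$ need not lie in $H$, so you must pass to $K\cap H$ and use injectivity of the restriction map on homogeneous quasimorphisms (the $\phi(g^d)=d\phi(g)$ argument), i.e.\ the easy half of your fact (i) is needed after all. Beware too that the ``if and only if'' in your fact (i) overstates the transfer direction: averaging over cosets is not injective on $\QH$, so $\QH(H)$ infinite dimensional does not formally force $\QH(G)$ infinite dimensional; fortunately you never use that direction, since the property is defined virtually.

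The one genuine defect is in your primary route for \ref{vast:SES}. With real coefficients $H^1_b$ of every group vanishes, so the first terms of your displayed sequence are vacuous, and, more importantly, the sequence as written stops at $H^2_b(G_0;\bbR)$: it only asserts injectivity of inflation from $Q_0$ and gives no upper bound whatsoever on $H^2_b(G_0;\bbR)$, which is what you need. To repair it you must invoke exactness one step further (Bouarich's theorem that $0\to H^2_b(Q_0;\bbR)\to H^2_b(G_0;\bbR)\to H^2_b(N_0;\bbR)^{Q_0}$ is exact), and then do nontrivial bookkeeping to pass between $H^2_b$, $\EH^2_b$ and $\QH$ (note that $N_0$ need not be finitely generated, and that bounded classes with nonzero ordinary part have to be tracked through the comparison maps). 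Since the hands-on argument avoids all of this and is precisely what the paper does, that is the version to present; the bounded-cohomology route is salvageable but not with the sequence as you displayed it.
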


\begin{proof}
 If $p:G\onto G'$ is an epimorphism, and if $H'<G'$ has finite index, consider the finite index subgroup $H=p\m(H')<G$.
 	Then composition by $p$ yields an injective map $\QH(H')\to \QH(H)$.
 	\ref{vast:surjection} follows.
 	
 	If $H<G$ has finite index $d$, the restriction map $\QH(G)\ra \QH(H)$ is injective.
 	Indeed, assume that the restriction of $\phi \in \QH(G)$ to $H$ is trivial. Then, for $g\in G$, since $g^d\in H$, it follows that $0= \varphi(g^d) = d\varphi(g)$.
 	Thus $\varphi$ is trivial on $G$.
 	This implies that if $\QH(G)$ is infinite dimensional, then so is $\QH(H)$.
 	Using this, it is not hard to verify \ref{vast:finite index}.

 	Let us prove assertion \ref{vast:SES}  (c.{}f.{} Gromov's mapping theorem \cite{Gromov_volume}).
 	Consider a short exact sequence $$1\ra N\ra G \ra Q \ra 1$$ with $N$ not virtually having many quasimorphisms,
 	and consider $H<G$ a finite index subgroup.
 	By restriction, we get a short exact sequence
 	$$1\ra N'\ra H \ra Q' \ra 1$$ where $N'=H\cap N$ and $Q'$ is the image of $H$ in $Q$.
 	Assuming that $\QH(H)$ is infinite dimensional, we will prove that $\QH(Q')$ is infinite dimensional.
 	Since $N$ does not virtually have many quasimorphisms, $\QH(N')$ is finite dimensional,
 	so the kernel $\QH_0(H)\subset \QH(H)$ of the restriction map to $N'$ is infinite dimensional.
 	
 	Consider a quasimorphism $\phi\in \QH_0(H)$, and let $C$ be such that $\abs{\phi(gh)-\phi(g)-\phi(h)}<C$ for all $g,h\in G$.
 	Since $\phi$ vanishes on $N'$,
 	the values of $\phi$ on a coset $gN'$ differ by at most $C$ from $\phi(g)$.
 	In particular,  $\phi(gN')$ is bounded,
 	and we define $\hat\psi:Q'\to \bbR$ by
 	$$\hat\psi(gN'):=\inf\{\phi(gk) \mid k \in N' \}.$$
 	It follows that $\abs{\hat\psi(ghN') - \hat\psi(gN') - \hat\psi(hN')}<4C$.
 	We can then make $\hat\psi$ homogeneous in the usual way, by taking
 	$$\psi(\overline{g}) := \lim\limits_{n\to \infty} \frac{\hat{\psi}(\overline{g}^n)}{n},\ \ \textrm{for $\overline{g}\in Q'$}.$$	
 	Then $\psi$ is the unique homogeneous quasimorphism on $Q'$ that is a bounded distance from $\hat{\psi}$.
 	The map $\phi\mapsto \psi$ from $\QH_0(H)\to QH(Q')$ is  injective: if $\psi=0$, then $\hat\psi$ is bounded, so $\phi$ is bounded too.
 	Since $\phi$ is homogeneous, $\phi=0$.
\end{proof}

 By \cite{Brooks} any non-abelian free group $G$
 has an infinite dimensional space of homogeneous quasimorphisms.
 It follows from the above that any large group virtually has many quasimorphisms.
 This was generalized to hyperbolic groups
 by Epstein--Fujiwara  \cite{EpFu_second}, and to groups acting on a hyperbolic space with a WPD element by Bestvina-Fujiwara \cite{BeFu_quasi}.
 By \cite{BF_hyperbolic}, $\Out(\bbF_n)$ has such an action on a hyperbolic space. Therefore:

 \begin{thm}[\cite{BF_hyperbolic}] \label{thm_H2_OutFn}
 	$\QH(\Out(\bbF_n))$ is infinite dimensional for $n\geq 2$.
 \end{thm}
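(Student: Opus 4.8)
For $n=2$ the statement is immediate: $\Out(\bbF_2)\simeq\GL_2(\bbZ)$ is virtually free, a non-abelian free group has an infinite-dimensional space of homogeneous quasimorphisms by Brooks \cite{Brooks}, and since the restriction map $\QH(G)\to\QH(H)$ is injective when $H<G$ has finite index (cf. the proof that virtually having many quasimorphisms is a vastness property), this passes to $\Out(\bbF_2)$. So from now on assume $n\geq 3$.

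The plan is to exhibit a non-elementary action of $\Out(\bbF_n)$ on a Gromov-hyperbolic space admitting a loxodromic WPD element, and then feed this into the Bestvina--Fujiwara construction of quasimorphisms. \textbf{Step 1.} Take a hyperbolic complex on which $\Out(\bbF_n)$ acts by isometries; the free factor complex (or the free splitting complex) is such a space, hyperbolic by Bestvina--Feighn \cite{BF_hyperbolic}. One checks the action is non-elementary: $\Out(\bbF_n)$ is not virtually cyclic, and it fixes no point of the Gromov boundary, since fully irreducible automorphisms act with north--south dynamics on the compactification and there exist pairs of them with disjoint attracting/repelling sets. \textbf{Step 2.} Exhibit a loxodromic WPD element: a fully irreducible automorphism $\phi$ acts loxodromically, and satisfies Bestvina--Fujiwara's weak proper discontinuity condition. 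This is the technical heart; it rests on the fact that quasi-axes of fully irreducibles have virtually cyclic stabilizers, combined with the geometry of the chosen complex. \textbf{Step 3.} Invoke \cite{BeFu_quasi}: a group acting non-elementarily on a hyperbolic space with a loxodromic WPD element has infinite-dimensional $\EH^2_b(\,\cdot\,;\bbR)$, hence infinite-dimensional $\QH$. Concretely the quasimorphisms are built, in the manner of Brooks, by counting signed occurrences of long sub-segments of (powers of) a quasi-axis of $\phi$ along orbit paths; the WPD property ensures these counting functions are genuine quasimorphisms and that infinitely many of them are linearly independent.

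The main obstacle is \textbf{Step 2}, the verification of the WPD condition for a fully irreducible automorphism on the hyperbolic complex: Step 1 is soft and Step 3 is a black-box application of an existing general theorem. One may alternatively bypass Steps 2--3 by quoting directly that any group with a proper hyperbolically embedded subgroup virtually has many quasimorphisms \cite{DGO_HE} (the same input already used for Theorem~\ref{thm_SQ_OutFn}), the hyperbolically embedded subgroup here being the virtually cyclic subgroup generated by a fully irreducible automorphism \cite{BF_hyperbolic}.
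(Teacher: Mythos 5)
Your proposal is correct and follows essentially the same route as the paper, which deduces the statement by combining the Bestvina--Fujiwara criterion of \cite{BeFu_quasi} (an action on a hyperbolic space with a WPD element yields infinitely many independent homogeneous quasimorphisms) with the existence of such an action for $\Out(\bbF_n)$ from \cite{BF_hyperbolic}; the paper treats both ingredients as black boxes, exactly as you do. Your separate handling of $n=2$ via virtual freeness and the alternative via hyperbolically embedded subgroups \cite{DGO_HE} are harmless variants of the same citation chain.
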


 On the other hand, for $n\geq 3$, every homogeneous quasimorphism on $\SL_{n}(\bbZ)$ is trivial \cite{Newman_unimodular}.
 This also holds for any finite index subgroup of $\SL_{n}(\bbZ)$ by \cite[Th. 21]{BuMo_continuous}.
 Alternatively, by \cite[Thm. 6.1]{DWM_bounded} every finite index subgroup $H$ of $\SL_n(\bbZ)$ contains a finite index subgroup $H_0$ that is boundedly generated by elementary matrices.
 Triviality of $\QH(H)$ then follows since it embeds into $\QH(H_0)$. Indeed,
 Steinberg relations,
 \begin{equation*}
 [E_{ij}(\lambda),E_{kl}(\mu)] = \begin{cases}
 E_{il}(\lambda\mu) & \textrm{if j=k,} \\
 1 		& \textrm{otherwise,}
 \end{cases}
 \end{equation*}
where $E_{ij}(\lambda)$ represents the elementary matrix which differs from the identity matrix by a $\lambda$ in the $(i,j)$--entry,
show that
elementary matrices in $H_0$ have a power that is a single commutator,
and quasi-morphisms are uniformly bounded on commutators.

 Hence we get:

 \begin{thm}[\cite{Newman_unimodular,BuMo_continuous,DWM_bounded}]\label{thm_H2_GLn}\ \newline
For $n\geq 3$, and any finite index subgroup $H < \SL_n(\bbZ)$, $\QH(H)=\{0\}$.
 \end{thm}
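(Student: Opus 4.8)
The plan is to prove the statement as a bounded-generation phenomenon, along the lines sketched before the theorem. First a reduction: if $H_0<H$ has finite index $d$, then the restriction map $\QH(H)\to\QH(H_0)$ is injective, since a homogeneous quasimorphism $\phi$ vanishing on $H_0$ satisfies $0=\phi(g^d)=d\,\phi(g)$ for every $g\in H$ (this is the observation already used to verify \ref{vast:finite index} for virtually having many quasimorphisms). Hence it suffices, given a finite index $H<\SL_n(\bbZ)$, to produce a further finite index subgroup $H_0<H$ with $\QH(H_0)=\{0\}$. By \cite[Thm. 6.1]{DWM_bounded} we may take $H_0$ boundedly generated by elementary matrices: there exist $e_1,\dots,e_k\in H_0$, with $e_s=E_{i_sj_s}(\lambda_s)$ and $i_s\neq j_s$, such that every $g\in H_0$ can be written $g=e_1^{a_1}\cdots e_k^{a_k}$ with $a_s\in\bbZ$.

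Now fix $\phi\in\QH(H_0)$ and let $D=\sup_{g,h}\lvert\phi(gh)-\phi(g)-\phi(h)\rvert$ be its defect. I will use the standard fact that a homogeneous quasimorphism is bounded by its defect on single commutators, i.e.\ $\lvert\phi([a,b])\rvert\le D$ for all $a,b\in H_0$. The key step is to show $\phi(e_s)=0$ for each $s$. Since $H_0$ has finite index, for each ordered pair $p\neq q$ the intersection $E_{pq}(\bbZ)\cap H_0$ is a finite index subgroup of $E_{pq}(\bbZ)\cong\bbZ$, hence equals $E_{pq}(m_{pq}\bbZ)$ for some integer $m_{pq}\ge1$. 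Because $n\ge3$ we may pick an index $k_s\notin\{i_s,j_s\}$, and then for every positive multiple $t$ of $m_{i_sk_s}m_{k_sj_s}$ the Steinberg relation yields
\[
e_s^{\,t}=E_{i_sj_s}(t\lambda_s)=\bigl[\,E_{i_sk_s}(m_{i_sk_s}),\ E_{k_sj_s}(t\lambda_s/m_{i_sk_s})\,\bigr],
\]
where both matrices on the right lie in $H_0$ by the choice of $t$. Therefore $t\,\lvert\phi(e_s)\rvert=\lvert\phi(e_s^{\,t})\rvert\le D$ for all such $t$, and letting $t\to\infty$ forces $\phi(e_s)=0$.

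To finish, take any $g=e_1^{a_1}\cdots e_k^{a_k}\in H_0$; applying the quasimorphism inequality $k-1$ times and using $\phi(e_s^{a_s})=a_s\phi(e_s)=0$ gives $\lvert\phi(g)\rvert\le(k-1)D$. Applying this to $g^j$ and using homogeneity, $j\,\lvert\phi(g)\rvert=\lvert\phi(g^j)\rvert\le(k-1)D$ for every $j\ge1$, so $\phi(g)=0$. Thus $\QH(H_0)=\{0\}$, and the reduction gives $\QH(H)=\{0\}$. The only substantial input is \cite[Thm. 6.1]{DWM_bounded}; alternatively one may simply quote \cite{Newman_unimodular}, which gives $\QH(\SL_n(\bbZ))=\{0\}$ for $n\ge3$, together with \cite[Th. 21]{BuMo_continuous} to pass to finite index subgroups. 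I expect the only point needing care to be the bookkeeping in the displayed identity: one must arrange that the powers $e_s^{\,t}$ become \emph{single} commutators inside $H_0$ itself, not merely inside $\SL_n(\bbZ)$, which is precisely where the congruence-type observation on $E_{pq}(\bbZ)\cap H_0$ and the hypothesis $n\ge3$ are used.
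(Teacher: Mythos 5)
Your argument is correct and follows essentially the same route as the paper: the paper's proof is exactly the bounded-generation alternative you develop (cite \cite[Thm. 6.1]{DWM_bounded} to get $H_0$ boundedly generated by elementary matrices, use Steinberg relations to make powers of those elementary matrices single commutators inside $H_0$, and conclude via the injectivity of $\QH(H)\to\QH(H_0)$), with your $m_{pq}$-bookkeeping just filling in the detail the paper leaves implicit in the phrase ``elementary matrices in $H_0$ have a power that is a single commutator.'' The fallback you mention (Newman plus \cite[Th. 21]{BuMo_continuous}) is also the paper's primary citation, so nothing is missing.
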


 \subsubsection{Bounded generation}\label{sec:boundedly generated}

\begin{dfn}
	A group $G$ is said to be \emph{boundedly generated} if there are elements $g_1,\ldots, g_k \in G$ such that any $g \in G$ can be written in the form $g=g_1^{\alpha_1}\ldots g_k^{\alpha_k}$ for some $\alpha_1,\ldots, \alpha_k \in \bbZ$.

\end{dfn}

\begin{prop}[{\cite[Prop. 7]{Tavgen90_bddgen}}]
	Not being boundedly generated is a vastness property.
\end{prop}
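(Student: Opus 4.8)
The plan is to verify the three axioms \ref{vast:surjection}, \ref{vast:finite index}, \ref{vast:SES} directly for the property ``not being boundedly generated'', following Tavgen. Throughout it is convenient to phrase things contrapositively: write $\calp$ = ``not boundedly generated'', so $\neg\calp$ = ``boundedly generated'', and the three conditions become: (i) if $G\onto G'$ and $G'$ is boundedly generated then so is $G$ --- no wait, the direction is the other way: \ref{vast:surjection} requires that $G'$ having $\calp$ forces $G$ to have $\calp$, i.e. $G$ boundedly generated $\Rightarrow$ $G'$ boundedly generated. This is immediate: if $g_1,\dots,g_k$ witness bounded generation of $G$ and $p:G\onto G'$, then $p(g_1),\dots,p(g_k)$ witness it for $G'$, since $p(g_1^{\alpha_1}\cdots g_k^{\alpha_k})=p(g_1)^{\alpha_1}\cdots p(g_k)^{\alpha_k}$. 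So \ref{vast:surjection} holds.

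Next I would handle \ref{vast:finite index}: for $H\le G$ of finite index, $G$ is boundedly generated iff $H$ is. One direction: if $G$ is boundedly generated by $g_1,\dots,g_k$, pass to the normal core $H_0=\bigcap_{g\in G} gHg^{-1}$, which is still finite index, say $[G:H_0]=m$. The point is that cyclic subgroups of $G$ intersect $H_0$ in a finite-index (hence finite-index at most $m$) subgroup, so $g_i^m\in H_0$, and one rewrites a word $g_1^{\alpha_1}\cdots g_k^{\alpha_k}$ that happens to lie in $H_0$ by collecting the ``remainders'' modulo $m$ into a bounded-length prefix taking finitely many values; this exhibits $H_0$, hence $H$, as boundedly generated (using that a finite union of cosets $s_j\langle g_1^m\rangle\cdots$ of the boundedly generated subgroup still gives bounded generation of $H_0$ after absorbing the finitely many $s_j$ into the generating tuple). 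For the converse, if $H$ is boundedly generated by $h_1,\dots,h_k$, take coset representatives $t_1,\dots,t_m$ of $H$ in $G$; then every $g\in G$ is $g=t_j h$ for some $j$ and some $h\in H$, and $h$ is a bounded product of powers of the $h_i$, so the tuple $(t_1,\dots,t_m,h_1,\dots,h_k)$ boundedly generates $G$ (each $t_j$ appears to at most the first power, which is fine as we may list all of $t_1,\dots,t_m$ in front). This gives \ref{vast:SES}... no, \ref{vast:finite index}.

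Finally \ref{vast:SES}: given $1\to N\to G\to Q\to 1$ with $N$ and $Q$ both boundedly generated, show $G$ is. Choose $g_1,\dots,g_k\in G$ whose images $\bar g_1,\dots,\bar g_k$ in $Q$ boundedly generate $Q$, and $h_1,\dots,h_\ell$ boundedly generating $N$. For $g\in G$, write its image in $Q$ as $\bar g_1^{\alpha_1}\cdots\bar g_k^{\alpha_k}$; then $g(g_1^{\alpha_1}\cdots g_k^{\alpha_k})^{-1}\in N$, which equals $h_1^{\beta_1}\cdots h_\ell^{\beta_\ell}$; hence $g=h_1^{\beta_1}\cdots h_\ell^{\beta_\ell}\,g_1^{\alpha_1}\cdots g_k^{\alpha_k}$, so the concatenated tuple $(h_1,\dots,h_\ell,g_1,\dots,g_k)$ boundedly generates $G$. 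This is purely formal. The main obstacle --- the only step requiring genuine care --- is the ``$G$ bdd gen $\Rightarrow$ $H$ bdd gen'' half of \ref{vast:finite index}, where one must control how a bounded word over $G$ that lands in the finite-index subgroup $H_0$ decomposes into a bounded word over $H_0$; this is exactly the content of Tavgen's Proposition~7, so I would simply cite \cite[Prop.~7]{Tavgen90_bddgen} for it rather than reproduce the bookkeeping. The remaining observation is that $\bbZ$ is obviously boundedly generated, so ``not being boundedly generated'' is a vastness property not satisfied by $\bbZ$, as needed for Corollary~\ref{cor:vast out raag}.
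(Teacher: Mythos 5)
Your proof is correct and follows essentially the same route as the paper: (V1) via stability of bounded generation under epimorphisms, (V3) via stability under extensions, and (V2) by adjoining coset representatives in the easy direction while deferring the harder direction ($G$ boundedly generated implies its finite-index subgroups are) to Tavgen's Proposition~7, exactly as the paper does.
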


\begin{proof}
	Since bounded generation is stable under epimorphism, \ref{vast:surjection} is clear.
	The condition \ref{vast:SES} holds since  an extension of two boundedly generated groups is boundedly generated.
	
	It remains to check \ref{vast:finite index}.	
	If $H<G$ is of finite index, and $H$ is boundedly generated by $h_1,\ldots,h_k$, then for a set of coset representatives $g_1,\ldots,g_r$ of $H$ in $G$,
we get that $G$ is boundedly generated by $g_1,\dots,g_r,h_1,\dots h_k$.
	We refer the reader to \cite{Tavgen90_bddgen} for verification of the final part, that if $G$ is boundedly generated then so are its finite index subgroups.
\end{proof}

It is well known, since Carter--Keller \cite{CarterKeller}
that the groups $\SL_{n}(\bbZ)$ are boundedly generated for $n\geq 3$.
On the other hand, since $\Out(\bbF_n)$ has an infinite dimensional space of homogeneous quasimorphisms, it is not  boundedly generated.
Indeed, if a group is boundedly generated by $g_1,\dots, g_k$,
then each homogeneous quasimorphism is determined, up to a bounded error, by its value on each $g_i$.

\begin{thm}[\cite{CarterKeller,BF_hyperbolic}]\hfill
	\begin{enumerate}
		\item For $n\geq 2$, the groups $\Out(\bbF_n)$ are not  boundedly generated.
		\item For $n\geq 3$, the groups $\SL_n(\bbZ)$ are boundedly generated.
	\end{enumerate}
\end{thm}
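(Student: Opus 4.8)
The plan is to treat the two assertions separately and by quite different means: assertion~(1) is a soft consequence of Theorem~\ref{thm_H2_OutFn}, while assertion~(2) is the genuinely hard, number-theoretic input, due to Carter--Keller.

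For assertion~(1) I would argue by contraposition, using only the elementary fact that a boundedly generated group has a finite-dimensional space of homogeneous quasimorphisms. Suppose $G$ is boundedly generated by $g_1,\dots,g_k$, and let $\phi\in\QH(G)$ have defect $C$, so that $|\phi(gh)-\phi(g)-\phi(h)|\le C$ for all $g,h$. If $g=g_1^{\alpha_1}\cdots g_k^{\alpha_k}$, then iterating this inequality and using homogeneity ($\phi(g_i^{\alpha_i})=\alpha_i\phi(g_i)$) gives $|\phi(g)-\sum_{i=1}^k\alpha_i\phi(g_i)|\le (k-1)C$. Hence the linear map $\QH(G)\to\bbR^k$, $\phi\mapsto(\phi(g_1),\dots,\phi(g_k))$, has the property that every $\phi$ in its kernel is bounded on $G$; but a bounded homogeneous quasimorphism is identically zero, since $|\phi(g)|=|\phi(g^m)|/m\to 0$ as $m\to\infty$. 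So $\QH(G)$ embeds in $\bbR^k$ and is finite dimensional. Applying this with $G=\Out(\bbF_n)$ and invoking Theorem~\ref{thm_H2_OutFn} proves that $\Out(\bbF_n)$ is not boundedly generated for $n\ge 2$. (The same argument explains why $\SL_2(\bbZ)$, being virtually free, fails to be boundedly generated, which is why assertion~(2) must exclude $n=2$.)

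For assertion~(2) I would quote the theorem of Carter--Keller \cite{CarterKeller}; here is the shape of the argument. One first reduces bounded generation of $\SL_n(\bbZ)$ to showing that, for $n\ge 3$, $\SL_n(\bbZ)$ is the product of a bounded number (depending only on $n$) of the elementary root subgroups $U_{ij}=\{E_{ij}(t):t\in\bbZ\}\cong\bbZ$; since each $U_{ij}$ is cyclic, grouping a bounded-length product of elementary matrices into powers of the fixed generators $E_{ij}(1)$ yields bounded generation in the sense of the definition. The latter statement is proved by reducing an arbitrary matrix to the identity using a number of elementary row and column operations bounded independently of the size of the entries. Over $\bbZ$ the naive Euclidean algorithm needs roughly $\log$ of the entries many steps, so the key is to exploit, for $n\ge 3$, the presence of a spare coordinate as scratch space: given a unimodular row, one uses a single elementary operation to move one entry into a prescribed residue class chosen --- via the Chinese Remainder Theorem together with Dirichlet's theorem on primes in arithmetic progressions --- to be a prime coprime to another entry, and then primality lets one reduce the ideal they generate to all of $\bbZ$ in a bounded number of further operations. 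Clearing the first column, then the first row, and inducting on $n$ finishes.

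The main obstacle lies entirely in assertion~(2): assertion~(1) is a two-line computation once Theorem~\ref{thm_H2_OutFn} is granted, whereas assertion~(2) rests on the full strength of Carter--Keller --- the difficulty being the uniform bound on the number of elementary operations, and in particular the genuinely analytic ingredient (Dirichlet's theorem) used to manufacture primes exactly where the Euclidean reduction needs them. Since in this paper assertion~(2) is used only as a black box contrasting $\SL_n(\bbZ)$ with $\Out(\bbF_n)$, in practice I would simply cite \cite{CarterKeller} for it and carry out the quasimorphism argument for assertion~(1) in full.
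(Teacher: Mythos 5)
Your proposal is correct and follows essentially the same route as the paper: assertion (1) is deduced from Theorem~\ref{thm_H2_OutFn} via the observation that a homogeneous quasimorphism on a boundedly generated group is determined, up to bounded error, by its values on the bounded generators (so $\QH$ embeds in $\bbR^k$), and assertion (2) is quoted from Carter--Keller \cite{CarterKeller} as a black box. Your extra details (the injectivity of $\QH(G)\to\bbR^k$ via vanishing of bounded homogeneous quasimorphisms, and the sketch of the Carter--Keller reduction) are sound but not needed beyond what the paper records.
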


\subsection{McCool groups}\label{sec:prelim McCool}

Let $\bbF$ be a finitely generated free group,
and $\calc=\{[c_1],\dots,[c_s]\}$ a finite set of conjugacy classes of elements of $\bbF$.
One defines the \emph{McCool group} $\Mc(\bbF,\calc)\subset \Out(\bbF)$ as
the set of all outer automorphisms in $\Out(\bbF)$ that preserve  each conjugacy class $[c_i]$.
In fact, if we are given a family of subgroups $(K_i)_{i\in I}$ of $\bbF$,  where $I$ may be infinite,
one can consider the set of all outer automorphisms $\Phi$
such that for all $i$, $\Phi$ has a representative in $\Aut(\bbF)$ fixing $K_i$. This seemingly more general class of subgroups of $\Out(\bbF)$ actually coincides with the class of McCool groups of $\bbF$ \cite{GL_McCool}.

A graph of groups decomposition of $\bbF$ is \emph{relative to $\calc$} if for each $[c]\in \calc$, $c$ is contained in a conjugate
of a vertex group; equivalently, $c$ is elliptic in the Bass-Serre tree of the splitting.
If $H<\bbF$ is a free factor of $\bbF$, and if $c\in \bbF$ has a conjugate in $H$, then
$H\cap [c]$ is an $H$--conjugacy class. We denote by $\calc_{|H}$ the set of conjugacy classes obtained in this way from $\calc$
(note that $\calc_{|H}$ might be empty even if $\calc\neq \es$).

A group $G$ is said to be \emph{freely indecomposable relative to $\calc$} if $G$ does not admit a free product decomposition $G=G_1*G_2$ relative to $\calc$.
A \emph{Grushko decomposition} of $\bbF$ relative to $\calc$ is a free product decomposition
$\bbF = H_1 * \dots * H_k * \bbF_r$ relative to $\calc$
such that each $H_i$ is freely indecomposable relative to $\calc_{|H_i}$.
In the Grushko decomposition, the free product with $\bbF_r$ should be thought of as
$r$ HNN extensions over the trivial group.
We refer to \cite{GL_McCool} or to \cite{GL3} for further details.

The following is a special case of a result of the first author and Levitt for our situation \cite[Thm 4.6]{GL6}.

\begin{thm}[\cite{GL6}]\label{thm:mccool ses}
	Suppose $\bbF$ is freely indecomposable relative to $\calc$.
	Then  there is a finite index subgroup $M^0$ in $\Mc(\bbF;\calc)$ which fits in a short exact sequence
		$$1 \to \calt \to M^0 \to \prod\limits_{j=1}^{s} \MCG(S_j) \to 1$$
	where $\calt$ is a finitely generated abelian group, and
	$\MCG(S_j)$ is the group of isotopy classes of homeomorphisms of a (maybe non-orientable) hyperbolic surface $S_j$,
	 which map each boundary component to itself in an orientation-preserving way.
\end{thm}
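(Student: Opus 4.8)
The plan is to deduce the statement from the general structure theorem for relative outer automorphism groups proved by the first author and Levitt, namely \cite[Thm 4.6]{GL6}, by specializing it to the case where the ambient group is free.

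First I would set up the canonical JSJ machinery. Since $\bbF$ is freely indecomposable relative to $\calc$, it admits a canonical JSJ decomposition over infinite cyclic subgroups relative to $\calc$: a finite graph of groups $\Delta$ with infinite cyclic edge groups, whose vertex groups come in two types. The \emph{quadratically hanging} (QH) vertices are fundamental groups of compact hyperbolic surfaces $S_j$ with non-empty boundary --- possibly non-orientable --- in which the incident edge groups and those conjugacy classes of $\calc$ carried by the vertex are represented by the boundary components. The remaining \emph{rigid} vertices are finitely generated free groups admitting no further splitting relative to their incident edge groups together with the part of $\calc$ they carry. Canonicity of $\Delta$ means it is preserved, up to isomorphism, by every element of $\Mc(\bbF;\calc)$, which places us exactly in the framework of \cite[Thm 4.6]{GL6}.

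Next I would apply \cite[Thm 4.6]{GL6}. It presents a finite index subgroup of $\Mc(\bbF;\calc)$ as assembled from the group of twists $\calt$ of $\Delta$ (generated by the Dehn twists along the finitely many edges, and finitely generated abelian), the mapping class groups of the QH surfaces, and the relative automorphism groups of the rigid vertices, together with the finite group of symmetries of the underlying graph. The crucial point in our situation is that each rigid vertex group is a finitely generated free group which, by construction of $\Delta$, is rigid relative to the required structure, so its relative outer automorphism group is \emph{finite}; the graph-symmetry contribution is finite as well. Hence the subgroup $M^0$ of $\Mc(\bbF;\calc)$ that fixes the graph $\Delta$ and acts trivially on every (finite) rigid-vertex contribution has finite index, and for it the extension collapses to
$$1 \to \calt \to M^0 \to \prod_{j=1}^s \MCG(S_j) \to 1$$
with $\calt$ finitely generated abelian.

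It then remains to recognize the $\MCG$ factors precisely as in the statement. Every boundary component of a QH surface $S_j$ corresponds either to an incident edge of $\Delta$ (whose cyclic edge group has its Dehn twist already inside $\calt$) or to one of the conjugacy classes $[c_i]\in\calc$; since conjugacy classes in a free group are oriented ($[c]\neq[c^{-1}]$ in general) and $M^0$ fixes the combinatorial and orientation data of $\Delta$, the image of $M^0$ in the automorphism group of the QH vertex consists exactly of the mapping classes of $S_j$ that send each boundary component to itself and restrict to it in an orientation-preserving way, i.e. $\MCG(S_j)$ in the sense of the statement. I expect the main obstacle to be precisely this last translation between the general formalism of \cite{GL6} and the concrete assertions here --- that the rigid vertices contribute only finite groups (so that they disappear upon passing to $M^0$), that the twist group is abelian and not merely nilpotent, and that the boundary behaviour of the mapping classes is exactly ``each boundary component to itself, orientation-preserving on the boundary'' --- whereas the rest is a direct citation of \cite[Thm 4.6]{GL6}.
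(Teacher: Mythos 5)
Your proposal matches the paper's treatment: the paper offers no independent argument for this statement, quoting it directly as a special case of \cite[Thm 4.6]{GL6} applied to the canonical cyclic JSJ decomposition of $\bbF$ relative to $\calc$, with $\calt$ the group of twists on the edges (cf.\ Remark \ref{rem_trivial_JSJ}). The details you supply --- finiteness of the rigid-vertex and graph-symmetry contributions, abelianness of the twist group, and the identification of the QH factors with the boundary-preserving mapping class groups --- are exactly what is packaged in that citation, so the approach is essentially the same.
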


\begin{rem}\label{rem_trivial_JSJ}
  The short exact sequence comes from the analysis of the cyclic JSJ decomposition of $\bbF$ relative to $\calc$,
and $\calt$ is a group of twists on the edges.
In particular, if the JSJ decomposition is trivial then $\calt$ is trivial, and $M$ is either finite
(if the JSJ decomposition consists of a single rigid vertex in which case $s=0$),
or $M$ is isomorphic to some $MCG(S)$ for some surface as above (if the JSJ decomposition consists of a
single quadratically hanging vertex, in which case $s=1$). In the latter case, one can take $M=M^0$ but we won't need this fact.
\end{rem}

\subsection{Automorphisms of splittings}
\label{sec:prelim usable results}

We give here two technical results concerning the structure of certain (outer and non-outer) automorphism groups of free products of HNN extensions.

All the $G$--trees $T$ we will consider will be \emph{minimal}: there is no invariant subtree.
A $G$--tree $T$ is non-trivial when $T$ is not a point. In all cases we consider, $T$ will be \emph{irreducible} meaning that there exist two hyperbolic elements whose axes have compact intersection.

\begin{dfn}
  The $G$--tree $T$ is invariant under an automorphism $\phi\in \Aut(G)$
if there exists a map $J_{\phi}:T\ra T$ which is $\phi$--equivariant: $J_\phi(gx)=\phi(g)J_\phi(x)$.

The map $J_{\phi}$ is unique when $T$ is irreducible.
\end{dfn}

Note that $T$ is always invariant under every inner automorphism $\ad_g$ as one can take $J_{\ad_g}(x)=gx$.
Thus, one also says that $T$ is invariant under an outer automorphism $\Phi\in \Out(G)$ if $T$ is invariant under any representative of $\Phi$ in $\Aut(G)$.

Consider a $G$--tree $T$ which is invariant under a group $\cala\subset \Aut(G)$ containing $\Inn(G)$.
Then one gets an action of $\cala$ on $T$ where $\phi\in \cala$ acts through $J_\phi$.
In particular, this action of $\cala$ extends the action of $G$ in the following sense:
for each $g\in G$, the action of $g$ coincides with the action of the inner automorphism $\ad_g\in \cala$.

We remind the reader that the \emph{deformation space} of a $G$--tree $T$ consists of all $G$--trees $T'$ which have the same elliptic subgroups as $T$.
For instance, if $T$ is the $\bbF$-tree associated to a Grushko decomposition of $\bbF$ relative to $\calc$ as above,
then in general, $T$ is not invariant under $\Mc(\bbF,\calc)$, but its deformation space is.

However, in the particular case where the relative Grushko decomposition is of the form $\bbF=A*B$ or $\bbF=A*_{\grp1}$, then
its Bass-Serre tree $T$ is in fact invariant under $\Mc(\bbF,\calc)$.
The reason is that $T$ is the \emph{unique} reduced tree in its deformation space by \cite{Lev_rigid}.
Here, a $G$--tree $T$ is \emph{reduced} if whenever the stabiliser of an edge $e$ in $T$ is equal to the vertex stabiliser of one of its end-points, then both end-points of $e$ lie in the same $G$--orbit.
We refer the reader to \cite{Lev_rigid} for details.

The following results describe consequences of this invariance for the structure of certain groups of
(outer and non-outer) automorphisms of free products or HNN extensions,
showing that they themselves split as amalgamated products or HNN extensions.
Note that these results would not extend to graphs of groups with more factors.

\begin{lem}\label{lem_usable_amalg}
  Let $H=A*B$ with $A,B$ non-trivial groups.
Let $\calo$  be a group of outer automorphisms of $H$ that preserve the conjugacy classes of $A$ and of $B$. Let $\cala$ be the preimage of $\calo$ in $\Aut(H)$.
Then
\begin{enumerate}
	\item $\calo\hookrightarrow \Aut(A)\times \Aut(B)$. The image of $\Phi \in \calo$ is obtained as follows: there is a unique respresentative $\Tilde \Phi\in \Aut(H)$
	preserving $A$ and $B$ (not just their conjugacy classes) simultaneously.
	Then the image of $\Phi$ is given by $(\tilde \Phi_{|A} , \tilde \Phi_{|B})$, the restrictions of $\Tilde \Phi$ to $A$ and $B$.
	
	\item $\cala$ splits as the amalgam $$\cala\simeq \Big[\calo \semidirect A\Big]*_\calo \Big[ \calo\semidirect B\Big]$$
	where the action of $\calo$ on $A$ and $B$ is given by the embedding in $\Aut(A)\times \Aut(B)$.
\end{enumerate}
\end{lem}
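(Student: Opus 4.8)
The plan is to use the invariance of the Bass-Serre tree $T$ of the splitting $H=A*B$ under the group $\cala$, exploiting that $T$ is the unique reduced tree in its deformation space by \cite{Lev_rigid}. Since $\calo$ preserves the conjugacy classes of $A$ and $B$, every element of $\cala$ sends elliptic subgroups to elliptic subgroups, hence preserves the deformation space; uniqueness of the reduced representative then forces $T$ itself to be $\cala$-invariant. So by the discussion preceding the lemma, $\cala$ acts on $T$ extending the action of $H=\Inn(H)$ (identifying $H$ with $\Inn(H)$, which is legitimate since $A,B$ are non-trivial so $H$ has trivial centre).

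For part (1): $T$ has two orbits of vertices, with stabilizers $A$ and $B$ (up to conjugacy), and one orbit of edges with trivial stabilizer. First I would fix a vertex $\tilde a$ with $H$-stabilizer exactly $A$. Given $\Phi\in\calo$ and any representative $\phi\in\Aut(H)$, the map $J_\phi$ sends $\tilde a$ to some vertex with stabilizer $\phi(A)$; since $\Phi$ preserves the conjugacy class of $A$, after composing $\phi$ with an inner automorphism we get a representative $\tilde\Phi$ with $\tilde\Phi(A)=A$, i.e. $J_{\tilde\Phi}(\tilde a)=\tilde a$. Similarly one arranges $\tilde\Phi(B)=B$ by a further inner adjustment; here one must check the two adjustments are compatible, which follows because the inner automorphisms fixing $A$ (as a set) are exactly $\ad_a$ for $a\in A$ (using the normal form / malnormality of $A$ in $A*B$), and such an $\ad_a$ can be further used to fix $B$ only if it is trivial — more precisely, picking $\tilde b$ adjacent to $\tilde a$, there is a unique $\tilde\Phi$ fixing both $\tilde a$ and $\tilde b$, hence fixing $A=\Stab(\tilde a)$ and $B=\Stab(\tilde b)$. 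Uniqueness of such $\tilde\Phi$ (two representatives differing by $\ad_g$ with $g$ fixing both $\tilde a,\tilde b$, forcing $g=1$) gives a well-defined injective homomorphism $\calo\to\Aut(A)\times\Aut(B)$, $\Phi\mapsto(\tilde\Phi_{|A},\tilde\Phi_{|B})$; injectivity holds because if both restrictions are trivial then $\tilde\Phi=\id$ on the generating set $A\cup B$.

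For part (2): I would identify the stabilizers of the vertices $\tilde a$ and $\tilde b$ for the $\cala$-action on $T$. The full stabilizer $\Stab_\cala(\tilde a)$ consists of those $\phi\in\cala$ with $J_\phi(\tilde a)=\tilde a$, i.e. $\phi(A)=A$; by part (1)'s uniqueness statement, each outer class $\Phi\in\calo$ has a unique such representative, and two elements of $\cala$ with the same image in $\calo$ and both fixing $\tilde a$ differ by $\ad_a$ with $a\in A=\Stab_H(\tilde a)$. Hence $\Stab_\cala(\tilde a)\cong \calo\semidirect A$, where the semidirect product structure comes from conjugation and the $\calo$-action is via $\Phi\mapsto\tilde\Phi_{|A}\in\Aut(A)$; similarly $\Stab_\cala(\tilde b)\cong\calo\semidirect B$. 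The edge stabilizer $\Stab_\cala(e)$ for $e=[\tilde a,\tilde b]$ is the intersection, which is the set of $\phi$ fixing both endpoints; by the above this is precisely (a copy of) $\calo$, mapping isomorphically onto $\calo\subset\cala$. Since $\cala$ acts on the tree $T=A*B$ (one edge orbit, trivial edge $H$-stabilizer, so $\cala$ acts with a single edge orbit too, as $\cala/H$ permutes nothing extra — the quotient graph $\cala\backslash T$ is still a single edge), Bass-Serre theory gives $\cala\cong \Stab_\cala(\tilde a)*_{\Stab_\cala(e)}\Stab_\cala(\tilde b)=[\calo\semidirect A]*_\calo[\calo\semidirect B]$. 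The main obstacle I anticipate is the bookkeeping in part (1): carefully justifying that a genuine canonical representative $\tilde\Phi$ preserving $A$ and $B$ simultaneously exists and is unique, which rests on $A$ and $B$ being malnormal and self-normalizing in $A*B$ (equivalently, on $\tilde a,\tilde b$ being the unique fixed points); once this is cleanly established, part (2) is a direct Bass-Serre computation, and the identification of the $\calo$-action with the $\Aut(A)\times\Aut(B)$-embedding is a matter of unwinding definitions.
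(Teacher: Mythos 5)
Your proposal is correct and follows essentially the same route as the paper: invariance of the Bass--Serre tree via Levitt's uniqueness of the reduced tree in its deformation space, the canonical representative $\tilde\Phi$ fixing the edge (equivalently both endpoints, using trivial edge stabilizer and the fact that $\calo$ does not swap the factors), and then Bass--Serre theory applied to the $\cala$-action, whose quotient is a single edge, to obtain the amalgam $[\calo\semidirect A]*_{\calo}[\calo\semidirect B]$. The only cosmetic difference is that you phrase existence/uniqueness of $\tilde\Phi$ via self-normalization and malnormality of the factors, whereas the paper argues directly with the single $H$-orbit of edges and the trivial $H$-stabilizer of $e$.
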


\begin{proof}
  Let $T$ be the Bass--Serre tree of the decomposition $H=A*B$.
Since the conjugacy classes of $A$ and $B$ are $\calo$--invariant, $\calo$ preserves the deformation space of this splitting.
By \cite{Lev_rigid}, $T$ is the unique reduced tree in its deformation space, so $T$ is $\cala$--invariant.
As above, the isometries $J_\phi$ for $\phi\in \cala$ yield an action of $\cala$ on $T$ which extends the action of $H$.

Let $a,b\in T$ be the vertices fixed by $A$ and $B$, and $e$ the edge joining them.
For $\Phi \in \calo$, let $\Tilde \Phi \in \cala$ be the unique preimage of $\Phi$ in $\Aut(H)$
such that $J_{\Tilde \Phi}$ fixes $e$.
Such $\Tilde \Phi$ exists because there is only one $H$--orbit of edges and because $\Phi$ does not exchange the conjugacy classes of $A$ and $B$,
while uniqueness is a consequence of the fact that the $H$--stabilizer of $e$ is trivial.
Note that $\Tilde \Phi$ can be alternatively described as in the statement of the lemma.
The mapping $\Phi \mapsto (\Tilde \Phi_{|A},\Tilde \Phi_{|B})\in \Aut(A)\times \Aut(B)$ is a homomorphism, and it is one-to-one
because if $\Phi$ is in the kernel, then $\Tilde\Phi$ fixes $A$ and $B$ and is therefore the identity.
This proves the first part.

To prove the second part, we use the action of $\cala$ on $T$ described above.
Since $\cala$ contains $\Inn(H)$ and contains no element exchanging $a$ and $b$,
the quotient graph $T/\cala$ is a single edge, so the action of $\cala$ on $T$ yields a splitting of $\cala$ as an amalgamated product.
The stabilizer $\cala(e)$ of the edge $e$ is exactly the set of lifts $\Tilde \Phi$, as defined above, for $\Phi \in \calo$.
The stabilizer $\cala(a)$ of $a$ is the set of automorphisms $\phi\in \cala$ preserving $A$.
It it is generated by $\cala(e)$ and $\Tilde A:=\{\ad_x \mid x\in A\}\subset \Aut(H)$, with $\Tilde A\simeq A$ since $H$ has trivial center.
The group $\Tilde A$ is normal in $\cala(a)$, and under the identification of $\cala(e)$ with $\calo$ and of $\Tilde A$ with $A$,
one has $\cala(a)\simeq \calo\semidirect A$.
 A similar statement holds for the stabilizer of $b$, and the lemma follows.
\end{proof}

The following lemma gives a similar result for HNN extensions over the trivial group.
As we will see in the proof,  the subgroup $\calo^0$  is actually the subgroup (of index at most 2) of $\calo$
that does not switch the orientation of the edge of the quotient graph.

\begin{lem}\label{lem_usable_HNN}
  Let $H=A*_{\{1\}}$ be an HNN extension over the trivial group, and $\calo$ a group of outer automorphisms of $H$, preserving the conjugacy class of $A$.
Denote by $t$ the stable letter of the HNN extension.

Let $\calo^0\subset \calo$ be the set of all $\Phi\in \calo$ sending the conjugacy class of the stable letter $t$
to the conjugacy class of an element of $tA$.
Let $\cala^0$ be the preimage of $\calo^0$ in $\Aut(H)$.
Then
\begin{enumerate}
	\item $\calo^0$ is a subgroup of index at most $2$ in $\calo$, and
 $\calo^0\hookrightarrow \Aut(A)\semidirect A$.
The image of $\Phi\in \calo^0$ is obtained as follows:
there is a unique representative $\Tilde \Phi\in \Aut(H)$
preserving $A$ and sending $t$ to $ta$ for some $a\in A$.
The image of $\Phi$ is $(\Tilde \Phi_{|A},a)\in \Aut(A)\semidirect A$.
	
	\item
	Suppose that $A$ is abelian, and let $\cala'$ be the image of $\calo^0$ in $\Out(A)=\Aut(A)$. Then $\cala^0$ embeds in the HNN extension
			$$\cala^0 \hookrightarrow \Big[ \cala' \ltimes (A\times A)\Big] *_{\cala'\ltimes A}$$
	where the embeddings $j_1,j_2$ defining the HNN extension are
	the natural embeddings $$j_1:\cala'\semidirect A\hookrightarrow\cala'\semidirect \Big[ \{1\}\times A\Big]\ \textrm{ and }\ j_2:\cala'\semidirect A\hookrightarrow\cala'\semidirect \Big[ A\times \{1\} \Big].$$
	
	Whenever $\cala$ contains all automorphisms that are the identity on $A$ and send $t$ to $atb$, for  all $a,b\in A$, we have equality between $\cala^0$ and the HNN extension.
\end{enumerate}

\end{lem}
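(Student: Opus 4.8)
The plan is to run the same argument as for Lemma~\ref{lem_usable_amalg}, using the Bass--Serre tree of the HNN splitting in place of that of the amalgam. Write $T$ for the Bass--Serre tree of $H=A*_{\{1\}}$, so that $T$ has vertex set $H/A$ and edge set $H$, the edge $h$ joining $hA$ and $htA$; it is minimal and irreducible, and $t$ acts hyperbolically with translation length $1$. Since $\calo$ preserves the conjugacy class of $A$ it preserves the deformation space of this splitting, and since $T$ is the unique reduced tree in that deformation space by \cite{Lev_rigid}, $T$ is invariant under $\cala$. Thus each $\phi\in\cala$ acts on $T$ through the (unique, as $T$ is irreducible) equivariant isometry $J_\phi$, extending the $H$--action via $\Inn(H)$. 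I will use throughout the normalizer computation $N_H(A)=A$, valid in any free product $A*B$ with $B\neq\{1\}$, which yields all the uniqueness assertions below.

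For part~(1), first identify $\calo^0$ with the kernel of a ``co-orientation'' character. As $\cala$ permutes the conjugates of $A$ it preserves $\ngrp{A}$, so every $\Phi\in\calo$ descends to an automorphism of $H/\ngrp{A}\cong\bbZ$, giving a homomorphism $\eps\colon\calo\to\{\pm1\}$ whose kernel has index at most $2$. Now for a representative $\phi$ of $\Phi\in\calo$, the element $\phi(t)$ acts on $T$ as the conjugate isometry $J_\phi\, t\, J_\phi^{-1}$, hence is hyperbolic of translation length $1$; and the hyperbolic elements of $H$ of translation length $1$ are exactly the conjugates of elements of $tA$ together with the conjugates of elements of $t^{-1}A$, the two families being distinguished by their image in $H/\ngrp{A}\cong\bbZ$ (namely $+1$ versus $-1$). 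Hence $\phi(t)$ is conjugate into $tA$ precisely when $\eps(\Phi)=+1$, so $\calo^0=\ker\eps$ and the index is at most $2$. Next, given $\Phi\in\calo^0$ with representative $\phi_0$, I replace $\phi_0$ by $\ad_{g^{-1}}\phi_0$ where $gAg^{-1}=\phi_0(A)$, so that $\phi_0(A)=A$ and $J_{\phi_0}$ fixes $v=A$; since $\eps(\Phi)=+1$, the edge $e$ from $v$ to $tA$ is sent by $J_{\phi_0}$ to an edge issuing from $v$ of the form ``$a_0$'' for some $a_0\in A$, so $\phi_0(t)\in a_0tA$, and replacing $\phi_0$ once more by $\ad_{a_0^{-1}}\phi_0$ gives a representative $\widetilde\Phi$ with $\widetilde\Phi(A)=A$ and $\widetilde\Phi(t)\in tA$. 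Uniqueness of $\widetilde\Phi$ holds because a conjugation identifying two such representatives is by an element of $N_H(A)=A$, and for $g\in A$ one has $g(ta)g^{-1}=ta'$ only if $g=1$, since $A\cap tAt^{-1}=\{1\}$. As $\widetilde\Phi_1\widetilde\Phi_2$ again preserves $A$ and carries $t$ into $tA$, uniqueness forces $\widetilde{\Phi_1\Phi_2}=\widetilde\Phi_1\widetilde\Phi_2$, so $\Phi\mapsto(\widetilde\Phi_{|A},a)$ (where $\widetilde\Phi(t)=ta$) is a homomorphism $\calo^0\to\Aut(A)\ltimes A$, injective since a $\widetilde\Phi$ fixing $A$ pointwise and fixing $t$ is trivial.

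For part~(2), assume $A$ abelian, so $\Out(A)=\Aut(A)$, and apply Bass--Serre theory to $\cala^0\actson T$. Because $\Inn(H)\subseteq\cala^0$, the quotient $T/\cala^0$ has a single vertex and a single edge, that edge being a loop (its endpoints $v$ and $tv=\ad_t(v)$ lie in one orbit), and the action has no inversions since $\cala^0$ is co-orientation preserving. Hence $\cala^0$ is the HNN extension with stable letter $\ad_t$, vertex group $\calb:=\Stab_{\cala^0}(v)=\{\phi\in\cala^0\mid\phi(A)=A\}$, and edge group $\calc:=\Stab_{\cala^0}(e)$. Using $N_H(A)=A$ and abelianness of $A$, every $\phi\in\calb$ has $\phi_{|A}\in\cala'$ and $\phi(t)=a_0ta_1$ with $a_0,a_1\in A$ uniquely determined, giving an injective homomorphism $\calb\hookrightarrow\cala'\ltimes(A\times A)$, $\phi\mapsto(\phi_{|A},(a_0,a_1))$; and $\calc$ consists exactly of the canonical representatives $\widetilde\Phi$ of part~(1), so it embeds in $\cala'\ltimes A$ via $\widetilde\Phi\mapsto(\widetilde\Phi_{|A},a)$. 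Computing the two edge inclusions shows the literal inclusion $\calc\hookrightarrow\calb$ corresponds to $(\psi,a)\mapsto(\psi,(1,a))$, i.e.\ to $j_1$, while conjugation by $\ad_t$ corresponds to $(\psi,a)\mapsto(\psi,(a,1))$, i.e.\ to $j_2$. Thus $\calb$ and $\calc$ embed compatibly into the vertex and edge groups of $[\cala'\ltimes(A\times A)]*_{\cala'\ltimes A}$, and the induced map $\cala^0\hookrightarrow[\cala'\ltimes(A\times A)]*_{\cala'\ltimes A}$ is injective, as reduced HNN normal forms are preserved when the vertex group is enlarged while the edge group and both incidence maps are kept. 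Finally, if $\cala$ contains every automorphism fixing $A$ pointwise and sending $t$ to $atb$, then each lies in $\cala^0$ (as $atb$ is conjugate to $tba\in tA$) and maps to $(\id,(a,b))$, so together with the image $\cala'$ of the projection $\calb\to\Aut(A)$ this generates all of $\cala'\ltimes(A\times A)$, hence $\calb=\cala'\ltimes(A\times A)$; likewise the automorphisms $t\mapsto tb$ give $\calc=\cala'\ltimes A$, so the embedding is onto and $\cala^0$ equals the HNN extension.

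I expect the main difficulty to be purely organizational: pinning down, with consistent conventions for the semidirect products, that the two edge-incidence maps of the HNN decomposition of $\cala^0$ are precisely the two ``coordinate'' embeddings $\cala'\ltimes A\hookrightarrow\cala'\ltimes(A\times A)$ of the statement, and checking that the outer-conjugacy condition defining $\calo^0$ coincides with ``co-orientation preserving'' — which is where the translation-length-$1$ classification of hyperbolic elements and the quotient $H\to H/\ngrp{A}\cong\bbZ$ enter. Everything else is a direct transcription of the amalgam case of Lemma~\ref{lem_usable_amalg}.
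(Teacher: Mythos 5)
Your proof is, in substance, the paper's: invariance of the Bass--Serre tree $T$ via Levitt's uniqueness of the reduced tree, canonical lifts fixing the edge $e$ for part (1), and for part (2) the HNN decomposition of $\cala^0$ induced by its action on $T$, with the two edge maps computed to be $j_1$ and $j_2$. Your identification of $\calo^0$ as the kernel of the character $\calo\to\Aut(H/\ngrp{A})\cong\{\pm 1\}$, via the classification of hyperbolic elements of translation length one, is a correct and slightly more explicit substitute for the paper's remark that $\calo^0$ is the orientation-preserving subgroup for the action on the loop $T/H$; likewise your hands-on uniqueness arguments (using $N_H(A)=A$ and $A\cap tAt\m=\{1\}$) replace the paper's appeal to triviality of edge stabilizers. (Your ``conjugation by $\ad_t$'' should be read as $\phi\mapsto \ad_{t\m}\circ\phi\circ\ad_t$, the map that lands in the stabilizer of $v$; the formula $(\psi,a)\mapsto(\psi,(a,1))$ you give is the right one.)

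The one step whose justification does not hold up as written is the final injectivity claim. You invoke the principle that reduced HNN normal forms survive when ``the vertex group is enlarged while the edge group and both incidence maps are kept'', but here the edge group is \emph{not} kept: the image of $\calc$ in $\cala'\ltimes A$ is the image of $\calo^0$ under the part-(1) embedding, which is in general a proper subgroup (it is all of $\cala'\ltimes A$ only under the surjectivity hypothesis of the last sentence). Enlarging the edge group can destroy injectivity in general: the map $B*\bbZ\to B\times\bbZ$, obtained from the HNN splitting over the trivial group by enlarging the edge group to $B$ with both incidence maps the identity, is compatible but not injective. What the normal-form argument actually requires is that an element of $\calb$ whose image lies in $j_1(\cala'\ltimes A)$, i.e.\ has $a_\phi=1$, already lies in $\calc$, and that one whose image lies in $j_2(\cala'\ltimes A)$, i.e.\ has $b_\phi=1$, already lies in $i_2(\calc)$. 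Both follow in one line from what you have established: if $a_\phi=1$ then $J_\phi$ fixes $v$ and $tv$, hence $e$; if $b_\phi=1$ then $J_\phi$ fixes $t\m v$ and $v$, so $\psi:=\ad_t\circ\phi\circ\ad_{t\m}$ fixes $e$ and $\phi=\ad_{t\m}\circ\psi\circ\ad_t=i_2(\psi)$. With these two lines added, reduced words map to reduced words and the embedding follows; this is also the point the published proof treats most tersely, so the fix is small, but your stated justification should not stand as is.
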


\begin{rem}
	We note that the assumption that $A$ is abelian in the second part of Lemma \ref{lem_usable_HNN} could be avoided,
but with a more complicated description of $j_1,j_2$.
\end{rem}

\begin{proof}
  Let $T$ be the Bass-Serre tree of the HNN decomposition $H=A*_{\{1\}}$.
Since the conjugacy class of $A$ is $\calo$--invariant, $\calo$ preserves the deformation space of this splitting.
By \cite{Lev_rigid}, $T$ is the unique reduced tree in its deformation space, so $T$ is $\cala$--invariant.
As above, the isometries $J_\phi$ for $\phi\in \cala$ yield an action of $\cala$ on $T$ which extends the action of $H$.
In particular, $\calo$ acts on the quotient circle $T/H$ and
the subgroup $\calo^0$ is the subgroup of index at most 2 preserving the orientation of the edge.

Let $v\in T$ be the vertex fixed by $A$, and $e$ the edge joining $v$ to $tv$.
For $\Phi \in \calo^0$, let $\Tilde \Phi \in \cala^0$ be the unique representative fixing $e$.
Since $J_{\Tilde \Phi}$ fixes $e$, it fixes its endpoint $tv$, so
$tv=J_{\Tilde \Phi}(tv)=\Tilde\Phi(t) J_{\Tilde \Phi}(v)=\Tilde\Phi(t) v$, so $\Tilde\Phi(t)=ta$ for some $a\in A$.
The mapping $\Phi\mapsto (\Tilde \Phi_{|A},a)\in \Aut(A)\semidirect A$ is a homomorphism, and it is one-to-one
because if $\Phi$ is in the kernel then $\Tilde\Phi$ fixes $A$ and $t$ and is therefore the identity.
This proves the first part.

To prove the second part, as a first step we show that the action of $\cala^0$ on $T$, as described above, gives the splitting of $\cala^0$ as the HNN extension
		$\cala^0= \left[\cala^0(A)\right]*_{\calo^0}$,
	where $\cala^0(A)\subset \Aut(H)$ is the subgroup of automorphisms in $\cala^0$ preserving the group $A$ (not up to conjugacy),
	and
	where the two embeddings $i_1,i_2 : \calo^0 \to \cala_0(A)$ defining the HNN extension are
	$i_1:\Phi\mapsto \Tilde \Phi$ and
	$i_2:\Phi\mapsto \ad_{t\m}\circ\Tilde \Phi \circ \ad_t$.
Since $\cala^0$ contains $\Inn(H)$, and since no element of $\calo^0$ flips the edge in $T/H$, the quotient graph $T/\cala^0$
is a circle, so we get a splitting of $\cala^0$ as an HNN extension.
The stabilizer of the edge $e$ is exactly the set of lifts $\Tilde \Phi$ for $\Phi \in \calo^0$,
and the stabilizer of the vertex $v$ is $\cala^0(A)$.
Since the endpoints of $e$ are $v$ and $tv$,
one can take for $i_1$ and $i_2$ the maps induced by the inclusion and by the conjugation by $\ad_t\m$.

 Now suppose $A$ is abelian.
Given $\phi\in \cala^0(A)$, consider $a_\phi,b_\phi\in A$ the unique elements such that
$\phi(t)=a_\phi t b_\phi$.
This is possible because $J_\phi$ fixes $v$ and sends $tv$ to a neighbour of $v$,
so $\varphi(t)v =J_\phi(tv) = a_\phi tv$ for some $a_\phi \in A$.

This yields a homomorphism $\cala^0(A)\ra \Aut(A) \semidirect (A\times A)$
defined by $\phi\mapsto (\phi_{|A},a_\phi,b_\phi)$
(if $A$ was not abelian, we would rather need to use $\phi\mapsto (\phi_{|A},a_\phi,b_\phi\m)$,
and this would require changing the description of the embeddings $j_1,j_2$).
This homomorphism is clearly injective,
and it is onto whenever $\cala$ contains all automorphisms fixing $A$ and sending $t$ to $atb$, for all $a,b\in A$.

The subgroup $\cala^0(e)$ consists of the automorphisms in $\cala^0(A)$ with $a_\phi=1$,
so the inclusion $\cala^0(e)\subset\cala^0(A)$ corresponds to the embedding into $\Aut(A) \semidirect (\{1\}\times A)$.
Conjugation by $\ad_{t\m}$ yields another embedding of  $\cala_0(e)$ into $\cala_0(A)$:
if $\phi \in \cala_0(e)$ sends $t$ to $tb_\phi$,
then $\ad_{t\m}\circ\phi\circ\ad_{t}$ sends $t$ to $\ad_{t\m}(tb_\phi)=b_\phi t$.
This yields the embedding $j_2$.
\end{proof}

\section{Using equivalence classes} \label{sec:free equivs}

In this section we prove the first alternative of the trichotomy.
It is based on the following simple observation, which we did not find in the literature (compare with \cite[\S 3]{CV09}).

Recall that $\Out^0(\A_\Gamma)$ is the finite index subgroup of $\Out(\A_\Gamma)$ generated by transvections and partial conjugations.

\begin{prop} \label{prop_tricho1}
Let $[v]$ be an equivalence class of $\Gamma$  of size $n\geq 2$.

Then there is a natural epimorphism $\Out^0(\A_\Gamma)\onto \Out^0(\A_{[v]})$
where $\Out^0(\A_{[v]})$ is isomorphic to $\SL_n(\bbZ)$ or $\Out^+(\bbF_n)$ according to whether $[v]$ is abelian or not.
\end{prop}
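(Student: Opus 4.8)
The plan is to construct the epimorphism by a ``restriction'' operation that forgets everything outside the equivalence class $[v]$. First I would fix the enumeration of vertices compatible with the preordering, so that $[v]=\{v_{i},\dots,v_{i+n-1}\}$ for consecutive indices. The key point is that the standard generators of $\Out^0(\A_\Gamma)$ --- transvections $L_u^w, R_u^w$ with $u\leq w$, and partial conjugations $C_Z^x$ --- interact with $[v]$ in a controlled way. A transvection $L_u^w$ with both $u,w\in[v]$ is an ``internal'' generator; a transvection with $u\in[v]$ and $w\notin[v]$ has multiplier outside $[v]$, and since $w\not\sim v$ we actually have $w< v$ in the strict sense is impossible (as $u\leq w$ and $u\sim v$ would force $v\leq w$), so in fact one checks that no transvection can move a vertex of $[v]$ out of $\A_{[v]}$ in a way that survives; partial conjugations $C_Z^x$ with $x\in[v]$: here $Z$ is a union of components of $\Gamma\setminus\st(x)$, and one must check that $Z\cap[v]=\es$ because any two vertices of $[v]$ have the same star (if abelian) or same link (if free), hence lie in $\st(x)$ or are non-adjacent to $x$ but then in the component of $x$ — in all cases no vertex of $[v]$ is conjugated. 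So the recipe is: given $\Phi\in\Out^0(\A_\Gamma)$, lift to $\phi\in\Aut(\A_\Gamma)$ preserving $\A_{[v]}$ (possible after composing with an inner automorphism, using that $[v]$ is ``visible'' in the preordering), and restrict to get $\phi|_{\A_{[v]}}\in\Aut(\A_{[v]})$, then pass to $\Out(\A_{[v]})$.

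The main steps, in order, are: (1) Show that for each standard generator $g$ of $\Out^0(\A_\Gamma)$ there is a canonical representative $\tilde g\in\Aut(\A_\Gamma)$ that stabilizes the subgroup $\A_{[v]}$ setwise, and define $r(g):=\tilde g|_{\A_{[v]}}$. Concretely: internal transvections restrict to the analogous transvection; external transvections and all partial conjugations restrict to the identity on $\A_{[v]}$ (this is the content of the ``controlled interaction'' above). (2) Check that $r$ is well-defined on all of $\Aut^0$-lifts, i.e. that the assignment extends to a homomorphism $\Out^0(\A_\Gamma)\to\Out(\A_{[v]})$; the cleanest way is to use the standard representation / block structure from Section \ref{sec:standard representation}: the diagonal $[v]$-block of $\sigma(\Phi)$ depends only on $\Phi$, and combined with the fact that the partial-conjugation part does not touch $\A_{[v]}$, one gets a well-defined restriction homomorphism. (3) Identify the image: the internal transvections $L_{v_j}^{v_k}, R_{v_j}^{v_k}$ with $v_j,v_k\in[v]$ are exactly the standard generators of $\Out^0(\A_{[v]})$, and $\A_{[v]}\cong\bbZ^n$ if $[v]$ is abelian, $\cong\bbF_n$ if $[v]$ is free (as observed in Section \ref{sec:preordering}); by the Servatius--Laurence generating set these transvections generate $\SL_n(\bbZ)$, resp. $\Out^+(\bbF_n)$, so $r$ is onto. (4) Note $n\geq 2$ is used only to ensure $[v]$ genuinely has transvections and $\Out^0(\A_{[v]})$ is non-trivial.

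I expect the main obstacle to be Step (1)--(2): carefully verifying that \emph{every} standard generator admits a representative stabilizing $\A_{[v]}$ and that the resulting restriction is independent of choices (i.e. really descends to $\Out$, not just $\Aut$, and is a homomorphism rather than merely a map on generators). The subtlety is that a priori an automorphism preserving the conjugacy class structure of $[v]$ need not preserve $\A_{[v]}$ on the nose; one needs the preordering to guarantee the existence of a \emph{canonical} such representative, analogous to the $\tilde\Phi$ appearing in Lemma \ref{lem_usable_amalg}. Once one knows the restriction is a homomorphism killing partial conjugations and external transvections, surjectivity and the identification of the target are immediate from the known presentation of $\Out^0$ of $\bbZ^n$ and $\bbF_n$. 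A reasonable alternative to avoid the ``canonical representative'' issue entirely is to define the map via the standard representation: $\Out^0(\A_\Gamma)\xrightarrow{\sigma}\calg_\bbZ\to\SL_n(\bbZ)$ by projecting to the $[v]$-diagonal block, which handles the abelian case directly, and then handle the free case by a parallel argument using a ``non-abelian'' analogue of the block projection (a retraction onto $\Out^0(\A_{[v]})$), checking on generators that it is well-defined and surjective.
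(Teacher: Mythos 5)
There is a genuine gap, and it sits exactly where you predicted the difficulty would be. Your Step (1) is false as stated: not every standard generator of $\Out^0(\A_\Gamma)$ has a representative stabilizing $\A_{[v]}$ setwise. If some vertex $w$ strictly dominates $v$ (that is, $v\leq w$ but $w\not\sim v$), then the transvection $R_u^w$ with $u\in[v]$ lies in $\Out^0(\A_\Gamma)$ and sends $u$ to $uw$; no representative $\ad_g\circ R_u^w$ of its outer class preserves $\A_{[v]}$, since already in the abelianisation $u$ is sent to $u+w$, which does not lie in the span of $[v]$. (Concrete example: $[v]=\{v_1,v_2\}$ free, $w$ adjacent to both, so $\A_\Gamma\cong\bbF_2\times\bbZ$.) Your parenthetical correctly rules out $w<v$, but the case that actually occurs is $w>v$, and the phrase ``in a way that survives'' is precisely where a construction is missing: such a $w$ must first be \emph{killed} by a quotient before one can restrict. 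This is what the paper does: it composes the factor map $\Fact:\Out^0(\A_\Gamma)\ra\Out^0(\A_{\Gamma_{\leq [v]}})$, where $\Gamma_{\leq[v]}$ is the downward-closed subgraph of vertices $\leq v$ (downward-closure is what makes the kernel of $\A_\Gamma\onto\A_{\Gamma_{\leq[v]}}$ invariant), with the restriction map $\Res$, after verifying that the image of $\Fact$ preserves the conjugacy class of $\A_{[v]}$. Note that a one-step factor map straight onto $\A_{[v]}$ also fails (a transvection $L_u^{v'}$ with $u<v$ and multiplier $v'\in[v]$ does not preserve its kernel), so the two-step composite $\Res\circ\Fact$ is genuinely needed; neither your restriction-only recipe nor the obvious quotient-only variant is well defined.

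Two further points. First, your claim that a partial conjugation with multiplier $x\in[v]$ conjugates no other vertex of $[v]$ is wrong when $[v]$ is free: the other vertices of $[v]$ are isolated in $\Gamma\setminus\st(x)$, so for instance $C^x_{\{u\}}$ exists for $u\in[v]\setminus\{x\}$, and when $\card{[v]}\geq 3$ it induces a nontrivial outer automorphism of $\A_{[v]}$. This does not threaten the proposition (it only enlarges the image), but it invalidates the well-definedness argument of your Step (2), which leans on partial conjugations ``not touching'' $\A_{[v]}$. Second, your fallback via the standard representation does give a correct homomorphism onto $\SL_n(\bbZ)$ in the abelian case (projecting a block lower-triangular group onto a diagonal block is a homomorphism, and the image is generated by elementary matrices), but in the free case the ``non-abelian analogue of the block projection'' you invoke is exactly the retraction the proposition asks you to construct, so nothing is proved there: checking a formula on generators does not establish well-definedness without a structural construction such as the paper's $\Res\circ\Fact$.
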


\begin{rem}\label{rem:add inversions}
	If we consider the finite index subgroup $\Out^1(\A_\Gamma)$ of $\Out(\A_\Gamma)$ generated by transvections, partial conjugations, and inversions, then
	Proposition \ref{prop_tricho1} can be modified to show $\Out^1(\A_\Gamma)$ maps onto $\GL_n(\bbZ)$ or $\Out(\bbF_n)$ instead.
\end{rem}

The proof of Proposition \ref{prop_tricho1} will be given below, after defining and discussing the restriction and factor maps.

\subsection{Restriction and factor maps}\label{sec_res_fact}

Let $\Gamma'$ be an induced subgraph of $\Gamma$.
The inclusion induces a monomorphism $\A_{\Gamma'}\hookrightarrow \A_{\Gamma}$, allowing us to identify
$\A_{\Gamma'}$ with a subgroup of $\A_\Gamma$.

Given any outer automorphism $\Phi \in\Out(\A_\Gamma)$ preserving the conjugacy class of $\A_{\Gamma'}$,
choose a representative $\phi\in \Aut(\A_\Gamma)$ of $\Phi$
preserving $\A_{\Gamma'}$, and look at its restriction  $\phi_{|\A_{\Gamma'}}$.
The choice of $\phi$ is well defined up to the normalizer of $\A_{\Gamma'}$.
Since
 by \cite[Prop 2.2]{CCV_automorphisms}, the normalizer of $\A_{\Gamma'}$  is generated by $\A_{\Gamma'}$ and its centralizer,
it follows that the class of $\phi_{|\A_{\Gamma'}}$ in $\Out(\A_{\Gamma'})$
does not depend on the choice of the representative $\phi$.
Denoting by $\Out(\A_\Gamma;\A_{\Gamma'})$
the group of outer automorphisms that preserve the conjugacy class of $\A_{\Gamma'}$,
we thus get a \emph{restriction map} (as considered in \cite{CV09}, for instance)
$$\Res:\Out(\A_\Gamma;\A_{\Gamma'}) \ra \Out(\A_{\Gamma'}).$$

In some situations, the whole group $\Out^0(\A_\Gamma)$ preserves the conjugacy class of $\A_{\Gamma'}$.
This happens precisely when for every $u \in \Gamma \setminus \Gamma'$ we have both:
\begin{itemize}
	\item  $u \not\geq v$ for any $v \in \Gamma'$,
	\item  there is a connected component $Z$ of $\Gamma \setminus \st(u)$ such that the vertex set of $\Gamma'$ is contained in $Z\cup \st(u) $. 
\end{itemize}
The former ensures that all transvections preserve $\A_{\Gamma'}$, while the latter says all partial conjugations preserve $\A_{\Gamma'}$, up to conjugacy.
In such a situation, $\Out^0(\A_\Gamma)=\Out^0(\A_\Gamma;\A_{\Gamma'})$, so the restriction map is indeed
$$\Res:\Out^0(\A_\Gamma) \ra \Out^0(\A_{\Gamma'}).$$

In addition to the inclusion $\A_{\Gamma'}\hookrightarrow \A_{\Gamma}$,
there is also an epimorphism $\kappa: \A_{\Gamma}\ra \A_{\Gamma'}$ obtained by killing every vertex in $\Gamma\setminus \Gamma'$
($\kappa$ is a retraction of the inclusion).
If the kernel of $\kappa$ is preserved by $\Out^0(\A_\Gamma)$, then
one gets a homomorphism
$$\Fact:\Out^0(\A_\Gamma) \ra \Out^0(\A_{\Gamma'})$$
which we call a \emph{factor map}.
Since partial conjugations always factor through $\kappa$, the factor map is well-defined precisely if the following condition holds:
$$ v\in \Gamma',\ u\leq v \imp u\in \Gamma'.$$
Thus, given any set of vertices $S$ in $\Gamma$, and defining $\Gamma_{\leq S}$ to be the subgraph of $\Gamma$ induced by vertices $u\leq v$ for some $v\in S$,
then $\Fact:\Out^0(\A_\Gamma) \ra \Out^0(\A_{\Gamma_{\leq S}})$ is always well-defined.

We note that $\Fact$ and $\Res$ send transvections to transvections or to the identity, and send
partial conjugations to  partial conjugations or the identity.

\subsection{Mapping to automorphisms of equivalence classes}

We are now ready to prove Proposition \ref{prop_tricho1}.

\begin{proof}[Proof of Proposition \ref{prop_tricho1}.]
	 Suppose  $V=[v]$ has size at least two and
	consider   $\Gamma_{\leq V}$  and the factor map $\Fact:\Out^0(\A_\Gamma)\ra \Out^0(\A_{\Gamma_{\leq V}})$ as defined above.
	
We claim that  the image of $\Out^0(\A_\Gamma)$ in $\Out^0(\A_{\Gamma_{\leq V}})$
	preserves the conjugacy class of $\A_V$.
Indeed, if $\tau\in \Out^0(\A_\Gamma)$ is a transvection with multiplier $u$,
        then $\Fact(\tau)=\id$ whenever $u\not\leq v$;
if $u\leq v$ but $u\notin V$, then $\Fact(\tau)$ is the identity on $\A_V$;
        in the remaining case, $u\in V$ so $\Fact(\tau)$ stabilizes $\A_V$.

   Let $c$ be a partial conjugation of $\A_\Gamma$ with multiplier $w$, and let us prove that the conjugacy class of $V$ is invariant under $c$.
   This holds whenever $V$ is contained in $Z\cup \st(w)$ for some connected component $Z$ of $\Gamma_{\leq V}\setminus \st(w)$.
   Assume the contrary, that
   $x,y\in V$   are separated by $\st(w)$.
    Then $V$ must be a free equivalence class, and
   $\lk(x)=\lk(y)\subset \st(w)$, so $v\leq w$. If $w\in V$, then $\Fact(c)$ preserves $\A_{V}$.
If $w\notin V$ then $w\notin \Gamma_{\leq V}$, so $c$ maps to the identity in $\Out^0(\A_{\Gamma_{\leq V}})$
   and we are done.

Thus one can compose $\Fact$ with the restriction map
 $\Res:\Out^0(\A_{\Gamma'};\A_{V})\ra \Out^0(\A_V)$, and get
 the desired homomorphism $\Res\circ \Fact:\Out^0(\A_\Gamma) \to \Out^0(\A_V)$.
 To see that it is surjective, we just observe that all transvections and partial conjugations of $\A_V$ are in the image.
\end{proof}

\section{Separating intersections of links}\label{sec:SIL}

The goal of this section is the second alternative of our trichotomy.

\begin{prop}\label{prop_tricho2}
  Let $\Gamma$ be a graph which contains a SIL, and in which all equivalence classes are abelian.

Then $\Out(\A_\Gamma)$ is large.
\end{prop}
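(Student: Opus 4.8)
The plan is to use the restriction and factor maps from Section \ref{sec_res_fact} to reduce to a three-vertex-equivalence-class situation, and then to analyze that smaller group directly. Concretely, suppose $(x_1,x_2\mid x_3)$ is a SIL in $\Gamma$. Among all SILs I would choose one that is ``minimal'' in an appropriate sense: minimizing the set of vertices dominated by (or otherwise relevant to) the three equivalence classes $[x_1],[x_2],[x_3]$, so that after applying a composition of a factor map $\Fact$ (to cut down to $\Gamma_{\leq S}$ for $S=[x_1]\cup[x_2]\cup[x_3]$) and a restriction map $\Res$, one lands in $\Out^0$ of a RAAG whose graph is exactly the disjoint union of the three cliques $[x_1],[x_2],[x_3]$ (these are cliques since all equivalence classes are abelian by hypothesis). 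That RAAG is $\bbZ^{n_1}*\bbZ^{n_2}*\bbZ^{n_3}$ where $n_i=|[x_i]|$, and the SIL condition guarantees that the composed map $\Out^0(\A_\Gamma)\to\Out(\bbZ^{n_1}*\bbZ^{n_2}*\bbZ^{n_3})$ is well-defined and that its image $\calo$ contains, in particular, the two partial conjugations $C^{x_1}_{[x_3]}$ and $C^{x_2}_{[x_3]}$ as well as the relevant transvections within each equivalence class. I would then verify $\calo$ has an explicit, well-understood generating set. Since largeness passes to any group virtually surjected onto, it suffices to prove $\calo$ is large.

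To prove largeness of $\calo\subset\Out(\bbZ^{n_1}*\bbZ^{n_2}*\bbZ^{n_3})$, I would mimic the Grunewald--Larsen--Lubotzky--Malestein / Grunewald--Lubotzky strategy referenced in the introduction: pass to the free product $G=\bbZ^{n_1}*\bbZ^{n_2}*\bbZ^{n_3}$, take a suitable index-$2$ subgroup $G'\leq G$ (for instance the kernel of a homomorphism $G\to\bbZ/2$), and let $\calo$ (or a finite-index subgroup of it that stabilizes this homomorphism up to the natural equivalence) act on $H_1(G';\bbQ)$. This gives a linear representation $\calo^0\to\GL(H_1(G';\bbQ))$, and the goal is to show its image contains an arithmetic group of a non-compact type that is large, or at least surjects onto such — e.g. that the image is commensurable with something built out of $\SL_m(\bbZ)$ factors and $\bbZ^k$'s with a large factor. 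The partial conjugations $C^{x_i}_{[x_3]}$ act as unipotents on $H_1(G';\bbQ)$ and should generate a large enough image (via strong approximation and the fact that the Zariski closure is semisimple or has a large semisimple part). In the cases where this homological image is too small to conclude largeness directly — which is exactly the subtlety the introduction flags — I would instead exhibit a splitting of $\calo$ (or $\cala$, its preimage in $\Aut$) as an amalgamated product or HNN extension, using Lemma \ref{lem_usable_amalg} and Lemma \ref{lem_usable_HNN} applied to the free-product / HNN structure of $\bbZ^{n_1}*\bbZ^{n_2}*\bbZ^{n_3}$, and deduce largeness from the structure of the vertex and edge groups (which are, up to finite index, products of $\GL$'s and free abelian groups, with a genuinely non-elementary amalgam).

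The key steps in order: (1) choose a minimal SIL $(x_1,x_2\mid x_3)$ and set up the factor-then-restrict homomorphism $\Out^0(\A_\Gamma)\twoheadrightarrow \calo\leq\Out(\bbZ^{n_1}*\bbZ^{n_2}*\bbZ^{n_3})$, checking well-definedness via the conditions in Section \ref{sec_res_fact} and the SIL definition; (2) identify an explicit generating set of $\calo$, including the two partial conjugations along $[x_3]$ by $x_1$ and $x_2$ and the intra-class $\SL_{n_i}(\bbZ)$ transvections; (3) build the index-$2$ subgroup $G'$ and the homological representation, compute (the Zariski closure of) the image of $\calo^0$, and invoke strong approximation to get largeness when the image is big enough; (4) in the remaining small-image cases, apply Lemmas \ref{lem_usable_amalg}/\ref{lem_usable_HNN} to split $\cala$ (the lift of $\calo$) as an amalgam or HNN extension over the trivial-edge-group splittings of the free product, and conclude largeness from the splitting; (5) conclude that $\Out(\A_\Gamma)$, having a finite-index subgroup $\Out^0(\A_\Gamma)$ surjecting onto the large group $\calo$, is itself large. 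The main obstacle will be step (3)--(4): controlling the image of the homological representation and, when it degenerates, producing the right splitting and verifying it is non-elementary — this requires a careful case analysis on the triple $(n_1,n_2,n_3)$ and on how the equivalence classes interact (e.g. whether some $[x_i]$ has size $1$, and whether extra transvections between the classes survive the restriction map), which is precisely where the ad hoc arguments promised in the section outline come in.
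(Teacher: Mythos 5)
Your plan is essentially the paper's own proof: choose a minimal SIL (this is the paper's notion of a \emph{special} SIL, Propositions \ref{prop:all abelian, sil->special} and \ref{prop:large}), map $\Out^0(\A_\Gamma)$ via $\Res\circ\Fact$ onto a subgroup $\calo\leq\Out^0(\bbZ^{n_1}*\bbZ^{n_2}*\bbZ^{n_3})$ with an explicit generating set, use the Grunewald--Lubotzky-style action on $H_1$ of a double cover when the extra partial conjugation $C_X^{z_k}$ survives (Lemma \ref{lem:three partial conjugations}), and otherwise split the relevant automorphism group as an amalgam or HNN extension via Lemmas \ref{lem_usable_amalg} and \ref{lem_usable_HNN}. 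The only notable difference is that in the homological case no Zariski-closure or strong-approximation argument is needed: the $(-1)$-eigenspace of the deck transformation on $H_1$ of the double cover is $2$-dimensional, so the partial conjugations act through the virtually free group $\PGL(2,\bbZ)$ with non-elementary image, and largeness follows at once.
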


To prove this result,
we first introduce a particular kind of SIL, which we call a \emph{special SIL}.
We then prove that if $\Gamma$  satisfies the hypotheses of Proposition \ref{prop_tricho2}
then it has a special SIL (Proposition \ref{prop:all abelian, sil->special}).
We complete this section by showing that if $\Gamma$ has a special SIL, then $\Out(\A_\Gamma)$ is large. This
last part does not use the fact that all the equivalence classes are abelian, see Proposition \ref{prop:large}.

\subsection{SILs and special SILs}\label{subsec_SIL}

\begin{dfn}[\cite{GPR_automorphisms}]
  A \emph{separating intersection of links (SIL)} is a triple $\left( x_1 ,x_2 \mids x_3 \right)$ of vertices in $\Gamma$
  which pairwise do not commute, and such that the connected component  of $\Gamma\setminus (\lk(x_1)\cap\lk(x_2))$ containing $x_3$ avoids $x_1$ and $x_2$.
\end{dfn}

Note that $(x_1,x_2\mids x_3)$ is a SIL if and only if $(x_2,x_1\mids x_3)$ is a SIL. Other permutations of the triple do not necessarily preserve the SIL.

Consider a SIL  $S=(x_1,x_2 \mids x_3)$ and $Z$
the connected component of $\Gamma\setminus (\lk(x_1)\cap\lk(x_2))$ containing $x_3$.
We recall from the introduction that the two partial conjugations $C_Z^{x_1},C_Z^{x_2}$
generate a free group in $\Out(\A_\Gamma)$. We call them the \emph{SIL automorphisms} of $S$.

 We proceed to give, in Lemma \ref{lem:shared component}, an alternative definition of a SIL that will sometimes be more convenient.
It says that when you have a SIL $(x_1,x_2 \mids x_3)$,
there is a subset of $\Gamma$ that appears as a connected component of
each of the three sets $\Gamma\setminus \st(x_1)$, $\Gamma\setminus \st(x_2)$, and $\Gamma \setminus (\lk(x_1)\cap\lk(x_2))$.
 It follows from \cite[Lemma 4.5]{GPR_automorphisms}, and we also refer the reader to \cite[Section 2.3]{DayWade_BNS}, however we include a short proof for completeness.

 We first establish some notation.
 Given $u,v\in \Gamma$ that don't commute,
 denote by  $\Gamma^v_u$ the connected component $\Gamma\setminus \st(v)$ containing $u$.
 With this notation, the partial conjugation $C^v_U$ that acts non-trivially on $u$ has support $U=\Gamma^v_u$.

 We define the \emph{external boundary} $\partial Z$ of a subset $Z\subset \Gamma$ to be the set of vertices in $\Gamma\setminus Z$ that have a neighbour in $Z$.

\begin{rem}\label{rem:component and external boundary}
 The set $\Gamma^v_u$ is characterised as being the connected subgraph of $\Gamma$ that contains $u$, does not intersect $\lk(v)$, and whose external boundary $\partial \Gamma^v_u$ is contained in $\lk(v)$.
 \end{rem}

\begin{lem}\label{lem:shared component}
	A triple $(x,y\mids z)$ is a SIL if and only if  $x,y,z$ don't pairwise commute and $\Gamma^x_z = \Gamma^y_z$.
\end{lem}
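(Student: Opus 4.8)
The plan is to prove the two implications separately, using the characterisation of $\Gamma^v_u$ recorded in Remark~\ref{rem:component and external boundary}: namely that $\Gamma^v_u$ is the unique connected subgraph containing $u$, disjoint from $\lk(v)$, with external boundary contained in $\lk(v)$. Throughout, write $L = \lk(x)\cap\lk(y)$ and let $Z$ denote the connected component of $\Gamma\setminus L$ containing $z$; by definition $(x,y\mids z)$ is a SIL exactly when $x,y,z$ pairwise don't commute and $x,y\notin Z$.

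First I would prove the forward direction. Assume $(x,y\mids z)$ is a SIL. Since $x\notin Z$ and $Z$ is a connected component of $\Gamma\setminus L$, every vertex of $\partial Z$ lies in $L\subseteq \lk(x)$, and $Z$ itself is disjoint from $L\supseteq \es$; but I need $Z$ disjoint from $\lk(x)$, not merely from $L$. Here is where I use that $x\notin Z$: the vertices of $\lk(x)$ that could lie in $Z$ would have to be in $\lk(x)\setminus\lk(y)$. Suppose $w\in Z\cap\lk(x)$. Then $w\notin\lk(y)$ (else $w\in L$, contradicting $w\in Z$), so $w$ and $y$ don't commute; and since $w\in Z$ with $y\notin Z$, any path from $w$ to $y$ must exit $Z$ through $L\subseteq\lk(x)$. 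I would argue that $Z$ is then a connected set containing $z$, disjoint from $L$, but possibly meeting $\lk(x)$; to fix this, replace $Z$ by the connected component $Z'$ of $\Gamma\setminus\lk(x)$ containing $z$ — one checks $Z'\subseteq Z$ (since $L\subseteq\lk(x)$, removing $\lk(x)$ removes at least as much, so $z$'s component can only shrink) and that $x\notin Z'$, so $Z' = \Gamma^x_z$. Symmetrically $\Gamma^y_z$ is the component of $\Gamma\setminus\lk(y)$ containing $z$. The claim $\Gamma^x_z=\Gamma^y_z$ then follows because both equal the component of $\Gamma\setminus L$ containing $z$: indeed $\Gamma^x_z$ is connected, contains $z$, is disjoint from $\lk(x)\supseteq L$, and its external boundary lies in $\lk(x)$; I must upgrade ``external boundary in $\lk(x)$'' to ``in $L$'' using that $z$ (hence all of $Z$) is also separated from $y$. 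This bookkeeping — showing the component of $\Gamma\setminus\lk(x)$ containing $z$, the component of $\Gamma\setminus\lk(y)$ containing $z$, and the component of $\Gamma\setminus L$ containing $z$ all coincide — is the technical heart and the step I expect to be the main obstacle.

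For the converse, assume $x,y,z$ pairwise don't commute and $\Gamma^x_z=\Gamma^y_z=:W$. Then $W$ is disjoint from $\lk(x)$ and from $\lk(y)$, hence from $L$. Its external boundary $\partial W$ lies in $\lk(x)$ (as $W=\Gamma^x_z$) and in $\lk(y)$ (as $W=\Gamma^y_z$), so $\partial W\subseteq L$. Therefore $W$ is a connected subgraph of $\Gamma\setminus L$ whose boundary in $\Gamma$ lies entirely in $L$, which forces $W$ to be a full connected component of $\Gamma\setminus L$; since $z\in W$, this component is exactly the $Z$ in the definition of SIL. Finally $x\notin W$ (as $W=\Gamma^x_z$ and $x\notin\st(x)$'s complement component containing... more precisely $x\notin\Gamma\setminus\st(x)$), and likewise $y\notin W$, so $x,y\notin Z$ and $(x,y\mids z)$ is a SIL.

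I would keep the argument short by phrasing both directions around the single observation: for non-commuting $v$ and $u$, $\Gamma^v_u$ equals the connected component of $\Gamma\setminus\lk(v)$ containing $u$ provided that component omits $v$, and a connected set $W\ni u$ disjoint from a separating set $S$ with $\partial W\subseteq S$ is a component of $\Gamma\setminus S$. Applying this with $S\in\{\lk(x),\lk(y),L\}$ and chasing which components contain $z$ gives both implications. The only genuinely delicate point is verifying, in the forward direction, that removing the smaller set $L$ instead of $\lk(x)$ does not merge $z$'s component with one containing $x$ — which is precisely the SIL hypothesis $x\notin Z$ — so the logical content is exactly an unpacking of definitions once that equivalence is in hand.
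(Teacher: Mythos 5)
Your converse direction is complete and is essentially the paper's argument: $\partial W\subseteq\lk(x)\cap\lk(y)$, $W$ disjoint from $\lk(x)\cap\lk(y)$, hence $W$ is a full component of $\Gamma\setminus(\lk(x)\cap\lk(y))$ avoiding $x$ and $y$. The problem is the forward direction, where you yourself flag ``the main obstacle'': showing that the SIL component $Z$ of $\Gamma\setminus L$ (with $L=\lk(x)\cap\lk(y)$) is disjoint from $\lk(x)$, or equivalently that the components of $\Gamma\setminus\lk(x)$, of $\Gamma\setminus\lk(y)$ and of $\Gamma\setminus L$ containing $z$ all coincide. As written, this step is never established: your attempted argument (take $w\in Z\cap\lk(x)$, note $w\notin\lk(y)$, and consider paths from $w$ to $y$ exiting $Z$ through $L$) does not reach a contradiction and is abandoned, and the proposed detour through $Z'$, the component of $\Gamma\setminus\lk(x)$ containing $z$, merely relocates the same unproved claim, since identifying $Z'$ with $\Gamma^y_z$ (or with $Z$) again requires knowing $\partial Z'\subseteq\lk(y)$ and $Z'\cap\lk(y)=\es$. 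So the forward implication is genuinely incomplete.

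The missing step is in fact a one-liner, and it is exactly what the paper uses: if $w\in Z\cap\lk(x)$, then $w$ is adjacent to $x$; since $x\notin L$ (a vertex is never in its own link), $x$ lies in $\Gamma\setminus L$ and is joined by an edge to $w\in Z$, hence $x\in Z$ — contradicting the SIL condition $x\notin Z$. Thus $Z\cap\st(x)=\es$, and symmetrically $Z\cap\st(y)=\es$. Combined with $\partial Z\subseteq L\subseteq\lk(x)$, the characterisation of Remark \ref{rem:component and external boundary} gives $Z=\Gamma^x_z$ directly (and likewise $Z=\Gamma^y_z$), so no replacement of $Z$ by $Z'$ and no three-way component comparison is needed. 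With that observation inserted, your plan closes up and coincides with the paper's proof; without it, the forward direction does not go through.
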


\begin{proof}
  If $(x,y\mids z)$ is a SIL, let $Z$ be the connected component of $\Gamma\setminus (\lk(x)\cap \lk(y))$ containing $z$.
  We claim that $\Gamma^x_z=Z$.
  By definition, for every $v\in Z$, all of its neighbours are in $Z \cup (\lk(x)\cap\lk(y))$.
  In particular, its external boundary $\partial Z$ is contained in $\lk(x)$.
  Since $(x,y\mids z)$ is a SIL, $Z$ does not intersect $\st(x)$.
  It follows from Remark \ref{rem:component and external boundary} that $Z = \Gamma^x_z$.
  Similarly $Z = \Gamma^y_z$.
  This proves the first implication.

Conversely, assume that $\Gamma^x_z = \Gamma^y_z$,
and denote this set by $Z$. Then  $\partial Z \subseteq \lk(x)$, and $\partial Z \subseteq \lk(y)$, hence $\partial Z \subseteq \lk(x)\cap\lk(y)$.
Since also $Z$ does not intersect $\lk(x)\cap \lk(y)$, this implies that $Z$ is a connected component of $\Gamma \setminus (\lk(x)\cap \lk(y))$.
Since $x,y\notin Z$,  $(x,y\mids z)$ is a SIL.
\end{proof}

Let $\Gamma_S\subset \Gamma$ be the subgraph induced by the equivalence classes $[x_1]\cup[x_2]\cup [x_3]$.
Assuming that each equivalence class $[x_i]$ is abelian (maybe cyclic), then $\A_{\Gamma_S}$ is the free product
of three abelian groups.
Note that, in this case,
the definition of a SIL then implies that the three equivalence classes are distinct.

Let $\Gamma_{\leq S}\subset\Gamma$ be the subgraph induced by
$\{u\in \Gamma\mid  u\leq x_i \text{ for some }i\leq 3\}$
and consider the corresponding factor map, $\Fact:\Out^0(\A_\Gamma)\ra \Out^0(\A_{\Gamma_{\leq S}})$,  as defined in Section \ref{sec_res_fact}.
Next consider the restriction map $\Res:\Out^0(\A_{\Gamma_{\leq S}};\A_{\Gamma_S})\ra \Out^0(\A_{\Gamma_S})$.
In order to be able to compose these maps, we need that the image of $\Fact$ preserves the conjugacy class of $\A_{\Gamma_S}$.

\begin{dfn}\label{dfn:special_SIL}
	A SIL $S=(x_1,x_2\mids x_3)$ is a \emph{special SIL} if each equivalence class $[x_i]$ is abelian and
	the image of $\Fact:\Out^0(\A_\Gamma)\ra \Out^0(\A_{\Gamma_{\leq S}})$
	is contained in $\Out(\A_{\Gamma_{\leq S}};\A_{\Gamma_S})$.
	
	In particular, the map
	$\rho=\Res\circ \Fact:\Out^0(\A_\Gamma)\ra \Out^0(\A_{\Gamma_{ S}})$ is well-defined.
\end{dfn}

\begin{rem}\label{rem_SIL}
 If $S=(x_1,x_2\mids x_3)$ is a  SIL where each equivalence class $[x_i]$ is abelian
 and with
$\Gamma_{\leq S}=\Gamma_S$, then it is obviously a special SIL.
\end{rem}

\begin{rem}\label{rem_swap}
It is an immediate consequence of the definition that
if $(x_1,x_2\mids x_3)$ is a special SIL, and if $(x_3,x_2\mids x_1)$ happens to be a SIL, then it is necessarily also special. 
\end{rem}

\begin{lem}	\label{lem:combinatorial defn of special sil}
Consider a SIL  $S=(x_1,x_2\mids x_3)$ where the equivalence classes $[x_i]$ are abelian.

Then $S$ is a special SIL if and only if the following hold:
\begin{enumerate}
	\renewcommand{\theenumi}{\textup{(Sp\arabic{enumi})}}
\item\label{item:special SIL 1} for any $u\in \Gamma_{\leq S}$, if $x_i\leq u \leq x_j$ for some $i\neq j\in\{1,2,3\}$, then $u\in \Gamma_S$
\item\label{item:special SIL 2} if $u\in \Gamma_{\leq S}\setminus \Gamma_S$, there is a connected component $Z$ of $\Gamma\setminus \st(u)$ such that $x_1,x_2,x_3$ lie in $Z \cup \st(u)$;
	equivalently, $[x_1]\cup [x_2]\cup [x_3]\subset Z \cup \st(u)$.
\end{enumerate}
\end{lem}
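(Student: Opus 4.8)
The plan is to reduce the whole statement to the generators of $\Out^0(\A_\Gamma)$. Since $\Out^0(\A_\Gamma)$ is generated by transvections and partial conjugations (Definition \ref{dfn_out0}), the image of $\Fact$ is generated by the $\Fact$-images of these, and the outer automorphisms of $\A_{\Gamma_{\leq S}}$ that preserve the conjugacy class of $\A_{\Gamma_S}$ form a subgroup; hence $S$ is a special SIL if and only if $\Fact(\tau)$ preserves this conjugacy class for every transvection $\tau$ of $\A_\Gamma$ and $\Fact(c)$ does for every partial conjugation $c$. I will check these two requirements separately and prove the first equivalent to (Sp1) and the second to (Sp2). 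Throughout I use, from Section \ref{sec_res_fact}, that $\Gamma_{\leq S}$ is downward-closed for $\leq$, that $\Fact$ sends a transvection with multiplier $w$ and moved vertex $v$ either to the identity (if $v$ or $w$ is outside $\Gamma_{\leq S}$) or to the transvection of $\A_{\Gamma_{\leq S}}$ with the same data, and that $\Fact$ sends a partial conjugation $C_Z^w$ either to the identity (if $w\notin\Gamma_{\leq S}$) or to $C_{Z\cap\Gamma_{\leq S}}^w$.

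For transvections, the $\Fact$-image, when nontrivial, is the transvection of $\A_{\Gamma_{\leq S}}$ with some multiplier $w$ and moved vertex $v$; it fixes $\A_{\Gamma_S}$ pointwise when $v\notin\Gamma_S$ and preserves $\A_{\Gamma_S}$ setwise when $v,w\in\Gamma_S$, so the only potential failure is $v\in\Gamma_S$, $w\in\Gamma_{\leq S}\setminus\Gamma_S$. In that case the standard representation $\sigma$ of $\Out(\A_{\Gamma_{\leq S}})$ sends $\Fact(\tau)$ to a matrix taking $e_v\in\bbZ^{\Gamma_S}$ to $e_v+e_w\notin\bbZ^{\Gamma_S}$, whereas $\sigma$ of any outer automorphism preserving the conjugacy class of $\A_{\Gamma_S}$ stabilises the coordinate summand $\bbZ^{\Gamma_S}$ (conjugation is trivial on the abelianisation); so $\Fact(\tau)$ does not preserve the conjugacy class. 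Since $v\in\Gamma_S$ means $v\in[x_i]$ and hence $v\leq w\iff x_i\leq w$, while $w\in\Gamma_{\leq S}$ gives $w\leq x_j$ for some $j$ — necessarily $j\neq i$, as $i=j$ would force $w\sim x_i$, i.e. $w\in\Gamma_S$ — such a bad transvection exists exactly when some $w\in\Gamma_{\leq S}\setminus\Gamma_S$ satisfies $x_i\leq w\leq x_j$ with $i\neq j$, that is, exactly when (Sp1) fails.

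For partial conjugations, write $\bar c=\Fact(C_Z^w)$ with $w\in\Gamma_{\leq S}$. If $w\in\Gamma_S$ then $w\in\A_{\Gamma_S}$, so its representative $C_{Z\cap\Gamma_{\leq S}}^w$ preserves $\A_{\Gamma_S}$ setwise; so assume $w\in\Gamma_{\leq S}\setminus\Gamma_S$. Each abelian class $[x_i]$ is a clique, hence contained either in $\st(w)$ or in a single component of $\Gamma\setminus\st(w)$, and accordingly $\bar c$ conjugates the free factor $\A_{[x_i]}$ of $\A_{\Gamma_S}=\A_{[x_1]}*\A_{[x_2]}*\A_{[x_3]}$ by $w$ when $[x_i]\subseteq Z$ and fixes it pointwise otherwise. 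If for every component $Z$ of $\Gamma\setminus\st(w)$ one has $\Gamma_S\cap Z=\es$ or $\Gamma_S\subseteq Z\cup\st(w)$, then, $w$ commuting with $\Gamma_S\cap\lk(w)$, $\bar c$ acts on $\A_{\Gamma_S}$ as $\ad_w$ or as the identity, so preserves its conjugacy class (and the same follows for arbitrary $Z$); thus no bad partial conjugation exists. Conversely — the delicate point — if this combinatorial condition fails then, since at most one $x_i$ can lie in $\st(w)$ (otherwise some $x_i\in\lk(w)\subseteq\st(x_j)$ for the $x_j$ with $w\leq x_j$, contradicting the SIL), two of the classes, say $[x_1]\subseteq Z_1$ and $[x_2]\subseteq Z_2$, lie in distinct components $Z_1\neq Z_2$ of $\Gamma\setminus\st(w)$, and $\Fact(C_{Z_1}^w)$ conjugates $\A_{[x_1]}$ by $w$ while fixing $\A_{[x_2]}$ pointwise. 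Abelianisation cannot detect this; instead I retract $\A_{\Gamma_{\leq S}}$ onto $\A_W$ by killing the vertices outside $W=[x_1]\cup[x_2]\cup\{w\}$. There are no edges between these three blobs (as $x_1,x_2$ are non-adjacent and $[x_1],[x_2]$ meet $\st(w)$ trivially), so $\A_W=\A_{[x_1]}*\A_{[x_2]}*\langle w\rangle$, and the retraction carries $\Fact(C_{Z_1}^w)(\A_{\Gamma_S})$ to $\langle w\A_{[x_1]}w^{-1},\A_{[x_2]}\rangle$ and $\A_{\Gamma_S}$ to $\A_{[x_1]}*\A_{[x_2]}$. Since the normaliser in $\A_W$ of each free factor $\A_{[x_i]}$ is $\A_{[x_i]}$ itself, the former subgroup is conjugate to the latter only if $w\in\A_{[x_2]}\A_{[x_1]}\subseteq\A_{[x_1]}*\A_{[x_2]}$, which is impossible as $w$ generates the complementary free factor; hence $\Fact(C_{Z_1}^w)$ does not preserve the conjugacy class of $\A_{\Gamma_S}$, so $S$ is not special. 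Finally, the combinatorial condition holding for all $w\in\Gamma_{\leq S}\setminus\Gamma_S$ is readily checked to be equivalent to (Sp2), and combining the two analyses yields the lemma. I expect this non-conjugacy assertion — the free-product computation together with the identification of the normalisers of the free factors of $\A_W$ — to be the only genuinely non-routine point; the rest is bookkeeping with the preordering and the factor map.
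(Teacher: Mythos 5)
Your proof is correct, and its skeleton is the same as the paper's: reduce to the generators of $\Out^0(\A_\Gamma)$ via the factor map, then show the transvection generators behave well exactly when \ref{item:special SIL 1} holds and the partial conjugations exactly when \ref{item:special SIL 2} holds. Where you genuinely diverge is in how you certify that a bad generator fails to preserve the conjugacy class of $\A_{\Gamma_S}$: the paper exhibits explicit elements ($x_iu$, resp.\ $ux_iu^{-1}x_j$) and appeals to Servatius' solution of the conjugacy problem in RAAGs, whereas you use the standard representation for transvections (any conjugate of $\A_{\Gamma_S}$ has image the coordinate summand $\bbZ^{\Gamma_S}$ of the abelianisation, which the transvection visibly fails to stabilise) and, for partial conjugations, a retraction onto $\A_W=\A_{[x_1]}*\A_{[x_2]}*\grp{w}$ followed by a free-product argument; this makes the proof more self-contained, at the cost of some extra work, while the paper's citation of the conjugacy algorithm is shorter. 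Two points in your write-up need tightening. First, ``each abelian class is a clique, hence contained either in $\st(w)$ or in a single component of $\Gamma\setminus\st(w)$'' does not follow from clique-ness alone (a clique may meet both $\st(w)$ and a component); you need the defining property of the equivalence class: if some $x'\sim x''$ lies in $\lk(w)$ then $w\in\lk(x')\subset\st(x'')$, so the whole class lies in $\st(w)$. This is precisely the ``equivalently'' clause of \ref{item:special SIL 2}, which the lemma asserts and the paper proves, and your converse uses it when you write $[x_1]\subseteq Z_1$, $[x_2]\subseteq Z_2$ and claim these classes miss $\st(w)$. Second, your final non-conjugacy deduction (``conjugate only if $w\in\A_{[x_2]}\A_{[x_1]}$'') needs more than the computation of the normalisers: you must also know that a conjugator carrying $\A_{[x_1]}*\A_{[x_2]}$ to $w\A_{[x_1]}w^{-1}*\A_{[x_2]}$ can be adjusted to carry $\A_{[x_2]}$ to $\A_{[x_2]}$ and $\A_{[x_1]}$ to $w\A_{[x_1]}w^{-1}$ factor-by-factor, a (standard) Kurosh/Bass--Serre uniqueness statement. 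A shorter route to the same conclusion: a nontrivial $\A_{[x_2]}$ fixes a unique vertex of the Bass--Serre tree of $\bigl(\A_{[x_1]}*\A_{[x_2]}\bigr)*\grp{w}$, so the only conjugate of $\A_{[x_1]}*\A_{[x_2]}$ containing $\A_{[x_2]}$ is $\A_{[x_1]}*\A_{[x_2]}$ itself, and that subgroup does not contain $w\A_{[x_1]}w^{-1}$ by normal forms. With these two standard facts supplied, your argument is complete.
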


\begin{proof}
Assume \ref{item:special SIL 1} and \ref{item:special SIL 2} hold.
Let $\tau\in \Out^0(\A_\Gamma)$ be a transvection with multiplier $u$. If $u\notin \Gamma_{\leq S}$, its image in $\Out^0(\A_{\Gamma_{\leq S}})$
under the factor map is the identity. Otherwise, $u\leq x_j$ for some $j$. 
We can assume that some element of $\Gamma_S$ not fixed by $\tau$, so $x_i\leq u$ for some $i$.
Condition \ref{item:special SIL 1} guaranties that $u\in \Gamma_S$ if $i\neq j$. 
This also holds if $i=j$, as this implies $u\in [x_i]\subset \Gamma_S$.
It follows that $\Fact(\tau)$ preserves the conjugacy class of $\Gamma_S$.
Now consider a partial conjugation $C_Y^u$ of $\Gamma$. If $u\notin \Gamma_{\leq S}$, then $\Fact(C_Y^u)=\id$.
If $u\in \Gamma_S$, then $\Fact(C_Y^u)$ clearly preserves the conjugacy class of $\A_{\Gamma_S}$.
Otherwise, \ref{item:special SIL 2} gives us that
$[x_1]\cup [x_2]\cup [x_3]\subset Z \cup \st(u)$ for some connected component $Z$ of $\Gamma\setminus \st(u)$,
so  $C_Y^u$ is inner on $\A_{\Gamma_S}$.

Let us now check the equivalence mentioned in \ref{item:special SIL 2}.
Assume that $Z\cup \st(u)$ contains $\{x_1,x_2,x_3\}$.
Then if $x_i\in \st(u)$, then so is any $x'_i\sim x_i$, so $[x_i]\subset \st(u)$.
If $x_i\in Z$ and $x'_i$ is another vertex in $[x_i]$, then $x'_i$ and $x_i$ are joined by an edge because $[x_i]$ is abelian.
Since $Z$ is a connected component of $\Gamma\setminus \st(u)$, and $x_i,x'_i\notin \st(u)$, it follows that $x'_i\in Z$.

Let us now prove the converse implication (which won't be used in the sequel).
Assume that \ref{item:special SIL 2} does not hold, so that there exists $u\in \Gamma_{\leq S}\setminus\Gamma_S$ such that there are $i,j\in \{1,2,3\}$ with $x_i,x_j$
	in two distinct connected components $Y,Y'$, respectively, of $\Gamma\setminus \st(u)$.
Then the partial conjugation $C_Y^u$ sends $x_ix_j$ to $ux_iu\m x_j$,
and one easily checks that it is not conjugate in $\A_{\Gamma_{\leq S}}$ to an element of $\A_{\Gamma_S}$ using, for example, Servatius' solution to the conjugacy problem \cite[p. 38]{Servatius}.

When \ref{item:special SIL 1} does not hold, we have a transvection $L_{x_i}^u$ for some $i\in\{1,2,3\}$ and some $u \in\Gamma_{\leq S}\setminus \Gamma_S$.
Then $L_{x_i}^u$ sends $x_i$ to $x_i u $, which, similarly, is not conjugate in $\A_{\Gamma_{\leq S}}$ to an element of $\A_{\Gamma_S}$.
\end{proof}

\subsection{Finding special SILs}

In this section,  we consider a graph $\Gamma$ in which all equivalence classes are abelian,
 and we prove the existence of a special SIL as soon as there is a SIL.

\begin{prop}\label{prop:all abelian, sil->special}
	Suppose that in a graph $\Gamma$ all equivalence classes are abelian.

If $\Gamma$ has a SIL, then it has a special SIL.
\end{prop}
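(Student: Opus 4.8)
The plan is to start from an arbitrary SIL $(x_1,x_2\mids x_3)$ and modify it, via a well-chosen minimality procedure, until conditions \ref{item:special SIL 1} and \ref{item:special SIL 2} of Lemma \ref{lem:combinatorial defn of special sil} both hold. Since all equivalence classes in $\Gamma$ are abelian, the $[x_i]$ are automatically abelian, so by Lemma \ref{lem:combinatorial defn of special sil} it suffices to produce a SIL satisfying \ref{item:special SIL 1} and \ref{item:special SIL 2}. The natural quantity to minimise is the ``size'' of $\Gamma_{\leq S}\setminus \Gamma_S$, or more precisely the shared connected component $Z=\Gamma^{x_1}_{x_3}=\Gamma^{x_2}_{x_3}$ provided by Lemma \ref{lem:shared component}; the idea is that if either condition fails, a witnessing vertex $u\in \Gamma_{\leq S}\setminus\Gamma_S$ can be used either to shrink $Z$ or to replace one of the $x_i$ by a dominating vertex $u$, producing a ``smaller'' SIL and contradicting minimality.

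First I would fix a SIL and, using Lemma \ref{lem:shared component}, work with the common component $Z$ so that $\partial Z\subseteq \lk(x_1)\cap\lk(x_2)$ and $x_1,x_2\notin Z\ni x_3$. Among all SILs $(x_1,x_2\mids x_3)$ I would choose one minimising, say, $\abs{Z}$ (and among those, perhaps minimising $\abs{\Gamma_{\leq S}}$ as a tie-breaker). Then I would suppose \ref{item:special SIL 2} fails: there is $u\in\Gamma_{\leq S}\setminus\Gamma_S$ and two distinct components $Y,Y'$ of $\Gamma\setminus\st(u)$ with, say, $x_i\in Y$, $x_j\in Y'$. Since $u\in\Gamma_{\leq S}$, $u\leq x_k$ for some $k$; because $u$ does not commute with two of the $x$'s and $u\notin\Gamma_S$, one should be able to extract a new SIL involving $u$ in place of one of the $x$'s, with strictly smaller shared component — here the non-commutation data forced by the two components $Y,Y'$ and the domination $u\leq x_k$ are exactly what is needed to check the SIL axioms via Lemma \ref{lem:shared component}. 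Similarly, if \ref{item:special SIL 1} fails, there is $u\in\Gamma_{\leq S}\setminus\Gamma_S$ with $x_i\leq u\leq x_j$ for distinct $i,j$; then $u$ does not commute with the third vertex $x_\ell$ (since $x_\ell$ does not commute with $x_i\leq u$, essentially), and $u$ together with two of the original vertices, or with the partial-conjugation component data, yields a smaller SIL, again contradicting minimality. In both cases the key is that replacing an original vertex by a proper dominator strictly decreases the relevant component, because dominators have smaller links.

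The main obstacle I expect is the bookkeeping in the replacement step: verifying carefully that the modified triple is genuinely a SIL (pairwise non-commutation, and the shared-component condition from Lemma \ref{lem:shared component}) and that the chosen complexity measure strictly decreases, while making sure the new triple still has all its equivalence classes abelian — which is automatic here since \emph{every} equivalence class of $\Gamma$ is abelian. One subtlety is choosing the right complexity measure so that \emph{both} failure modes decrease it; it may be cleanest to order SILs lexicographically by $(\abs{Z},\abs{\Gamma_{\leq S}\setminus\Gamma_S})$, or to argue by a two-stage minimisation (first make $\Gamma_{\leq S}=\Gamma_S$ impossible to shrink further, handling \ref{item:special SIL 1}, then handle \ref{item:special SIL 2}). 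A careful case analysis on which of $x_1,x_2,x_3$ the vertex $u$ dominates, and on whether $u\in Z$ or not, will be required, but each case reduces to an application of Lemma \ref{lem:shared component} and Remark \ref{rem:component and external boundary}.
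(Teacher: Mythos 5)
Your overall architecture is the same as the paper's: take a SIL that is minimal for some complexity, and use Lemma \ref{lem:combinatorial defn of special sil} to show that a failure of \ref{item:special SIL 1} or \ref{item:special SIL 2} would produce a smaller SIL. The genuine gap is in the complexity you propose and in the claim that drives it. You assert that the replacement step ``strictly decreases the relevant component, because dominators have smaller links''; this is false for every replacement move that actually produces a SIL here. If $(x,y\mids z)$ is a SIL with shared component $Z=\Gamma^x_z=\Gamma^y_z$ and $y\leq u\leq x$, then $\partial Z\subset\lk(y)\subset\lk(u)$ while $Z\cap\lk(u)=\emptyset$ (because $\lk(u)\subset\st(x)$), so by Remark \ref{rem:component and external boundary} the new SIL $(u,y\mids z)$ has shared component exactly $Z$ (this is the computation in Lemma \ref{lem_sandwich}); the case $z\leq u\leq x$ gives $\Gamma^u_z=\{z\}=\Gamma^x_z$, again unchanged. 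Similarly, in the \ref{item:special SIL 2}-failure case with $u\leq x_1$, the SIL $(u,x_2\mids x_3)$ produced by Lemma \ref{lem:u<x in (x,y|z)} has shared component $\Gamma^{x_2}_{x_3}=Z$, and with $u\leq x_3$ the SIL $(x_1,x_2\mids u)$ of Lemma \ref{lem:(x,y|z) z minimal} has shared component either $Z$ itself or a singleton (which need not be smaller, since $Z$ may already be a singleton). So your primary coordinate $\abs{Z}$ essentially never strictly decreases; note that replacing $x$ by a vertex $u\leq x$ removes a \emph{smaller} star, so components can only stay the same or grow.

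Everything then rests on the tie-breaker $\abs{\Gamma_{\leq S}\setminus\Gamma_S}$, for which you give no strict-decrease argument, and it can stay constant: replacing $x_j$ by $u$ moves $[u]$ out of the ``bad'' set, but if the discarded vertex $x_j$ is dominated by one of the retained vertices then $\Gamma_{\leq S'}=\Gamma_{\leq S}$ and $[x_j]$ re-enters the bad set, so when $\card{[u]}=\card{[x_j]}=1$ the count is unchanged. With a measure that can remain constant under the move, minimality yields no contradiction and the argument stalls. The quantity that genuinely decreases is not numerical but the position of the triple in the domination preorder: in every failure case the witness satisfies $u\leq x_k$ for the vertex it replaces and $u\notin\Gamma_S$, hence $u\not\sim x_k$. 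This is exactly how the paper proceeds: after using Lemma \ref{lem_swap} to normalise so that $x_1,x_2\not\leq x_3$, it chooses $(x_1,x_2\mids x_3)$ minimal in the sense that no SIL $(x_1',x_2'\mids x_3')$ has $x_i'\leq x_{\sigma(i)}$ for a permutation $\sigma$ with some $x_i'\not\sim x_{\sigma(i)}$; then Lemmas \ref{lem:(x,y|z) z minimal}, \ref{lem_sandwich} and \ref{lem:u<x in (x,y|z)} (whose path arguments are the technical content your plan defers as ``bookkeeping'') turn any failure of \ref{item:special SIL 1} or \ref{item:special SIL 2} into a strictly smaller SIL, contradicting minimality. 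Replacing your lexicographic measure by this preorder-minimality is the needed repair.
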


The general strategy to prove Proposition \ref{prop:all abelian, sil->special} is to choose a SIL $S = (x,y\mids z)$ satisfying some minimality assumption
(implying that $\Gamma_{\leq S}$ is minimal for inclusion among all SILs).
We will verify that such a SIL is special.

The following easy fact will be used several times.

\begin{fact}\label{fact_comm}
Consider $u,x,y\in \Gamma$ such that $x,y$ do not commute.

If $u\leq x$ then $u$ does not commute with $y$. 
\end{fact}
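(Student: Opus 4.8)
\textbf{Fact \ref{fact_comm}.} The statement to prove is: if $u,x,y\in\Gamma$ with $x$ and $y$ non-commuting, and if $u\leq x$, then $u$ does not commute with $y$.

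\textbf{Plan.} The proof is immediate from the definitions, by contradiction. Suppose $u$ commutes with $y$; then $y\in\lk(u)$. Since $u\leq x$, by definition $\lk(u)\subseteq\st(x)=\lk(x)\cup\{x\}$, so $y\in\lk(x)\cup\{x\}$. If $y=x$ this contradicts the hypothesis that $x,y$ do not commute (a vertex always ``commutes with itself'', so distinct non-commuting vertices are in particular distinct; one should note $x\neq y$ is part of ``$x,y$ do not commute''). If $y\in\lk(x)$, then $y$ commutes with $x$, again contradicting the hypothesis that $x$ and $y$ do not commute. Either way we reach a contradiction, so $u$ does not commute with $y$.

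\textbf{Main obstacle.} There is essentially no obstacle; the only point requiring a word of care is the degenerate case $y=x$, which is ruled out because ``$x,y$ do not commute'' implicitly includes $x\neq y$ (adjacent or equal vertices ``commute'' in the relevant sense). So the write-up is a two-line unwinding of the definition of $\leq$ together with the definition of $\st$.

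\begin{proof}
Suppose for contradiction that $u$ commutes with $y$, so that $y\in\lk(u)$. Since $u\leq x$, we have $\lk(u)\subseteq\st(x)=\lk(x)\cup\{x\}$, hence $y\in\lk(x)\cup\{x\}$. As $x$ and $y$ do not commute, in particular $y\neq x$, so $y\in\lk(x)$; but then $y$ commutes with $x$, contradicting the hypothesis. Therefore $u$ does not commute with $y$.
\end{proof}
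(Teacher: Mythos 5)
Your proof is correct and is essentially the same one-line argument as the paper's: if $u$ commuted with $y$ then $y\in\lk(u)\subseteq\st(x)$, contradicting that $x$ and $y$ do not commute. Your extra remark handling the case $y=x$ is a harmless elaboration of what the paper leaves implicit.
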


\begin{proof}
  If $u$ commutes with $y$, then $y\in \lk(u)\subset \st(x)$ contradicting that $x$ and $y$ don't commute.
\end{proof}

\begin{lem}\label{lem:(x,y|z) z minimal}
	Let $\Gamma$ be any graph.
 If $(x,y\mids z)$ is a SIL and $z'\leq z$ is such that $z'\notin [x]\cup [y]$,
  then $(x,y\mids z')$ is a SIL.
\end{lem}

\begin{proof}
By Fact \ref{fact_comm}, $z'$ does not commute with $x$ or $y$.
	Let $Z$ be the connected component of $\Gamma\setminus(\lk(x)\cap\lk(y))$ containing $z$.
	If $z' \in Z$, then $(x,y\mids z')$ is a SIL and we are done.
	If on the other hand $z'\notin Z$, then $\lk(x)\cap\lk(y)$ separates $z$ from $z'$, and since $\lk(z')\subset \st(z)$,
	one deduces  $\lk(z')\subset \lk(x)\cap\lk(y)$, see Figure \ref{fig:SIL_to_special-z_min}.
	Thus $\{z'\}$ forms its own connected component in $\Gamma\setminus(\lk(x)\cap\lk(y))$, hence $(x,y\mids z')$ is a SIL.
\end{proof}

 \begin{lem}\label{lem_swap} Let $\Gamma$ be any graph.
	If $(x,y\mids z)$ is a SIL with $x\leq z$, then $(z,y\mids x)$ is also a SIL.
\end{lem}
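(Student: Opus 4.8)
We are given that $(x,y\mids z)$ is a SIL, so by Lemma~\ref{lem:shared component} we have $\Gamma^x_z=\Gamma^y_z$; call this common set $Z$. The extra hypothesis is $x\leq z$, i.e. $\lk(x)\subseteq\st(z)$. We want to prove $(z,y\mids x)$ is a SIL, which by Lemma~\ref{lem:shared component} amounts to checking two things: first that $z,y,x$ pairwise do not commute, and second that $\Gamma^z_x=\Gamma^y_x$. The pairwise non-commutation is immediate: $x,y,z$ already pairwise do not commute since $(x,y\mids z)$ is a SIL. So the whole content is the identity of connected components $\Gamma^z_x=\Gamma^y_x$.

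First I would use the characterisation in Remark~\ref{rem:component and external boundary}: $\Gamma^v_u$ is the unique connected subgraph containing $u$, disjoint from $\lk(v)$, whose external boundary lies in $\lk(v)$. So it suffices to exhibit one connected set $W\ni x$ with $W\cap\lk(z)=\emptyset$, $\partial W\subseteq\lk(z)$, and simultaneously $W\cap\lk(y)=\emptyset$, $\partial W\subseteq\lk(y)$; then $W=\Gamma^z_x=\Gamma^y_x$ and we are done. The natural candidate is $W:=\Gamma^y_x$, the connected component of $\Gamma\setminus\st(y)$ containing $x$ (which exists since $x,y$ do not commute). By definition this already has the properties relative to $y$: it is connected, contains $x$, misses $\lk(y)$ (indeed misses $\st(y)$), and $\partial W\subseteq\lk(y)$. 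The work is to show it also has the properties relative to $z$, namely $W\cap\lk(z)=\emptyset$ and $\partial W\subseteq\lk(z)$.

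For $W\cap\lk(z)=\emptyset$: take $w\in W$. I would argue that $w\notin\lk(x)$ unless it equals... wait, more carefully: the point is that $W=\Gamma^y_x$ and $Z=\Gamma^y_z$ are connected components of the same graph $\Gamma\setminus\st(y)$; since $x\notin Z$ (as $(x,y\mids z)$ is a SIL) these are \emph{distinct} components, hence disjoint, and moreover no vertex of $W$ is adjacent to any vertex of $Z$. Now $z\in Z$, and I claim $\lk(z)\subseteq Z\cup\st(y)$: indeed every neighbour of $z$ is either in $\st(y)$ or, being adjacent to $z\in Z$ and outside $\st(y)$, lies in the component $Z$. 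Since $W$ is disjoint from both $Z$ and $\st(y)$, we get $W\cap\lk(z)=\emptyset$. For $\partial W\subseteq\lk(z)$: let $w\in\partial W$, so $w$ is adjacent to some vertex of $W$, hence $w\notin Z$ (no $W$-vertex touches $Z$) and $w\notin W$; since $w$ is adjacent to $W\subseteq\Gamma\setminus\st(y)$ but $w\notin W$, the component structure forces $w\in\st(y)\subseteq\st(z)$ — here is where $x\leq z$... no wait, I need $\st(y)\subseteq\st(z)$, which I do not have. Let me reconsider: I should instead use $\partial W\subseteq\lk(y)\subseteq\st(y)$ and then relate $\st(y)$ to $\lk(z)$. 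The hypothesis $x\leq z$ gives $\lk(x)\subseteq\st(z)$, and since $x,z$ do not commute, $z\notin\lk(x)$, so actually $\lk(x)\subseteq\lk(z)$. Hmm, that controls $\lk(x)$, not $\partial W$ directly; but one can check $\partial W\subseteq\lk(x)$ is false in general. The cleaner route: I would show directly that $\partial W\subseteq\lk(z)$ by taking $w\in\partial W$, noting $w\in\lk(y)$, and arguing $w\ne z$ and $w$ must lie in $\st(z)$ because $w\in\lk(x)$ — is $w\in\lk(x)$? A boundary vertex of $W=\Gamma^y_x$ need not be adjacent to $x$. So the genuinely delicate point, and the \textbf{main obstacle}, is controlling $\partial W$ using only $x\leq z$; I expect the right argument is to observe that $\lk(x)\cap\st(y)\subseteq\partial W\cup(\text{stuff})$ is not what's needed, but rather to use Remark~\ref{rem:component and external boundary} in the form: $W\subseteq\Gamma^z_x$ automatically because $W$ is connected, contains $x$, and $W\cap\st(z)=\emptyset$ (which follows from $W\cap\lk(z)=\emptyset$ proved above together with $x\notin W$... no, $x\in W$; rather $z\notin W$ and $W\cap\lk(z)=\emptyset$ gives $W\cap\st(z)=\emptyset$), hence $W$ is contained in \emph{a} component of $\Gamma\setminus\st(z)$, namely $\Gamma^z_x$; symmetrically, reversing roles and using that $\Gamma^z_x$ is connected, contains $x$, disjoint from $\st(z)$, and showing $\Gamma^z_x\subseteq\Gamma^y_x$ requires $\Gamma^z_x\cap\st(y)=\emptyset$ — and \emph{this} is where $x\leq z$ enters: if some $w\in\Gamma^z_x$ lay in $\lk(y)$, then... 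I would trace the path from $x$ to $w$ inside $\Gamma\setminus\st(z)$ and derive a contradiction with $\lk(x)\subseteq\st(z)$ separating things, perhaps using that the first vertex on such a path entering $\lk(y)$ would have to be in $\partial(\Gamma^y_x)\cap(\Gamma\setminus\st(z))$, contradicting $\partial(\Gamma^y_x)\subseteq\lk(y)$ together with a comparison of links. I expect this double-inclusion argument — $\Gamma^y_x=\Gamma^z_x$ via mutual containment, with $x\leq z$ feeding exactly the one containment that is not formal — to be the crux, and the rest to be bookkeeping with Remark~\ref{rem:component and external boundary} and Fact~\ref{fact_comm}.
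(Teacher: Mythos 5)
Your plan (verify $(z,y\mids x)$ via Lemma \ref{lem:shared component} by showing $\Gamma^z_x=\Gamma^y_x$) is viable, and the half you actually carry out is correct: $W:=\Gamma^y_x$ and $Z:=\Gamma^y_z$ are distinct components of $\Gamma\setminus\st(y)$, no vertex of $W$ is adjacent to $Z$, $\lk(z)\subseteq Z\cup\st(y)$, hence $W\cap\lk(z)=\emptyset$ (note this half does not use $x\leq z$ at all). But the other half — $\partial W\subseteq\lk(z)$, equivalently the inclusion $\Gamma^z_x\subseteq\Gamma^y_x$ — is never proved. You explicitly leave it at ``I would trace the path \dots derive a contradiction \dots perhaps using \dots'', and this is precisely the step where the hypothesis $x\leq z$ must enter, so as written the argument has a genuine gap at its crux.

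The gap is easy to close, and you walked right past the key observation: since $x$ and $z$ do not commute, $x\leq z$ gives $\lk(x)\subseteq\lk(z)\subseteq\st(z)$ (you noted this and dismissed it as controlling only $\lk(x)$). But it means $x$ has no neighbour outside $\st(z)$, so $\Gamma^z_x=\{x\}$; combined with your first half ($W$ connected, $x\in W$, $z\notin W$, $W\cap\lk(z)=\emptyset$, hence $W\subseteq\Gamma^z_x$) this forces $W=\{x\}=\Gamma^z_x$, and Lemma \ref{lem:shared component} finishes the proof. For comparison, the paper argues directly with links and the SIL definition: $\lk(x)=\lk(x)\cap\lk(z)\subseteq\lk(x)\cap\lk(y)\subseteq\lk(z)\cap\lk(y)$, where the equality uses $x\leq z$ and the middle inclusion uses that $\lk(x)\cap\lk(y)$ separates $x$ from $z$; hence the component of $x$ in $\Gamma\setminus(\lk(z)\cap\lk(y))$ is $\{x\}$ and $(z,y\mids x)$ is a SIL. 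Both routes hinge on realizing that the relevant component of $x$ collapses to the single vertex $x$ — the one idea missing from your write-up.
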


\begin{proof}
	Since $x\leq z$, and since $x$ and $z$ don't commute by the definition of a SIL, $\lk(x)\subset \lk(z)$.
	We thus have (see Figure \ref{fig:SIL_to_special-z_min2})
	$$\lk(x) = \lk(x)\cap\lk(z)\subseteq \lk(x)\cap\lk(y) \subseteq \lk(z)\cap\lk(y)$$
	where the first  inclusion follows from the fact that $\lk(x)\cap \lk(y)$ separates $x$ from $z$.
	This proves that $(z,y\mids x)$ is a SIL. 
\end{proof}
	
\begin{figure}[ht!]
	\labellist \small
	\pinlabel $x$ at 335 216
	\pinlabel $y$ at 364 95
	\pinlabel $z$ at 105 90
	\pinlabel $z'$ at 310 3
	\pinlabel $Z$ at 60 140
	\pinlabel $\lk(x)\cap\lk(z)$ at 204 215
	\tiny
	\pinlabel $\lk(z')$ at 205 65
	\endlabellist
	
	\centering
	\includegraphics[width=9cm]{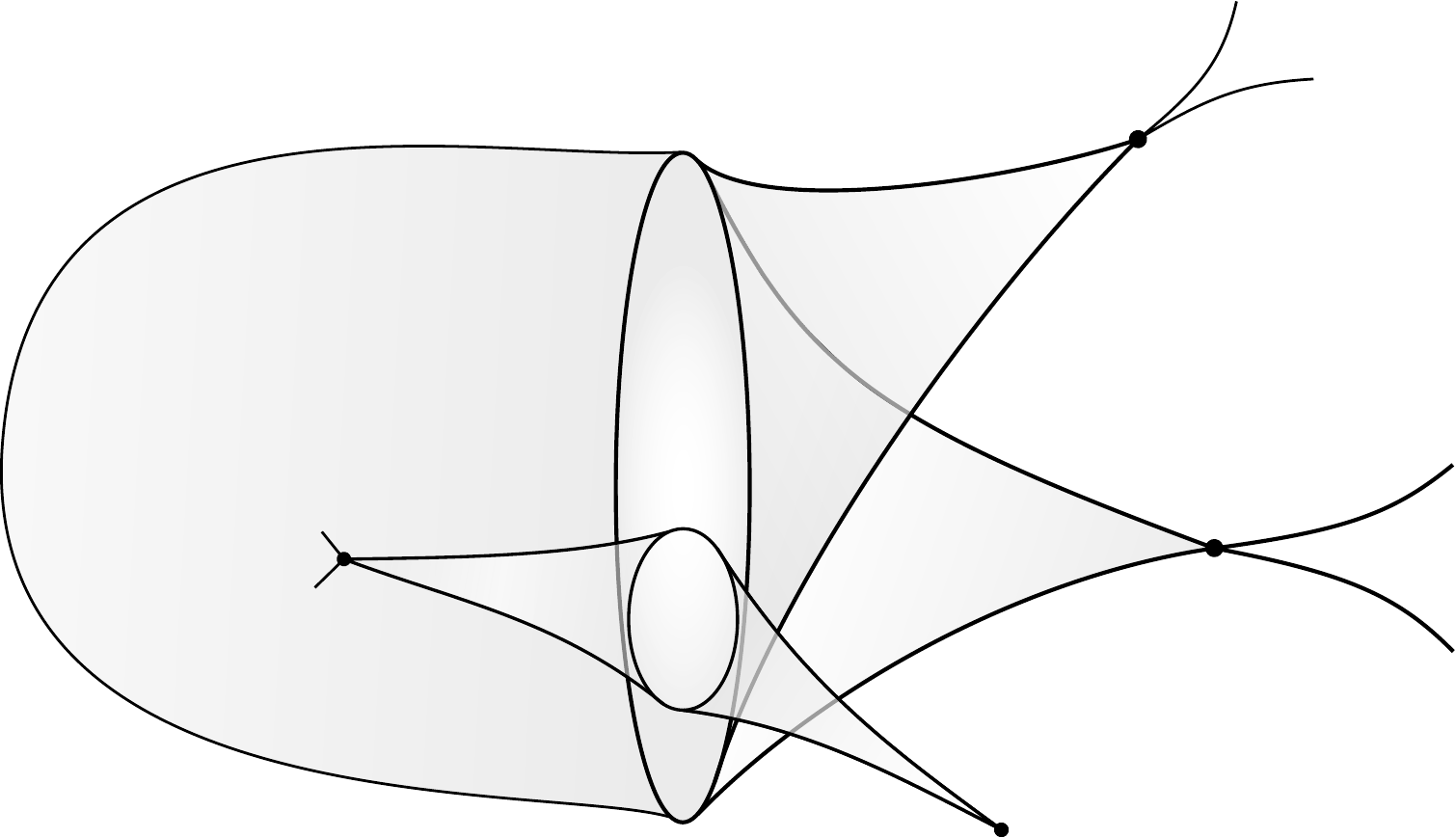}
	\caption{If the set $\lk(x)\cap\lk(y)$ separates $z$ from $z'$, and $\lk(z')\subset \st(z)$, then $\lk(z')\subset \lk(x)\cap\lk(y)$.}
	\label{fig:SIL_to_special-z_min}
\end{figure}

\begin{figure}[ht!]
	\labellist \small
	\pinlabel $x$ at 328 216
	\pinlabel $y$ at 350 95
	\pinlabel $z$ at 52 129
	\pinlabel $\lk(x)=\lk(y)\cap\lk(z)$ at 190 210
	\endlabellist
	
	\centering
	\includegraphics[width=7.5cm]{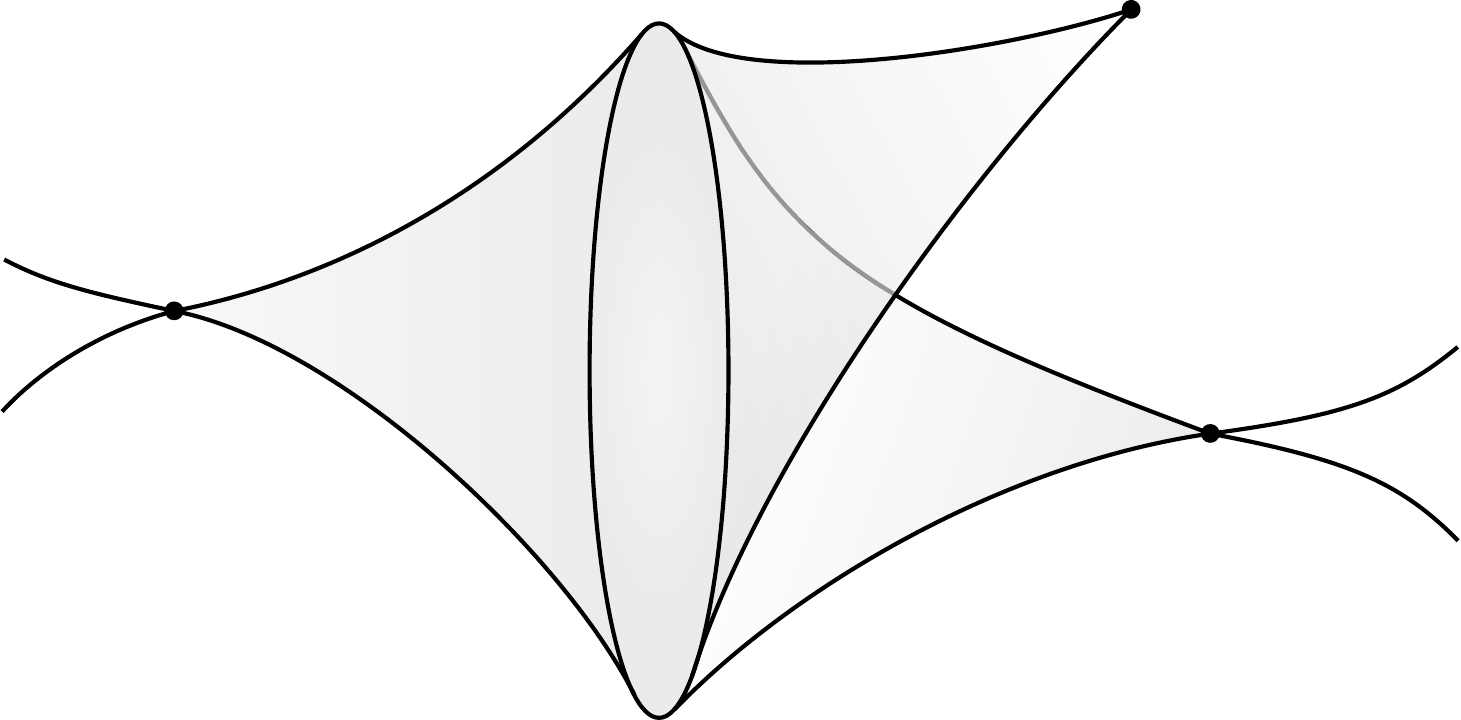}
	\caption{If $x\leq z$ then $\lk(x)\subset\lk(z)$, and since $(x,y\mids z)$ is a SIL, the intersection $\lk(x)\cap\lk(z)=\lk(x)$ must be contained in $\lk(x)\cap \lk(y)$, and hence in $\lk(z)\cap\lk(y)$.}\label{fig:SIL_to_special-z_min2}
\end{figure}

For the following, recall that $\Gamma^u_z$ denotes the connected component of $\Gamma\setminus \st(u)$ that contains $z$.

\begin{lem}\label{lem_sandwich}
	Let $\Gamma$ be any graph. Let $S = (x,y\mids z)$ be a SIL and $u\in \Gamma$ be such that either $z\leq u\leq x$ or $y\leq u \leq x$.

Then $(u,y\mids z)$ is a SIL.
\end{lem}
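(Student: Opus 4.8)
The statement bundles two cases, $z\leq u\leq x$ and $y\leq u\leq x$, but they should be reducible to a common core using the lemmas already established. First I would record the easy preliminary facts: since $(x,y\mids z)$ is a SIL, the three vertices $x,y,z$ pairwise do not commute; and by Fact \ref{fact_comm}, since $u\leq x$, the vertex $u$ does not commute with $y$, while since $u\leq x$ and $x,z$ don't commute (resp. $u\leq x$ and $x,y$ don't commute covers the same), one checks $u$ does not commute with $z$ either — actually in the first case $z\leq u$ directly with $z,x$ noncommuting gives (Fact \ref{fact_comm}, applied with the pair $\{z,x\}$ and $z\leq u$... wait, one wants $u$ vs $x$) — so I would instead argue directly: from $u\leq x$ and $x\not\sim$-commute-$z$, Fact \ref{fact_comm} gives $u$ and $z$ don't commute; from $u\leq x$ and $x,y$ noncommuting, $u$ and $y$ don't commute. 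So $u,y,z$ pairwise do not commute, which is the first half of the SIL condition for $(u,y\mids z)$.

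The second half is the separation statement, and here I would lean on Lemma \ref{lem:shared component}: $(u,y\mids z)$ is a SIL iff $u,y,z$ don't pairwise commute and $\Gamma^u_z=\Gamma^y_z$. Since $(x,y\mids z)$ is a SIL we already know $\Gamma^x_z=\Gamma^y_z$; call this set $Z$. So it suffices to show $\Gamma^u_z=Z$ as well, i.e.\ that the connected component of $\Gamma\setminus\st(u)$ containing $z$ is exactly $Z$. I would use the characterization in Remark \ref{rem:component and external boundary}: $Z$ is connected, contains $z$, and I must check (a) $Z\cap\st(u)=\emptyset$ and (b) $\partial Z\subseteq\lk(u)$. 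For (b): we know $\partial Z\subseteq \lk(x)\cap\lk(y)$ (from the SIL $(x,y\mids z)$, via the proof of Lemma \ref{lem:shared component}), and since $u\leq x$, $\lk(x)\cap\lk(y)\subseteq\lk(x)\subseteq\st(u)$; but a vertex of $\partial Z$ lies outside $Z$, and I need it in $\lk(u)$ not merely $\st(u)$ — it cannot equal $u$ itself because that would put $u$ adjacent to $Z$, and... here I'd need to also know $u\notin Z$, which is part of (a). For (a): I must show $Z$ avoids both $u$ and $\lk(u)$. That $Z$ avoids $\lk(u)$ will follow once I know $Z\subseteq \lk(x)\cap\lk(y)\cup Z$ with the right boundary control and $\lk(u)\cap Z=\emptyset$; more carefully, I expect to show $Z\subseteq\Gamma\setminus\st(u)$ directly. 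The key point is that $u$ lies ``between'' $x$ (or $z$) and the separating set, and I would use the hypothesis $z\leq u$ (first case) to locate $u$: since $\lk(z)\subseteq\st(u)$, and $z\in Z$ with $\partial Z\subseteq\lk(x)\cap\lk(y)$, I can try to show $u\notin Z$ by contradiction, then deduce $Z\cap\st(u)=\emptyset$.

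The case split $z\leq u$ versus $y\leq u$ is where I expect the real work. In the case $z\leq u\leq x$, I have $\lk(z)\subseteq\st(u)$ and $\lk(u)\subseteq\st(x)$, so $u$ is sandwiched in the preorder between $z$ and $x$; I'd argue that $u\in Z$ leads to a contradiction with $z,u$ noncommuting and the structure of $Z$ as a component of $\Gamma\setminus(\lk(x)\cap\lk(y))$ together with $\lk(z)\subseteq\st(u)$ (if $u\in Z$ then since $u,z$ don't commute and both avoid $\st(x)$, a path argument inside $Z$ plus $\partial Z\subseteq\lk(x)$ pins things down), or alternatively apply Lemma \ref{lem_swap} to $(x,y\mids z)$ with $z\leq u$... hmm, Lemma \ref{lem_swap} needs the swapped vertex $\leq$ the anchor. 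Actually I suspect the slick route is: apply Lemma \ref{lem:(x,y|z) z minimal} or a variant to move between $(x,y\mids z)$ and $(x,y\mids u)$ or $(u,y\mids z)$ stepwise. In the case $y\leq u\leq x$, $u$ sits between $y$ and $x$ in the preorder, both of which are ``source'' vertices of the SIL on the same side; here I'd expect $\Gamma^u_z = \Gamma^x_z$ because $\lk(u)$ is squeezed, $\lk(x)\cap\lk(y)\subseteq\lk(u)\subseteq\st(x)$ — wait, $u\leq x$ gives $\lk(u)\subseteq\st(x)$, and $y\leq u$ gives $\lk(y)\subseteq\st(u)$, so $\lk(x)\cap\lk(y)\subseteq\lk(y)\subseteq\st(u)$, which is the inclusion I want for controlling $\partial Z$. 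The main obstacle, then, is the careful verification in each case that $u\notin Z$ and that no vertex of $Z$ is adjacent to $u$ — essentially showing $\lk(u)\cap Z=\emptyset$ — which requires combining $\partial Z\subseteq\lk(x)\cap\lk(y)$ with the preorder inequalities and possibly a connectivity argument within $Z$; I would organize this by first proving $u\notin Z$ (using that $u$ does not commute with $z$, hence $u\notin\lk(z)$, combined with where $u$ sits), and then proving $\st(u)\cap Z=\emptyset$ by showing any $w\in\lk(u)\cap Z$ would force, via $\partial Z\subseteq\st(x)$ and the inequalities, a contradiction with $x,y,z$ being pairwise non-commuting.
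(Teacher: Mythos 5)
Your overall framework matches the paper's: pairwise non-commutation of $u,y,z$ via Fact \ref{fact_comm}, then reducing, via Lemma \ref{lem:shared component} and Remark \ref{rem:component and external boundary}, to showing $\Gamma^u_z=\Gamma^y_z$. For the case $y\leq u\leq x$ you essentially have the paper's argument: since $u$ and $y$ do not commute (Fact \ref{fact_comm}), $y\leq u$ gives $\lk(y)\subseteq\lk(u)$, not just $\lk(y)\subseteq\st(u)$, so $\partial Z\subseteq\lk(y)\subseteq\lk(u)$; and $\lk(u)\subseteq\st(x)$ together with $Z\cap\st(x)=\emptyset$ gives $Z\cap\lk(u)=\emptyset$. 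At that point your worries about $u\in Z$ or $u\in\partial Z$ evaporate (a connected set containing $z\neq u$ that contained or was adjacent to $u$ would meet $\lk(u)$), so no extra connectivity argument is needed; in particular your proposed route to $u\notin Z$ via ``$u\notin\lk(z)$'' is not the right reason, but this is a cosmetic repair.

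The genuine gap is the case $z\leq u\leq x$. There the only concrete inclusions you have are $\lk(z)\subseteq\st(u)$ and $\lk(u)\subseteq\st(x)$, and neither of these, nor $\partial Z\subseteq\lk(x)\cap\lk(y)$, yields the needed control $\partial Z\subseteq\lk(u)$; you defer this to an unspecified path argument or to Lemma \ref{lem:(x,y|z) z minimal} or Lemma \ref{lem_swap}, but, as you half-notice yourself, neither applies: Lemma \ref{lem:(x,y|z) z minimal} replaces the third entry of the SIL by a \emph{smaller} vertex, whereas here $z\leq u$, and Lemma \ref{lem_swap} requires $x\leq z$, which is not assumed. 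The missing observation --- which is the paper's entire argument in this case --- is that $z\leq x$ with $z,x$ non-commuting forces $\lk(z)\subseteq\lk(x)$, so $z$ is isolated in $\Gamma\setminus\st(x)$ and $\Gamma^x_z=\{z\}$; likewise $z\leq u$ gives $\Gamma^u_z=\{z\}$, and since the SIL hypothesis gives $\Gamma^y_z=\Gamma^x_z=\{z\}$, the equality $\Gamma^u_z=\Gamma^y_z$ is immediate. Without this observation (or some equivalent way to see $\partial Z\subseteq\lk(z)$), your plan for the case $z\leq u\leq x$ does not close.
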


\begin{proof}
In both cases, as $u\leq x$, Fact \ref{fact_comm} shows that $u$ does not commute with $y$ or $z$.

Assume first that $z\leq u\leq x$.
Since $z\leq x$, the connected component $\Gamma^x_z$ is reduced to $\{z\}$.
Since $z\leq u$, one similarly has $\Gamma^u_z=\{z\}$.
Since $(x,y\mids z)$ is a SIL, one has $\Gamma^y_z=\Gamma^x_z=\{z\}$.
One concludes that $\Gamma^u_z=\Gamma^y_z$, so $(u,y\mid z)$ is a SIL by Lemma \ref{lem:shared component}.

  Assume now that $y\leq u\leq x$.
Let $Z=\Gamma^x_z=\Gamma^y_z$.
Then  the external boundary of $Z$ satisfies $\partial Z\subset \lk(y)\subset \lk(u)$
and $Z$ does not intersect $\lk(u)$ since $\lk(u)\subset \st(x)$.
This shows that $Z = \Gamma^u_z$ by Remark \ref{rem:component and external boundary}.
It follows that $(u,y\mids z)$ is a SIL.
\end{proof}

\begin{lem}\label{lem:u<x in (x,y|z)}
	Let $\Gamma$ be any graph.
	Suppose $S = (x,y\mids z)$ is a SIL and there is a vertex $u$ such that $u\leq x$
	and $u \notin [z]$.
	Then exactly one of the following holds:
	\begin{itemize}
		\item the triple $(u,y\mids z)$ is a SIL, or
		\item there is a connected component $Z$ of $\Gamma\setminus \st(u)$ such that $x,y,z \in Z\cup \st(u)$.
	\end{itemize}
\end{lem}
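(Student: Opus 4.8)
Looking at Lemma \ref{lem:u<x in (x,y|z)}, the plan is to distinguish cases according to the position of $u$ relative to $x,y,z$, making use of the previous lemmas, in particular the characterisation of SILs via connected components (Lemma \ref{lem:shared component}) and Fact \ref{fact_comm}.

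\textbf{Setup.} Since $u\leq x$ and $x,y$ don't commute (SIL hypothesis), Fact \ref{fact_comm} gives that $u$ does not commute with $y$; similarly, since $x,z$ don't commute, $u$ does not commute with $z$. So $u$ fails to commute with both $y$ and $z$. Let $Z=\Gamma^x_z=\Gamma^y_z$ be the common connected component, which equals the component of $\Gamma\setminus(\lk(x)\cap\lk(y))$ containing $z$, by Lemma \ref{lem:shared component}. Consider $\Gamma^u_z$, the connected component of $\Gamma\setminus\st(u)$ containing $z$.

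\textbf{Case analysis.} The natural dichotomy is whether $x\in\Gamma^u_z$ (equivalently, whether $\st(u)$ fails to separate $x$ from $z$) or not. If $x\notin\Gamma^u_z$, I would argue that $\Gamma^u_z=\Gamma^y_z=Z$: the containment $\partial(\Gamma^u_z)\subseteq\lk(u)\subseteq\st(x)$ together with the fact that $x$ is not in $\Gamma^u_z$ and $z\in Z$ forces, via Remark \ref{rem:component and external boundary}, that $\Gamma^u_z$ is a connected subgraph avoiding $\lk(x)\cap\lk(y)$ with external boundary inside it—hence it coincides with $Z$. (One must also check $y\notin\Gamma^u_z$; this follows since $y\in\st(x)$ would contradict $x,y$ non-commuting, and $y$ is not in $Z$.) Then $\Gamma^u_z=\Gamma^y_z$ with $u,y\notin\Gamma^u_z$, so $(u,y\mid z)$ is a SIL by Lemma \ref{lem:shared component}. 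Conversely, if $x\in\Gamma^u_z$, then setting $Z'=\Gamma^u_z$ we have $x,z\in Z'$ and $Z'\subseteq Z'\cup\st(u)$; it remains to place $y$. Either $y\in\st(u)$, and we are done with the second alternative, or $y\notin\st(u)$, in which case $y$ lies in some component of $\Gamma\setminus\st(u)$. If that component is $Z'$, again $x,y,z\in Z'\cup\st(u)$ and we are done; if $y$ lies in a component $Y'\neq Z'$, I would derive a contradiction with $(x,y\mid z)$ being a SIL—namely, $\st(u)\subseteq\st(x)$ separates $y$ from $z$ while $x$ lies on the same side as $z$, which should contradict $\Gamma^x_z=\Gamma^y_z$ (the component $\Gamma^x_z$ would have to contain a path from $z$ to $x$ crossing $\lk(u)\subseteq\lk(x)$, impossible) — so this subcase does not occur.

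\textbf{Exclusivity.} Finally I would check the two alternatives are mutually exclusive: if $(u,y\mid z)$ is a SIL then by definition $x$ cannot lie in $Z\cup\st(u)$ whenever — wait, rather: if both held, then $Z=\Gamma^u_z$ would contain $x$ (from the second alternative, $x\in Z\cup\st(u)$, and $x\notin\st(u)$ since $u\leq x$ and they don't commute forces... actually $u\leq x$ with $u\neq x$ non-commuting gives $x\notin\lk(u)$, and $x\neq u$), so $x\in\Gamma^u_z$, contradicting that $(u,y\mid z)$ is a SIL (which requires $u,x\notin\Gamma^u_z$... more precisely requires the component avoid $u$; we need $x\notin$ it). I would tighten this: a SIL $(u,y\mid z)$ means $\Gamma^u_z$ avoids $\{u,y\}$, but does it avoid $x$? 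Not automatically. The cleaner exclusivity argument: in the first alternative $\Gamma^u_z=Z$ does not contain $x$; in the second, $x\in Z\cup\st(u)$ but $x\notin\st(u)$ (shown above), so $x\in Z=\Gamma^u_z$. These contradict.

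\textbf{Main obstacle.} The delicate point is the sub-case $x\in\Gamma^u_z$, $y$ in a different component $Y'$ from $x$: ruling this out requires carefully exploiting $\lk(u)\subseteq\lk(x)$ to show any separation of $y$ from $z$ by $\st(u)$ propagates to a separation by $\lk(x)\cap\lk(y)$ that also separates $x$, contradicting $\Gamma^x_z=\Gamma^y_z$. Figures analogous to \ref{fig:SIL_to_special-z_min} and \ref{fig:SIL_to_special-z_min2} would clarify this. I expect this is where the author's proof does the real work; the rest is bookkeeping with Lemma \ref{lem:shared component} and Remark \ref{rem:component and external boundary}.
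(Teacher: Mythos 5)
Your case split is on the wrong dichotomy, and this creates a genuine gap. Since $u\leq x$ does not prevent $u$ and $x$ from being adjacent, it can happen that $x\in\st(u)$, hence $x\notin\Gamma^u_z$, while the second alternative (not the SIL) is what actually holds; your Case A then reaches a false conclusion. Concretely, let $\Gamma$ have vertices $x,y,z,u,w$ and edges $xu$, $xw$, $yw$, $zw$. Then $(x,y\mids z)$ is a SIL (removing $\lk(x)\cap\lk(y)=\{w\}$ isolates $z$), $u\leq x$ and $u\notin[z]$, and $x\notin\Gamma^u_z=\{w,y,z\}$, so you are in your Case A; yet $(u,y\mids z)$ is not a SIL, since $\lk(u)\cap\lk(y)=\emptyset$ and $\Gamma$ is connected. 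The intermediate claims of Case A fail exactly here: $\Gamma^u_z$ meets $\lk(x)\cap\lk(y)$ (at $w$) and contains $y$. The hidden assumption is $x\notin\st(u)$: only then does a neighbour of $x$ inside $\Gamma^u_z$ force $x\in\Gamma^u_z$. The same unjustified assertion ($u\leq x$ does not imply that $u$ and $x$ fail to commute) underlies your exclusivity paragraph; exclusivity should instead be run through $y$, as in the paper: a SIL $(u,y\mids z)$ gives $y\notin\Gamma^u_z$ by Lemma \ref{lem:shared component}, while the second alternative forces $y\in\Gamma^u_z$ because $y\notin\st(u)$ by Fact \ref{fact_comm}.

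The correct dichotomy for your strategy would be $x\in\Gamma^u_z\cup\st(u)$ versus $x\notin\Gamma^u_z\cup\st(u)$. In the latter case your Case A argument can be completed (then $\Gamma^u_z$ really does avoid $\lk(x)$, hence equals $\Gamma^x_z=\Gamma^y_z$ by Remark \ref{rem:component and external boundary}), and your Case B subcase can be closed as you suspected: a path inside $\Gamma^u_z$ from $z$ to $x$ must cross the external boundary of $\Gamma^x_z$, which lies in $\lk(x)\cap\lk(y)$, producing a neighbour of $y$ in $\Gamma^u_z$. But the new case $x\in\st(u)$ must still be treated, and there you would essentially have to prove $y\in\Gamma^u_z\cup\st(u)$ from scratch, which is the real content. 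The paper sidesteps the case analysis entirely: assuming $(u,y\mids z)$ is not a SIL, it picks a shortest path from $z$ to $\{u,y\}$ avoiding $\lk(u)\cap\lk(y)$, shows the path must end at $y$ and avoid $\st(u)$, and then appends $x$ to get $x\in\Gamma^u_z\cup\st(u)$ as well; this handles the adjacent case $x\in\lk(u)$ automatically. As it stands, your proposal does not prove the lemma.
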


\begin{proof}

As above, the inequality $u\leq x$ implies, by Fact \ref{fact_comm}, that $u$ does not commute with $y$ or $z$.
	
	Let  $Z=\Gamma^u_z$.
	We claim that either $x,y\in Z$ or $(u,y\mids z)$ is a SIL, but not both.
	Clearly, if $(u,y\mids z)$ is a SIL then $y \notin Z$, so both cannot occur.
	Now assume $(u,y\mids z)$ is not a SIL.
	Then there is a path from $z$ to $\{u,y\}$ avoiding $\lk(u)\cap\lk(y)$.
	Let $p$ be such a path of minimal length, and let $z=p_0,p_1,\ldots,p_k$ be the vertices of $p$.
        This choice of $p$ ensures that for all $i\leq k-2$, $p_i\notin \lk(u)$ and $p_i\notin \lk(y)$.

      We first claim that $p_k\neq u$. 
     Indeed, if $p_k=u$, then $p_{k-1} \in \lk(u)\subseteq \st(x)$,
	so either $(p_0,\ldots, p_{k-1})$ or $(p_0,\ldots, p_{k-1},x)$ defines a path from $z$ to $x$.
	Since $(x,y\mids z)$ is a SIL, this path must go through $\lk(x)\cap\lk(y)$. Since $p_i\notin \lk(y)$ for $i\leq k-2$,
	we get that $p_{k-1}\in \lk(y)$
	and in particular $p_{k-1} \in \lk(u)\cap\lk(y)$, contradicting our choice of $p$.
	
	Thus we assume $p$ joins $z$ to $y$. Since $(x,y\mids z)$ is a SIL, there is some $i\leq k-1$ with $p_{i} \in \lk(x)\cap \lk(y)$.
        As above, since $p_i\notin \lk(y)$ for $i\leq k-2$, we get that $p_{k-1} \in \lk(x)\cap \lk(y)$.
        Since $p$ avoids $\lk(u)\cap\lk(y)$, it follows that $p_{k-1} \notin \lk(u)$, hence $p_{i}\notin \lk(u)$ for all $i\leq k$.
        It follows that the path $p$ connects $z$ to $y$ in the complement of $\lk(u)$, and
        the path $(p_0,\dots,p_{k-1},x)$ connects $z$ to $x$ in the complement of $\lk(u)$ unless $x\in \lk(u)$.
	We conclude that $x,y\in Z\cup \st(u)$.
\end{proof}

\begin{proof}[Proof of Proposition \ref{prop:all abelian, sil->special}]
  Consider a SIL $S=(x_1,x_2\mids x_3)$ 
  with $\{x_1,x_2,x_3\}$ minimal in the following sense: if $(x'_1,x'_2\mids x'_3)$ is another SIL
such that there is a permutation $\sigma$ of $\{1,2,3\}$ with $x'_i\leq x_{\sigma(i)}$ for all $i=1,2,3$, then $x'_i\sim x_{\sigma(i)}$ for all $i$.
We prove that $S$ is special using Lemma \ref{lem:combinatorial defn of special sil}.

Since equivalence classes are abelian, $[x_1]$, $[x_2]$, and $[x_3]$ are distinct.
If $x_1\leq x_3$, then Lemma \ref{lem_swap} allows us to exchange their roles, and since the equivalences classes are distinct, we can assume that $x_1\not\leq x_3$ and $x_2\not\leq x_3$.

To prove \ref{item:special SIL 1} in Lemma \ref{lem:combinatorial defn of special sil},
suppose that $x_i\leq u\leq x_j$, for some $i\neq j\in \{1,2,3\}$, and let us prove that $u\in \Gamma_S$.
Firstly, from above, we may assume $j\neq 3$.
Then, by Lemma \ref{lem_sandwich}, one can replace $x_j$ by $u$ in $S$,
so minimality implies that $u\in [x_j] \subset \Gamma_S$ and \ref{item:special SIL 1} follows.

To prove \ref{item:special SIL 2},
consider $u\in \Gamma_{\leq S}\setminus \Gamma_S$. 
By Lemma \ref{lem:(x,y|z) z minimal}, if $u\leq x_3$ then $(x_1,x_2\mids u)$ is a SIL, contradicting minimality.
If $u\leq x_1$, then by Lemma \ref{lem:u<x in (x,y|z)}, either $(u,x_2\mids x_3)$ is a SIL which contradicts our minimality assumption,
or there exists a component $Z$ of $\Gamma\setminus \st(u)$ such that $x_1,x_2,x_3 \in Z\cup \st(u)$,
and \ref{item:special SIL 2} follows in this case. 
The same argument works if $u\leq x_2$.
\end{proof}

\subsection{Special SILs imply largeness}

The goal of this subsection is the following result. It does not assume that  all
 equivalence classes of $\Gamma$ are abelian,  though we recall that if $(x,y\mids z)$ is a special SIL then, by definition, $[x],[y],[z]$ are each abelian.
\begin{prop}\label{prop:large}
	Assume that the graph $\Gamma$ has a special SIL.
	
	Then $\Out^0(\A_\Gamma)$ is large.
\end{prop}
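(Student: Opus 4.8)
The plan is to push the whole statement into a single explicitly presented group and then prove that group is large. Largeness is inherited by any group surjecting onto a large group: if $G\onto Q$ and $Q_0\le Q$ has finite index with $Q_0\onto\bbF_2$, then the preimage of $Q_0$ in $G$ has finite index and surjects onto $\bbF_2$. Since $S=(x_1,x_2\mids x_3)$ is a special SIL, Definition \ref{dfn:special_SIL} provides a well-defined homomorphism $\rho=\Res\circ\Fact\colon\Out^0(\A_\Gamma)\to\Out^0(\A_{\Gamma_S})$, and $\A_{\Gamma_S}\cong\bbZ^{n_1}*\bbZ^{n_2}*\bbZ^{n_3}$, where $n_i$ is the size of $[x_i]$ and the three classes are distinct (being abelian and pairwise non-commuting). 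So it suffices to prove that $\calo:=\rho(\Out^0(\A_\Gamma))$ is large.

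The first step is to pin down a generating set of $\calo$. Because $\Fact$ and $\Res$ carry transvections to transvections or the identity, and partial conjugations to partial conjugations or the identity, $\calo$ is generated by: the images of the transvections internal to each class $[x_i]$, which generate a copy of $\SL_{n_i}(\bbZ)$ acting on the $i$-th free factor; the two SIL automorphisms $c_i:=\rho(C_Z^{x_i})$, $i=1,2$, each conjugating the $[x_3]$-factor by a standard generator of the $[x_i]$-factor; and a short, explicit list of cross-transvections and additional partial conjugations dictated by the domination relations among $x_1,x_2,x_3$, which condition \ref{item:special SIL 1} of Lemma \ref{lem:combinatorial defn of special sil} keeps inside $\Gamma_S$. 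In particular $\calo$ contains the subgroup generated by $\SL_{n_1}(\bbZ),\SL_{n_2}(\bbZ),\SL_{n_3}(\bbZ)$ and $c_1,c_2$.

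To prove $\calo$ is large I would follow the Grunewald--Lubotzky strategy. Fix an epimorphism $\epsilon\colon\A_{\Gamma_S}\onto\bbZ/2\bbZ$ and let $K=\ker\epsilon$. Since $\A_{\Gamma_S}$ has only finitely many index-two subgroups, a finite-index subgroup $\cala_1$ of the preimage of $\calo$ in $\Aut(\A_{\Gamma_S})$ preserves $K$, hence acts by restriction on $H_1(K;\bbQ)$, yielding a representation $\psi\colon\cala_1\to\GL(H_1(K;\bbQ))$. The heart of the argument is to compute $\psi$ on the internal groups $\SL_{n_i}(\bbZ)$ and on the SIL automorphisms $c_1,c_2$ (in the spirit of Lemma \ref{lem:three partial conjugations}), thereby identifying the image of $\psi$ up to commensurability with an arithmetic group; in a broad range of the parameters $(n_1,n_2,n_3)$ this arithmetic group is itself large — it has a finite-index subgroup surjecting onto a non-abelian free group, exactly as for $\Out(\bbF_3)$ — and then $\cala_1$, hence $\calo$, hence $\Out^0(\A_\Gamma)$, is large.

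For the remaining configurations, where $H_1(K;\bbQ)$ is too small for $\psi$ to have large image, I would instead exploit the free-product structure of $\A_{\Gamma_S}$: writing $\A_{\Gamma_S}=A*B$ in a way compatible with the SIL automorphisms and the internal generators (for instance $A=\bbZ^{n_3}$ and $B=\bbZ^{n_1}*\bbZ^{n_2}$, or an HNN presentation when some $n_i$ contributes a rank-one free factor), the conjugacy classes of $A$ and $B$ are invariant under a finite-index subgroup of $\calo$, so by the rigidity results of Section \ref{sec:prelim usable results} the associated Bass--Serre tree is invariant and Lemma \ref{lem_usable_amalg} (resp.\ Lemma \ref{lem_usable_HNN}) exhibits $\calo$, or a finite-index subgroup, as an amalgamated product (resp.\ HNN extension) over an explicit subgroup; analyzing the vertex and edge groups of this splitting then produces a finite-index subgroup surjecting onto a non-abelian free group, giving largeness in these cases as well. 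I expect the main obstacle to be precisely the homological computation together with its case analysis: one must determine $\psi(\cala_1)$ in terms of $n_1,n_2,n_3$ and the domination relations among $x_1,x_2,x_3$, pin down exactly the configurations where it fails to be large, and check that the amalgam/HNN alternative covers exactly those configurations with the index data needed to force largeness; the bookkeeping of the auxiliary generators of $\calo$ is a secondary but still delicate point.
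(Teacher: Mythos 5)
Your overall skeleton does match the paper's: reduce to the image $\calo=\rho(\Out^0(\A_\Gamma))\subset\Out^0(\A_{\Gamma_S})$, try a Grunewald--Lubotzky-type representation on the homology of an index-two subgroup, and fall back on the splitting lemmas of Section \ref{sec:prelim usable results} when that representation fails. But the way you propose to divide into cases is wrong, and this is where the plan as written would break down. The $(-1)$-eigenspace of the deck involution on $H_1(K;\bbQ)$ has dimension $2$ for \emph{every} choice of $(n_1,n_2,n_3)$ (this is the computation inside Lemma \ref{lem:three partial conjugations}: $\dim V_{-1}=(a+b+c-1)-(a+b+c-3)=2$), so the image of $\psi$ is never ``an arithmetic group that is large in a broad range of the parameters''; it always lands, virtually, in $\PGL(2,\bbZ)$. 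Whether that image is large has nothing to do with the sizes $n_i$ or with $H_1(K;\bbQ)$ being ``too small'': it depends on which generators $\calo$ actually contains. The correct dichotomy is whether $\calo$ contains partial conjugations with multipliers in all three classes, i.e.\ whether the extra partial conjugations $C_X^{z_k}$ of type \ref{it_pc} occur (equivalently, whether $\st(z)$ separates $x$ from $y$ in $\Gamma$). If they do, the three explicit matrices in $\PGL(2,\bbZ)$ generate a non-abelian free group and largeness follows for all sizes; if they do not, the image of the same representation is virtually abelian for all sizes, so no homological bookkeeping in terms of $(n_1,n_2,n_3)$ will rescue it.

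In the second branch your sketch also skips the steps that actually produce largeness. Lemma \ref{lem_usable_amalg} (resp.\ \ref{lem_usable_HNN}) splits the preimage $\cala$ of $\calo$ in $\Aut$, not $\calo$ itself, and merely having an amalgam or HNN splitting does not imply largeness. The paper first uses those lemmas to lift the restriction $\calo\to\Out(U)$, $U=\grp{X}*\grp{Y}$, to a homomorphism $\rho_1:\calo\to\Aut(U)$ (here one needs $\grp{Z}$ to be a free factor, or $Z=\{z\}$ via Lemma \ref{lem_commute} when $z\leq x$ or $z\leq y$), computes the image $\cala_1$ explicitly (Lemma \ref{lem:H gen set}), applies the splitting lemmas a second time, now to $U$ itself, to write $\cala_1$ as $\bigl[\calo_1\semidirect\bbZ^a\bigr]*_{\calo_1}\bigl[\calo_1\semidirect\bbZ^b\bigr]$ or as an HNN extension over $\SL_a(\bbZ)\semidirect\bbZ^a$ (Lemma \ref{lem_iso}), and only then gets largeness by reducing mod $3$ to a non-elementary virtually free quotient (Corollary \ref{cor_large}). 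Without identifying the vertex and edge groups this precisely and exhibiting such a compatible finite quotient, ``analyzing the vertex and edge groups'' does not by itself yield a finite-index subgroup surjecting onto $\bbF_2$. So the proposal has the right ingredients in view but mis-specifies the case analysis and leaves out the computations that constitute the proof.
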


 Let $(x,y\mids z)$ be a special SIL in $\Gamma$. Recall from Definition \ref{dfn:special_SIL} that $\Gamma_S$ is the induced graph with vertex set  $S=[x]\cup [y]\cup [z]$, and that $\A_{\Gamma_S}$
is the free product of the three abelian groups generated by $[x]$, $[y]$ and $[z]$.
A special SIL comes with a map
$$\rho=\Res\circ\Fact:\Out^0(\A_\Gamma)\ra \Out^0(\A_{\Gamma_S})$$
transiting through $\Out^0(\A_{\Gamma_{\leq S}})$.
Proposition \ref{prop:large} is a direct consequence of the following lemma  showing that the image of $\rho$ in $\Out^0(\A_{\Gamma_S})$ is large.

\begin{lem}\label{lem:special SIL implies large}
	Let $\calo\subset \Out^0(\A_{\Gamma_S})$ be the image of $\rho$.
	
	Then $\calo$ is large.
\end{lem}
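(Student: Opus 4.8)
The plan is to identify explicitly enough elements of $\calo\subset\Out^0(\A_{\Gamma_S})$, where $\A_{\Gamma_S}=\bbZ^{n_1}*\bbZ^{n_2}*\bbZ^{n_3}$ with $n_i=\abs{[x_i]}$, and then mimic the Grunewald--Lubotzky / Looijenga--GLLM strategy. First I would pin down which automorphisms of $\A_{\Gamma_S}$ lie in the image of $\rho=\Res\circ\Fact$. Since $(x_1,x_2\mids x_3)$ is a special SIL with shared component $Z=\Gamma^{x_1}_{x_3}=\Gamma^{x_2}_{x_3}$ (Lemma~\ref{lem:shared component}), the two SIL partial conjugations $C_Z^{x_1}, C_Z^{x_2}$ of $\A_\Gamma$ survive under $\Fact$ and $\Res$ to give the two ``standard'' partial conjugations of $\A_{\Gamma_S}$ that conjugate the free factor $\bbZ^{n_3}=\grp{[x_3]}$ by $x_1$ and by $x_2$ respectively; more generally all partial conjugations of $\A_{\Gamma_S}$ by vertices of $[x_1]$ or $[x_2]$ on the factor $\grp{[x_3]}$, and symmetric variants whenever the corresponding component structure in $\Gamma_S$ allows it, are in $\calo$. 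Also, for each equivalence class $[x_i]$ the full $\SL_{n_i}(\bbZ)$ of ``internal'' transvections within that class lies in $\calo$ (these come from transvections of $\A_\Gamma$ with multiplier and target both in $[x_i]$). So $\calo$ contains the subgroup $\calo_0$ of $\Out(\bbZ^{n_1}*\bbZ^{n_2}*\bbZ^{n_3})$ generated by $\SL_{n_1}(\bbZ)\times\SL_{n_2}(\bbZ)\times\SL_{n_3}(\bbZ)$ together with at least the two partial conjugations $C^{x_1}_{Z}, C^{x_2}_Z$; it then suffices to show $\calo_0$ is large, since largeness passes up to overgroups of finite index is not what we need — rather, largeness is inherited by any group containing a large \emph{finite-index} subgroup, so instead I will show directly that the concretely-generated subgroup $\calo_0$ itself is large and note $\calo_0\subseteq\calo$ gives largeness of $\calo$ only if $\calo_0$ has finite index; more robustly, largeness of a subgroup does not imply largeness of the ambient group, so I must argue on $\calo$ itself: I will use that $\calo$ surjects onto (or contains with finite index) a well-understood group and that the generating set of $\calo$ is exactly $\{$internal transvections$\}\cup\{$a controlled list of partial conjugations determined by the SIL$\}$, as outlined after the statement of Proposition~\ref{prop_tricho2}.

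With a generating set in hand, the main construction is a linear representation coming from homology of a finite-index subgroup, as in \cite{Looi97,GLLM_arithmetic_mcg,GruLub_linear}. Concretely: $\A_{\Gamma_S}$ surjects onto $\bbZ/2$ (say, sending $x_1\mapsto 1$ and everything else to $0$, or more symmetrically using the abelianization), with kernel $N$ of index $2$; by Nielsen--Schreier-type analysis for free products, $N$ is again a free product of abelian groups, and $H_1(N;\bbQ)$ carries a natural action of the preimage in $\Out(\A_{\Gamma_S})$ of the stabilizer of this epimorphism — in particular of a finite-index subgroup of $\calo$. I would compute this representation on the explicit generators: the internal $\SL_{n_i}(\bbZ)$ blocks act block-diagonally (doubling each block because $N$ sees two ``copies'' of the corresponding rank-$n_i$ piece), and the partial conjugations $C^{x_1}_Z, C^{x_2}_Z$ act by unipotent matrices that are \emph{non-trivial} precisely because the SIL guarantees $x_1,x_2$ act non-trivially on the relevant component — this non-triviality, together with the $\SL$ blocks acting irreducibly on their summands, is what forces the Zariski closure of the image to be large (a product of $\SL$'s, or at least an algebraic group acting with no invariant line), whence by strong approximation \cite{MVW_congruence} and the usual argument (as in the proof of Corollary~\ref{cor_involved_OutFn} and Theorem~\ref{thm:Grunewald-Lubotzky}) a finite-index subgroup surjects onto $\SL_k(\bbZ/p)$ for infinitely many $p$, and in particular onto a non-abelian free group after passing to a suitable finite-index subgroup — giving largeness.

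The subtlety, acknowledged in the paragraph after Proposition~\ref{prop_tricho2}, is that in some degenerate cases the image of $\calo$ in this homological representation is \emph{too small} to conclude largeness directly — roughly, when the ranks $n_i$ are small (e.g.\ all equal to $1$, so $\A_{\Gamma_S}=\bbF_3$ or a free product like $\bbZ*\bbZ*\bbZ$) and the partial-conjugation part is also constrained, the Zariski closure can be solvable-by-(small semisimple). The fallback, which I expect to be the technical heart of the proof, is to exhibit a splitting of $\calo$ (or a finite-index subgroup) as a non-trivial amalgamated product or HNN extension with the amalgamated/associated subgroup of infinite index on at least one side, using Lemmas~\ref{lem_usable_amalg} and \ref{lem_usable_HNN}: indeed $\A_{\Gamma_S}$ has an obvious free-product splitting (e.g.\ $\grp{[x_3]} * \grp{[x_1]\cup[x_2]}$, or the three-factor one regrouped into two factors), and $\calo$ preserves the conjugacy classes of the factors, so those lemmas present $\calo$ (or its relevant $\cala^0$-analogue, after passing to the non-outer level if needed) as an amalgam $[\calo'\semidirect A]*_{\calo'}[\calo'\semidirect B]$ or the HNN analogue. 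Provided the edge group has infinite index — which one checks from $A,B$ being non-trivial and the vertex groups being strictly larger, using that at least one of the $n_i\geq 2$ or that there is genuine partial-conjugation freedom (the free group generated by $C^{x_1}_Z,C^{x_2}_Z$ providing the needed infinite index when the abelian ranks alone do not) — such a splitting makes $\calo$ virtually surject onto $\bbZ$ or onto a non-abelian free group by standard Bass--Serre theory (a group splitting nontrivially as an amalgam or HNN with proper finite-index-avoiding edge group has a finite-index subgroup mapping onto $\bbF_2$), i.e.\ is large. So the overall structure is: (i) determine generators of $\calo$ from the special SIL; (ii) try the homological linear representation and conclude via strong approximation when its image is big enough; (iii) otherwise fall back on a free-product splitting of $\A_{\Gamma_S}$ and Lemmas~\ref{lem_usable_amalg}/\ref{lem_usable_HNN} to realize $\calo$ as a nontrivial amalgam or HNN extension and deduce largeness. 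The case division in (ii)/(iii) — precisely delineating when the representation suffices — is the part I expect to require the most care.
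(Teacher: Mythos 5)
Your plan has the same skeleton as the paper's proof (pin down the generating set of $\calo$ coming from the special SIL; use a Looijenga/Grunewald--Lubotzky-type representation on the homology of an index-two subgroup; fall back on a splitting via Lemmas \ref{lem_usable_amalg} and \ref{lem_usable_HNN} when the representation fails), but both of your concluding steps contain genuine gaps. First, the case division is not governed by the sizes $n_i$: the paper distinguishes cases according to whether the image of the generating set contains the extra partial conjugations $C_X^{z_k}$ (which happens exactly when $\st(z_k)$ separates $X$ from $Y$), and in the bad case the homological representation has virtually abelian image for \emph{every} choice of ranks. More seriously, in the good case your route to largeness --- big Zariski closure, strong approximation, virtual surjections onto $\SL_k(\bbZ/p)$, ``and in particular onto a non-abelian free group'' --- does not work: having virtual surjections onto the finite groups $\SL_k(\bbZ/p)$ does not yield a virtual surjection onto $\bbF_2$ (compare $\SL_3(\bbZ)$, which has all such finite quotients but is not large). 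What the paper actually shows is that the $(-1)$-eigenspace is $2$-dimensional, so the representation lands in $\PGL(2,\bbZ)$, a virtually free group, and an explicit matrix computation shows the images of the three partial conjugations generate a non-abelian free subgroup; largeness of the image, hence of $\calo$, follows from it being a non-elementary subgroup of a virtually free group, with no appeal to strong approximation.

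Second, in the fallback case your final step --- ``a nontrivial amalgam or HNN splitting with edge group avoiding finite index gives a finite-index subgroup mapping onto $\bbF_2$ by standard Bass--Serre theory'' --- is false as stated: splitting nontrivially over a subgroup does not imply largeness (an amalgam of two infinite simple groups over a proper common subgroup splits nontrivially but has no proper finite-index subgroups at all). The paper needs the precise algebraic shape of the splitting: it first uses the first halves of Lemmas \ref{lem_usable_amalg}/\ref{lem_usable_HNN}, applied to $\A_{\Gamma_S}=U*\grp{Z}$ or $U*_{\{1\}}$ with $U=\grp{X}*\grp{Y}$, to map $\calo$ onto a subgroup $\cala_1\leq\Aut(U)$; it identifies $\cala_1$ as $\bigl[\calo_1\semidirect\bbZ^a\bigr]*_{\calo_1}\bigl[\calo_1\semidirect\bbZ^b\bigr]$ with $\calo_1=\SL_a(\bbZ)\times\SL_b(\bbZ)$ (or the analogous HNN extension when $y\leq x$); and only then deduces largeness by reducing the abelian parts and the $\SL$-blocks modulo $3$, obtaining a quotient which is an amalgam (or HNN extension) of finite groups, hence virtually free and not virtually cyclic. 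Without this explicit structure and the passage to a compatible finite quotient, the splitting alone gives nothing. Relatedly, your claimed generating set of $\calo$ is incomplete: besides internal transvections and partial conjugations it may contain cross-class transvections when $y\leq x$, $z\leq x$ or $z\leq y$, and these are exactly what force the HNN variants of the argument.
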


The remainder of this section is devoted to proving Lemma \ref{lem:special SIL implies large}.

From now on, we use the following notations: $X=[x]=\{x_1,\dots,x_a\}$,
$Y=[y]=\{y_1,\dots,y_b\}$, and $Z=[z]=\{z_1,\dots,z_c\}$, so that
$S=X\cup Y\cup Z$, and $\A_{\Gamma_S}\simeq \bbZ^a*\bbZ^b*\bbZ^c$.

Thanks to Lemma \ref{lem_swap} and Remark \ref{rem_swap}, we assume without loss of generality that  $x\not \leq z$ and $y\not \leq z$.
Similarly, up to exchanging $x$ and $y$, we can assume that $x\not \leq y$.

\begin{lem}
	Let $\cals$ be the generating set of $\calo$ consisting of non-trivial images under $\rho$ of all transvections and partial conjugations of $\A_\Gamma$.
Then $\cals$ consists of  the following list of outer automorphisms of $\A_{\Gamma_S}$ (up to taking inverses):

	\begin{enumerate}
\renewcommand{\theenumi}{($\cals$\arabic{enumi})}
		\item \label{it_sil}The SIL automorphisms: $C_{Z}^{x_i}:z\mapsto x_izx_i\m$, $C_{Z}^{y_j}:z\mapsto y_jzy_j\m$ for $z\in Z$;
		\item \label{it_int}  The \textit{internal }transvections $R_{x_i}^{x_{i'}},L_{x_i}^{x_{i'}}$; $R_{y_j}^{y_{j'}},L_{y_j}^{y_{j'}}$; $R_{z_k}^{z_{k'}},L_{z_k}^{z_{k'}}$,
			whenever $\card X,\card Y,\card Z\geq 2$ respectively.
		\item \label{it_pc} If $\st(z_k)$ separates $X$ from $Y$, the other partial conjugations $C_{X}^{z_k}:x\mapsto z_kxz_k\m$ for $x\in X$;
		\item \label{it_yx} If $y\leq x$, the transvections $R_{y_j}^{x_i}:y_j\mapsto y_jx_i$, $L_{y_j}^{x_i}:y_j\mapsto x_iy_j$;
		\item \label{it_zx} If $z\leq x$, the transvections $R_{z_k}^{x_i}:z_k\mapsto z_k{x_i}$, $L_{z_k}^{x_i}:z_k\mapsto {x_i}z_k$;
		\item \label{it_zy} If $z\leq y$, the transvections $R_{z_k}^{y_j}:z_k\mapsto z_k{y_j}$, $L_{z_k}^{y_j}:z_k\mapsto {y_j}z_k$.
	\end{enumerate}
\end{lem}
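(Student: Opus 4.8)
The plan is to compute $\rho$ on each of the standard generators of $\Out^0(\A_\Gamma)$ — the transvections $L_v^w,R_v^w$ and the partial conjugations $C_U^w$ — and read off which ones survive (map to something non-trivial) and what they become. Recall $\rho=\Res\circ\Fact$, where $\Fact:\Out^0(\A_\Gamma)\to\Out^0(\A_{\Gamma_{\leq S}})$ kills any generator whose multiplier lies outside $\Gamma_{\leq S}$ and sends the rest to the ``same'' generator on $\A_{\Gamma_{\leq S}}$, while $\Res$ restricts an automorphism preserving the conjugacy class of $\A_{\Gamma_S}$ down to $\A_{\Gamma_S}$. So the whole analysis reduces to: which generators have multiplier in $\Gamma_{\leq S}$, and of those, which act non-trivially on $\A_{\Gamma_S}$ after restriction. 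Since a transvection $L_v^w$ or $R_v^w$ requires $v\leq w$, and since $\Fact$ is non-trivial on it only when $w\in\Gamma_{\leq S}$, i.e.\ $w\leq x_i$ for some $i$, the surviving transvections are exactly those with $v\leq w\leq x_i$ for some equivalence-class representative.

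First I would handle transvections. If both $v,w\in S$, then $v\sim w$ (since $v\leq w$ and $\Gamma_{\leq S}$'s relevant vertices collapse: actually $v,w$ in $S$ with $v\leq w\leq x_i$ forces $v,w$ in the same class by property \ref{item:special SIL 1} applied carefully, or rather $w\in[x_i]$ and then $v\leq x_i$, $v\in\Gamma_{\leq S}$, and $v\in S$ gives $v\in[x_i]$), giving the internal transvections \ref{it_int}, which exist precisely when the relevant class has size $\geq 2$. If $w\in S$ but $v\notin S$, then $\Fact(\tau)$ is the identity on $\A_S$ — killed. If $v\in S$ but $w\notin S$: here $v\leq w\leq x_i$, so $v\leq x_i$, and by \ref{item:special SIL 1} (with $w\in\Gamma_{\leq S}\setminus\Gamma_S$, sandwiched $v\leq w\leq x_i$) we would get $w\in\Gamma_S$, a contradiction unless $v$ and $x_i$ lie in the same class — so this case forces $v\in[x_i]$ already, wait: \ref{item:special SIL 1} says if $x_j\leq u\leq x_i$ with $i\neq j$ then $u\in\Gamma_S$; we only have $v\leq w\leq x_i$ with $v\in[x_j]$, so indeed $w\in\Gamma_S$, contradiction. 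Hence the only surviving ``mixed'' transvections have $v,w\in S$ in different classes with $v\leq w$ but not $w\leq v$, i.e.\ $[v]\lneq[w]$: given our normalization $x\not\leq z$, $y\not\leq z$, $x\not\leq y$, the possibilities are $y\leq x$, $z\leq x$, $z\leq y$, yielding exactly \ref{it_yx}, \ref{it_zx}, \ref{it_zy}. I would also note that $\Fact(\tau)$ does preserve the conjugacy class of $\A_{\Gamma_S}$ in these cases (this is part of what ``special'' buys us), so $\Res$ applies.

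Next, partial conjugations $C_U^w$. If $w\notin\Gamma_{\leq S}$, it dies under $\Fact$. If $w\in\Gamma_S$: then $\Fact(C_U^w)$ is a partial conjugation on $\A_{\Gamma_{\leq S}}$ with multiplier $w$, and its restriction to $\A_{\Gamma_S}$ conjugates the classes lying in the relevant component. For $w=x_i$: the component of $\Gamma\setminus\st(x_i)$ containing $z$ is $\Gamma^x_z=Z$ (by Lemma \ref{lem:shared component}, this being a SIL), and it contains $Z=[z]$ but not $[x],[y]$; this gives the SIL automorphism $C_Z^{x_i}$ in \ref{it_sil}. Could $\Fact(C_U^x)$ also move $Y$? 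Only if $y$ and $z$ lie in the same component of $\Gamma\setminus\st(x)$, i.e.\ $\st(x)$ doesn't separate them; but $\st(x)$ does separate $y$ from $z$ since $\Gamma^x_z=Z$ avoids $y$ — so no. Symmetrically $w=y_j$ gives $C_Z^{y_j}$. For $w=z_k$: the component of $\Gamma\setminus\st(z)$ containing $x$ might or might not contain $y$. If $\st(z)$ separates $X$ from $Y$, we get a genuine partial conjugation $C_X^{z_k}$ (and $C_Y^{z_k}$, but this equals $(C_X^{z_k})\m$ composed with the inner $\ad_{z_k}$ in $\Out$, so up to inverses it's just one family) — case \ref{it_pc}; if not, then $C_X^{z_k}$ would conjugate both $X$ and $Y$ simultaneously, hence is inner on $\A_{\Gamma_S}$ and dies. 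Finally, if $w\in\Gamma_{\leq S}\setminus\Gamma_S$: condition \ref{item:special SIL 2} guarantees $[x]\cup[y]\cup[z]$ lies in $Z'\cup\st(w)$ for a single component $Z'$, so $\Fact(C_U^w)$ is inner on $\A_{\Gamma_S}$ — killed. Assembling all cases gives exactly the list \ref{it_sil}--\ref{it_zy}, and since $\cals$ by definition consists of the non-trivial images of transvections and partial conjugations, this completes the proof.

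The main obstacle I anticipate is the bookkeeping in the ``mixed'' partial-conjugation case $w=z_k$ — specifically verifying that when $\st(z)$ fails to separate $X$ from $Y$ the image really is inner (not merely a product of partial conjugations that happens to act trivially on $\A_{\Gamma_S}$), and dually that when it does separate them, $C_X^{z_k}$ genuinely preserves the conjugacy class of $\A_{\Gamma_S}$ rather than mapping outside it; both hinge on tracking components of $\Gamma\setminus\st(z_k)$ versus $\Gamma_{\leq S}\setminus\st(z_k)$ and invoking \ref{item:special SIL 2}. A secondary nuisance is the ``up to inverses'' convention: several generators and their inverses coincide (e.g.\ $C_Y^{z_k}$ versus $C_X^{z_k}$ modulo inner, or $L$ versus $R$ under the class relation), so I must be careful to list each family once.
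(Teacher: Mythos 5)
Your proposal is correct and follows essentially the same route as the paper: compute $\rho=\Res\circ\Fact$ on each transvection and partial conjugation, using the normalization $x\not\leq z$, $y\not\leq z$, $x\not\leq y$ to sort the surviving transvections into ($\cals$2), ($\cals$4)--($\cals$6), and the special-SIL conditions together with the identifications such as $C_Y^{x_i}=(C_Z^{x_i})^{-1}$ modulo inner automorphisms to sort partial conjugations into ($\cals$1), ($\cals$3). The only cosmetic correction is in the multiplier-$x_i$ (or $y_j$) case: the support may be a union of components meeting both $Y$ and $Z$, in which case the image conjugates both classes and is inner on $\A_{\Gamma_S}$ (hence trivial in $\Out(\A_{\Gamma_S})$), exactly as you already observe for the $z_k$ case, so the resulting list is unchanged.
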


\begin{proof}
	We check the image of all transvections and partial conjugations fit in the list above.
	
Each left transvection $L_v^w$ (respectively $R_v^w$) of $\A_{\Gamma}$ is mapped by $\rho$
	to the corresponding transvection of $\A_{\Gamma_S}$ if $v,w\in \Gamma_S$,
	and to the identity otherwise.
        Since we assume that  $x\not \leq z$, $y\not \leq z$, and $x\not \leq y$,
	any transvection not in the kernel of $\rho$ is mapped to one of the automorphisms of \ref{it_int}, \ref{it_yx}, \ref{it_zx}, or \ref{it_zy}.
	
	Any partial conjugation $C_W^w$ of $\A_\Gamma$ with multiplier $w$ is mapped to the identity under $\rho$ if $w\notin S$.
	If $w\in S$, then its image under $\rho$ is trivial if its support $W$ does not intersect $S$ or if it contains the two equivalence classes from $\{X,Y,Z\}$ which do not contain $w$
	(in the latter case, the image under $\rho$ will be an inner automorphism of $\A_{\Gamma_S}$).
	Alternatively,
	$W\cap S$ contains precisely one of the equivalence classes from $\{X,Y,Z\}$ not containing $w$.
	Note that in $\Out(\A_{\Gamma_S})$, we have the equalities $C_Z^x= (C_Y^x)\m$, and similarly $C_X^y= (C_Z^y)\m$, $C_X^z= (C_Y^z)\m$,
	so that up to taking inverses, there are only 3 kinds of partial conjugations in $\cals$, according to  whether their multiplier lies in $X,Y$ or $Z$.
	Hence partial conjugations are mapped under $\rho$ to either the identity or (up to taking inverse) one of the automorphisms of \ref{it_sil} or \ref{it_pc}.
	
	Finally, the necessity of the automorphisms in \ref{it_sil} follows because $(x,y\mids z)$ is a SIL, meanwhile the necessity of the internal transvections in \ref{it_int} is clear.
\end{proof}

\begin{rem}\label{rem_all_nothing}
  Since all $z_k$ are equivalent, if $\cals$ contains one partial conjugation $C_{X}^{z_k}$ for some $k\leq c$ as in \ref{it_pc},
it contains $C_{X}^{z_k}$ for all $k\leq c$. A similar comment applies to \ref{it_yx}, \ref{it_zx}, \ref{it_zy}.
\end{rem}

We distinguish two cases according to whether $\cals$ contains the partial conjugations $C_{X}^{z_k}$ or not.

\subsubsection{Case 1: when $C_{X}^{z_k} \in \cals$}

In this case $\cals$ contains all the partial conjugations $C_Z^{x_i},C_Z^{y_j}$ and $C_X^{z_k}$ for $i\leq a$, $j\leq b$, $k\leq c$.
Largeness of $\Out(\A_{\Gamma_S})$ then follows from the following lemma,
the proof of which uses a variation of the representations of Looijenga \cite{Looi97} and Grunewald--Lubotzky \cite{GruLub_linear}.

\begin{lem}\label{lem:three partial conjugations}
	Let $a,b,c\geq 1$. Suppose a subgroup $\calo$ of $\Out^0(\bbZ^a * \bbZ^b * \bbZ^c)$ contains the partial conjugations $C_X^y,C_Y^z,C_Z^x$ for some $x\in X, y\in Y, z\in Z$.
	
	Then $\calo$ is large.
\end{lem}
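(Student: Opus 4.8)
The plan is to realize $\calo$ acting on (the first rational homology of a finite-index subgroup of) the free product $H = \bbZ^a * \bbZ^b * \bbZ^c$, in the spirit of the Looijenga and Grunewald--Lubotzky representations. Concretely, I would first pass to the subgroup $H_0 \normal H$ of index $2$ that is the kernel of the homomorphism $H \onto \bbZ/2\bbZ$ sending every generator of every factor to the nontrivial element. The three given partial conjugations $C_X^y, C_Y^z, C_Z^x$ all preserve $H_0$ (they conjugate by elements of $H_0$ and their action descends trivially to $\bbZ/2\bbZ$), so after replacing $\calo$ by a finite-index subgroup we obtain a representation $\calo \to \GL(H_1(H_0;\bbQ))$ via the action on homology. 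The point of passing to $H_0$ rather than $H$ itself is that $H_1(H;\bbQ) = \bbQ^{a+b+c}$ is acted on trivially by partial conjugations, whereas $H_1(H_0;\bbQ)$ is much larger (its rank grows because $H_0$, being an index-$2$ subgroup of a free product of free abelians, is again a free product of abelian groups, with more free factors) and the partial conjugations act nontrivially on it.

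\textbf{Main steps.} First I would use Kurosh / the structure of $H_0$ as the fundamental group of the graph of groups obtained by pulling back the splitting $\bbZ^a * \bbZ^b * \bbZ^c$ along the double cover, to compute $H_1(H_0;\bbQ)$ explicitly as a $\calo$-module; it decomposes into an ``abelian part'' coming from the preimages of the vertex groups and a ``graph part'' $\bbQ$ coming from the first homology of the underlying graph (a circle). Second, I would write down the matrices by which $C_X^y, C_Y^z, C_Z^x$ act in a suitable basis: each partial conjugation acts as a unipotent (transvection-like) element, and conjugation by the deck transformation interchanges the two lifts of each factor, so the three partial conjugations together generate a subgroup of $\GL(H_1(H_0;\bbQ))$ that I expect to be Zariski-dense in a product of $\SL$'s or $\mathrm{Sp}$'s of the relevant blocks — this is exactly the mechanism in \cite{Looi97,GruLub_linear}. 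Third, once the image is shown to contain an arithmetic group commensurable with $\SL_m(\bbZ)$ for some $m\geq 2$ (or at least a non-abelian free group after strong approximation and Remark~\ref{rem:afgi An}-style arguments), I would invoke that $\SL_2(\bbZ)$ and $\SL_3(\bbZ)$ are large, or more directly exhibit a finite-index subgroup of the image surjecting onto a non-abelian free group, to conclude that $\calo$ is large by property \ref{vast:finite index} and \ref{vast:surjection} (largeness passes through finite-index subgroups and up to supergroups / quotients in the appropriate direction).

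\textbf{The hard part} will be the explicit determination of the Zariski closure of the image of $\langle C_X^y, C_Y^z, C_Z^x\rangle$ in $\GL(H_1(H_0;\bbQ))$ and showing it is large enough; in particular one must check that the three partial conjugations do not all land in a proper parabolic or in a torus, and that the arithmetic group one produces really is infinite and non-elementary (this is where one genuinely uses that we have \emph{three} partial conjugations with the cyclic pattern $X \to Y \to Z \to X$, as opposed to two — with two one only gets an abelian or solvable image). I would handle the small cases $a = b = c = 1$ (where $H = \bbF_3$ and the statement reduces to largeness of a subgroup of $\Out(\bbF_3)$ containing three partial conjugations) separately if the homological argument degenerates, possibly by directly exhibiting an amalgam or HNN splitting of $\calo$ using Lemmas~\ref{lem_usable_amalg} and \ref{lem_usable_HNN} applied to the free product structure — indeed the excerpt flags that ``in some cases the image is too small and we instead show $\calo$ splits as an amalgam or HNN extension.'' So the proof will likely branch: a homological/arithmeticity argument in the generic case, and a Bass--Serre-theoretic splitting argument (feeding into a standard criterion: a group splitting as an amalgam $A *_C B$ with $[A:C], [B:C] \geq 2$ and one of them $\geq 3$, or an ascending HNN-like configuration, virtually surjects onto $\bbF_2$) in the degenerate cases.
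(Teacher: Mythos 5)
Your setup is the same as the paper's (the index-two subgroup $\tilde A=\ker\pi$, where $\pi$ sends every generator to the nontrivial element of $\bbZ/2\bbZ$, and the action of a finite-index subgroup of $\calo$ on $H_1(\tilde A;\bbQ)$, \`a la Looijenga and Grunewald--Lubotzky), but the engine you propose for the generic case does not work, and you are missing the one observation that makes the argument go through. The subgroup $\langle C_X^y,C_Y^z,C_Z^x\rangle$ does not have large Zariski closure in $\GL(H_1(\tilde A;\bbQ))$: partial conjugations act trivially on the $(+1)$-eigenspace of the deck transformation (which is just $H_1$ of the base), so everything they do is seen on the $(-1)$-eigenspace $V_{-1}$, and the key computation in the paper (an extension of a result of Gasch\"utz, valid for \emph{all} $a,b,c\geq 1$, including $a=b=c=1$) is that $\dim V_{-1}=2$, with basis $(1-g)(e_x-e_z)$, $(1-g)(e_y-e_z)$ in the chain complex of the double cover of the Salvetti complex. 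Inner automorphisms act as $\pm\Id$ there, so one gets $\hat\sigma:\Out_\pi(\A_{\Gamma_S})\to\PGL(2,\bbQ)$, with a finite-index subgroup landing in $\PGL(2,\bbZ)$; the three partial conjugations go to explicit matrices whose products satisfy $\hat\sigma(C_X^yC_Y^z)=\left(\begin{smallmatrix}-1&0\\2&-1\end{smallmatrix}\right)$ and $\hat\sigma(C_Y^zC_Z^x)=\left(\begin{smallmatrix}-1&-2\\0&-1\end{smallmatrix}\right)$, which generate a nonabelian free group. Since the target $\PGL(2,\bbZ)$ is virtually free, a non-elementary image is automatically virtually a nonabelian free group, hence large, and largeness pulls back to $\calo$ through the surjection and the finite-index passage (that last mechanism of yours is fine). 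By contrast, your proposed route is flawed even if the computations could be done: an image commensurable with $\SL_m(\bbZ)$ for $m\geq 3$ would be useless, because $\SL_3(\bbZ)$ is \emph{not} large (Margulis' normal subgroup theorem, bounded generation --- this is precisely the skimpy side of the dichotomy the paper is about; you are probably thinking of $\Out(\bbF_3)$), and merely knowing the image contains a nonabelian free group in some big linear group gives no largeness either. Without the two-dimensionality of $V_{-1}$ you have no mechanism producing largeness.

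Your proposed branching is also misplaced. The amalgam/HNN fallback via Lemmas \ref{lem_usable_amalg} and \ref{lem_usable_HNN} is what the paper uses in the \emph{other} case of the surrounding argument, namely when the third partial conjugation $C_X^{z}$ is not available and the homology representation has virtually abelian image; it is not needed in this lemma. With all three partial conjugations in the cyclic pattern $X\to Y\to Z\to X$, the $V_{-1}$ argument is uniform in $a,b,c$, and $a=b=c=1$ is not a degenerate case but the classical Gasch\"utz case.
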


\begin{proof}
	Recall that $\A_{\Gamma_S}= \grp{X}*\grp{Y}*\grp{Z}\simeq \bbZ^a * \bbZ^b * \bbZ^c$.
	Let
	$$\pi : \A_{\Gamma_S} \to \bbZ_2=\grp{g}$$
	 be the homomorphism sending each
         generator in $S$  to the non-trivial element $g$ of the cyclic group $\bbZ_2$.
	Let $\Tilde A=\ker\pi$ and $\Aut_\pi(\A_{\Gamma_S})$ be the finite index subgroup of $\Aut(\A_{\Gamma_S})$
        of automorphisms $\phi$ such that $\pi\circ\phi=\pi$.

	The Grunewald--Lubotzky representation comes from the action of  $\Aut_\pi(\A_{\Gamma_S})$  on the homology group $H_1(\Tilde A;\bbQ)$.
	Let $K$ be the Salvetti complex of $\A_{\Gamma_S}$,
	\ie the wedge of three tori $T_X,T_Y$ and $T_Z$ of dimensions $a,b$ and $c$ respectively.
	Consider the corresponding double cover $\Tilde K_\pi$ of $X$, so that $\bbZ_2$ acts on $\Tilde K_\pi$ by deck transformations (see Figure \ref{fig_tori}).
	The $1$--skeleton of $\Tilde K_\pi$ can be identified with the Cayley graph of $\bbZ_2$ with generating set $\pi(S)=\pi(X\cup Y\cup Z)$.
	For each $w\in X\cup Y \cup Z$, let $e_w$ denote the oriented edge in $\Tilde K_\pi$ joining $1$ to $g$ labelled by $w$.
	The edge joining $g$ to $1$ labelled by $w$ is the translate $ge_w$.	
	Topologically, $\Tilde K_\pi$ is the disjoint union of two-sheeted covers $\Tilde T_X$, $\Tilde T_Y$, and $\Tilde T_Z$ of $T_X,T_Y$ and $T_Z$,
	identified at two distinct points ($1$ and $g$).

	Each automorphism in  $\Aut_\pi(\A_{\Gamma_S})$
	is induced by a homotopy equivalence of $K$, which lifts uniquely to a homotopy equivalence of $\Tilde K_\pi$
	fixing the vertices of $\Tilde K_\pi$, and commuting with the action of $\bbZ_2$ by deck transformation.
	At the level of homology, we get an action of  $\Aut_\pi(\A_{\Gamma_S})$  on $H_1(\Tilde K_\pi;\bbQ)=H_1(\Tilde A;\bbQ)$
	which commutes with the action of $\bbZ_2$, and preserves the lattice $H_1(\Tilde A;\bbZ)$.
	
	The action of $\grp{g}=\bbZ_2$ splits  $H_1(\Tilde A;\bbQ)$  into the direct sum of  eigenspaces
	$H_1(\Tilde A;\bbQ)=V_{+1}\oplus V_{-1}$ corresponding to the eigenvalues $+1$ and $-1$.
	We will show that $V_{-1}\simeq \bbQ^2$
	(this is an extension of a result of G\"aschutz \cite{Gaschutz},
	which applies when $a=b=c=1$).

        \begin{figure}[ht]
          \centering
          \includegraphics{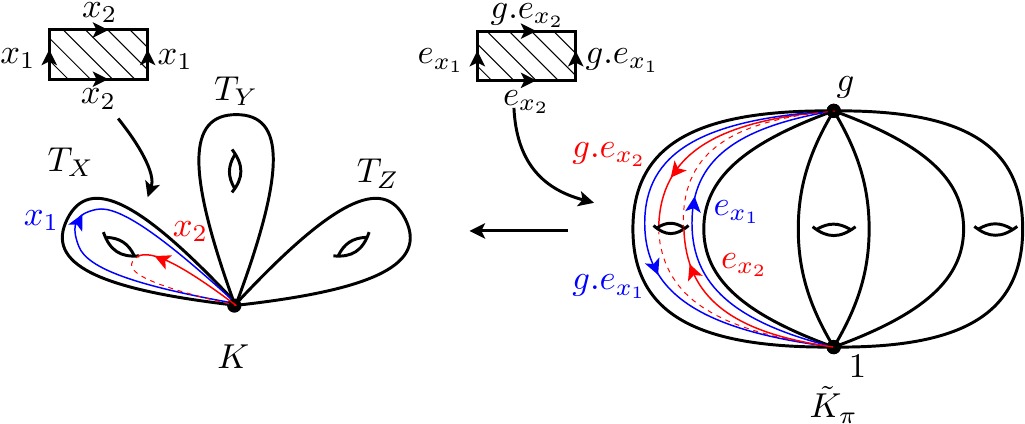}
          \caption{The $2$-fold cover $\Tilde K_\pi$ of the Salvetti complex $K$, and the lift of a $2$-cell.}
          \label{fig_tori}
        \end{figure}

	Denote by $C_i$ the $\bbQ$--vector space of $i$--dimensional chains in $\Tilde K_\pi$,
	by $C_i^{(+1)}$ and $C_i^{(-1)}$ the corresponding eigenspaces for the action of $\bbZ_2$.
	We denote by $\partial_i:C_i\ra C_{i-1}$ the boundary morphism.
	It commutes with the action of $\bbZ_2$, so by restriction we get morphisms
	$\partial_i^{(+1)}:C_{i}^{(+1)}\ra C_{i-1}^{(+1)}$ and
	$\partial_i^{(-1)}:C_{i}^{(-1)}\ra C_{i-1}^{(-1)}$.
    This decomposition of the space of chains gives the decomposition of $H_1(\Tilde K_\pi;\bbQ)$ into eigenspaces,
	and $V_{-1}$ is the quotient $\ker \partial_1^{(-1)} / \partial_2^{(-1)}(C_2^{(-1)})$.
	A basis of $C_1^{(-1)}$ is given by $\{(1-g)e_{u}\mid u\in S\}$, so its dimension is $a+b+c$.
	We have that $\partial_1(C_1)=C_0^{(-1)}\simeq\bbQ$,
	so $\ker \partial_1^{(-1)}$ has codimension $1$, i.e.\
        $\dim ( \ker \partial_1^{(-1)} ) = a+b+c - 1$.
	The image $\partial_2(C_2)$ is the subspace of $C_1$  generated by
	$(1-g)(e_{u}-e_v)$ for $u, v\in S$ in the same equivalence class (see Figure \ref{fig_tori}).
     All these elements are in $C_1^{-1}$ so $\partial_2(C_2)=\partial_2^{(-1)}( C_2^{(-1)})$ has dimension $a+b+c-3$.
	It follows that $\dim(V_{-1}) = 2$ and
	 the two cycles
	$$\vec_1 := (1-g)(e_x-e_z), \ \  \vec_2 := (1-g)(e_y-e_z)$$
	form a basis of $V_{-1}$.
			
	We thus get a representation  $\sigma:\Aut_\pi(\A_{\Gamma_S})\ra \PGL(V_{-1})\simeq \PGL(2,\bbQ)$.
	Since the lattice $H_1(\Tilde A;\bbZ)$ is preserved, a finite index subgroup of $\Aut_\pi(\A_{\Gamma_S})$ preserves
	$\bbZ \vec_1\oplus \bbZ \vec_2$, and thus has a representation in $\PGL(2,\bbZ)$.

	 One can check that all inner automorphisms act as $\pm\id$ on $\vec_1,\vec_2$, and hence lie in the kernel of $\sigma$.
	This therefore gives a representation
			$$\hat \sigma : \Out_\pi (\A_{\Gamma_S}) \to \PGL(2,\bbQ)$$
	where $\Out_\pi(\A_{\Gamma_S})=\Aut_\pi(\A_{\Gamma_S})/\Inn(\A_{\Gamma_S})$,
	which contains a finite index subgroup which maps into $\PGL(2,\bbZ)$.

	We claim that the partial conjugations of $\bbZ^a*\bbZ^b*\bbZ^c$ are in $\Aut_\pi(\A_{\Gamma_S})$, that their images preserve $\bbZ\vec_1 \oplus \bbZ\vec_2$, and that the group they generate in $\PGL(2,\bbZ)$ is virtually a non-abelian free group.

	Lifting a homotopy equivalence of $K$ inducing one of the partial conjugations,
	one can describe their action  on $V_{-1}$ as follows:
	$$C_X^{y_j}(e_{x}) = (1-g)e_{y_j} +ge_{x} = (1-g)e_y +ge_x,\ C_X^{y_j}(e_{y})=e_{y}, \text{ and } C_X^{y_j}(e_{z})=e_{z}.$$
	Using relations like $(1-g)(ge_x) = (1-g)(-e_x)$, we therefore get
$$C_X^{y_j}(\vec_1) = (1-g)(ge_x +(1-g)e_y-e_z) = -\vec_1 + 2\vec_2$$
	and 			$$C_X^{y_j}(\vec_2) = (1-g)(e_y-e_z) = \vec_2$$	
	so 	$$\hat \sigma(C_X^{y_j})=\left(\begin{array}{rr}
	-1 & 0 \\
	2 & 1
	\end{array}\right).$$
	With similar calculations we see that
	$$\hat \sigma(C_Y^{z_k})=\left(\begin{array}{rr}
	1 & 0 \\
	0 & -1
	\end{array}\right),\ \hat \sigma(C_Z^{x_i})=\left(\begin{array}{rr}
	-1 & -2 \\
	0 & 1
	\end{array}\right).$$
	We thus have
	$$\hat \sigma(C_X^yC_Y^z)=
	\left(\begin{array}{rr}
	-1 & 0 \\
	2 & -1
	\end{array}\right), \ \hat \sigma(C_Y^zC_Z^x)=\left(\begin{array}{rr}
	-1 & -2 \\
	0 & -1
	\end{array}\right).$$
	In particular the image of $\<C_X^y,C_Y^z,C_Z^x\>$ under $\hat{\sigma}$ in $\PGL(2,\bbZ)$ contains a non-abelian free group.
	The lemma follows.
\end{proof}

\subsubsection{Case 2: when $C_{X}^{z_k} \notin \cals$}
In this case, $\cals$ contains no partial conjugation of the form $C_{X}^{z_k}$ (\ie in \ref{it_pc}).
The representation used in case 1 has virtually abelian image, and does not help us.

Consider the subgroup $U=\grp{X} \ast \grp{Y}\subset \A_{\Gamma_S}$.
Since all generators in \ref{it_sil}, \ref{it_int}, \ref{it_yx}--\ref{it_zy} preserve the conjugacy class of $U$,
so does every automorphism in $\calo=\grp{\cals}$.
By restricting to $U$, we get a homomorphism $\calo\ra \Out(U)$ as in Section \ref{sec_res_fact}.

In Lemma \ref{lem:H gen set} we will show that this homomorphism lifts to a homomorphism $\calo\ra \Aut(U)$.
Before proving this,
we note the following easy, but useful, fact.

\begin{lem}\label{lem_commute}
	Consider $u\leq v$, and assume that $[u]$ is abelian with $\card{[u]}\geq 2$.
	
	Then $u$ commutes with $v$.
\end{lem}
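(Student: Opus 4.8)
The plan is to use the existence of a second vertex in the equivalence class $[u]$ together with transitivity of the preordering. First I would pick a vertex $u'\in[u]$ with $u'\neq u$; this is possible since $\card{[u]}\geq 2$. Because $[u]$ is abelian, it is a clique in $\Gamma$ (see Section \ref{sec:preordering}), so $u$ and $u'$ are joined by an edge, i.e.\ $u\in\lk(u')$.

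Next I would observe that $u'\sim u$ and $u\leq v$ together give $u'\leq v$: indeed $u'\sim u$ means $u'\leq u$, and $\leq$ is transitive, so $u'\leq u\leq v$. By definition this says $\lk(u')\subseteq\st(v)$, and combining with $u\in\lk(u')$ yields $u\in\st(v)$. Hence either $u=v$ or $u$ is adjacent to $v$, and in both cases $u$ commutes with $v$ in $\A_\Gamma$, as desired.

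I do not expect any real obstacle here: the statement is essentially immediate once one records that an abelian equivalence class is a clique and that composing $\sim$ with $\leq$ still gives $\leq$. The only minor point is the degenerate case $u=v$, which is harmless; in the intended applications $u$ and $v$ are distinct, so the conclusion is that they span an edge of $\Gamma$.
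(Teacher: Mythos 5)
Your argument is correct and is exactly the paper's proof: pick $u'\in[u]$ with $u'\neq u$, use that the abelian class is a clique to get $u\in\lk(u')$, and use $u'\leq u\leq v$ to conclude $\lk(u')\subseteq\st(v)$, hence $u\in\st(v)$. The remark about the degenerate case $u=v$ is harmless and does not change anything.
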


\begin{proof}
	Consider  $u'\sim u$ with $u'\neq u$. Since $[u]$ is abelian, $u\in \lk(u')\subset \st(v)$, so $u$ and $v$ commute.
\end{proof}

Note that in the following, the notation for the generators from Section \ref{sec:prelim-generators} is used to represent both outer automorphisms (when they are in $\calo$) and honest automorphisms (when they are in, for example, $\cala_1$).

\begin{lem}\label{lem:H gen set}
	There is a homomorphism $\rho_1 : \calo \to \Aut(U)$, with image $\cala_1$ generated by

	\begin{itemize}
		\item $\left\{\ad_{x_i},\ad_{y_j}, R_{x_i}^{x_{i'}},L_{x_i}^{x_{i'}}, R_{y_j}^{y_{j'}},L_{y_j}^{y_{j'}} \colon 1\leq i,i' \leq a, 1\leq j,j' \leq b\right\}$ if $y \not\leq x$,
		\item $\left\{\ad_{x_i},\ad_{y}, R_{x_i}^{x_{i'}},L_{x_i}^{x_{i'}}, L_y^{x_i}, R_y^{x_i} \colon 1 \leq i,i'\leq a \right\}$ if $y\leq x$.
	\end{itemize}
\end{lem}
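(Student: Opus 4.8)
The plan is to realise $\rho_1$ as the first–factor projection of one of the embeddings provided by Section~\ref{sec:prelim usable results}, applied to the free–product decomposition $\A_{\Gamma_S}=U*\langle Z\rangle$. Recall that $\Gamma_S$ is a disjoint union of three cliques $X,Y,Z$ of sizes $a,b,c\geq 1$, so $\A_{\Gamma_S}\simeq\bbZ^a*\bbZ^b*\bbZ^c$ and $U=\langle X\rangle*\langle Y\rangle$; that we have arranged $x\not\leq z$, $y\not\leq z$, $x\not\leq y$; and that we are in Case~2, so no generator $C_X^{z_k}$ (type \ref{it_pc}) occurs in $\cals$. I would begin with two reductions from Lemma~\ref{lem_commute}: as $[y]$ is abelian and $x,y$ do not commute, $y\leq x$ forces $b=1$; and as $[z]$ is abelian and neither $x$ nor $y$ commutes with $z$, $z\leq x$ or $z\leq y$ forces $c=1$. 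This already accounts for the shapes of the two generating sets in the statement.

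\emph{Case $z\not\leq x$ and $z\not\leq y$.} Then $\cals$ consists only of generators of types \ref{it_sil}, \ref{it_int}, and (if $y\leq x$) \ref{it_yx}. Each of these — hence every element of $\calo$ — preserves the conjugacy classes of both $U$ and $\langle Z\rangle$: the SIL automorphisms conjugate $Z$ by an element of $U$ while fixing $U$ pointwise; internal transvections within $X$ or $Y$ preserve $U$ and fix $\langle Z\rangle$ pointwise; internal transvections within $Z$ fix $U$ pointwise; type \ref{it_yx} transvections preserve $U$ and fix $\langle Z\rangle$ pointwise. I would then apply Lemma~\ref{lem_usable_amalg} with $A=U$, $B=\langle Z\rangle$, and let $\rho_1$ be the embedding $\calo\hookrightarrow\Aut(U)\times\Aut(\langle Z\rangle)$ followed by projection onto the first factor. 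Since the canonical representative $\tilde\Phi$ preserves $U$, composing $\rho_1$ with $\Aut(U)\to\Out(U)$ recovers the restriction homomorphism $\calo\to\Out(U)$.

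\emph{Case $z\leq x$ or $z\leq y$.} Here $c=1$, so $\A_{\Gamma_S}=U*\langle z\rangle$ is an HNN extension of $U$ over the trivial group with stable letter $z$, and I would instead apply Lemma~\ref{lem_usable_HNN} with $A=U$, $t=z$. The point to verify is $\calo^0=\calo$: every generator in $\cals$ sends the conjugacy class of $z$ into that of an element of $zU$ — this holds for $C_Z^{x_i},C_Z^{y_j}$ (conjugate back to $z$), for $R_z^{x_i},R_z^{y_j}$ (they send $z$ to $zx_i$, $zy_j\in zU$), for $L_z^{x_i},L_z^{y_j}$ (they send $z$ to $x_iz\sim zx_i$, resp.\ $y_jz\sim zy_j$), and for the remaining generators, which fix $z$. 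Then $\rho_1$ is the composite $\calo=\calo^0\hookrightarrow\Aut(U)\semidirect U\to\Aut(U)$, again lifting the restriction map.

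In either case it remains to evaluate $\rho_1$ on generators. For an internal transvection within $X$ or $Y$, a type \ref{it_yx} transvection, an internal transvection within $Z$, or $R_z^{x_i},R_z^{y_j}$, the standard Servatius automorphism representing it already preserves $U$ and (according to the case) fixes $\langle Z\rangle$ pointwise or sends $z$ to $za$ with $a\in U$; so it is the canonical representative, and $\rho_1$ sends it to the correspondingly–named automorphism of $U$ — which is $\id_U$ for the internal transvections within $Z$ and for $R_z^{x_i},R_z^{y_j}$. The crucial observation, which I expect to be the only delicate step, is that the SIL automorphisms do \emph{not} restrict trivially: the canonical representative of $C_Z^{x_i}$ is $C_Z^{x_i}\circ\ad_{x_i^{-1}}$ (it fixes $\langle Z\rangle$, resp.\ sends $z\mapsto z$, and preserves $U$ because $x_i\in U$), so $\rho_1(C_Z^{x_i})=\ad_{x_i}^{-1}$ on $U$, and likewise $\rho_1(C_Z^{y_j})=\ad_{y_j}^{-1}$; similarly $L_z^{x_i}$ has canonical representative $L_z^{x_i}\circ\ad_{x_i^{-1}}$ and maps to $\ad_{x_i}^{-1}$ (and $L_z^{y_j}$ to $\ad_{y_j}^{-1}$). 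As $\langle\ad_{x_i}^{-1}\rangle=\langle\ad_{x_i}\rangle$, these images generate precisely the set displayed in the statement; the sub-case $y\leq x$ gives $b=1$, hence a single $\ad_y$ together with the transvections $L_y^{x_i},R_y^{x_i}$, matching the second bullet. Everything else is routine bookkeeping over the sub-cases $z\leq x$, $z\leq y$, $y\leq x$.
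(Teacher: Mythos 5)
Your proposal is correct and follows essentially the same route as the paper: the amalgam $U*\grp{Z}$ with Lemma \ref{lem_usable_amalg} when $z\not\leq x$ and $z\not\leq y$, the HNN extension $U*_{\{1\}}$ with Lemma \ref{lem_usable_HNN} when $z\leq x$ or $z\leq y$ (using Lemma \ref{lem_commute} to get $c=1$, resp.\ $b=1$), and the same evaluation of $\rho_1$ on the generators, including the key observation that the SIL automorphisms and the left transvections $L_z^{x_i},L_z^{y_j}$ map to $\ad_{x_i}^{-1},\ad_{y_j}^{-1}$. Your explicit verification that $\calo=\calo^0$ in the HNN case is a detail the paper leaves implicit, but it is the same argument.
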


\begin{proof}
We first consider the case where $z\not\leq x$ and $z\not \leq y$, so that $\cals$ contains no generator in \ref{it_zx}, \ref{it_zy}.
Since the other generators preserve the conjugacy class of $\grp{Z}$,
applying Lemma \ref{lem_usable_amalg} to the decomposition $\Gamma_S=U*\grp{Z}$, we get an embedding of $\calo$ into $\Aut(U)\times \Aut(Z)$,
and hence, by projection, a homomorphism $\rho_1:\calo\ra \Aut(U)$.
As mentionned in Lemma \ref{lem_usable_amalg}, given $\Phi\in \calo\subset \Out(\Gamma_S)$,
$\rho_1(\Phi)$ is defined by choosing the representative
$\Tilde\Phi\in \Aut(\Gamma_S)$ that preserves simultaneously $U$ and $\grp{Z}$, and taking for $\rho_1(\Phi)$ the restriction of $\Tilde \Phi$ to $U$.

For a SIL automorphism $\Phi=[C_Z^{x_i}]\in \Out(\Gamma_S)$  in \ref{it_sil},
one gets $\Tilde \Phi=\ad_{x_i}\m\circ C_Z^{x_i}$ so $\rho_1(\Phi)=\ad_{x_i}\m \in \Aut(U)$.
Similarly, $\rho_1([C_Z^{y_j}])=\ad_{y_j}\m$.
The internal transvections in \ref{it_int} readily preserve $U$ and $\grp{Z}$, and their restrictions to $U$ give
the automorphisms $R_{x_i}^{x_{i'}},L_{x_i}^{x_{i'}}, R_{y_j}^{y_{j'}},L_{y_j}^{y_{j'}}$ (the internal transvections on $Z$ restrict to the identity).

If $y\not \leq x$, then there is no other generator in $\cals$ and we are done.
If $y\leq x$, then $Y=\{ y \}$ by Lemma \ref{lem_commute}, so there are no internal transvections on $Y$.
The transvections $L_y^{x_i}$ and $R_y^{x_i}$ in \ref{it_yx} preserve $U$ and $\grp{Z}$, so their image under $\rho_1$
are $L_y^{x_i}$ and $R_y^{x_i}$.

There remains to consider the case where $z\leq x$ or $z\leq y$.
In either situation, the equivalence class $Z$ is reduced to $\{z\}$ by Lemma \ref{lem_commute},
so $\A_{\Gamma_S}$ can be written as an HNN extension $\A_{\Gamma_S}=U*_{\{1\}}$ with stable letter $z$.
The first part of Lemma \ref{lem_usable_HNN} then yields a homomorphism $\calo\ra \Aut(U)\semidirect U$, hence a homomorphism $\rho_1:\calo\ra \Aut(U)$,
defined as follows:
given $\Phi\in \calo\subset \Out(\Gamma_S)$, choose
$\Tilde\Phi\in \Aut(\A_{\Gamma_S})$ that preserves $U$ and sends $z$ into $zU$, then $\rho_1(\Phi)=\Tilde\Phi_{|U}$.

One checks as above that the image of the SIL automorphisms in \ref{it_sil} under $\rho_1$ are the inner automorphisms $\ad_{x_i}\m,\ad_{y_j}\m$,
and that internal transvections in \ref{it_int} yield all the transvections $R_{x_i}^{x_{i'}}$, $L_{x_i}^{x_{i'}}$, $R_{y_j}^{y_{j'}}$, and $L_{y_j}^{y_{j'}}$.
The right transvections $R_{z}^{x_i}$, $R_{z}^{y_j}$ in \ref{it_zx}, \ref{it_zy} are mapped to the identity
while the left transvections $L_{z}^{x_i}$, $L_{z}^{y_j}$ are mapped to $\ad_{x_i}\m$, $\ad_{y_j}\m$ respectively
(this is because $\ad_{x_i}\m\circ L_{z}^{x_i}$ is the representative that sends $z$ to $zx_i\in zU$).
This all applies when  $y\not \leq x$.
If $y\leq x$, then as above $Y=\{y\}$ and the transvections $L_y^{x_i}$ and $R_y^{x_i}$ are mapped under $\rho_1$ to $L_y^{x_i}$ and $R_y^{x_i}$ in $\Aut(U)$.
\end{proof}

Before treating the general case, let's first give a simpler argument showing largeness of $\cala_1$
in the particular case where $a=b=1$ (this argument is not needed as the argument below will include this case).
If $y\not \leq x$, we see from Lemma \ref{lem:H gen set} that  $\cala_1=\grp{\ad_{x},\ad_y}$.
Since $x,y$ do not commute, this is isomorphic to a free group, giving largeness in this instance.
If $y \leq x$, then $\cala_1 = \grp{\ad_x,\ad_y,L_y^x,R_y^x}$.
Writing  $x=\ad_x,y=\ad_y,L=L_y^x,R=R_y^x$,
since $R=x\m L$, $\cala_1$ is isomorphic to the semidirect product $\grp{L}\semidirect \grp{x,y}$
which can be presented as
$$\grp{x,y,L\mid [L,x]=1, LyL\m=xy}.$$
Since this presentation has deficiency one and includes a commutator,  and since $\grp{x,L}$ has infinite index in the abelianisation of $\cala_1$,
this group is large by a result of Button \cite[Theorem 3.1]{Button_large_deficiency}.

\begin{lem}\label{lem_iso}
  Let $\cala_1$ be  the subgroup of $\Aut(U)$ given in Lemma \ref{lem:H gen set}, where $U=\grp{X}*\grp{Y}\simeq \bbZ^a*\bbZ^b$.

Then one of the following  holds:
\begin{itemize}
\item either $y\not \leq x$, and $\cala_1$ splits as an amalgam $\cala_1\simeq \Big[\calo_1\semidirect \bbZ^a\Big] *_{\calo_1} \Big[ \calo_1\semidirect \bbZ^b\Big]$
with $\calo_1=\SL_a(\bbZ)\times \SL_b(\bbZ)$ acting on $\bbZ^a$ and on $\bbZ^b$ in the natural way;
\item or $y\leq x$, $b=1$ and $\cala_1$ splits as an HNN extension
$\cala_1\simeq \Big[\SL_a(\bbZ)\semidirect (\bbZ^a\times \bbZ^a)\Big] *_{\calo_1} $ where $\calo_1=\SL_a(\bbZ)\semidirect \bbZ^a$, and the two
embeddings of $\calo_1$ defining the HNN extension are the natural maps $j_1,j_2$ defined by
$$j_1:\calo_1\simeq \SL_a(\bbZ)\semidirect (\bbZ^a\times \{1\})\hookrightarrow \SL_a(\bbZ)\semidirect (\bbZ^a\times \bbZ^a)$$
and
$$j_2:\calo_1\simeq \SL_a(\bbZ)\semidirect (\{1\} \times \bbZ^a)\hookrightarrow \SL_a(\bbZ)\semidirect (\bbZ^a\times \bbZ^a).$$
\end{itemize}
\end{lem}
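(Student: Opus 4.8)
The plan is to recognise the two alternatives of Lemma~\ref{lem_iso} as exactly the outputs of Lemmas~\ref{lem_usable_amalg} and~\ref{lem_usable_HNN}, applied to $U$ viewed either as the free product $\grp{X}*\grp{Y}\simeq\bbZ^a*\bbZ^b$ (case $y\not\leq x$) or, when $y\leq x$, as the HNN extension $\grp{X}*_{\{1\}}$ over the trivial group with vertex group $A=\grp{X}$ and stable letter $t=y$ (recall that in this case $Y=\{y\}$ and $b=1$ by Lemma~\ref{lem_commute}). The first thing to observe is that $\cala_1$ contains $\Inn(U)$: the elements $\ad_{x_1},\dots,\ad_{x_a}$ together with the $\ad_{y_j}$ (or $\ad_y$) are among its generators and these generate $\Inn(U)\simeq U$. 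Hence $\cala_1$ is precisely the preimage in $\Aut(U)$ of its image $\calo_1:=\cala_1/\Inn(U)$ in $\Out(U)$, which is the situation both lemmas require.

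In the case $y\not\leq x$, every generator listed in Lemma~\ref{lem:H gen set} preserves the conjugacy classes of $\grp{X}$ and of $\grp{Y}$ (the internal transvections preserve both subgroups setwise, while the $\ad$'s preserve the conjugacy classes), so the same is true of $\calo_1$. Lemma~\ref{lem_usable_amalg} then directly gives the splitting $\cala_1\simeq\bigl[\calo_1\semidirect\bbZ^a\bigr]*_{\calo_1}\bigl[\calo_1\semidirect\bbZ^b\bigr]$, together with an embedding $\calo_1\hookrightarrow\Aut(\grp{X})\times\Aut(\grp{Y})=\GL_a(\bbZ)\times\GL_b(\bbZ)$ through which the semidirect actions are taken. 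It then remains to compute, for each generator, the distinguished representative preserving $\grp{X}$ and $\grp{Y}$ simultaneously and to read off its restriction: the $\ad$'s give $(\Id,\Id)$, an internal transvection on $X$ gives an elementary matrix in the first coordinate and $\Id$ in the second, and symmetrically for $Y$; since elementary matrices generate $\SL$, the image is exactly $\SL_a(\bbZ)\times\SL_b(\bbZ)$, acting on $\bbZ^a$ and $\bbZ^b$ in the natural way. This identifies $\calo_1$ as claimed.

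In the case $y\leq x$, with $U=\grp{X}*_{\{1\}}$, $A=\grp{X}\simeq\bbZ^a$ and $t=y$, each generator from Lemma~\ref{lem:H gen set} either fixes $X$ pointwise or acts on $\grp{X}$ by an automorphism or a conjugation, hence preserves the conjugacy class of $A$; moreover each sends the conjugacy class of $y$ into that of an element of $yA$ (immediate for $R_y^{x_i}$; for $L_y^{x_i}$ conjugate $x_iy$ by $x_i\m$ to get $yx_i\in yA$; clear for the $\ad$'s and the internal transvections). In the notation of Lemma~\ref{lem_usable_HNN} this says $\calo^0$ is the whole image $\calo_1$ of $\cala_1$ in $\Out(U)$, so $\cala^0=\cala_1$. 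Since $A$ is abelian, the second part of that lemma gives $\cala_1\hookrightarrow\bigl[\cala'\semidirect(\bbZ^a\times\bbZ^a)\bigr]*_{\cala'\semidirect\bbZ^a}$ with the stated natural embeddings $j_1,j_2$, where $\cala'$ is the image of $\calo_1$ in $\Aut(A)=\GL_a(\bbZ)$; computing restrictions to $A$ as before (the $\ad$'s and the transvections $L_y^{x_i},R_y^{x_i}$ restrict to $\Id$, the internal transvections on $X$ to elementary matrices) gives $\cala'=\SL_a(\bbZ)$ and $\calo_1\simeq\SL_a(\bbZ)\semidirect\bbZ^a$. Finally, the transvections $L_y^{x_i},R_y^{x_i}\in\cala_1$ generate exactly the group of automorphisms that are the identity on $A$ and send $y$ to $uyv$ with $u,v\in A$, which is precisely the condition in Lemma~\ref{lem_usable_HNN}(2) forcing the embedding to be an isomorphism; hence $\cala_1\simeq\bigl[\SL_a(\bbZ)\semidirect(\bbZ^a\times\bbZ^a)\bigr]*_{\calo_1}$, as required.

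The work here is entirely bookkeeping, and the main care point is tracking, for each generator, the distinguished representative the two structural lemmas single out (the one preserving both free factors, respectively preserving $A$ and moving $t$ inside $tA$), and, in the HNN case, checking that no generator lies in $\calo\setminus\calo^0$ and that the explicit equality clause of Lemma~\ref{lem_usable_HNN}(2) applies. The hypothesis $b=1$ from Lemma~\ref{lem_commute} is what collapses the second free factor and turns the picture into a genuine HNN extension over the trivial group.
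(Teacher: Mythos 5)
Your proposal is correct and follows essentially the same route as the paper: apply Lemma \ref{lem_usable_amalg} when $y\not\leq x$ and Lemma \ref{lem_usable_HNN} (with $Y=\{y\}$ from Lemma \ref{lem_commute}) when $y\leq x$, then identify $\calo_1$, respectively $\cala'$, by computing the images of the generators from Lemma \ref{lem:H gen set}. Your extra checks — that $\Inn(U)\subset\cala_1$, that all generators lie in $\calo^0$, and that the transvections $L_y^{x_i},R_y^{x_i}$ supply the equality clause of Lemma \ref{lem_usable_HNN}(2) — are exactly the points the paper leaves implicit, and they are verified correctly.
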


 We give the proof of Lemma \ref{lem_iso} after the following corollary, which will conclude the proof of Lemma \ref{lem:special SIL implies large}.

\begin{cor}\label{cor_large}
  The group $\cala_1$ is large, and hence so are $\calo$ and $\Out(\A_\Gamma)$.
\end{cor}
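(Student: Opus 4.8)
The plan is to exploit the two splittings of $\cala_1$ provided by Lemma \ref{lem_iso} through a reduction modulo a fixed odd prime $p$ (take $p=3$), which will produce a surjection of $\cala_1$ onto a nontrivial amalgam, resp.\ HNN extension, of \emph{finite} groups. Such a group is the fundamental group of a finite graph of finite groups, so it acts on a locally finite tree with finite vertex stabilisers and is virtually free; its rational Euler characteristic will be negative, so it is virtually a nonabelian free group and in particular large. As largeness passes to any group surjecting onto it, this gives that $\cala_1$ is large. It then propagates: $\rho_1\colon\calo\onto\cala_1$ is onto by Lemma \ref{lem:H gen set}, and $\rho=\Res\circ\Fact\colon\Out^0(\A_\Gamma)\onto\calo$ is onto by construction, so $\calo$ and $\Out^0(\A_\Gamma)$ are large; and since $\Out^0(\A_\Gamma)$ has finite index in $\Out(\A_\Gamma)$, the latter is large too.

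The heart of the argument is the following computation, carried out first in the amalgam case of Lemma \ref{lem_iso}. Let $\bar C=\SL_a(\bbZ/p)\times\SL_b(\bbZ/p)$ be the reduction mod $p$ of $\calo_1=\SL_a(\bbZ)\times\SL_b(\bbZ)$, and set $\bar A=\bar C\ltimes(\bbZ/p)^a$ and $\bar B=\bar C\ltimes(\bbZ/p)^b$, with $\bar C$ acting through its $\SL_a$, resp.\ $\SL_b$, factor, so that $\bar C$ sits inside $\bar A$ and inside $\bar B$ as the evident complementary subgroup. Reduction mod $p$ gives surjections $\calo_1\ltimes\bbZ^a\onto\bar A$ and $\calo_1\ltimes\bbZ^b\onto\bar B$ agreeing on $\calo_1$ (both restrict there to $\calo_1\onto\bar C$), hence a surjection $\cala_1=[\calo_1\ltimes\bbZ^a]*_{\calo_1}[\calo_1\ltimes\bbZ^b]\onto\bar A*_{\bar C}\bar B$. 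In this amalgam $\bar C$ has index $p^a\geq 3$ in $\bar A$ and index $p^b\geq 3$ in $\bar B$, so $1/p^a+1/p^b<1$ and $\chi(\bar A*_{\bar C}\bar B)=1/|\bar A|+1/|\bar B|-1/|\bar C|<0$; thus $\bar A*_{\bar C}\bar B$ is non-elementary virtually free, hence large, hence $\cala_1$ is large. The HNN case is entirely parallel: writing $\bar C'=\SL_a(\bbZ/p)\ltimes(\bbZ/p)^a$ for the reduction of $\calo_1$ and $\bar A'=\SL_a(\bbZ/p)\ltimes\bigl((\bbZ/p)^a\times(\bbZ/p)^a\bigr)$, reduction mod $p$ carries $\cala_1=\bigl[\SL_a(\bbZ)\ltimes(\bbZ^a\times\bbZ^a)\bigr]*_{\calo_1}$ onto the HNN extension $\bar A'*_{\bar C'}$ whose two edge inclusions are the coordinate embeddings prescribed by $j_1,j_2$; here $[\bar A':\bar C']=p^a\geq 3$, so $\chi(\bar A'*_{\bar C'})=(1-p^a)/|\bar A'|<0$ and again the quotient, hence $\cala_1$, is large.

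The only point requiring attention is to keep the finite quotient non-elementary: an amalgam or HNN extension of finite groups over a subgroup of index $2$ on both sides (e.g.\ an infinite dihedral group) has zero Euler characteristic and is not large. Taking the odd prime $p=3$ makes every relevant index at least $3$ and removes this worry uniformly; and the potentially degenerate case $a=b=1$ is harmless because then $\calo_1=\{1\}$ and Lemma \ref{lem_iso} already displays $\cala_1\cong\bbZ*\bbZ$. Beyond this, Corollary \ref{cor_large} is a formality: all the real content of Case 2 lies in Lemmas \ref{lem:H gen set} and \ref{lem_iso}, and what remains is the bookkeeping above together with the standard facts that largeness is inherited by preimages under epimorphisms and by (and from) finite-index subgroups.
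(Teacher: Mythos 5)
Your proposal is correct and follows essentially the same route as the paper: reduce the amalgam (resp.\ HNN) decomposition of Lemma \ref{lem_iso} modulo $3$ to obtain a quotient that is an amalgam or HNN extension of finite groups, observe it is virtually free but not virtually cyclic (the paper asserts this directly, you justify it via the negative Euler characteristic, which is a fine way to make it explicit), and then pull largeness back through the epimorphisms $\Out^0(\A_\Gamma)\onto\calo\onto\cala_1$ and the finite-index inclusion $\Out^0(\A_\Gamma)<\Out(\A_\Gamma)$.
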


\begin{proof}[Proof of Corollary \ref{cor_large}.]
Assume first that $\cala_1=  \Big[\calo_1\semidirect \bbZ^a\Big] *_{\calo_1} \Big[ \calo_1\semidirect \bbZ^b\Big]$
with $\calo_1=\SL_a(\bbZ)\times \SL_b(\bbZ)$.
Consider $\bar \calo_1=\SL_a(\bbZ_3)\times \SL_b(\bbZ_3)$, a finite quotient of $\calo_1$.
Then define  $$\bar \cala_1=\Big[\bar\calo_1\semidirect \bbZ_3^a\Big] *_{\bar \calo_1} \Big[ \bar \calo_1\semidirect \bbZ_3^b\Big].$$
This quotient of $\cala_1$ is virtually free, and not virtually cyclic (even if $a=1$ or $b=1$).
This implies that $\cala_1$ is large.
The case of an HNN extension is similar.
\end{proof}

\begin{proof}[Proof of Lemma \ref{lem_iso}.]
We denote by $\calo_1$ the image of $\cala_1$ in $\Out(U)$,  and show that it is $\SL_a(\bbZ)\times\SL_b(\bbZ)$, or $\SL_a(\bbZ)\ltimes \bbZ^a$ respectively.

First consider the case where $y\not\leq x$, so that $\cala_1$ has a generating set
given by the first possibility in Lemma  \ref{lem:H gen set}.
The first assertion of Lemma \ref{lem_usable_amalg} shows that $\calo_1\hookrightarrow \Aut(\bbZ^a)\times \Aut(\bbZ^b)$
and looking at the image of the generating set of $\cala_1$,
we see that the image of $\calo_1$ is precisely $\SL_a(\bbZ)\times\SL_b(\bbZ)$.
The second assertion of Lemma \ref{lem_usable_amalg} shows that $\cala_1$ splits as an amalgam
$$\cala_1\simeq \Big [\calo_1\semidirect \bbZ^a\Big] *_{\calo_1} \Big[\calo_1\semidirect \bbZ^b\Big].$$
Since the actions are the natural ones,
this concludes the lemma in this case.

In the case where $y\leq x$,  $Y=\{y\}$ by Lemma \ref{lem_commute}, so $U$ splits as an HNN extension $U=\grp{X}*_{\{1\}}$,
and $y$ is a stable letter.
The group $\cala_1$ has a generating set given by the second possibility in Lemma \ref{lem:H gen set}.
Let $\cala'\subset \Aut(\grp{X})$ be the image of $\cala_1$ under the natural map to $\Out(\grp{X})=\Aut(\grp{X})$.
In view of the generating set, we see that $\cala'\simeq \SL_a(\bbZ)$.
Then the second part of Lemma \ref{lem_usable_HNN}
  applies to $\cala_1$, and says that $\cala_1$ splits as an HNN extension
\begin{eqnarray*}\cala_1&=
	&  \Big[\cala' \semidirect (\grp{X}\times \grp{X})\Big]*_{\cala'\semidirect \grp{X}} \\
&\simeq & \Big[\SL_a(\bbZ) \semidirect (\bbZ^a\times \bbZ^a)\Big]*_{\SL_a(\bbZ)\semidirect \bbZ^a}.
\end{eqnarray*}
The lemma follows.
\end{proof}

\section{A short exact sequence when there is no SIL}\label{sec:SES when no SIL}

In this section, we prove the third alternative of the trichotomy.

\begin{prop}\label{prop_tricho3}
	Let $\Gamma$ be a finite graph  with no SIL.
	
	Then $\Out^0(\A_\Gamma)$ fits in a short exact sequence
	$$1\ra P\ra \Out^0(\A_\Gamma) \ra \prod_{i=1}^k \SL_{n_i}(\bbZ)\ra 1$$
	where $P$ is finitely generated nilpotent, and $n_1,\dots, n_k$ are the sizes of the equivalence classes in $\Gamma$.
\end{prop}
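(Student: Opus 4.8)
The plan is to exhibit the short exact sequence via the standard representation $\sigma:\Out(\A_\Gamma)\to \GL_{\card\Gamma}(\bbZ)$ introduced in Section \ref{sec:standard representation}, and then to analyse its kernel, namely the Torelli group $\IA(\A_\Gamma)$. Recall that, with an enumeration of the vertices compatible with the preordering, the image $\sigma(\Out^0(\A_\Gamma))$ is exactly $\calg_\bbZ$, a block lower-triangular subgroup of $\SL_{\card\Gamma}(\bbZ)$ with a diagonal block $\SL_{n_i}(\bbZ)$ for each equivalence class of size $n_i$ and an off-diagonal block $M_{n_i,n_j}(\bbZ)$ for each pair $[v_i]\le [v_j]$. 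Composing with the projection of $\calg_\bbZ$ onto the product of diagonal blocks gives an epimorphism $\Out^0(\A_\Gamma)\onto \prod_{i=1}^k \SL_{n_i}(\bbZ)$; the surjectivity here is clear because internal transvections inside each equivalence class already surject onto the corresponding $\SL_{n_i}(\bbZ)$. Let $P$ denote the kernel of this composed map. By construction $P$ is an extension $1\to \IA(\A_\Gamma)\to P\to U_\bbZ\to 1$, where $U_\bbZ$ is the group of integer points of the unipotent radical of $\calg$ (the strictly block-lower-triangular matrices), which is a finitely generated torsion-free nilpotent group.

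The first real input is that $\Gamma$ has no SIL, which by the result quoted in Section \ref{sec:torelli} (and the short proof promised there, cf. \cite{GPR_automorphisms,CRSV_no_SIL}) forces $\IA(\A_\Gamma)$ to be \emph{abelian}: indeed the Day--Wade generating set $\calm_\Gamma$ consists of partial conjugations (which generate an abelian group precisely when there is no SIL) together with commutator transvections $K_u^{[v,w]}$, and one checks that under the no-SIL hypothesis these all commute. Thus $P$ is an extension of a finitely generated nilpotent group $U_\bbZ$ by a finitely generated abelian group $\IA(\A_\Gamma)$, hence $P$ is finitely generated and polycyclic. To conclude that $P$ is actually nilpotent rather than merely polycyclic, I would invoke the argument of Day (as signposted at the end of Section \ref{sec:SES when no SIL} in the outline): one shows that the conjugation action of $U_\bbZ$ on the abelian group $\IA(\A_\Gamma)$ is unipotent, so that $P$ has a central series with finitely generated abelian quotients. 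Concretely, conjugating a generator $K_u^{[v,w]}$ or a partial conjugation $C_Z^x$ by an elementary transvection $R_a^b$ or $L_a^b$ changes it by a product of generators that are "lower" in a suitable grading of $\calm_\Gamma$ by the preordering, so the action of each transvection on $\IA(\A_\Gamma)$ is unipotent; since $U_\bbZ$ is generated by (images of) transvections, its whole action is unipotent, and an extension of a finitely generated nilpotent group by a finitely generated abelian group on which it acts unipotently is nilpotent.

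The remaining bookkeeping is to verify that $\sigma$ restricted to $\Out^0(\A_\Gamma)$ indeed lands in $\calg_\bbZ$ with the claimed block structure — this is immediate from the action of transvections on the abelianisation (elementary matrices, placed according to the preordering) and the fact that partial conjugations lie in $\ker\sigma$ — and that $\sigma(\Out^0(\A_\Gamma))$ is \emph{all} of $\calg_\bbZ$, not just a subgroup; the latter follows because the off-diagonal elementary matrices in block $(i,j)$ are realised by transvections $R_{v}^{w}$ with $v\in[v_i]$, $w\in[v_j]$ and $v\le w$, and together with the diagonal $\SL_{n_i}(\bbZ)$'s these generate $\calg_\bbZ$ by a standard Gaussian-elimination argument in block-triangular groups. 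Finally, $\ker(\calg_\bbZ\onto\prod\SL_{n_i}(\bbZ))=U_\bbZ$ by definition, giving the identification of $P/\IA(\A_\Gamma)$ with $U_\bbZ$ used above. I expect the main obstacle to be the nilpotency (as opposed to mere polycyclicity) of $P$: proving that the action of the unipotent part on the abelian Torelli group is unipotent requires a careful choice of filtration of $\calm_\Gamma$ compatible with the preordering and a verification that all the relevant commutators decrease the filtration degree — this is where Day's argument does the work, and reproducing it faithfully is the delicate point.
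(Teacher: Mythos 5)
Your proposal follows essentially the same route as the paper: the standard representation with image $\calg_\bbZ$, the no-SIL hypothesis forcing $\IA(\A_\Gamma)$ to be abelian (in the paper the commutator transvections $K_u^{[v,w]}$ are in fact trivial by Lemma \ref{lem:no SIL dominating pair}, rather than merely commuting), and Day's commutator relations with a depth-type filtration to upgrade abelian-by-nilpotent to nilpotent. The paper packages that last step slightly differently --- it shows $P$ is generated by partial conjugations and transvections $R^u_v$ with $u\geq v$, $u\not\sim v$, and filters this generating set by vertex depth so that $[S_i,S_j^{\pm1}]\subseteq\grp{S_{i+j}}$ --- but this is the same mechanism as your ``unipotent action on $\IA(\A_\Gamma)$'' argument, and your identification of it as the delicate point is accurate.
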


Since any free equivalence class of size 3 or more yields a SIL, the assumptions of the Proposition imply that
free equivalence classes have size at most two.
The following lemma shows that the absence of SILs also puts strong constraints on these free equivalence classes.
It won't be needed in the sequel.

\begin{lem}\label{lem_product}
	If $\Gamma$ has no SIL then $\A_\Gamma = (\bbF_2)^k \times \A_\Lambda$,
	where $\Lambda$ is a graph with no non-abelian free equivalence class
	and $k\geq 0$ is the number of non-abelian free equivalence classes in $\Gamma$.
\end{lem}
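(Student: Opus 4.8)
The plan is to show that a non-abelian free equivalence class in a SIL-free graph must be a "global" vertex pair, i.e.\ each of its two vertices is adjacent to every other vertex of $\Gamma$, and then peel such pairs off one at a time as direct factors.

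\textbf{Step 1: free equivalence classes have size $\leq 2$.} As already noted, a free equivalence class $\{v_1,v_2,v_3\}$ of size $\geq 3$ produces a SIL: the three vertices pairwise don't commute, they share a common link $L=\lk(v_1)=\lk(v_2)=\lk(v_3)$, and $v_3$ is its own connected component of $\Gamma\setminus(\lk(v_1)\cap\lk(v_2))=\Gamma\setminus L$ (its whole link is $L$). So under the hypothesis, every non-abelian free class has exactly two elements.

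\textbf{Step 2: a non-abelian free class $\{v_1,v_2\}$ consists of global vertices.} Suppose not, so there is a vertex $w\notin\{v_1,v_2\}$ with $w\notin\lk(v_1)=\lk(v_2)$ (recall $v_1,v_2$ share the same link since the class is free). I claim $(v_1,v_2\mid w)$ is a SIL. We have $v_1,v_2,w$ pairwise non-commuting ($v_1\not\sim_{\text{adj}}v_2$ because the class is free; $w\notin\lk(v_i)$). And $\lk(v_1)\cap\lk(v_2)=\lk(v_1)$; I must check the component $W$ of $\Gamma\setminus\lk(v_1)$ containing $w$ does not contain $v_1$ or $v_2$. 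But $\lk(v_1)$ separates $v_1$ from everything: since $v_1\not\sim_{\text{adj}} v_2$, the only way out of $\{v_1\}$ is through $\lk(v_1)$, so $\{v_1\}$ and $\{v_2\}$ are each singleton components of $\Gamma\setminus\lk(v_1)$ — in particular neither lies in $W$. Hence a SIL, contradiction. Therefore every vertex $w\neq v_1,v_2$ lies in $\lk(v_1)=\lk(v_2)$, i.e.\ $v_1$ and $v_2$ are adjacent to all other vertices (and to no one between themselves).

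\textbf{Step 3: split off the factor and induct.} A pair of vertices $\{v_1,v_2\}$ that is non-adjacent to each other but adjacent to every other vertex of $\Gamma$ gives a direct product decomposition $\A_\Gamma\simeq \A_{\{v_1,v_2\}}\times \A_{\Gamma'}=\bbF_2\times \A_{\Gamma'}$, where $\Gamma'=\Gamma\setminus\{v_1,v_2\}$; this is immediate from the defining presentation, since $v_1,v_2$ commute with all generators of $\A_{\Gamma'}$. It remains to observe that $\Gamma'$ is again SIL-free (a SIL in an induced subgraph need not lift to a SIL in general, but here $\Gamma'$ is obtained by deleting vertices that are adjacent to everything; such deletions do not create SILs — deleting a global vertex $v$ only removes it from links, and for a triple $(x,y\mid z)$ in $\Gamma'$, being a SIL in $\Gamma$ versus $\Gamma'$ differs only in whether $v$ lies in the separating link, but since $v\in\lk(x)\cap\lk(y)$ always, the components of $\Gamma\setminus(\lk_\Gamma(x)\cap\lk_\Gamma(y))$ and of $\Gamma'\setminus(\lk_{\Gamma'}(x)\cap\lk_{\Gamma'}(y))$ agree), and that the non-abelian free equivalence classes of $\Gamma'$ are exactly those of $\Gamma$ other than $\{v_1,v_2\}$ — deleting global vertices changes no links among the remaining vertices, hence changes no domination relations among them. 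So one iterates, pulling off one $\bbF_2$ factor per non-abelian free class, and after $k$ steps arrives at $\A_\Gamma\simeq(\bbF_2)^k\times\A_\Lambda$ with $\Lambda$ having no non-abelian free equivalence class.

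\textbf{Main obstacle.} The delicate points are the bookkeeping in Step 3: one must be careful that deleting the global pair $\{v_1,v_2\}$ neither destroys the SIL-free hypothesis nor merges/alters other equivalence classes. Both reduce to the elementary but slightly fussy observation that a vertex adjacent to \emph{all} others is irrelevant to the combinatorics (links, domination, separation) of the remaining vertices; I would isolate this as a small sublemma before running the induction. Everything else is a direct unwinding of the definitions of SIL and of the preordering.
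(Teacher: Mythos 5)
Your proof is correct and follows essentially the same route as the paper: show that the two vertices of a non-abelian free class must be adjacent to every other vertex (else a vertex outside their common link yields a SIL), split off the $\bbF_2$ factor, and induct. Your explicit verification that deleting such a global pair preserves SIL-freeness and the remaining equivalence classes is exactly the bookkeeping the paper's induction uses but leaves largely implicit, so the two arguments coincide in substance.
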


\begin{proof}
	We proceed by induction on $k$. The case $k=0$ is immediate.
	Suppose we have $x \sim y$, with $[x,y] \neq 1$.
	Let $\Gamma'=\lk(x)=\lk(y)$. Any vertex $z \in \Gamma \setminus (\{x,y\}\cup \Gamma')$, yields a SIL $(x,y\mids z)$ so
	$\Gamma=\Gamma'\cup\{x,y\}$, and $A_\Gamma=\bbF_2\times A_{\Gamma'}$.
	
	Since for any $v\in \Gamma'$, $\lk_{\Gamma}(v)=\lk_{\Gamma'}(v)\cup\{x,y\}$,
	two vertices of $\Gamma'$ are equivalent in $\Gamma'$ if and only if they are in $\Gamma$.
	Thus $\Gamma'$ has $k-1$ non-abelian free equivalence classes so by induction,
	$\A_{\Gamma'} = (\bbF_2)^{k-1} \times \A_{\Lambda}$, with $\Lambda$ as in the lemma.
	This completes the proof.
\end{proof}

We will make use of the standard representation of $\Out(\A_\Gamma)$, see Section \ref{sec:standard representation}. Recall that its kernel is denoted by $\IA(\A_\Gamma)$.
We will show that having no SIL implies that $\IA(\A_\Gamma)$ is abelian.
As a first step towards this, we give the following simple lemma.

\begin{lem}\label{lem:no SIL dominating pair}
	Suppose $\Gamma$ contains no SIL, and there are distinct vertices $v,x,y$ such that $v\leq x,y$.
	
	Then $[x,y]=1$.
\end{lem}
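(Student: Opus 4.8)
The plan is to argue by contradiction, supposing that $x$ and $y$ do not commute, and then exhibiting a SIL of the form $(x, y \mids z)$ for some suitably chosen vertex $z$. Since $v \leq x$ and $v \leq y$, we have $\lk(v) \subseteq \st(x)$ and $\lk(v) \subseteq \st(y)$, and since $v$ is distinct from both $x,y$ and does not equal them, while $x \not\sim y$ (they don't even commute), the vertex $v$ is a natural candidate to play the role of $z$: one first checks that $v$ does not commute with $x$ nor with $y$. Indeed, if $v$ commuted with $x$, then $x \in \lk(v) \subseteq \st(y)$, contradicting $[x,y] \neq 1$; symmetrically for $y$. So $x, y, v$ are pairwise non-commuting.

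Next I would analyze the connected component $Z$ of $\Gamma \setminus (\lk(x) \cap \lk(y))$ containing $v$. The key observation is that $\lk(v) \subseteq \st(x) \cap \st(y)$. Since $x \notin \lk(v)$ (as $[v,x]\neq 1$, hence $x \notin \st(v)$ either... wait, more precisely $x \notin \lk(v)$ since they don't commute) and likewise $y \notin \lk(v)$, we actually get $\lk(v) \subseteq (\st(x)\setminus\{x\}) \cap (\st(y)\setminus\{y\})$; but $\st(x)\setminus\{x\} = \lk(x)$ and $\st(y)\setminus\{y\}=\lk(y)$, so $\lk(v) \subseteq \lk(x) \cap \lk(y)$. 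This means every neighbour of $v$ lies in the separating set $\lk(x)\cap\lk(y)$, so the connected component $Z$ of $\Gamma\setminus(\lk(x)\cap\lk(y))$ containing $v$ is just the singleton $\{v\}$. In particular $x \notin Z$ and $y \notin Z$ (they are distinct from $v$), so the triple $(x, y \mids v)$ satisfies the definition of a SIL. This contradicts the hypothesis that $\Gamma$ has no SIL, so $[x,y]=1$.

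I do not expect any serious obstacle here; this is a short combinatorial argument. The only point requiring a little care is the bookkeeping around stars versus links — making sure that when $v$ does not commute with $x$ we correctly conclude $x \notin \lk(v)$, and hence that $\lk(v)$ sits inside $\lk(x)\cap\lk(y)$ rather than merely inside $\st(x)\cap\st(y)$ — but this is exactly the kind of manipulation already used in Fact \ref{fact_comm} and in the proof of Lemma \ref{lem:shared component}, and the latter's characterisation of $\Gamma^x_z$ via external boundaries (Remark \ref{rem:component and external boundary}) could alternatively be invoked to package the "$Z = \{v\}$" step cleanly: $\{v\}$ is connected, contains $v$, avoids $\lk(x)\cap\lk(y)$, and has external boundary $\lk(v) \subseteq \lk(x)$, so $\{v\} = \Gamma^x_v$, and symmetrically $\{v\} = \Gamma^y_v$, whence $(x,y\mids v)$ is a SIL by Lemma \ref{lem:shared component}.
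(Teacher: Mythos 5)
Your proof is correct and follows essentially the same route as the paper: the paper also observes that if $x$ (or $y$) commuted with $v$ then $[x,y]=1$ immediately, and otherwise $\lk(v)\subseteq\lk(x)\cap\lk(y)$, so that $(x,y\mids v)$ is a SIL whenever $[x,y]\neq 1$. Your bookkeeping with stars versus links and the identification of the component containing $v$ as $\{v\}$ matches the intended argument.
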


\begin{proof}
	If $x$ commutes with $v$, then $x\in \lk(v)\subset \st(y)$, so $[x,y]=1$.
	So we can assume that $x,y\notin \st(v)$.
	This therefore implies that $\lk(v)\subset \lk(x)\cap \lk(y)$, so in particular $(x,y\mids v)$ will form a SIL whenever $[x,y]\neq 1$.
\end{proof}

\begin{prop}\label{prop:IA abelian}
	Suppose $\Gamma$ has no SIL.
	
	Then $\IA(\A_\Gamma)$ is abelian.
\end{prop}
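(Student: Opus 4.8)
The plan is to use Day's explicit generating set $\mathcal{M}_\Gamma$ for $\IA(\A_\Gamma)$ described in Section \ref{sec:torelli}: it consists of the partial conjugations $C_Z^v$ together with the commutator transvections $K_u^{[v,w]}$ (with $u,v,w$ distinct, $[v,w]\neq 1$, and $u\leq v,w$). The strategy is to show that, under the no-SIL hypothesis, (i) there are no commutator transvections at all, and (ii) all partial conjugations lying in $\IA(\A_\Gamma)$ commute with one another; together these give that $\IA(\A_\Gamma)$ is generated by a commuting set, hence abelian.

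For (i), the existence of a commutator transvection $K_u^{[v,w]}$ requires distinct vertices with $u\leq v$, $u\leq w$ and $[v,w]\neq 1$. But Lemma \ref{lem:no SIL dominating pair} says precisely that in a SIL-free graph, $u\leq v$ and $u\leq w$ with $u,v,w$ distinct forces $[v,w]=1$. So no commutator transvection can exist, and $\mathcal{M}_\Gamma$ reduces to the set of partial conjugations.

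For (ii), I would invoke the result of Gutierrez--Piggott--Ruane (cited in the introduction): the subgroup of $\Out(\A_\Gamma)$ generated by partial conjugations is abelian if and only if $\Gamma$ has no SIL. Since $\Gamma$ has no SIL, the partial conjugations pairwise commute, and as they generate $\IA(\A_\Gamma)$ by the previous paragraph, $\IA(\A_\Gamma)$ is abelian. Alternatively, if one prefers a self-contained argument rather than quoting GPR, one can check directly that two partial conjugations $C_Z^x$ and $C_{Z'}^y$ commute in $\Out(\A_\Gamma)$: if $x$ and $y$ commute this is clear (their supports can be arranged compatibly using that $Z,Z'$ are unions of components of $\Gamma\setminus\st(x)$, resp. $\st(y)$); if $x$ and $y$ do not commute, then one shows using the component structure that either the supports are nested or disjoint up to conjugacy, and the failure of commutativity would produce a triple $(x,y\mids z)$ with $\Gamma^x_z=\Gamma^y_z$, i.e.\ a SIL by Lemma \ref{lem:shared component}, contradicting the hypothesis. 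The main obstacle is handling this last case cleanly — making precise, via the external-boundary characterization in Remark \ref{rem:component and external boundary}, exactly when two partial conjugations with non-commuting multipliers fail to commute, and extracting a SIL from that failure. Quoting GPR sidesteps this, so I expect the paper simply cites it; I would do the same, perhaps with a one-line sketch for completeness.
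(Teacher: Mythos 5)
Your proposal is correct and follows essentially the same route as the paper: reduce to Day's generating set $\mathcal{M}_\Gamma$, kill the commutator transvections via Lemma \ref{lem:no SIL dominating pair}, and then observe that the partial conjugations commute. The only difference is that for this last step you cite Gutierrez--Piggott--Ruane, which the paper explicitly acknowledges as sufficient (citing \cite{GPR_automorphisms} and \cite{CRSV_no_SIL}) before giving its own short self-contained argument: reduce to the case $x_1\notin Z_2$, $x_2\notin Z_1$ with non-commuting multipliers and a common support vertex $z$, and extract the SIL $(x_1,x_2\mids z)$ by a shortest-path argument.
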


\begin{proof}
	
	A generating set $\calm_\Gamma$ for $\IA(\A_\Gamma)$ of Day and Wade is given in Section \ref{sec:torelli}. It consists of
	all partial conjugations,
	along with commutator transvections
	$$K_v^{[x,y]}:v\mapsto v[x,y], \ \ \textrm{ when } v\leq x, y,\ v\notin \{x,y\}.$$
	However, by Lemma \ref{lem:no SIL dominating pair}, all commutator transvections are trivial when there is no SIL.
	
	There remains to check that
	the group generated by all partial conjugations is abelian.
	This is due to \cite[Thm. 3.8]{CRSV_no_SIL} and \cite[Thm. 1.4]{GPR_automorphisms},  and also follows from \cite[Lemma 2.5]{Day_solvable}.
	We give a simple self-contained proof.
	
	\begin{figure}[ht!]
		\labellist \small \hair 4pt
		\pinlabel $\st(x_1)$ at 250 345
		\pinlabel $\st(x_2)$ at 35 185
		\pinlabel $\lk(x_1)\cap\lk(x_2)$ at 251 163
		\pinlabel $Z_1\cap Z_2$ at 450 35
		\pinlabel $Z_1$ [t] at 405 0
		\pinlabel $Z_2$ [l] at 510 77.5
		\pinlabel $z$ [l] at 400 79
		\pinlabel $p_1$ [l] at 415 105
		\pinlabel $p_j$ [l] at 370 184
		\pinlabel $p_{n-1}$ [t] at 292 303
		\pinlabel $x_1$ [t] at 250 315
		\pinlabel $x_2$ [b] at 125 185
		\endlabellist
		
		\centering
		\includegraphics[width=10cm]{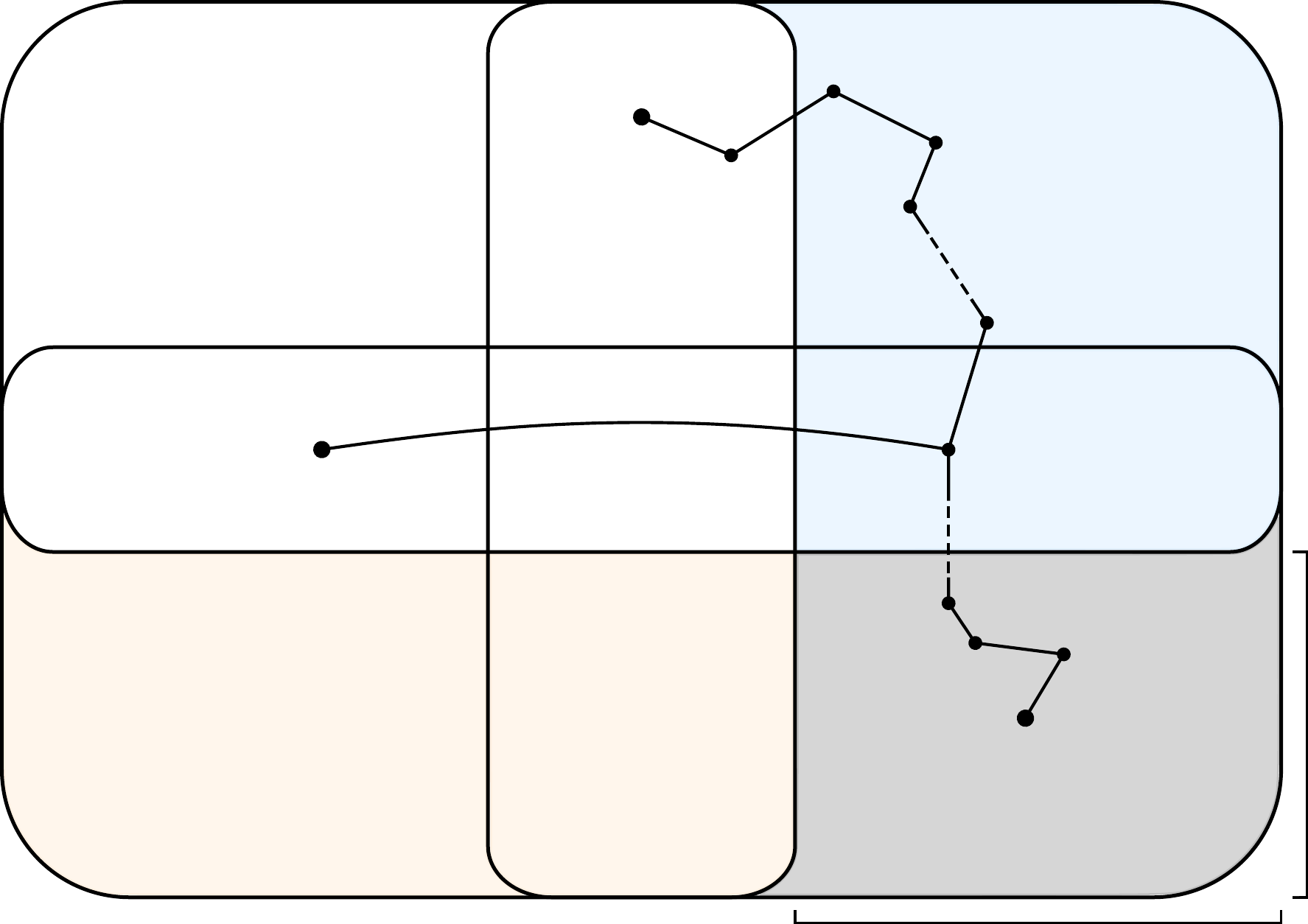}
		\vspace{3mm}
		\caption{If the partial conjugations $C_{Z_1}^{x_1}$ and $C_{Z_1}^{x_1}$ do not commute, then any path from $Z_1\cap Z_2$ to $\{x_1,x_2\}$ passes through $\lk(x_1)\cap\lk(x_2)$, forming a SIL.}\label{fig:no sil pc abelian}
	\end{figure}
	
	Let $C_{Z_1}^{x_1}$, $C_{Z_2}^{x_2}$ be two partial conjugations where $Z_i$ is a union of connected components of $\Gamma\setminus \st(x_i)$,
	and let's prove that they commute in $\Out(\A_\Gamma)$.
	If one changes $Z_i$ to $Z'_i=\Gamma\setminus (\st(x_i)\cup Z_i)$, then $C_{Z'_i}^{x_i}=(C_{Z_i}^{x_i})\m$ in $\Out(\A_\Gamma)$.
	Thus, up to changing $Z_i$ to $Z'_i$, one may assume that $x_1\notin Z_2$ and $x_2\notin Z_1$.
	If $Z_1\cap Z_2=\es$, then $C_{Z_1}^{x_1}$, $C_{Z_2}^{x_2}$ commute.
	If $x_1$ and $x_2$ commute then $C_{Z_1}^{x_1}$, $C_{Z_2}^{x_2}$ also commute.
	
	So assume that $x_1,x_2$ don't commute and consider $z\in Z_1\cap Z_2$.
	We claim that $(x_1,x_2\mids z)$ is a SIL, thus concluding the proof.
	If not, then there is a path joining $z$ to $x_1$ or $x_2$ which does not meet $\lk(x_1)\cap \lk(x_2)$.
	Choose a shortest such path $p$, and up to exchanging the roles of $x_1$ and $x_2$, write this path as
	$z=p_0,p_1,\dots,p_n=x_1$, see Figure \ref{fig:no sil pc abelian}.
	Since $z\in Z_2$ and $x_1\notin Z_2$, the path $p$ has to meet $\st(x_2)$, and therefore $\lk(x_2)$.
	Let $j$ be the smallest index such that $p_j\in \lk(x_2)$.
	We have $j\neq n$ because $x_2$ does not commute with $x_1$, and $j\neq n-1$ since otherwise, $p_{n-1}\in \lk(x_1)\cap \lk(x_2)$.
	Thus, the path $z=p_0,p_1,\dots,p_j,x_2$ is a shorter path which contradicts the choice of $p$.
\end{proof}

The standard representation yields the short exact sequence
$$1 \ra \IA(\A_\Gamma) \ra \Out^0(\A_\Gamma) \ra \calg_\bbZ \ra 1$$
where $\calg_\bbZ$ is a subgroup of the group of  block  lower-triangular unipotent matrices with integer entries, where each
block corresponds to an equivalence class (see Section \ref{sec:standard representation}).
Denote by $n_1,\dots,n_k$ the sizes of the equivalence classes.
Then $\calg_\bbZ$ maps onto  $\prod_{i=1}^k \SL_{n_i}(\bbZ)$ with kernel $\calu$ consisting of lower-triangular unipotent matrices.
In particular, $\calu$ is nilpotent and,
since $\IA(\A_\Gamma)$ is abelian by Proposition \ref{prop:IA abelian}, its preimage  $P$ in  $\Out^0(\A_\Gamma)$ is abelian-by-nilpotent.
We thus get the short exact sequence
$$1 \ra P \ra \Out^0(\A_\Gamma) \ra \prod_{i=1}^k \SL_{n_i}(\bbZ) \ra 1.$$
It remains to prove that $P$ is in fact nilpotent. To do so, we first verify it has a nice generating set.

\begin{lem}\label{lem:P gen set}
	The subgroup $P$ is generated by partial conjugations and transvections $R^u_v$ for $u\geq v$, $u\not\sim v$.
\end{lem}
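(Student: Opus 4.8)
The plan is to identify $P$ as the kernel of the composite map $\Out^0(\A_\Gamma)\to\calg_\bbZ\to\prod_i\SL_{n_i}(\bbZ)$, so that $P$ consists exactly of those outer automorphisms whose image in $\GL_{\card\Gamma}(\bbZ)$ is block lower-triangular \emph{and} unipotent on the diagonal blocks. First I would recall the standard generating set of $\Out^0(\A_\Gamma)$ given by transvections $R^u_v,L^u_v$ (for $v\le u$) and partial conjugations, and track the image of each generator under the standard representation $\sigma$. Partial conjugations lie in $\IA(\A_\Gamma)\subset P$, so they certainly belong to $P$. A transvection $R^u_v$ or $L^u_v$ with $v\le u$ is sent by $\sigma$ to an elementary matrix $E$ differing from the identity in a single off-diagonal entry, located in the row indexed by $v$ and the column indexed by $u$; this elementary matrix is unipotent and lies in $\calu$ (hence in $P$) precisely when $u$ and $v$ do \emph{not} lie in the same equivalence class, i.e.\ when $v<u$ with $v\not\sim u$. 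When $v\sim u$, the matrix $E$ is a genuine non-trivial elementary matrix inside the $\SL_{n_i}(\bbZ)$ block, so such transvections are \emph{not} in $P$.

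The key point is then to show that $P$ is generated by those generators of $\Out^0(\A_\Gamma)$ that happen to lie in $P$, together possibly with $L$-transvections being rewritten in terms of $R$-transvections. For this I would argue as follows. Let $P'\le P$ be the subgroup generated by all partial conjugations and all transvections $R^u_v$ with $u\ge v$, $u\not\sim v$ (note that for $v<u$ with $v\not\sim u$, one has $L^u_v=R^u_v$ modulo partial conjugations in $\Out(\A_\Gamma)$ — more precisely $L^u_v$ and $R^u_v$ differ by conjugation by $u$, which on $\A_\Gamma$ is an inner automorphism, hence equal in $\Out$; alternatively $L^u_v (R^u_v)^{-1}$ fixes all generators except $v\mapsto u v u^{-1} v \cdot v^{-1}$... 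I would simply note $R^u_v$ and $L^u_v$ have the same image under $\sigma$ and their ``difference'' is a partial-conjugation-type element, so both lie in $P'$). The quotient $\Out^0(\A_\Gamma)/P'$ is generated by the images of the remaining generators, namely the transvections $R^u_v,L^u_v$ with $u\sim v$; under $\sigma$ these map onto the elementary generators of $\prod_i\SL_{n_i}(\bbZ)$, and since $P'\subseteq P=\ker$ of the map to $\prod_i\SL_{n_i}(\bbZ)$, the composite $\Out^0(\A_\Gamma)/P'\to\prod_i\SL_{n_i}(\bbZ)$ is an isomorphism on a generating set hence an isomorphism. This forces $P'=P$.

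Concretely, the main obstacle is verifying that the map $\Out^0(\A_\Gamma)/P'\to\prod_i\SL_{n_i}(\bbZ)$ is injective, equivalently that $P'$ is exactly the kernel rather than merely contained in it. I would handle this by a counting/normal-form argument: every element of $\Out^0(\A_\Gamma)$ can, using the Steinberg-type relations among transvections and the relations moving partial conjugations past transvections, be brought to a normal form in which all the ``within-class'' transvections appear first (projecting onto a prescribed word in $\prod_i\SL_{n_i}(\bbZ)$) and the remainder lies in $P'$; then an element of $P$ has trivial image downstairs, so its normal form has trivial $\prod_i\SL_{n_i}(\bbZ)$-part, hence lies in $P'$. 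An alternative, cleaner route avoiding explicit rewriting: use that the short exact sequence $1\to\IA(\A_\Gamma)\to\Out^0(\A_\Gamma)\to\calg_\bbZ\to 1$ already identifies $P$ with the preimage of $\calu\le\calg_\bbZ$, and that $\calu$ is generated by the off-diagonal elementary matrices $E$ described above (this is a standard fact about the unipotent radical of a block-triangular group over $\bbZ$, using that $\calg_\bbZ$ is generated by elementary matrices); lifting each such generator of $\calu$ to a transvection $R^u_v$ with $u>v$, $u\not\sim v$, and noting $\IA(\A_\Gamma)$ is generated by partial conjugations (here we use Proposition~\ref{prop:IA abelian} and the description of $\calm_\Gamma$, since with no SIL the commutator transvections vanish — or in the general setting, $\IA(\A_\Gamma)$ together with the lifted elementary transvections generate $P$, and one checks commutator transvections $K^{[x,y]}_v$ also decompose into the allowed generators), we conclude $P$ is generated by partial conjugations and the transvections $R^u_v$ with $u\ge v$, $u\not\sim v$, as claimed.
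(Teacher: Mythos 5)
Your ``alternative, cleaner route'' is exactly the paper's proof: identify $P$ as the preimage under the standard representation of the block-unitriangular kernel $\calu=\ker\big(\calg_\bbZ\to\prod_i\SL_{n_i}(\bbZ)\big)$, note that $\calu$ is generated by the elementary matrices $E_{u,v}$ with $u\geq v$, $u\not\sim v$, lift these to the transvections $R^u_v$, and use that in the no-SIL setting $\IA(\A_\Gamma)$ is generated by partial conjugations because the commutator transvections in Day's generating set are trivial. The preceding sketch is superfluous and contains slips (e.g.\ $L^u_v$ and $R^u_v$ differ by the partial conjugation $C^u_{\{v\}}$, not by an inner automorphism, and sending generators to generators does not by itself give injectivity of $\Out^0(\A_\Gamma)/P'\to\prod_i\SL_{n_i}(\bbZ)$), but the final argument is correct and coincides with the paper's.
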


\begin{proof}
	Let $q$ denote the projection map from $\calg_\bbZ$ onto $\prod_{i=1}^k \SL_{n_i}(\bbZ)$.
	Then $P$ is the kernel of $q\circ \sigma$, where $\sigma$ is the standard representation.
	Since $\ker(q)$ is the group generated by the elementary matrices $E_{u,v}$ with $u\geq v$ but $u\not \sim v$,
	$P$ is generated by $\IA(\A_\Gamma)$ and by the transvections $R^u_v$ with $u\geq v$ but $u\not \sim v$.
	Since, by Lemma \ref{lem:no SIL dominating pair}, $\IA(\A_\Gamma)$ is generated by partial conjugations, the lemma follows.
\end{proof}

The final step in proving Proposition \ref{prop_tricho3} is the following.
The result can in fact be inferred from Day's proof that having no SIL and no equivalence class of size 2 or more implies that $\Out^0(\A_\Gamma)$ is nilpotent \cite[Prop. 2.11]{Day_solvable}.
We give the relevant part here for completeness.

\begin{lem}
	The subgroup $P$ is nilpotent.
\end{lem}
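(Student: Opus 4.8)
The plan is to use the generating set of $P$ from Lemma \ref{lem:P gen set} and show that $P$ is generated by a finite collection of subgroups, each contained in the kernel $\calu$ of the map $\calg_\bbZ \to \prod_i \SL_{n_i}(\bbZ)$ after projecting, and to control the relevant commutators. More precisely, recall that $\IA(\A_\Gamma)$ is abelian (Proposition \ref{prop:IA abelian}) and that $\calu$ is nilpotent. The difficulty is that an extension of a nilpotent group by an abelian group need not be nilpotent in general, so one must use the specific structure: the conjugation action of the transvections $R_v^u$ (with $u\geq v$, $u\not\sim v$) on the abelian group $\IA(\A_\Gamma)$ must be shown to be unipotent, in the sense that iterated commutators eventually vanish.

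First I would set up a grading. Using the enumeration $v_1,\dots,v_n$ of vertices compatible with the preordering, assign to each transvection $R_{v_i}^{v_j}$ (with $v_i \leq v_j$, $v_i\not\sim v_j$, hence $i<j$) the ``weight'' $j-i>0$, and likewise assign to each partial conjugation $C_Z^{v_j}$ a weight according to how far $v_j$ sits above the components it conjugates — concretely, since $v_i \in Z$ implies $v_i \not\geq v_j$ and in the block-triangular picture $C_Z^{v_j}$ contributes nothing to $\sigma$, partial conjugations lie in $\IA(\A_\Gamma)$ and can be given weight equal to their ``depth'' in a suitable lower central type filtration. Then I would define $P_{\geq d}$ to be the subgroup generated by all generators of weight $\geq d$ together with all of $\IA(\A_\Gamma)$ sitting in high enough filtration degree; the key computational claim is that $[P_{\geq d}, P_{\geq e}] \subseteq P_{\geq d+e}$, which is checked on generators via the explicit commutator formulas for transvections (Steinberg-type relations among the $R_v^u$) and the commutators between a transvection and a partial conjugation. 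Since the weights are bounded (there are finitely many vertices), the filtration terminates, giving nilpotence.

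The cleanest route, which I would actually follow, is to invoke Day's argument directly as the excerpt suggests: in \cite[Prop. 2.11]{Day_solvable} Day shows that when $\Gamma$ has no SIL and no equivalence class of size $\geq 2$, the group $\Out^0(\A_\Gamma)$ — which in that case equals $P$ — is nilpotent, and his proof proceeds exactly by exhibiting such a weight filtration on the generating set (transvections $R_v^u$ with $u>v$, and partial conjugations) and verifying the commutator-degree inequality. In our situation $P$ is generated by precisely the same elements by Lemma \ref{lem:P gen set} (the transvections $R_v^u$ with $u\geq v$, $u\not\sim v$, i.e. the ``strictly lower'' ones, plus partial conjugations), so Day's filtration applies verbatim to $P$ even though $\Out^0(\A_\Gamma)$ itself is larger. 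I would therefore state: by Lemma \ref{lem:P gen set}, $P$ is generated by the same family of transvections and partial conjugations that Day considers, and Day's computation \cite[Prop. 2.11]{Day_solvable} of the lower central series of the group they generate shows this group is nilpotent; since the only generators of $\Out^0(\A_\Gamma)$ excluded from $P$ are the transvections $R_v^u$ with $u\sim v$, which generate the $\SL_{n_i}$ blocks and do not appear here, nothing is lost.

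The main obstacle is verifying the commutator inequality $[P_{\geq d},P_{\geq e}]\subseteq P_{\geq d+e}$ for the mixed commutators — a transvection against a partial conjugation, and two partial conjugations against each other. The latter is immediate since all partial conjugations commute (Proposition \ref{prop:IA abelian}), so they can all be placed in the bottom filtration piece. For a transvection $R_{v_i}^{v_j}$ against a partial conjugation $C_Z^{v_k}$, one computes the commutator in $\Out(\A_\Gamma)$ and checks it is again a product of partial conjugations (it lies in $\IA(\A_\Gamma)$ since $C_Z^{v_k}$ does and $\IA$ is normal) whose multipliers and supports force higher weight; this uses that $v_i\leq v_j$ constrains how $v_j$ interacts with $\st(v_k)$. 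I expect this case analysis, already carried out by Day, to be the bulk of the work, so I would present it by reference rather than reprove it, noting only that Lemma \ref{lem:P gen set} is exactly what licenses the reduction.
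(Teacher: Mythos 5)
Your overall strategy is the paper's: take the generating set from Lemma \ref{lem:P gen set}, note that partial conjugations commute with each other and with transvections fixing their multiplier (Proposition \ref{prop:IA abelian} together with Day's Lemma 2.5, which only needs the no-SIL hypothesis), establish the two commutator relations $[R^x_y,C^y_Y]=C^x_Y$ and $[R^x_y,(R^y_z)^{\pm1}]=(R^x_z)^{\pm1}$, and then run a terminating weight filtration on the generators (the paper's weight is the depth of a vertex, i.e.\ the longest chain $x=x_1\geq x_2\geq\dots\geq x_m$ with $x_i\not\sim x_{i+1}$; your index-based weight $j-i$ would also satisfy the additivity, but you must fix an explicit weight for partial conjugations — the paper uses the depth of the multiplier — for the inclusion $[S_i,S_j^{\pm1}]\subset\grp{S_{i+j}}$ to make sense).

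The one genuine gap is the claim that Day's argument ``applies verbatim'' and that the mixed-commutator case analysis was ``already carried out by Day.'' Day's Proposition 2.11 and the relations in his Lemma 2.7 are proved under the hypothesis that $\Gamma$ has no SIL \emph{and} no equivalence class of size at least $2$; here the equivalence classes may be large (abelian of sizes $n_i$, or free of size $2$), so the ambient graph is not one Day considers and his relations cannot simply be cited — the issue is not which generators lie in $P$ but whether those relations hold among automorphisms of this bigger $\A_\Gamma$. This is precisely the content the paper adds ``for completeness'': the crux is that $[x,y]=1$ in both configurations, which follows from Lemma \ref{lem:no SIL dominating pair} for the transvection--transvection relation, and, for the transvection--partial conjugation relation, from a short argument producing a SIL $(x,y\mids z)$ from any $z$ outside $Y\cup\st(y)$ when $C^y_Y$ is not inner (using $\lk(y)\subset\lk(x)$), together with the observation that $C^x_Y$ is well defined because $x\geq y$ makes $Y$ a union of components of $\Gamma\setminus\st(x)$ up to elements of $\st(x)$. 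Your fallback plan of computing these commutators directly does close this gap, so the fix is only to carry out (rather than defer) that short verification.
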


\begin{proof}
	In view of Lemma \ref{lem:P gen set}, we need to understand commutators of transvections and partial conjugations, and their inverses.
	We note though that the inverse in $\Out(\A_\Gamma)$ of a partial conjugation  $C^y_Y$ is also a partial conjugation, namely $C^{y}_{Y'}$ where $Y'=\Gamma\setminus (Y \cup \st(y))$, so these do not require special attention.
	
	By \cite[Lemma 2.5]{Day_solvable}, having no SIL implies that any two generators (partial conjugation or transvection)
	that fix each other's multiplier commute.
	For any $x\in\Gamma$, any partial conjugation $C^w_W$ has a representative in $\Aut(\A_\Gamma)$ that fixes $x$.
	This shows that partial conjugations commute with each other and with transvections fixing their multiplier.

	For the remaining pairs of generators, we have the following relations,
	that were proved by Day in the case when there is no SIL and all equivalence classes have size one \cite[Lemma 2.7]{Day_solvable}.
	\begin{eqnarray}
	\label{eq:no SIL relator 1}
	[R^x_y, C^{y}_Y]&=&C^{x}_Y \text{ for $x\geq y$, $x\not \sim y$}\\
	\label{eq:no SIL relator 2}
	[R^x_y, (R^{y}_z)^{\pm1}]& =& (R^{x}_z)^{\pm1}  \text{ for $x\geq y\geq z$ with $x\not\sim y\not \sim z$.}
	\end{eqnarray}
	
	In \eqref{eq:no SIL relator 1}, $Y$ is assumed to be a union of connected components of $\Gamma \setminus \st(y)$.
	Since $x\geq y$, $Y$ is also a union of connected components of $\Gamma \setminus \st(x)$, together with elements of $\st(x)$ (on which conjugation by $x$ has no effect), so $C_Y^x$ is well defined.
	To prove these relations,
	we claim that $[x,y]=1$ in both cases.
	The relations  \eqref{eq:no SIL relator 1}--\eqref{eq:no SIL relator 2} then easily follow by a direct computation (left to the reader).

	In \eqref{eq:no SIL relator 2}, the fact that $[x,y]=1$ follows from Lemma \ref{lem:no SIL dominating pair}.
	The relation \eqref{eq:no SIL relator 1} is clear if $C_Y^y$ is inner because then so is $C_Y^x$.
	So we may assume that $C_Y^y$ is not inner
	and that $\st(y)$ disconnects $\Gamma$ into two different non-empty sets $Y$ and $Y'=\Gamma\setminus (Y \cup \st(y))$.
	Without loss of generality, we can assume that $x \in Y$. Assuming for contradiction that $[x,y]\neq 1$, since $\lk(x)\supset \lk(y)$,
	we get that any $z\in Y'$ yields a SIL $(x,y \mids z)$, a contradiction.
	
	We now verify these relations imply that $P$ is nilpotent.
	Given a vertex $x$, its \emph{depth}, denoted $\depth(x)$, is defined to be the maximal $m$ such that there are vertices $x_2,\ldots, x_m$ satisfying  $x=x_1\geq x_2 \geq \ldots \geq x_m$ and $x_i \not \sim x_{i+1}$ for $1\leq i <m$.
	Let $K$ be the maximal depth of vertices in $\Gamma$.
	For $1 \leq i \leq K$, let $S_i$ be the set consisting
	of partial conjugations $C^w_W$ with $\depth(w) \geq i$,
	and of transvections $R^x_y$ with $y\not\sim x$
	and $\depth(x)-\depth(y) \geq i $, $y\leq x$.
	Then $S_i \subseteq S_{i-1}$, and $S_{K+1}$ is empty.
	
	Consider $X \in S_i$ and $Y\in S_j$, for some $1\leq i,j\leq K$.
	It follows from the commutator relations  (when each others multipliers are fixed) and from \eqref{eq:no SIL relator 1} and \eqref{eq:no SIL relator 2}
	that  $[X,Y^{\pm 1}] \in \grp{S_{i+j}}$.
	Thus it follows that $P$ is nilpotent.
\end{proof}

\section{McCool groups}\label{sec:mccool}

Let $\bbF$ be a finitely generated free group and $\calc$ a finite set of conjugacy classes of elements in $\bbF$.
For the McCool group of $\bbF$ corresponding to $\calc$, we prove the following alternative. (See Section \ref{sec:prelim McCool} for definitions).

\begin{thm}\label{thm_MC}
  Let $M=\Mc(\bbF,\calc)$ be a McCool group
  and $\bbF = H_1 * \dots * H_k * \bbF_r$ be the Grushko decomposition of $\bbF$ relative to $\calc$.
  Then one of the following holds. 
  \begin{enumerate}
 \renewcommand{\theenumi}{($\calm$\arabic{enumi})}
  \item \label{it_Fr} If $r\geq 2$, then $M$ maps onto $\Out(\bbF_r)$.
  \item \label{it_FF} 
If either  $k=2$ and $r=1$, or $k\geq 3$, then $M$ is large.
  \item \label{it_sporadic} If $k+r=2$ and $r\leq 1$,
    then either $M$ is large, or it maps onto a non-virtually abelian mapping class group of a (maybe non-orientable) hyperbolic surface with at least one puncture, or $M$ is virtually abelian.
  \item \label{it_1bout} If $k+r=1$,
    then some finite index subgroup  $M_0$ of $M$ fits into a short exact sequence
$$1\ra \calt \ra M_0 \ra \prod_{j=1}^s \MCG(S_j)\ra 1$$
where $\calt$ is finitely generated and abelian, and $\MCG(S_j)$ is the mapping class group of a (maybe non-orientable) hyperbolic surface with at least one puncture.   \end{enumerate}
\end{thm}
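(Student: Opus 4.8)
The plan is to induct on the rank of $\bbF$, organizing the argument by the shape of the Grushko decomposition $\bbF=H_1*\dots*H_k*\bbF_r$ relative to $\calc$. I would first recall that this decomposition is canonical up to conjugacy and permutation of the factors (see \cite{GL_McCool,GL3}), so that every element of $M=\Mc(\bbF,\calc)$ permutes the conjugacy classes $[H_1],\dots,[H_k]$ and preserves the free rank $r$. In particular $M$ preserves the normal subgroup $N=\langle\langle H_1\cup\dots\cup H_k\rangle\rangle$, and I pass to the finite-index subgroup $M_0<M$ fixing each $[H_i]$, for which the analogues for free products of the restriction and factor maps of Section \ref{sec_res_fact}, together with Lemmas \ref{lem_usable_amalg} and \ref{lem_usable_HNN}, apply. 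If $r\geq 2$ one obtains \ref{it_Fr}: the $M$-equivariant quotient $\bbF\to\bbF/N\cong\bbF_r$ induces a map $M\to\Out(\bbF_r)$, and this is onto because every Nielsen generator of $\Out(\bbF_r)$ (elementary transvection, inversion, or permutation of a free basis $t_1,\dots,t_r$ of $\bbF_r$) lifts to an automorphism of $\bbF$ fixing each $H_i$ pointwise, hence preserving every $[c_j]\in\calc$ (each $c_j$ being conjugate into some $H_i$) and so lying in $M$.

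\textbf{Three or more Grushko factors.} When $r\leq1$ but there are at least three Grushko factors --- that is, case \ref{it_FF}: $k\geq3$, or $k=2$ and $r=1$ --- I choose three of the factors $G_1,G_2,G_3$ (each an $H_i$, or the $\bbZ$ factor when $r=1$); each is nontrivial, so $G_i^{\mathrm{ab}}\cong\bbZ^{a_i}$ with $a_i\geq1$. Killing $[G_1,G_1]\cup[G_2,G_2]\cup[G_3,G_3]$ and the remaining factors is $M_0$-equivariant and yields a homomorphism $M_0\to\Out^0(\bbZ^{a_1}*\bbZ^{a_2}*\bbZ^{a_3})$. The partial conjugations $C_{G_1}^{g_2},C_{G_2}^{g_3},C_{G_3}^{g_1}$ (which lie in $M$, each one fixing all but one free-product factor pointwise) map to the partial conjugations $C_X^y,C_Y^z,C_Z^x$, so Lemma \ref{lem:three partial conjugations} shows the image of $M_0$ is large; hence $M_0$, and therefore $M$, is large.

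\textbf{Two or one Grushko factors.} If $r\leq1$ and there are exactly two Grushko factors, then $\bbF=H_1*H_2$ or $\bbF=H_1*_{\{1\}}$ (base $H_1$, stable letter $t$). Lemma \ref{lem_usable_amalg} (resp. \ref{lem_usable_HNN}) identifies $M_0$ with $\Aut_\calc(H_1)\times\Aut_\calc(H_2)$ (resp. with an index-$\leq2$ subgroup of $\Aut_\calc(H_1)\semidirect H_1$), where $\Aut_\calc(H_i)$ is the stabilizer of $\calc_{|H_i}$ in $\Aut(H_i)$; in particular $M_0$ surjects onto $\Mc(H_i,\calc_{|H_i})$ for each $i$. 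Each $H_i$ is freely indecomposable relative to $\calc_{|H_i}$ and of smaller rank, so the inductive hypothesis --- here just case \ref{it_1bout}, i.e. Theorem \ref{thm:mccool ses} --- describes $\Mc(H_i,\calc_{|H_i})$. If some $H_i$ is nonabelian then $\Aut_\calc(H_i)$ contains the nonabelian free group $\Inn(H_i)\cong H_i$ as a normal subgroup with quotient $\Mc(H_i,\calc_{|H_i})$, and one deduces case \ref{it_sporadic}: either that quotient (and hence $M$, up to finite index) maps onto a non-virtually-abelian punctured-surface mapping class group, or it is virtually polycyclic and then $\Aut_\calc(H_i)$ --- being free-by-(virtually polycyclic) with nonabelian free part --- is large, making $M$ large. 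If every $H_i$ equals $\bbZ$ then $\bbF=\bbF_2$, $\Aut_\calc(\bbZ)$ is finite, and $M$ is virtually abelian. Finally case \ref{it_1bout} (a single Grushko factor, so $\bbF$ freely indecomposable relative to $\calc$) is exactly Theorem \ref{thm:mccool ses}.

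\textbf{Main obstacle.} I expect the two-factor case \ref{it_sporadic} to be the hard part: one must carefully track which of the four alternatives holds for each $\Mc(H_i,\calc_{|H_i})$, and check that combining a nonabelian $\Inn(H_i)$ with the inductively known structure of the quotient forces largeness outside an explicit sporadic list. This rests on two auxiliary inputs that deserve separate treatment --- that finitely generated free-by-(virtually polycyclic) groups with nonabelian free part are large, and that mapping class groups of (possibly non-orientable) punctured hyperbolic surfaces are virtually abelian only for a short explicit list of surfaces --- together with the point, harmless for the vastness applications, that one sometimes obtains only a finite-index subgroup of $M$ mapping onto the relevant mapping class group. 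A routine but necessary preliminary is to verify that the free-product versions of the restriction and factor maps send transvections and partial conjugations to transvections, partial conjugations, or the identity, so that all the equivariance statements used above are legitimate.
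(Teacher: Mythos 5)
Your treatment of \ref{it_Fr}, \ref{it_FF} and \ref{it_1bout} matches the paper's proof (quotient onto $\bbF_r$; abelianise three factors and invoke Lemma \ref{lem:three partial conjugations}; quote Theorem \ref{thm:mccool ses}), and your identification of $M$ in the two-factor case via Lemmas \ref{lem_usable_amalg} and \ref{lem_usable_HNN} is also the paper's. The problem is the heart of case \ref{it_sporadic}. When all the surfaces arising from $\Mc(H_1,\calc_{|H_1})$ have virtually abelian mapping class groups, you reduce largeness of $\cala=\Aut(H_1;\calc_{|H_1})$ to the assertion that a finitely generated (nonabelian free)-by-(virtually polycyclic) group is large. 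That assertion is not proved in your proposal, is not in the paper, and is not an off-the-shelf theorem: already its special case, largeness of free-by-cyclic groups $\bbF_n\rtimes\bbZ$ with $n\geq 2$, is a hard problem (such groups can have virtual first Betti number considerations that are only accessible through deep cubulation results in the hyperbolic case), and the virtually polycyclic quotient case does not reduce to the cyclic one by restricting to a $\bbZ$ subgroup, since largeness of a finitely generated normal subgroup does not pass to the ambient group. So the step you flag as ``deserving separate treatment'' is precisely the content that is missing; the paper supplies it not by a general principle but by exploiting the specific structure of $\cala$: the virtually polycyclic quotient occurs exactly when the cyclic JSJ decomposition of $H_1$ relative to $\calc_{|H_1}$ is nontrivial (or is a punctured Klein bottle QH vertex), this splitting is $\cala$-invariant, and one manufactures a virtual surjection of $\cala$ onto $\bbF_2$ from the action on the Bass--Serre tree, after using Hall's theorem to make an edge generator primitive in a finite-index subgroup and a $\bbZ/3$ cover to raise the Betti number of the quotient graph to at least $2$. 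Any repair of your argument will essentially have to reproduce this splitting argument, so the gap is not cosmetic.

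A secondary, smaller discrepancy: in the mapping class group branch you only obtain a finite-index subgroup of $M$ surjecting onto a non-virtually-abelian $\MCG(S_j)$, whereas the statement (and the paper's proof) gives a surjection from $M$ itself. The paper gets this because the mapping class group alternative only arises when the relative JSJ of $H_1$ is a single QH vertex, in which case $\Mc(H_1,\calc_{|H_1})$ is isomorphic to $\MCG(S)$ on the nose (Remark \ref{rem_trivial_JSJ}), not merely virtually; your route, which sees the JSJ only through the virtual short exact sequence of Theorem \ref{thm:mccool ses}, loses this and hence proves a weaker trichotomy than the one stated.
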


\begin{rem}\label{rem:MCG}
In the last assertion, it may happen that $s=0$, in which case $G$ is virtually abelian.
The mapping class group $\MCG(S_j)$ mentioned in this assertion is the group of homeomorphisms that preserve each boundary component
and its orientation,
modulo the connected component of the identity.
This can be viewed as the pure mapping class group of a punctured surface (fixing each puncture, and preserving the local orientation at each puncture).
\end{rem}

In some sporadic cases, the mapping class groups appearing in Theorem \ref{thm_MC} above may happen to be virtually abelian.
The following proposition gathers some known results showing that they are otherwise vast.

\begin{prop}\label{prop:v abelian mcg}
	Let $S$ be a hyperbolic (maybe non-orientable) surface, with at least one puncture.
	Then:
	\begin{enumerate}[(i)]
		\item $\MCG(S)$ is finite (or trivial) if and only if $S$ is
		a sphere with at most three punctures, or
		a projective plane with at most two punctures;
		
		\item $\MCG(S)$ is infinite virtually abelian if and only if $S$ is a once punctured Klein bottle;
		
		\item  In all other cases, $\MCG(S)$ involves all finite groups, is SQ-universal, virtually has many quasimorphisms, and is not boundedly generated.
	\end{enumerate}	
\end{prop}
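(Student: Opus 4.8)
The plan is to prove Proposition~\ref{prop:v abelian mcg} by first reducing to the classification of surfaces of small complexity, then invoking standard results. Since $S$ is a hyperbolic surface with at least one puncture, it admits a (possibly non-orientable) finite-type structure, and its mapping class group $\MCG(S)$ depends only on the topological type: the genus, the number of punctures, and orientability. For assertion~(i), I would recall the classical list of surfaces whose mapping class group is finite. In the orientable case these are the sphere with at most three punctures and the once-punctured torus only fails (it has $\MCG\simeq\SL_2(\bbZ)$, infinite); so among punctured orientable surfaces only $S_{0,1},S_{0,2},S_{0,3}$ give finite groups. In the non-orientable case, one uses the analogous classification (see Korkmaz \cite{Korkmaz_MCG}): the projective plane with at most two punctures has finite mapping class group, while the once-punctured Klein bottle has mapping class group infinite but virtually $\bbZ$, and everything of higher complexity is ``large'' in complexity. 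This gives (i) and (ii) directly by citation.

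For assertion~(iii), I would handle the four vastness properties separately, as the rest of Section~\ref{sec:mccool} does. Involving all finite groups: use the Birman exact sequence \cite{Birman_MCG,Korkmaz_MCG} relating $\MCG(S)$ to the mapping class group of the surface with one fewer puncture, together with the fact that forgetting a puncture corresponds to an extension whose kernel is (virtually) the fundamental group of the (filled) surface --- a non-abelian free group once we are outside the sporadic cases --- and then apply Lemma~\ref{lem:technical_lemma}(2): a finitely generated normal subgroup that involves all finite groups forces the ambient group to do so. (This mirrors \cite{MaRe_finite} for the orientable case.) For the other three properties --- SQ-universality, virtually having many quasimorphisms, and not being boundedly generated --- the cleanest route is acylindrical hyperbolicity: outside the sporadic cases, $\MCG(S)$ acts acylindrically on the curve complex of $S$ with a WPD/loxodromic element, hence by \cite{DGO_HE,BeFu_quasi} it contains a proper hyperbolically embedded subgroup. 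Proper hyperbolically embedded subgroups give SQ-universality \cite{DGO_HE} and an infinite-dimensional space of quasimorphisms \cite{BeFu_quasi} (these are vastness properties by Section~\ref{sec:prelim vast}); non-bounded-generation then follows since having many quasimorphisms obstructs bounded generation. For non-orientable surfaces one passes to the orientation double cover, or cites the curve-complex hyperbolicity results directly in the non-orientable setting.

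The main obstacle I expect is bookkeeping at the boundary of the classification: one must be careful to identify exactly which small surfaces $S$ (e.g.\ $S_{1,1}$, $S_{0,4}$, the twice-punctured projective plane, the once-punctured Klein bottle, the thrice-punctured projective plane) fall on which side. Concretely I would enumerate: the surfaces in (i) and (ii) are precisely those for which $\MCG(S)$ fails to be acylindrically hyperbolic with unbounded curve complex (or where the curve complex is empty/finite-diameter), so ``all other cases'' in (iii) are exactly the complement. The once-punctured torus $S_{1,1}$ and four-times-punctured sphere $S_{0,4}$ have $\MCG$ virtually free and non-elementary, so they do satisfy (iii); this needs to be flagged since their curve complexes are atypical (edgeless), but these mapping class groups are still virtually $\bbF_2$ hence large, so all four properties hold via largeness. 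Once the casework is pinned down, each property follows from a cited black box, so the write-up is short; the only real content is assembling the classification correctly.
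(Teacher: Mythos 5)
Your proposal is correct and follows essentially the same route as the paper: parts (i)--(ii) by citation of the classification of small (non-)orientable surfaces, involvement of all finite groups via the Birman exact sequence combined with Lemma \ref{lem:technical_lemma}, and SQ-universality, many quasimorphisms, and non-bounded-generation via WPD actions and hyperbolically embedded subgroups (\cite{BeFu_quasi,DGO_HE}), with the sporadic small surfaces ($S_{1,1}$, $S_{0,4}$, once-punctured Klein bottle) sorted out separately. Two minor points of comparison: when $S$ has a single puncture the Birman kernel is a \emph{closed} hyperbolic surface group rather than a free group, but it is still large, which is all that is needed (and all the paper uses); and the paper makes two steps self-contained that you take as black boxes, namely injectivity of the Birman map when $\chi(S')<0$ (via the kernel being abelian and hyperbolic surface groups having no abelian normal subgroups) and the fact that every surface outside (i)--(ii) has non-virtually-abelian $\MCG$ (via Stukow's criterion on intersecting two-sided Dehn twists).
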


\begin{proof}
The mapping class group of a sphere with three punctures is trivial.
	A projective plane with two punctures has finite mapping class group \cite[Cor. 4.6]{Korkmaz_MCG}.
	For the once punctured Klein bottle, its mapping class group is virtually cyclic \cite[Thm. A.5]{Stukow}.

When $S$ is not as in (i) or (ii), then it contains
two simple closed curves $\alpha,\beta$ not bounding a disk, a punctured disk or a M\"obius band,
and with non-zero geometric intersection number.
By  \cite[Prop. 4.7]{Stukow},
the corresponding Dehn twists $t_\alpha,t_\beta$ don't have commuting powers, so $\MCG(S)$ is not virtually abelian.

Now suppose $\MCG(S)$ is not virtually abelian.
Bestvina--Fujiwara proved that they admit an action on a hyperbolic space with WPD elements \cite{BeFu_cohomology,BeFu_quasi}
and deduce that they have many quasimorphisms (and hence are not boundedly generated).
SQ-universality follows by \cite{DGO_HE}.

By Masbaum and Reid \cite{MaRe_finite}, (see also \cite{GLLM_arithmetic_mcg}), mapping class
groups of {closed} orientable surfaces of genus at least one involve all finite groups.
We give below a (different) proof that this is the case for
non-virtually abelian mapping class groups of hyperbolic surfaces
with at least one puncture, orientable or not.

With our notations,
$\MCG(S)$ is a finite index subgroup of the pure mapping class group $G=\calp\calm(S)$
of homeomorphisms fixing all the marked points (with no condition on the orientation).
It is enough to prove that $G$ is virtually abelian or involves all finite groups.

Let $S'$ be obtained from $S$ by forgetting one marked point.
We use the Birman exact sequence
$$\pi_1(S')\xra[\delta]{} G \ra G'\ra 1$$
where $G'$ is the pure mapping class group of $S'$ (\cite{Birman_MCG}, see \cite{Korkmaz_MCG} for the non-orientable case).

If $S'$ has negative Euler characteristic then $\pi_1(S')$ is large.
If moreover $\delta$ is injective,
then $\pi_1(S')$ is a large finitely generated normal subgroup of $G$,
so $G$ involves all finite groups by Lemma \ref{lem:technical_lemma}.

It may happen that $\delta$ is not injective (e.g. if $S$ is a punctured torus).
By  \cite[Th.1]{Birman_MCG}, \cite{Korkmaz_MCG}, the kernel of $\delta$ is the image of $\pi_1(\operatorname{Homeo}(S'))$.
Since the fundamental group of any topological group is abelian, the kernel of $\delta$ is abelian.
But if $S'$ has negative Euler characteristic,
its fundamental group has no non-trivial abelian normal subgroup, so $\delta$ is injective, and the argument applies.

By parts (i) and (ii), the only remaining case to consider is
when $S$ is a once punctured torus. Then $\MCG(S)\simeq \SL_2(\bbZ)$ and it involves all finite groups.
\end{proof}

\begin{proof}[Proof of Theorem \ref{thm_MC}]
If $k+r=1$, then $\bbF$ is freely indecomposable relative to $\calc$ (the case where $\bbF=\bbZ$ being trivial).
By Theorem \ref{thm:mccool ses} \cite[Th 4.6]{GL6}, $M$ has a finite index subgroup that fits in a short exact sequence as in assertion \ref{it_1bout}.

Since $M$ preserves the conjugacy class of each  $c\in\calc$, it follows that the conjugacy class of each $H_i$ is also preserved under any automorphism in $M$.
Hence the normal subgroup of $\bbF$ generated by $H_1,\dots,H_k$ is $M$--invariant,
so induces an automorphism of the quotient group $\bbF_r$.
This yields a map $M\ra \Out(\bbF_r)$ which is clearly onto.
Thus if $r\geq 2$, assertion \ref{it_Fr} holds.

Now assume that either $r=1$ and $k=2$, or $k\geq 3$ and $r\geq 0$.
Since the conjugacy class of each factor $H_i$ is $M$--invariant, we may construct
a map from $M$ to $\Out(H_1*H_2*H_3)$ or $\Out(H_1*H_2*\bbZ)$ (using the former if  $k\geq 3$ and the latter otherwise) by taking the quotient of $\bbF$ by the normal closure of the unused $H_i$ factors  (if there are any).
The image of $M$ still preserves the conjugacy classes of $H_1,H_2$ (and $H_3$ if  $k\geq 3$).
Abelianising each $H_i$ yields a map from $M$ to $\Out(\bbZ^{n_1}*\bbZ^{n_2}*\bbZ^{n_3})$, with $n_i\geq 1$ and $n_3=1$ if  $k=2$.
Note that for each $i\neq j$ and each $h\in H_i$, $M$ contains the partial conjugation that restricts to $\ad_{h}$ on $H_j$
and to the identity on all other factors (when  $k=2$ this also holds  if $i$ or $j$ corresponds to the last factor $\bbZ$).
This implies that the image of $M$ in $\Out(\bbZ^{n_1}*\bbZ^{n_2}*\bbZ^{n_3})$ contains the three partial conjugations
required in the hypotheses of Lemma \ref{lem:three partial conjugations}.
This lemma then says that the image of $M$ is large, and so assertion \ref{it_FF} holds.

There remains to consider the case where $k+r=2$ and $r\leq 1$,
ie $\bbF=H_1*H_2$ or $\bbF=H_1*\bbZ$.
Then the embedding of Lemma \ref{lem_usable_amalg} (resp.\ Lemma \ref{lem_usable_HNN}) gives
$$
\begin{array}{lll}
M \cong \Aut(H_1;\calc_{|H_1})\times \Aut(H_2;\calc_{|H_2})
&& \textrm{if $r=0$,}\\
M \cong \Aut(H_1;\calc_{|H_1})\semidirect H_1
&& \textrm{if $r=1$.}
\end{array}
$$
where $\Aut(H_i;\calc_{|H_i})$ denotes the group of automorphisms preserving each conjugacy class in $\calc_{|H_i}$
($\calc_{|H_i}$ is defined in Section \ref{sec:prelim McCool}).
If $H_1$ and $H_2$ are both cyclic  (resp.\ $H_1$ is cyclic and $r=1$), then $M$ is virtually abelian.
So assume without loss of generality that $H_1$ has rank at least $2$.
Denote by $\cala=\Aut(H_1;\calc_{|H_1})$, and $\calo$ its image in $\Out(H_1)$.
We are going to distinguish several cases, and in each case, we will
prove that  $\cala$ is large or maps onto a non-virtually abelian mapping class group.
Since in both cases $M$ maps onto $\cala$, this will show that $M$ satisfies assertion \ref{it_sporadic} and will conclude the proof.

The separate cases are identified according to the nature of the canonical cyclic JSJ decomposition $\Gamma$ of $H_1$ relative to $\calc_{|H_1}$.
If $\Gamma$ is trivial, consisting of a rigid vertex, then $\calo$ is finite by Theorem \ref{thm:mccool ses} (see Remark \ref{rem_trivial_JSJ}).
In this case, $\cala$ contains $H_1$ with finite index, so is virtually free, hence large.

If $\Gamma$ is a trivial decomposition consisting of a QH vertex whose mapping class group is finite (like a twice punctured projective plane),
$\cala$ is virtually free for the same reason.
If $\Gamma$ is a trivial decomposition consisting of a QH vertex whose mapping class group is not virtually abelian, then $\cala$ maps
onto this mapping class group and we are done.
If $\Gamma$ is a trivial decomposition consisting of a QH vertex whose mapping class group is infinite and virtually abelian,
then the underlying surface is a punctured Klein bottle
by Proposition \ref{prop:v abelian mcg}.
We'll take care of this case at the end of the argument.

Now assume that $\Gamma$ is a non-trivial decomposition. More generally, this argument will apply as soon as $H_1$ has
a non-trivial cyclic splitting relative to $\calc$ that is invariant under $\cala$ (as in Section \ref{sec:prelim usable results}).
First assume that the first Betti number of $\Gamma$ (as a plain graph) is at least $2$.
The fact that $\Gamma$ is $\cala$--invariant implies that $\cala$ acts on the Bass-Serre tree $T$ of $\Gamma$,
where the action of inner automorphisms of $H_1$ coincides with the action of $H_1$ under the natural identification.
Then let $\cala_0\subset \cala$ be the finite index subgroup acting trivially on $T/H_1$.
Then the quotient graph $T/\cala_0$ is homeomorphic to $T/H_1$.
Thus $\cala_0$ is the fundamental group of a graph of groups whose first Betti number is at least $2$,
so $\cala_0$ has an epimorphism onto $\bbF_2$.

In general, we will replace $H_1$ by a finite index subgroup, to get another splitting
with Betti number at least $2$.

We first construct a splitting whose edge group is generated by a primitive element (\ie which is part of a basis of the free group $H_1$).
Let $a\in H_1$ be the generator of the stabilizer of some edge $e$ in $T$.
By Hall's theorem \cite{Hall} there is a finite index subgroup $H'$ of $H_1$ containing $a$ and in which $a$ is primitive.
Note for future use that witout loss of generality, we can assume that $H'$ is a free group of rank at least $3$.
Let $\cala_1\subset \cala$ be the finite index subgroup made up of automorphisms that preserve $H'$.
Then $\cala_1\subset \cala$ still acts on $T$, and let
let $\cala'\subset \cala_1$ be the finite index subgroup acting trivially on the quotient graph $T/H'$.
Let $T_e$ be the tree obtained from $T$ by collapsing every  edge  that is not in the $H'$--orbit of $e$.
Then the action $H'\actson T_e$ is dual to a one-edge splitting of $H'$ whose edge stabilizer is generated by the primitive element $a\in H'$,
and this splitting is $\cala'$--invariant.

We now construct a finite index subgroup $H''\subset H'$ such that
the first Betti number of $T_e/H''$ is at least $2$.
Write the splitting of $H'$ dual to its action on $T_e$ as the amalgam $H'=A*_{\grp{a}} B$ or as the HNN extension $H'=C*_{\grp{a}}$.
Then $A$ and $B$ have rank at least 2. Since the rank of $H_1$ is at least 3, so is the rank of $H'$, and $C$ has rank at least $3$.
It follows that there exists a epimorphism $\pi:H'\onto \bbZ/3\bbZ$ that kills $a$ and is such that
the restriction of $\pi$ to each factor $A$, $B$ or $C$ is onto.
Now take $H'' = \ker \pi$, and $\Gamma''=T_e/H''$.
Since $\pi(a)=1$, $\Gamma''$ has 3 edges, and since the restriction of $\pi$ to each factor is onto,
$\Gamma''$ has the same number of vertices as $T_e / H'$ (\ie either 1 or 2).
Thus $\Gamma''$ has first Betti number at least 2.
Now the finite index subgroup $\cala''\subset\cala'$ of automorphisms preserving $H''$ and acting trivially on $T_e/H''$
maps onto $\bbF_2$.

Finally, we now treat the case where the JSJ decomposition $\Gamma$ is   a trivial decomposition consisting of a QH vertex
corresponding to a punctured Klein bottle.
By \cite[Prop. A.3]{Stukow},
the punctured Klein bottle has a unique isotopy class of essential 2-sided simple closed curves which does not bound a Mobius band.
Thus, the splitting $\Gamma_0$ over this curve is invariant under $\cala$, and one can argue using $\Gamma_0$ as we did with the JSJ decomposition $\Gamma$.
\end{proof}

We can now give a proof of Corollary \ref{corspecial:mccool}, which says that any vastness property $\calp$ that holds for $\Out(\bbF_n)$, for $n\geq 2$, and for all non-virtually abelian mapping class groups of punctured hyperbolic
surfaces
will hold for any McCool group that is not virtually abelian.

\begin{proof}[Proof of Corollary \ref{corspecial:mccool}]
  If $M=\Mc(\bbF,\calc)$ does not satisfy $\calp$, then
the only  possibilities in Theorem \ref{thm_MC}
are the assertion \ref{it_sporadic} with $M$ virtually abelian,
or  the assertion \ref{it_1bout}
with each $\MCG(S_j)$ virtually abelian.
It follows that $M_0$ is a virtually polycyclic subgroup of $\Out(\bbF)$, hence virtually abelian by \cite{BFH_solvable}.
\end{proof}

\bibliographystyle{alpha}
\bibliography{bibliography_RAAG3}


\vspace{8mm}
\begin{minipage}{0.5\textwidth}{\small
 \begin{flushleft}
 Vincent Guirardel\\
 Institut de Recherche Math\'ematique de Rennes\\
 Universit\'e de Rennes 1 et CNRS (UMR 6625)\\
 263 avenue du G\'en\'eral Leclerc, CS 74205\\
 F-35042  RENNES C\'edex\\
 \emph{e-mail:} \texttt{vincent.guirardel@univ-rennes1.fr}\\[8mm]
 \end{flushleft}}
\end{minipage}\hspace{0.05\textwidth}
\begin{minipage}{0.45\textwidth}{\small
 \begin{flushleft}
 Andrew Sale\\
 Department of Mathematics \\ 310 Malott Hall \\ Cornell University \\ Ithaca, NY USA 14853 \\
 \emph{e-mail:} \texttt{andrew.sale@some.oxon.org}\\[8mm]
 \end{flushleft}}
 \end{minipage}

\end{document}